\DeclareMathAlphabet{\mathpzc}{OT1}{pzc}{m}{it}
\def\eqdefa{\buildrel\hbox{\footnotesize def}\over =}
\newcommand{\ve}{\varepsilon}
\newcommand{\ud}{\mathrm{d}}
\newcommand{\vv}{\mathbf{v}}
\newcommand{\aaa}{\mathbf{a}}
\newcommand{\xx}{\mathbf{x}}
\newcommand{\nn}{\mathbf{n}}
\newcommand{\ee}{\mathbf{e}}
\newcommand{\mm}{\mathbf{m}}
\newcommand{\sss}{\mathbf{s}}
\newcommand{\A}{\mathbf{A}}
\newcommand{\BB}{\mathbf{B}}
\newcommand{\GG}{\mathbf{G}}
\newcommand{\HH}{\mathbf{H}}
\newcommand{\UU}{\mathbf{U}}
\newcommand{\MM}{\mathbf{M}}
\newcommand{\NN}{\mathbf{N}}
\newcommand{\VV}{\mathbf{V}}
\newcommand{\RR}{\mathbf{R}}
\newcommand{\QQ}{\mathbf{Q}}
\newcommand{\FF}{\mathbf{F}}
\newcommand{\JJ}{\mathbf{J}}
\newcommand{\KK}{\mathbf{K}}
\newcommand{\CR}{\mathcal{R}}
\newcommand{\CS}{\mathcal{S}}
\newcommand{\CV}{\mathcal{V}}
\newcommand{\CE}{\mathcal{E}}
\newcommand{\CN}{\mathcal{N}}
\newcommand{\CF}{\mathcal{F}}
\newcommand{\CG}{\mathcal{G}}
\newcommand{\CP}{\mathcal{P}}
\newcommand{\CH}{\mathcal{H}}
\newcommand{\CI}{\mathcal{I}}
\newcommand{\CY}{\mathcal{Y}}
\newcommand{\CM}{\mathcal{M}}
\newcommand{\CJ}{\mathcal{J}}
\newcommand{\Ff}{\mathfrak{F}}
\newcommand{\Ef}{\mathfrak{E}}
\newcommand{\bxi}{\bm{\xi}}
\newcommand{\ML}{\mathscr{L}}
\newcommand{\MA}{\mathscr{A}}
\newcommand{\MB}{\mathscr{B}}
\newcommand{\MP}{\mathscr{P}}
\newcommand{\MS}{\mathscr{S}}
\newcommand{\Fi}{\mathfrak{i}}
\newcommand{\Fp}{\mathfrak{p}}
\newcommand{\Fq}{\mathfrak{q}}
\newcommand{\BOm}{\mathbf{\Omega}}
\newtheorem{theorem}{Theorem}
\newtheorem{proposition}[theorem]{Proposition}
\newtheorem{lemma}[theorem]{Lemma}
\newtheorem{corollary}[theorem]{Corollary}
\newtheorem{assumption}{Assumption}
\numberwithin{theorem}{section}
\numberwithin{equation}{section}
\title{Rigorous biaxial limit of a molecular-theory-based two-tensor hydrodynamics}
\author{Sirui Li\footnote{ School of Mathematics and Statistics, Guizhou University, Guiyang 550025, China (srli@gzu.edu.cn) }, Jie Xu\footnote{LSEC and NCMIS, Institute of Computational Mathematics and Scientific/Engineering Computing (ICMSEC), Academy of Mathematics and Systems Science (AMSS), Chinese Academy of Sciences, Beijing, China (xujie@lsec.cc.ac.cn)}}
\date{}
\begin{document}
\maketitle

\begin{abstract}
We consider a two-tensor hydrodynamics derived from the molecular model, where high-order tensors are determined by closure approximation through the maximum entropy state or the quasi-entropy. We prove the existence and uniqueness of local in time smooth solutions to the two-tensor system. Then, we rigorously justify the connection between the molecular-theory-based two-tensor hydrodynamics and the biaxial frame hydrodynamics. More specifically, in the framework of Hilbert
expansion, we show the convergence of the solution to the two-tensor hydrodynamics to the solution to the frame hydrodynamics.
\end{abstract}

\tableofcontents

\section{Introduction}

Liquid-crystal theories are classified by how the local anisotropy is described.
For the uniaxial nematic phase formed by rod-like molecules, the local anisotropy can be described by an orientational density function, a second-order symmetric traceless tensor, or a unit vector.
The resulting hydrodynamics are molecular models (such as Doi--Onsager \cite{DE}), tensor models (such as Beris--Edwards \cite{BE} and Qian--Sheng \cite{QS}) and vector models (such as Ericksen--Leslie \cite{E-61,E-91,Les}), respectively.
The connections between theories at these three levels are studied extensively, both in the sense of formal expansion \cite{KD,EZ,HLWZZ} and rigorous limit \cite{WZZ2,LWZ,WZZ3,LW,Liu-Wang, Xin-Zhang}.

When the molecular shape becomes more complex, other types of nematic phases are also observed.
For these phases, however, the connections between different level of theories are merely revealed.
Let us focus on the biaxial nematic phase on which we will focus throughout this article.
At the coarsest level, it needs to be described by an orthonormal frame field $\Fp(\xx)\in SO(3)$.
Under this description, the form of orientational elasticity \cite{S-A,GV1,SV} and hydrodynamics \cite{S-A,Liu-M,BP,S-W-M,GV2,LLC} have been proposed, which we would call frame models.
On the other hand, most molecular models and tensor models (typically with two tensors) are only built for spatially homogeneous cases, no matter for equilibria \cite{S-J-P,MDN,APK,BVG,SVD} and hydrodynamics \cite{SLW,SW1,SW2}.
Under this circumstance, it is not the timing to study the connections between these models.
%
%
%
%
%
%
%

A couple of recent works construct the spatially inhomogeneous free energy \cite{XYZ} and hydrodynamics \cite{XZ} for bent-core molecules that exhibit the biaxial nematic phase. The free energy is a functional of three tensors and is derived from molecular interactions, based on which the molecular model and tensor model for hydrodynamics are built.
When we restrain our interest to the biaxial nematic phase, the order parameters in these models can be reduced to two second-order tensors.
%
%
%
%
With these models, the very recent work \cite{LX} formally derive a biaxial limit of a molecular-theory-based hydrodynamics \cite{XZ} using the Hilbert expansion.
The biaxial limit model, written in the coordinates of the frame, is just the frame hydrodynamics given previously.
In the frame model, the numerous coefficients are expressed by those in the tensor model that have been derived as functions of molecular parameters, and the energy dissipation law is maintained in the frame model.
%
%
%
%
The Ericksen--Leslie model can also be recovered as a special case.
Furthermore, armed with the form of frame hydrodynamics in \cite{LX}, its well-posedness of smooth solutions in $\mathbb{R}^d(d=2,3)$ and the global existence of weak solutions in $\mathbb{R}^2$ are shown \cite{LWX}.
The uniqueness of global weak solutions is also established using the Littlewood--Paley theory \cite{LWX2}.

The main goal of this paper is to prove the local well-posedness of smooth solutions to the molecular-theory-based two-tensor hydrodynamics, and to rigorously justify the connection between the two-tensor hydrodynamics and the frame hydrodynamics in the sense of smooth solutions.
The main framework of our proof is constructing approximate solutions near the solution to frame hydrodynamics using the Hilbert expansion, then deriving the uniform estimates for the difference between the exact and the approximate solution.
Some new issues arise in the estimates compared with the uniaxial limit for one-tensor hydrodynamics, which we outline below.

The molecular-theory-based tensor model requires a stabilizing entropy term in the bulk energy.
The model also involves many high-order tensors.
They need to be expressed by the two order parameter tensors in some way, which is called the closure approximation.
In this paper, we consider two different approaches, that is, using the maximum entropy state \cite{XYZ} and the quasi-entropy \cite{Xu3} to give the entropy term and the closure approximation.
The maximum entropy state is a somewhat standard approach, but is expensive in computation.
The quasi-entropy is proposed as a elementary function substitution for the entropy term derived from the maximum entropy state, which we will explain in the following section.
In several simple cases discussed in \cite{Xu3}, the physics given by the quasi-entropy is consistent with that given by the maximum entropy state.
In the light of proposing a model with adequate computational complexity, it is significant to discuss closure approximation using the quasi-entropy.
Concerning the estimates for the rigorous biaxial limit, we shall see these two approaches will lead to the same arguments, so that the derivations afterwards can be handled in a unified manner.

To recognize the biaxial limit, it is necessary to comprehend the biaxial minimizer of the bulk energy.
The rotational invariance of the bulk energy leads to the fact that the minimizer is actually a three-dimensional manifold.
The tangent vectors of this manifold are zero-eigenvectors of the Hessian that is usually called in terms of `the linearized operator' in previous works.
A basic assumption is that the tangent vectors exactly span the kernel of the Hessian.
This assumption implies that the biaxial minimizer is nondegenerate, which physically means that the coefficients in the bulk energy are not at the critical values of phase transitions.
We will give some numerical evidences to this assumption.

The kernel of the linearized operator plays a key role in the analysis of the Hilbert expansion.
In particular, we can use this kernel to decouple the Hilbert expansion at different orders by projecting the tensors according to the kernel and its orthogonal complement.
%
As for the estimates, an essential difference from the previous works is that the projection and high-order tensors are noncommutative.
To deal with it, we will give a new and general way to obtain the desired estimates.
This is also the case for the singular term in the remainder equation, where we shall utilize the eigen-decomposition of the linearized operator.
We will clarify these two points after we write down the model and the main results.

To our knowledge, this is the first time a rigorous biaxial limit of hydrodynamics from a molecular-based model is established.
Moreover, it is possible that the minimizer of the bulk energy is uniaxial.
In this case, the limit model will be the Ericksen--Leslie model (see Section 5 in \cite{LX}).
When attempting to establish the uniaxial limit of two-tensor hydrodynamics, one will also face the issues described in the paragraphs above.
In other words, those approaches in the previous works on the uniaxial limit of the one-tensor hydrodynamics are special cases that cannot be extended naturally.
Instead, we need to follow the approach for the biaxial case.
We shall explain this at the end of this article.

In the rest of this section, let us introduce the notations, followed by the tensor and frame hydrodynamics.
Then, we state the main results.

\subsection{Preliminaries}

We begin with notations of orthonormal frames and tensors.
In a system composed of identical non-spherical rigid molecules, we need to describe its orientational distribution.
To this end, on each molecule we anchor a right-handed orthonormal frame $(\mm_1,\mm_2,\mm_3)$ to express its orientation.
If we write out the coordinates of the body-fixed frame in the right-handed reference frame, it gives a rotation matrix $\Fq\in SO(3)$, where $\Fq_{ij}$ is the $i$th coordinate of $\mm_j$.
%
We need another orthonormal frame to represent the local orientation of the nematic phase.
To distinguish, we shall use the notation $\Fp=(\nn_1,\nn_2,\nn_3)$.

For an $n$-th order tensor $U$ in $\mathbb{R}^3$, its coordinates in the reference frame is denoted by $U_{i_1\ldots i_n}$.
The dot product $U\cdot V$ of two tensors of the same order is defined by summing up the product of the corresponding coordinates,
\begin{align*}
U\cdot V=U_{i_1\cdots i_n}V_{i_1\cdots i_n},\quad|U|^2=U\cdot U,
\end{align*}
where we have adopted the Einstein summation convention on repeated indices and will assume it throughout the article.

An $n$-th order tensor $U$ is said to be symmetric, if its coordinates satisfy $U_{i_{\sigma(1)}\ldots i_{\sigma(n)}}=U_{i_1\cdots i_n}$ for arbitrary permutation $\sigma$ of $\{1,\ldots,n\}$.
A tensor $U$ can be symmetrized as
\begin{align*}
(U_{\rm sym})_{i_1\cdots i_n}=\frac{1}{n!}\sum_{\sigma}U_{i_{\sigma(1)}\cdots i_{\sigma(n)}},
\end{align*}
where the sum is taken over all the permutations. For any $n$-th order symmetric tensor $U$, its trace is an $(n-2)$th-order tensor defined by the contraction of two coordinates,
\begin{align*}
({\rm tr}U)_{i_1\ldots i_{n-2}}=U_{i_1\ldots i_{n-2}kk}.
\end{align*}
If a symmetric tensor $U$ satisfies ${\rm tr}U=0$, we say that $U$ is symmetric traceless.
If we express a tensor by its coordinates in another right-handed orthonormal frame, the symmetric and traceless properties are maintained, i.e. they are intrinsic properties of a tensor.

To express the symmetric tensors conveniently, we introduce the monomial notation,
\begin{align}\nonumber
\mm^{k_1}_1\mm^{k_2}_2\mm^{k_3}_3=\Big(\underbrace{\mm_1\otimes\cdots\otimes\mm_1}_{k_1}\otimes\underbrace{\mm_2\otimes\cdots\otimes\mm_2}_{k_2}\otimes\underbrace{\mm_3\otimes\cdots\otimes\mm_3}_{k_3}\Big)_{\rm sym}.
\end{align}
In this way, any homogeneous polynomical of $\mm_i$ represents a symmetric tensor.
The $3\times 3$ identity matrix $\Fi$ can be written as a polynomial $\Fi=\mm^2_1+\mm^2_2+\mm^2_3$.
For clarity, several simple examples are given below:
\begin{align*}
\mm_1\mm_2=&\,\frac{1}{2}(\mm_1\otimes\mm_2+\mm_2\otimes\mm_1),\quad\mm^2_1=\mm_1\otimes\mm_1,\\
\mm_1\mm_2\mm_3=&\,\frac{1}{6}\big(\mm_1\otimes\mm_2\otimes\mm_3+\mm_2\otimes\mm_3\otimes\mm_1+\mm_3\otimes\mm_1\otimes\mm_2\\
&\,+\mm_1\otimes\mm_3\otimes\mm_2+\mm_2\otimes\mm_1\otimes\mm_3+\mm_3\otimes\mm_2\otimes\mm_1\big),\\
\mm_1\mm^2_2=&\,\frac{1}{3}\big(\mm_1\otimes\mm_2\otimes\mm_2+\mm_2\otimes\mm_1\otimes\mm_2+\mm_2\otimes\mm_2\otimes\mm_1\big).
\end{align*}
The definition above also works for $\Fp=(\nn_1,\nn_2,\nn_3)$.

The order parameters to depict the local anisotropy are defined from the moments of $\mm_i$ of the density function $\rho(\Fq,\xx)$,
\begin{align}\nonumber
\big\langle\mm_{i_1}\otimes\cdots\otimes\mm_{i_n}\big\rangle=\int_{SO(3)}\mm_{i_1}(\Fq)\otimes\cdots\otimes\mm_{i_n}(\Fq)\rho(\Fq,\xx)\ud\Fq,\quad i_1,\ldots,i_n=1,2,3.
\end{align}
where $\langle\cdot\rangle$ is a short notation for the average under $\rho(\Fq,\xx)$. To extract the linearly independent components from these moments, it turns out that order parameters shall be chosen from averaged symmetric traceless tensors (see \cite{Xu1,XC,Xu4} for details).

The hydrodynamics involves differential operators on $SO(3)$.
For any frame $\Fp=(\nn_1,\nn_2,\nn_3)\in SO(3)$, the tangential space $T_{\Fp}SO(3)$ at a point $\Fp$ can be spanned by the orthogonal basis
\begin{align*}
V_1=(0,\nn_3,-\nn_2),\quad V_2=(-\nn_3,0,\nn_1),\quad V_3=(\nn_2,-\nn_1,0).
\end{align*}
The differential operators $\ML_k(k=1,2,3)$ are defined by taking the dot products of $V_k$ and
$\partial/\partial\Fp=(\partial/\partial\nn_1,\partial/\partial\nn_2,\partial/\partial\nn_3)$,
\begin{align}\label{diff-ML1-3}
\left\{
    \begin{aligned}
&\ML_1\eqdefa V_1\cdot\frac{\partial}{\partial\Fp}=\nn_3\cdot\frac{\partial}{\partial\nn_2}-\nn_2\cdot\frac{\partial}{\partial\nn_3},\\
&\ML_2\eqdefa V_2\cdot\frac{\partial}{\partial\Fp}=\nn_1\cdot\frac{\partial}{\partial\nn_3}-\nn_3\cdot\frac{\partial}{\partial\nn_1},\\
&\ML_3\eqdefa V_3\cdot\frac{\partial}{\partial\Fp}=\nn_2\cdot\frac{\partial}{\partial\nn_1}-\nn_1\cdot\frac{\partial}{\partial\nn_2}.
\end{aligned}
    \right.
\end{align}
The operator $\ML_k$ actually gives the derivative along the infinitesimal rotation about $\nn_k$. Acting the operators $\ML_k(k=1,2,3)$ on $\nn_i$, we can verify that $\ML_k\nn_i=\epsilon^{ijk}\nn_j$ with $\epsilon^{ijk}$ being the Levi-Civita symbol.
The operator $\ML_k$ can also be acted on a functional, where $\partial/\partial\Fp$ shall be replaced by the variational derivative $\delta/\delta\Fp$.

\subsection{Two-tensor hydrodynamics}\label{tensor-models}

In this paper, we focus on a two-tensor hydrodynamics considered in \cite{LX} for biaxial nematics, which is reduced from the model proposed in \cite{XZ}.

The order parameters are given by two second-order symmetric traceless tensors,
\begin{align*}
Q_1=\langle\mm^2_1-\Fi/3\rangle,\quad Q_2=\langle\mm^2_2-\Fi/3\rangle.
\end{align*}
We denote $\QQ=(Q_1,Q_2)^T$ in short.
A projection map $\MS$ is defined for second-order tensor,
\begin{align*}
    (\MS R)_{ij}=\frac 12(R_{ij}+R_{ji})-\frac 13 R_{kk}\delta_{ij},
\end{align*}
giving a symmetric traceless one.
It can also be imposed on an array of second-order tensors, i.e.,
\begin{align*}
\MS(R_1,\cdots,R_k)=(\MS R_1,\cdots,\MS R_k).
\end{align*}
Denote by $\mathbb{Q}$ the linear space formed by a pair of second-order symmetric traceless tensors
\begin{align*}
    \mathbb{Q}=\big\{\QQ=(Q_1,Q_2)^T:Q_i\text{ second-order symmetric traceless}\big\}.
\end{align*}
Its dimension is ten, since each second-order symmetric traceless tensor contributes five.

Now we are in the position of writing down the tensor hydrodynamics.
We shall adopt the form in \cite{LX} that would clearly reflect its structure.
To simplify the presentation, throughout the paper, we assume that the concentration of rigid molecules at each point $\xx$, the product of the Boltzmann constant and the absolute temperature, and the density of the fluid $\rho_s$ are all equal to one, so that they will not appear in the model compared with those given previously.
Such simplifications make no difference in the structure of the model and the estimates to be established.

We begin with the free energy
\begin{align}\label{free-energy}
{\CF[\QQ,\nabla\QQ]}=\int_{\mathbb{R}^3}\Big(\frac{1}{\ve}F_b(\QQ)+F_e(\nabla\QQ)\Big)\ud\xx.
\end{align}
The energy density is divided into the bulk part $F_b$ and the elastic part $F_e$.
The bulk energy density $F_b$ contains an entropy term and quadratic terms of $\QQ$,
\begin{align}
    F_b(\QQ)=&\,F_{\rm entropy}+\frac{1}{2}\big(c_{02}|Q_1|^2+c_{03}|Q_2|^2+2c_{04}Q_1\cdot Q_2\big).\label{free-energy-bulk}
\end{align}
The entropy term requires detailed discussion, which will be presented in Section \ref{key-section}.
The elastic energy density $F_e$, penalizing spatial inhomogeneity, consists of quadratic terms of $\nabla\QQ$,
\begin{align}
    F_e(\nabla\QQ)=&\,\frac{1}{2}\Big(c_{22}|\nabla Q_1|^2+c_{23}|\nabla Q_2|^2 +2c_{24}\partial_iQ_{1jk}\partial_iQ_{2jk}\nonumber\\
    &\,\quad+c_{28}\partial_iQ_{1ik}\partial_jQ_{1jk}+c_{29}\partial_iQ_{2ik}\partial_jQ_{2jk} +2c_{2,10}\partial_iQ_{1ik}\partial_jQ_{2jk}\Big). \label{free-energy-elastic}
\end{align}
To ensure the lower-boundedness of the free energy, we assume that the coefficients of the elastic energy satisfy positive definite conditions
\begin{align*}
    &\,c_{22}, c_{23}, c_{28}, c_{29}>0,\\
    &\,c_{24}^2<c_{22}c_{23},\quad c_{28}^2<c_{29}c_{2,10}.
\end{align*}
In the above, the coefficients $c_{ij}$ of the quadratic terms in (\ref{free-energy-bulk}) and (\ref{free-energy-elastic}) can be derived as functions of the molecular parameters (see \cite{XZ1,XYZ} for details).
In particular, it has been verified that these derived coefficients indeed satisfy the positive definite conditions given above (cf. \cite{XYZ}).
The small parameter $\ve$ in \eqref{free-energy} can be viewed as the squared relative scale $\tilde{L}$ between the rigid molecule and the domain of observation by a change of variable $\tilde{\xx}=\xx/\tilde{L}$.

We write the variational derivative of the free energy  \eqref{free-energy} as
\begin{align}\label{vari-deriv-Q}
\mu_{\QQ}=&\,~\frac{\delta \CF(\QQ,\nabla\QQ)}{\delta\QQ}=\MS\Big(\frac{1}{\ve}\frac{\partial F_b(\QQ)}{\partial\QQ}-\partial_i\Big(\frac{\partial F_e(\nabla\QQ)}{\partial (\partial_i\QQ)}\Big)\Big)\nonumber\\
\eqdefa&\,~\frac{1}{\ve}\CJ(\QQ)+\CG(\QQ),
\end{align}
where $\mu_{\QQ}=(\mu_{Q_1}, \mu_{Q_2})^T$, $\CJ(\QQ)=\big(\CJ_1(\QQ),\CJ_2(\QQ)\big)^T$ and $\CG(\QQ)=\big(\CG_1(\QQ),\CG_2(\QQ)\big)^T$ are calculated as
\begin{align}
\mu_{Q_1}=&\,\frac{1}{\ve}\CJ_1(\QQ)+\CG_1(\QQ)\nonumber\\
=&\,\frac{1}{\ve}\left(\MS\frac{\partial F_{\rm entropy}}{\partial Q_1}+c_{02}Q_1+c_{04}Q_2\right)\nonumber\\
&\,-c_{22}\Delta Q_{1jk}-c_{24}\Delta Q_{2jk}
-\MS(c_{28}\partial_j\partial_iQ_{1ik}+c_{2,10}\partial_j\partial_iQ_{2ik}),\label{mu-Q1}\\
\mu_{Q_2}=&\,\frac{1}{\ve}\CJ_2(\QQ)+\CG_2(\QQ)\nonumber\\
=&\,\frac{1}{\ve}\left(\MS\frac{\partial F_{\rm entropy}}{\partial Q_2}+c_{04}Q_1+c_{03}Q_2\right)\nonumber\\
&\,-c_{24}\Delta Q_{1jk}-c_{23}\Delta Q_{2jk}
-\MS(c_{2,10}\partial_j\partial_iQ_{1ik}+c_{29}\partial_j\partial_iQ_{2ik}).\label{mu-Q2}
\end{align}

Before we present the hydrodynamics, we explain some physical parameters that will appear below, for which we refer to \cite{XZ,LX}:
$I_{ii}(i=1,2,3)$ are diagonal elements of the moment of inertia for a molecule;
$\Gamma_i=\frac{m_0}{\zeta I_{ii}}(i=1,2,3)$ are the diffusion coefficients, where $m_0$ is a mass unit and $\zeta$ is a friction constant;
$e_i(i=1,2)$ are defined as $e_1=1-e_2=\frac{I_{22}}{I_{11}+I_{22}}$.

In addition, we define some fourth-order tensors:
\begin{align}\label{seven-tensors}
\left\{
    \begin{aligned}
&\CR_1=\big\langle\big(\mm^2_1-\Fi/3\big)\otimes\big(\mm^2_1-\Fi/3\big)\big\rangle,\quad
\CR_2=\big\langle\big(\mm^2_2-\Fi/3\big)\otimes\big(\mm^2_2-\Fi/3\big)\big\rangle, \\
&\CR_3=4\langle\mm_1\mm_2\otimes\mm_1\mm_2\rangle,\quad
\CR_4=4\langle\mm_1\mm_3\otimes\mm_1\mm_3\rangle,\quad
\CR_5=4\langle\mm_2\mm_3\otimes\mm_2\mm_3\rangle,\\
&\CV_{Q_1}=2\Big(\langle\mm_1\mm_3\otimes(\mm_1\otimes\mm_3)\rangle+e_1\langle\mm_1\mm_2\otimes(\mm_1\otimes\mm_2)\rangle-e_2\langle\mm_1\mm_2\otimes(\mm_2\otimes\mm_1)\rangle\Big), \\
&\CV_{Q_2}=2\Big(\langle\mm_2\mm_3\otimes(\mm_2\otimes\mm_3)\rangle-e_1\langle\mm_1\mm_2\otimes(\mm_1\otimes\mm_2)\rangle+e_2\langle\mm_1\mm_2\otimes(\mm_2\otimes\mm_1)\rangle\Big).
\end{aligned}
\right.
\end{align}
These tensors need to be specified as functions of $\QQ$, which is the so-called closure approximation to be discussed in Section \ref{key-section}.
The closure approximation is the origin of strong nonlinearity in the hydrodynamics.
We then define some operators from these tensors,
\begin{align}
&\,\CM_{\QQ}=\left(
  \begin{array}{cc}
    \CM_{11} &\, \CM_{12}\vspace{0.5ex} \\
    \CM_{12} &\, \CM_{22} \\
  \end{array}
\right)\eqdefa
\left(
  \begin{array}{cc}
    \Gamma_2\CR_4+\Gamma_3\CR_3 &\, -\Gamma_3\CR_3 \vspace{0.5ex}\\
    -\Gamma_3\CR_3 &\, \Gamma_1\CR_5+\Gamma_3\CR_3 \\
  \end{array}
\right), \label{diss-operator}\\
&\,\CV_{\QQ}\eqdefa\left(\begin{array}{c}
    \CV_{Q_1}  \vspace{0.5ex}\\
    \CV_{Q_2}
\end{array}\right),
\quad
\CN_{\QQ}\eqdefa(\CN_{Q_1},\CN_{Q_2})=(\CV^{T}_{Q_1}, \CV^T_{Q_2}),\label{CVQ-CNQ}\\
&\,\CP_{\QQ}\eqdefa \zeta\big(I_{22}\CR_{1}+I_{11}\CR_2+e_1I_{11}\CR_{3}\big). \label{CPQ}
\end{align}
Let us explain some short notations concerning the tensor contraction.
We regard fourth-order and second-order tensors as matrices and vectors, respectively.
Then, we make use of matrix-matrix and matrix-vector multiplications, such as
\begin{align*}
(\CV_{Q_1})_{ijkl}\kappa_{kl}=(\CV_{Q_1}\kappa)_{ij},
\end{align*}
where $\kappa_{ij}=\partial_jv_i$ is the gradient of the fluid velocity field $\vv$.
Thus, the transpose of a fourth-order tensor can be defined, such as $(\CV^T_{Q_1})_{ijkl}=(\CV_{Q_1})_{klij}$.
Again, $\CM_{\QQ}\mu_{\QQ}$ is carried out by matrix-vector multiplication,
\begin{align*}
    \CM_{\QQ}\mu_{\QQ}=\left(
    \begin{array}{c}
        \CM_{11}\mu_{Q_1}+\CM_{12}\mu_{Q_2} \vspace{0.5ex}\\
        \CM_{12}\mu_{Q_1}+\CM_{22}\mu_{Q_2}
    \end{array}\right).
\end{align*}

Now, let us write down the two-tensor hydrodynamics,
\begin{align}
\frac{\partial\QQ}{\partial t}+\vv\cdot\nabla\QQ=&\,-\CM_{\QQ}\mu_{\QQ}+\CV_{\QQ}\kappa,\label{Re-MB-Q-tensor-1}\\
\Big(\frac{\partial\vv}{\partial t}+\vv\cdot\nabla\vv\Big)_i=&\,-\partial_ip+\eta\Delta v_i+\partial_j(\CP_{\QQ}\kappa+\CN_{\QQ}\mu_{\QQ})_{ij}+\mu_{\QQ}\cdot\partial_i\QQ,\label{Re-MB-Q-tensor-2}\\
\nabla\cdot\vv=&\,0. \label{Re-MB-Q-tensor-3}
\end{align}
Here, $p$ is the pressure imposing the incompressibility (\ref{yuan-incompressible-v}) on $\vv$; $\eta$ is the viscous coefficient of the fluid; $\mu_{\QQ}=\ve^{-1}\CJ(\QQ)+\CG(\QQ)$; $\CJ(\QQ)$ and $\CG(\QQ)$ can be rewritten as
\begin{align}
\CJ(\QQ)=&\MS\frac{\partial F_{\rm entropy}}{\partial\QQ}+D_0\QQ,\label{CJ-QQ}\\
\CG(\QQ)_{jk}=&-D_1(\Delta\QQ)_{jk}-D_2\MS(\partial_j\partial_i\QQ_{ik}),\label{CG-QQ}
\end{align}
where the constant coefficient matrices $D_i(i=1,2,3)$ are expressed as, respectively,
\begin{align*}
D_0=\left(
  \begin{array}{cc}
    c_{02} & c_{04} \\
    c_{04} & c_{03}
  \end{array}
\right),\quad
D_1=\left(
  \begin{array}{cc}
    c_{22} & c_{24} \\
    c_{24} & c_{23}
  \end{array}
\right),\quad
D_2=\left(
  \begin{array}{cc}
    c_{28} & c_{2.10} \\
    c_{2.10} & c_{29}
  \end{array}
\right).
\end{align*}
To comprehend the model \eqref{Re-MB-Q-tensor-1}--\eqref{Re-MB-Q-tensor-3},  $\CM_{\QQ}\mu_{\QQ}$ is the rotational diffusion term; $\CV_{\QQ}\kappa$ is the rotational convection term, where the corresponding elastic stress is $\CN_{\QQ}\mu_{\QQ}$; $\CP_{\QQ}\kappa$ is the extra viscous stress induced by rigid molecules; and $\mu_{\QQ}\cdot\partial_i\QQ$ is the external force due to the presence of rigid molecules.


It is crucial that the fourth-order tensors $\CR_i(i=1,\cdots,5)$ are all positive definite, in the sense that for any second-order symmetric traceless tensor $Y$, we have $Y\cdot \CR_i Y\ge 0$ and the equality implies $Y=0$.
This can be guaranteed by the closure approximation to be introduced in Section \ref{key-section}, which has been discussed in \cite{LX}.
As a result, the operators $\CM_{\QQ}$ and $\CP_{\QQ}$ are positive definite.

The energy dissipation is given by
\begin{align}
    \frac{\ud}{\ud t} \left(\frac{1}{2}\|\vv\|_{L^2}^2+\CF[\QQ,\nabla\QQ]\right)
    =-(\mu_{\QQ},\CM_{\QQ}\mu_{\QQ})-\eta\|\kappa\|_{L^2}^2-(\kappa,\CP_{\QQ}\kappa).
\end{align}

\subsection{Biaxial frame hydrodynamics}

The local orientation of the biaxial nematic phase needs to be described by an orthonormal frame $\Fp=(\nn_1,\nn_2,\nn_3)\in SO(3)$.
The biaxial frame hydrodynamics can be derived formally from the tensor hydrodynamics above.
In this case, the frame $\Fp$ is that in the minimizer of the bulk energy $F_b$, written as
\begin{align}
    Q_i=s_i\Big(\nn_1\otimes\nn_1-\frac{\Fi}{3}\Big)+b_i(\nn_2\otimes\nn_2-\nn_3\otimes\nn_3),~~i=1,2. \label{mnmzform}
\end{align}
We will discuss it more in Section \ref{key-section}. Here, we focus on writing down the biaxial hydrodynamics.

We write the biaxial orientational elasticity in the form
\begin{align}\label{elastic-energy}
\CF_{Bi}[\Fp]=\int_{\mathbb{R}^3}f_{Bi}(\Fp,\nabla\Fp)\ud\xx.
\end{align}
The elastic energy density $f_{Bi}$ can be given by
(see \cite{SV,GV1,Xu2}, where other equivalent forms are provided):
\begin{align*}
    f_{Bi}(\Fp,\nabla\Fp)=&\,\frac{1}{2}\Big(\sum_{i=1}^3 K_i(\nabla\cdot\nn_i)^2+\sum_{i,j=1}^3K_{ij}(\nn_i\cdot\nabla\times\nn_j)^2
    +\sum_{i=1}^3\gamma_i\nabla\cdot[(\nn_i\cdot\nabla)\nn_i-(\nabla\cdot\nn_i)\nn_i]
    \Big),
\end{align*}
which consists of twelve bulk terms and three surface terms.
The coefficients $K_i$ and $K_{ij}$ of bulk terms shall all be positive. They can be derived from $c_{2j}$ in the tensor model (cf. \cite{Xu2}).

To present the frame hydrodynamics more conveniently, we introduce a set of local basis generated by nine second-order tensors: the identity tensor $\Fi$, five symmetric traceless tensors,
\begin{align*}
    \sss_1=\nn^2_1-\frac13\Fi,\quad \sss_2=\nn^2_2-\nn^2_3,\quad \sss_3=\nn_1\nn_2,\quad \sss_4=\nn_1\nn_3,\quad \sss_5=\nn_2\nn_3,
\end{align*}
and three asymmetric traceless tensors,
\begin{align*}
    \aaa_1=\nn_1\otimes\nn_2-\nn_2\otimes\nn_1,\quad \aaa_2=\nn_3\otimes\nn_1-\nn_1\otimes\nn_3,\quad \aaa_3=\nn_2\otimes\nn_3-\nn_3\otimes\nn_2.
\end{align*}
%
%
The frame hydrodynamics is a coupled system between the evolution equation of the orthonormal frame field $\Fp=(\nn_1,\nn_2,\nn_3)\in SO(3)$ and the Navier-Stokes equations. We write down the form given in \cite{LX}:
\begin{align}
&\,\chi_1\dot{\nn}_2\cdot\nn_3-\frac{1}{2}\chi_1\BOm\cdot\aaa_3-\eta_1\A\cdot\sss_5+\ML_1\CF_{Bi}=0,\label{frame-equation-n1}\\
&\,\chi_2\dot{\nn}_3\cdot\nn_1-\frac{1}{2}\chi_2\BOm\cdot\aaa_2-\eta_2\A\cdot\sss_4+\ML_2\CF_{Bi}=0,\label{frame-equation-n2}\\
&\,\chi_3\dot{\nn}_1\cdot\nn_2-\frac{1}{2}\chi_3\BOm\cdot\aaa_1-\eta_3\A\cdot\sss_3+\ML_3\CF_{Bi}=0,\label{frame-equation-n3}\\
&\,\Fp=(\nn_1,\nn_2,\nn_3)\in SO(3),\label{frame-SO3}\\
&\,\dot{\vv}=-\nabla p+\eta\Delta\vv+\nabla\cdot\sigma+\mathfrak{F},\label{yuan-frame-equation-v}\\
&\,\nabla\cdot\vv=0,\label{yuan-incompressible-v}
\end{align}
where we use the notation $\dot{f}=\partial_tf+\vv\cdot\nabla f$ to represent the material derivative, and recall that $\ML_k\CF_{Bi}$ is the variational derivative along the infinitesimal rotation round $\nn_k$.
The divergence of the viscous stress $\sigma$ is defined by $(\nabla\cdot\sigma)_i=\partial_j\sigma_{ij}$.
To express the stress, let us denote by $\A$ and $\BOm$ the
symmetric and skew-symmetric parts of the velocity gradient $\kappa_{ij}=\partial_jv_i$, respectively, i.e.,
\begin{align*}
\A=\frac{1}{2}(\kappa+\kappa^T),\quad\BOm=\frac{1}{2}(\kappa-\kappa^T).
\end{align*}
The viscous stress $\sigma$
is given by
\begin{align}
\sigma(\Fp,\vv)=&\,\beta_1(\A\cdot\sss_1)\sss_1+\beta_0(\A\cdot\sss_2)\sss_1+\beta_0(\A\cdot\sss_1)\sss_2+\beta_2(\A\cdot\sss_2)\sss_2\nonumber\\
&\,+\beta_3(\A\cdot\sss_3)\sss_3-\eta_3\Big(\dot{\nn}_1\cdot\nn_2-\frac{1}{2}\BOm\cdot\aaa_1\Big)\sss_3\nonumber\\
&\,+\beta_4(\A\cdot\sss_4)\sss_4-\eta_2\Big(\dot{\nn}_3\cdot\nn_1-\frac{1}{2}\BOm\cdot\aaa_2\Big)\sss_4\nonumber\\
&\,+\beta_5(\A\cdot\sss_5)\sss_5-\eta_1\Big(\dot{\nn}_2\cdot\nn_3-\frac{1}{2}\BOm\cdot\aaa_3\Big)\sss_5\nonumber\\
&\,+\frac{1}{2}\eta_3(\A\cdot\sss_3)\aaa_1-\frac{1}{2}\chi_3\Big(\dot{\nn}_1\cdot\nn_2-\frac{1}{2}\BOm\cdot\aaa_1\Big)\aaa_1\nonumber\\
&\,+\frac{1}{2}\eta_2(\A\cdot\sss_4)\aaa_2-\frac{1}{2}\chi_2\Big(\dot{\nn}_3\cdot\nn_1-\frac{1}{2}\BOm\cdot\aaa_2\Big)\aaa_2\nonumber\\
&\,+\frac{1}{2}\eta_1(\A\cdot\sss_5)\aaa_3-\frac{1}{2}\chi_1\Big(\dot{\nn}_2\cdot\nn_3-\frac{1}{2}\BOm\cdot\aaa_3\Big)\aaa_3,
\end{align}
where the viscous coefficients  satisfy the following nonnegative definiteness conditions:
\begin{align}\label{coefficient-conditions}
\left\{
    \begin{aligned}
&\beta_i\geq0,~i=1,\cdots,5,\quad \chi_j>0,~j=1,2,3,\quad \eta>0,\\
&\beta^2_0\leq\beta_1\beta_2,~~\eta^2_1\leq\beta_5\chi_1,~~\eta^2_2\leq\beta_4\chi_2,~~\eta^2_3\leq\beta_3\chi_3.
\end{aligned}
  \right.
\end{align}
Again, such conditions are indeed met when the coefficients are derived from the tensor hydrodynamics as functions of molecular parameters (see {\rm \cite{LX}} for details).
The external force $\mathfrak{F}$ is defined by
\begin{align}\label{external-force-F}
\mathfrak{F}_i=\partial_i\nn_1\cdot\nn_2\ML_3\CF_{Bi}+\partial_i\nn_3\cdot\nn_1\ML_2\CF_{Bi}+\partial_i\nn_2\cdot\nn_3\ML_1\CF_{Bi}.
\end{align}
It can also be regarded as a elastic stress (see Eq. (1.17) in \cite{LWX2}).

The relations between coefficients (\ref{coefficient-conditions}) guarantee that the frame hydrodynamics (\ref{frame-equation-n1})--(\ref{yuan-incompressible-v}) fulfils the following energy dissipation law \cite{LWX}:
\begin{align}\label{energy-law}
&\frac{\ud}{\ud t}\Big(\frac{1}{2}\int_{\mathbb{R}^3}|\vv|^2\ud\xx+\CF_{Bi}[\Fp]\Big)
=-\eta\|\nabla\vv\|^2_{L^2}-\sum^3_{k=1}\frac{1}{\chi_k}\|\ML_k\CF_{Bi}\|^2_{L^2}\nonumber\\
&\quad-\bigg(\beta_1\|\A\cdot\sss_1\|^2_{L^2}+2\beta_0\int_{\mathbb{R}^3}(\A\cdot\sss_1)(\A\cdot\sss_2)\ud\xx+\beta_2\|\A\cdot\sss_2\|^2_{L^2}\bigg)\nonumber\\
&\quad-\Big(\beta_3-\frac{\eta^2_3}{\chi_3}\Big)\|\A\cdot\sss_3\|^2_{L^2}
-\Big(\beta_4-\frac{\eta^2_2}{\chi_2}\Big)\|\A\cdot\sss_4\|^2_{L^2}
-\Big(\beta_5-\frac{\eta^2_1}{\chi_1}\Big)\|\A\cdot\sss_5\|^2_{L^2}.
\end{align}
Concerning the well-posedness results of the system (\ref{frame-equation-n1})--(\ref{yuan-incompressible-v}), we refer to \cite{LWX} for details.

\subsection{Main results}

The first result is the local well-posedness of the tensor hydrodynamics. Let $\mathbb{Q}_{\delta}$ be defined in \eqref{Q-delta}, which is related to the physical range of two tensors.
\begin{theorem}\label{locall-posedness-theorem}
Let $s\geq 2$ be an integer. Assume that $\Fp^*=(\nn^*_1,\nn^*_2,\nn^*_3)\in SO(3)$ is a constant orthonormal frame, and that $\QQ^*=(Q^*_1,Q^*_2)^T$ takes the biaxial minimizer of the bulk energy $F_b$ in the form
\begin{align*}
 Q^*_i=s_i\Big(\nn^{*2}_1-\frac{1}{3}\Fi\Big)+b_i(\nn^{*2}_2-\nn^{*2}_3),\quad i=1,2.
\end{align*}
If the initial data satisfies
\begin{align*}
\QQ_I(\xx)-\QQ^*\in H^{s+1}(\mathbb{R}^3),\quad\vv_I(\xx)\in H^s(\mathbb{R}^3),
\end{align*}
with $\QQ_I(\xx)\in\mathbb{Q}_{\delta}$ for all $\xx\in\mathbb{R}^3$, then there exists $T>0$ and a unique solution $(\QQ,\vv)$ to the two-tensor system \eqref{Re-MB-Q-tensor-1}--\eqref{Re-MB-Q-tensor-3} on $[0,T]$, such that $\QQ(\xx,0)=\QQ_I(\xx),\, \vv(\xx,0)=\vv_I(\xx)$ and
\begin{align*}
&\QQ(\xx,t)-\QQ^*\in C([0,T];H^{s+1}(\mathbb{R}^3)),\\
&\vv(\xx,t)\in C([0,T];H^{s}(\mathbb{R}^3))\cap L^2(0,T;H^{s+1}(\mathbb{R}^3)),
\end{align*}
with $\QQ(\xx,t)\in\mathbb{Q}_{\delta/2}$.
\end{theorem}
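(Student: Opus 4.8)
The system \eqref{Re-MB-Q-tensor-1}--\eqref{Re-MB-Q-tensor-3} is, for $\QQ$ staying in the physical range $\mathbb{Q}_{\delta/2}$, a quasilinear coupling of a strongly parabolic system for $\QQ$ with a Navier--Stokes-type equation for $\vv$, and the natural route is the energy method organized around the dissipation identity of Subsection~\ref{tensor-models} and its differentiated analogues. Since $\ve>0$ is fixed throughout this theorem, all constants may depend on it (the delicate uniform-in-$\ve$ bounds belong to later sections). First I would set $\UU=\QQ-\QQ^*$ and apply the Leray projector $\mathbb{P}$ to \eqref{Re-MB-Q-tensor-2} to remove the pressure. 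Because $\QQ^*$ is a constant biaxial minimizer lying strictly inside the physical domain and $\QQ_I(\xx)\in\mathbb{Q}_\delta$ for every $\xx$ with $\UU_I\in H^{s+1}$, the entropy term $\MS\,\partial F_{\mathrm{entropy}}/\partial\QQ$ and all closure tensors $\CR_i(\QQ),\CV_{Q_i}(\QQ)$ are smooth functions of $\QQ$ on $\mathbb{Q}_{\delta/2}$ with bounded derivatives; this is the only place the smoothness of the maximum-entropy (or quasi-entropy) closure on the \emph{open} physical domain (Section~\ref{key-section}) enters, and the two closures contribute only through these bounds, so they are handled identically. I would record the attendant Moser/Gagliardo--Nirenberg composition estimates on $\mathbb{Q}_{\delta/2}$; at this fixed-$\ve$ level the entropy enters only as a nonlinear term of low differential order, so its convexity need not be exploited (though it does give its top contribution a favorable sign).

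\textbf{Construction and uniform estimates.} I would produce approximate solutions --- say by Friedrichs mollification of the nonlinearities plus Picard iteration, or by Galerkin truncation --- whose short-time solvability is routine because the principal part $\partial_t\QQ=\CM_{\QQ}\bigl(D_1\Delta\QQ+D_2\MS(\partial_j\partial_i\QQ_{ik})\bigr)+\cdots$ is strongly parabolic (positive-definiteness of $\CM_{\QQ}$ from positivity of the $\CR_i$, together with the stated positive-definite elastic conditions, equivalently $\int F_e(\nabla\QQ)\gtrsim\|\nabla\QQ\|_{L^2}^2$) and the $\vv$-equation is parabolic through $\eta\Delta\vv$. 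The core is a closed bound, uniform in the approximation, for $E(t)=\|\vv(t)\|_{H^s}^2+\|\QQ(t)-\QQ^*\|_{H^{s+1}}^2$. For each $|\alpha|\le s$, apply $\partial^\alpha$ to \eqref{Re-MB-Q-tensor-1}--\eqref{Re-MB-Q-tensor-2}, test the velocity equation against $\partial^\alpha\vv$ and the $\QQ$-equation against the differentiated molecular field $\partial^\alpha\mu_{\QQ}$: the elastic energy $\int F_e(\nabla\partial^\alpha\QQ)$ (plus the entropy quadratic form) then reappears under $\tfrac{\ud}{\ud t}$ and, by coercivity, controls $\|\QQ-\QQ^*\|_{H^{s+1}}^2$, while $-(\partial^\alpha\mu_\QQ,\CM_\QQ\partial^\alpha\mu_\QQ)$, $-\eta\|\nabla\partial^\alpha\vv\|_{L^2}^2$ and $-(\partial^\alpha\kappa,\CP_\QQ\partial^\alpha\kappa)$ are dissipative (and encode parabolic smoothing). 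The decisive point is that the top-order parts of $\partial^\alpha\nabla\!\cdot(\CN_\QQ\mu_\QQ)$ against $\partial^\alpha\vv$ and of $\partial^\alpha(\mu_\QQ\cdot\partial_i\QQ)$ against $\partial^\alpha\vv$ cancel exactly against the top-order parts of $\partial^\alpha(\CV_\QQ\kappa)$ and $\partial^\alpha(\vv\cdot\nabla\QQ)$ against $\partial^\alpha\mu_\QQ$, using $\CN_\QQ=\CV_\QQ^{T}$ and $\nabla\!\cdot\vv=0$ --- the same rotational-stress/convection and body-force/transport cancellations that give the energy law. All that is left are dissipative terms and commutators/products bounded by $C(E)E$ via the composition estimates and $H^{s+1}(\mathbb{R}^3)\hookrightarrow W^{1,\infty}(\mathbb{R}^3)$ ($s\ge2$). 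A Gronwall argument gives $T>0$ and $\sup_{[0,T]}E(t)\le C$ independent of the approximation.

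\textbf{Invariant region, limit, uniqueness.} In tandem with the previous step --- necessarily in tandem, since the nonlinearities are defined only on $\mathbb{Q}_{\delta/2}$ --- the uniform $E$-bound with $H^{s+1}\hookrightarrow L^\infty$ gives $t$-equicontinuity of $\QQ$ with values in $C_b(\mathbb{R}^3)$, so $\|\QQ(t)-\QQ_I\|_{L^\infty}$ is small for small $t$ uniformly in $\xx$; shrinking $T$ if needed keeps $\QQ(\xx,t)\in\mathbb{Q}_{\delta/2}$, which both legitimizes all the nonlinear terms a posteriori and yields the last assertion. Then I would pass to the limit in the approximate scheme via the uniform bounds and Aubin--Lions compactness to obtain a solution in the stated class, with time-continuity into $H^{s+1}\times H^{s}$ and the $L^2(0,T;H^{s+1})$ regularity of $\vv$ coming from the parabolic structure (a Bona--Smith-type mollification argument gives the continuity up to $t=0$). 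For uniqueness I would take two solutions with the same data, write the equations for $(\vv^{(1)}-\vv^{(2)},\QQ^{(1)}-\QQ^{(2)})$, and run the energy estimate at the low level $\|\vv^{(1)}-\vv^{(2)}\|_{L^2}^2+\|\QQ^{(1)}-\QQ^{(2)}\|_{H^1}^2$, absorbing all terms using the already-known high regularity of one solution, and close by Gronwall.

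\textbf{Main obstacle.} The principal difficulty is the high-order coupling in the momentum equation: $\nabla\!\cdot(\CN_\QQ\mu_\QQ)$ carries three spatial derivatives of $\QQ$ --- one more than $\QQ\in H^{s+1}$, $\vv\in H^s$ would naively permit --- and $\mu_\QQ\cdot\partial_i\QQ$ is likewise borderline, so the top-order estimate closes only through the exact variational cancellations with the $\QQ$-equation; propagating these cancellations at every order $|\alpha|\le s$ while controlling all the commutators generated by the $\QQ$-dependent coefficients $\CM_\QQ,\CV_\QQ,\CP_\QQ$ and by the entropy term is the technical heart of the proof. The secondary difficulty --- that the entropy density and closure tensors are smooth only on the open physical domain --- forces the invariant-region argument to be interwoven with, rather than appended to, the energy estimates.
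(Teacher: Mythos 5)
Your proposal is correct and follows essentially the same route as the paper's proof: an iterative linearization with closed $H^{s+1}\times H^{s}$ energy estimates driven by the coercivity of $F_e$ and the positive-definiteness of $\CM_{\QQ},\CP_{\QQ}$, the exact cancellations between $\nabla\cdot(\CN_\QQ\mu_\QQ)$ / $\mu_\QQ\cdot\partial_i\QQ$ and $\CV_\QQ\kappa$ / $\vv\cdot\nabla\QQ$ via $\CN_\QQ=\CV_\QQ^T$ and the identity \eqref{additional-pressure-term}, the invariant-region argument keeping $\QQ\in\mathbb{Q}_{\delta/2}$ interlocked with the estimates, and a low-norm contraction/uniqueness step. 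The only cosmetic differences are that the paper's iteration puts $\frac{1}{\ve}\CJ(\QQ^{(n)})$ at the old iterate and tests against $\nabla^s\CG(\QQ^{(n+1)})$ rather than the full $\partial^\alpha\mu_\QQ$, and establishes convergence by a Cauchy estimate in $H^1\times L^2$ rather than Aubin--Lions compactness; both choices are minor variants of the same argument.
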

This result mainly depends on the estimates involving fourth-order tensors given by closure approximation.
Once the basic estimates are established (see Section \ref{key-section}), the proof is quite standard, which we present in Section \ref{loc-wellp}.

We now state the main result.
To emphasize the dependence of the solution on $\ve$, we use the notations such as $\QQ^{\ve}$, $\vv^{\ve}$ and $\CM_{\QQ^{\ve}}$.
Let $\CH_{\QQ}$ be the Hessian of the bulk energy $F_b$ at $\QQ$, and the projections $\MP^{\rm in}$ and $\MP^{\rm out}$ be defined in \eqref{kerproj} concerning the kernel of $\CH_{\QQ}$.

\begin{theorem}\label{biaixal-limit-theorem}
Assume that $\QQ^{(0)}(\xx,t)=(Q^{(0)}_1,Q^{(0)}_2)^T$ with $Q^{(0)}_i=s_i\big(\nn^2_1-\Fi/3\big)+b_i\big(\nn^2_2-\nn^2_3\big) (i=1,2)$ is the biaxial minimizer of the bulk energy $F_b(\QQ)$.
Let $(\Fp(\xx,t), \vv(\xx,t))$ be a smooth solution to the frame hydrodynamics \eqref{frame-equation-n1}--\eqref{yuan-incompressible-v} on $[0,T]$, 
which satisfies
\begin{align*}
 (\nabla\Fp,\vv)\in C([0,T];H^{\ell}), \quad \textrm{for~the~integer}~ \ell\ge 20.
\end{align*}
Suppose that the initial data $(\QQ^{\ve}_I, \vv^\ve_I)$ takes the form
\begin{align*}
\QQ_I^{\ve}(\xx)=\sum^3_{k=0}\ve^k\QQ^{(k)}(\xx,0)+\ve^3\QQ^{\ve}_{I,R}(\xx),\quad
\vv_I^\ve(\xx)=\sum^2_{k=0}\ve^k\vv^{(k)}(\xx,0)+\ve^3\vv^{\ve}_{I,R}(\xx),
\end{align*}
where the functions $\big(\QQ^{(1)},\QQ^{(2)},\QQ^{(3)}, \vv^{(1)},\vv^{(2)}\big)$ are determined by Proposition \ref{existence-Hilbert-expansion-prop}, and $(\QQ_{I,R}^\ve, \vv_{I,R}^\ve)$ fulfils
\begin{align*}
\|\vv_{I,R}^\ve\|_{H^2}+\|\QQ_{I,R}^\ve\|_{H^3}+\ve^{-1}\|\MP^{\rm out}(\QQ^\ve_{I,R})\|_{L^2}\le E_0.
\end{align*}
Then, there exists $\ve_0>0$ and $E_1>0$ such that for all $\ve<\ve_0$, the system \eqref{Re-MB-Q-tensor-1}--\eqref{Re-MB-Q-tensor-3} has a unique solution
$(\QQ^\ve(\xx,t), \vv^\ve(\xx,t))$ on $[0,T]$ that possesses the following Hilbert expansion:
\begin{align*}
\QQ^\ve(\xx,t)=\sum^3_{k=0}\ve^k\QQ^{(k)}(\xx,t)+\ve^3\QQ_R(\xx,t),\quad
\vv^\ve(\xx,t)=\sum^2_{k=0}\ve^k\vv^{(k)}(\xx,t)+\ve^3\vv_R(\xx,t),
\end{align*}
where, for any $t\in[0,T]$, the remainder $(\QQ_R,\vv_R)$ satisfies
\begin{align*}
\mathfrak{E}\big(\QQ_R(t),\vv_R(t)\big)\leq E_1.
\end{align*}
Here, $\mathfrak{E}\big(\QQ,\vv\big)$ is defined by
\begin{align*}
&\mathfrak{E}\big(\QQ,\vv\big)\eqdefa\frac{1}{2}\int_{\mathbb{R}^3}\Big[\Big(|\vv|^2+(\CM^{-1}_{\QQ^{(0)}}\QQ)\cdot\QQ+\frac{1}{\ve}\big(\CH^{\ve}_{\QQ^{(0)}}\QQ\big)\cdot\QQ\Big)\\
&\qquad\qquad\quad+\ve^2\Big(|\nabla\vv|^2+\frac{1}{\ve}\big(\CH^{\ve}_{\QQ^{(0)}}\partial_i\QQ\big)\cdot\partial_i\QQ\Big)+\ve^4\Big(|\Delta\vv|^2+\frac{1}{\ve}\big(\CH^{\ve}_{\QQ^{(0)}}\Delta\QQ\big)\cdot\Delta\QQ\Big)\Big]\ud\xx,
\end{align*}
where $\CH^{\ve}_{\QQ^{(0)}}\QQ=\CH_{\QQ^{(0)}}\QQ+\ve\CG(\QQ)$, and the constant $E_1$ is independent of $\ve$.
\end{theorem}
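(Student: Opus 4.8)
The plan is to carry out the standard Hilbert-expansion scheme adapted to the biaxial-frame setting: substitute the formal series $\QQ^\ve=\sum_{k=0}^3\ve^k\QQ^{(k)}+\ve^3\QQ_R$, $\vv^\ve=\sum_{k=0}^2\ve^k\vv^{(k)}+\ve^3\vv_R$ into \eqref{Re-MB-Q-tensor-1}--\eqref{Re-MB-Q-tensor-3}, use Proposition \ref{existence-Hilbert-expansion-prop} to cancel the terms of order $\ve^{-1},\ve^0,\ve^1,\ve^2$, and obtain a closed evolution system for the remainder $(\QQ_R,\vv_R)$ of the schematic form
\begin{align*}
\partial_t\QQ_R+\vv^\ve\cdot\nabla\QQ_R
 =-\tfrac1\ve\CM_{\QQ^{(0)}}\CH^\ve_{\QQ^{(0)}}\QQ_R
 -\CM_{\QQ^{(0)}}\CG(\QQ_R)+\CV_{\QQ^{(0)}}\kappa_R+\mathcal{N}_Q+\mathcal{S}_Q,
\end{align*}
\begin{align*}
\partial_t\vv_R+\vv^\ve\cdot\nabla\vv_R
 =-\nabla p_R+\eta\Delta\vv_R+\nabla\cdot(\CP_{\QQ^{(0)}}\kappa_R+\CN_{\QQ^{(0)}}\mu_{\QQ_R})
 +\mathcal{N}_v+\mathcal{S}_v,\qquad \nabla\cdot\vv_R=0,
\end{align*}
where $\mathcal{N}_Q,\mathcal{N}_v$ collect the nonlinear (at least quadratic in the remainder) contributions and $\mathcal{S}_Q,\mathcal{S}_v$ are the source terms built from $\QQ^{(k)},\vv^{(k)}$, which are $O(1)$ in $\ve$ by the regularity $(\nabla\Fp,\vv)\in C([0,T];H^\ell)$ with $\ell\ge 20$. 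The singular term $-\tfrac1\ve\CM_{\QQ^{(0)}}\CH^\ve_{\QQ^{(0)}}\QQ_R$ is the heart of the matter.

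The key step is the a priori energy estimate for $\mathfrak{E}(\QQ_R,\vv_R)$, which is precisely the dissipation functional tailored so that the singular term has a sign. I would differentiate $\mathfrak{E}$ along the remainder equations. Because $\CM_{\QQ^{(0)}}$ is symmetric positive definite (by positivity of $\CR_i$, as recorded after \eqref{CPQ}), the weight $\CM^{-1}_{\QQ^{(0)}}$ makes the $L^2$-in-$\QQ_R$ term pair with $-\CM_{\QQ^{(0)}}(\cdot)$ to produce a clean cross term, and the $\tfrac1\ve\CH^\ve_{\QQ^{(0)}}$ term pairs with the singular term to yield $-\tfrac1{\ve^2}\|\MP^{\rm out}$-part of $\CH^\ve_{\QQ^{(0)}}\QQ_R\|^2$-type dissipation; the crucial input is the nondegeneracy assumption that $\ker\CH_{\QQ^{(0)}}$ equals the span of the three rotational tangent vectors, which gives a spectral gap $\CH_{\QQ^{(0)}}\geq c\,\MP^{\rm out}$ on the orthogonal complement and bounds $\|\MP^{\rm out}\QQ_R\|_{L^2}\lesssim \ve\sqrt{\mathfrak{E}}$. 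The $\ve^2$- and $\ve^4$-weighted levels (controlling $\nabla\vv_R,\Delta\vv_R$ and the corresponding $\CH^\ve$-weighted derivatives of $\QQ_R$) are handled by applying $\nabla$ and $\Delta$ to the equations and repeating, where one must commute derivatives past the $\xx$-dependent coefficients $\CM_{\QQ^{(0)}},\CV_{\QQ^{(0)}}$, etc., picking up only lower-order terms controlled by $\|\nabla\Fp\|_{H^\ell}$. The Navier--Stokes side contributes $-\eta\|\nabla\vv_R\|^2$-type good terms and the stress/force couplings $\nabla\cdot(\CN_{\QQ^{(0)}}\mu_{\QQ_R})$ and $\mu_{\QQ_R}\cdot\partial_i\QQ^{(0)}$ cancel against the rotational-convection term $\CV_{\QQ^{(0)}}\kappa_R$ in $\mathfrak{E}$ up to commutators, mirroring the dissipation identity \eqref{energy-law}; this cancellation is the structural reason the functional closes.

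After the energy identity, the remaining work is to bound the right-hand side by $C(1+\mathfrak{E})$ plus a small multiple of the dissipation. The nonlinear terms $\mathcal{N}_Q,\mathcal{N}_v$ are estimated by Sobolev embedding and the algebra property of $H^s(\mathbb{R}^3)$ for $s\ge 2$; the only subtlety is that the closure tensors $\CR_i,\CV_{Q_j},\dots$ are nonlinear functions of $\QQ$, so their differences must be expanded as $\CR_i(\QQ^\ve)-\CR_i(\QQ^{(0)})=O(\ve)$ in the relevant norms using smoothness of the closure (from Section \ref{key-section}), keeping $\QQ^\ve$ inside the physical domain $\mathbb{Q}_{\delta/2}$ via Theorem \ref{locall-posedness-theorem} — one must check that the expansion stays in $\mathbb{Q}_\delta$ on $[0,T]$, which follows from the uniform bound on $\mathfrak{E}$ once $\ve_0$ is small. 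A particular point of care, flagged in the introduction, is that $\MP^{\rm in/out}$ and the high-order closure tensors do \emph{not} commute, so in the singular-term estimate one cannot simply split $\QQ_R=\MP^{\rm in}\QQ_R+\MP^{\rm out}\QQ_R$ and pass the projection through $\CM_{\QQ^{(0)}}$; instead I would use the eigen-decomposition of $\CH_{\QQ^{(0)}}$ to write $\CM_{\QQ^{(0)}}\CH^\ve_{\QQ^{(0)}}$ in a basis adapted to that decomposition and absorb the off-diagonal blocks into the dissipation with a Young inequality. Finally, a continuation/bootstrap argument: define $T_\ve^*$ as the maximal time on which $\mathfrak{E}\le E_1$ and $\QQ^\ve\in\mathbb{Q}_{\delta/2}$; the differential inequality $\tfrac{d}{dt}\mathfrak{E}\le C(1+\mathfrak{E})$ plus Grönwall, together with the choice of $E_0$ for the initial data, shows $\mathfrak{E}$ stays below $E_1$ for $\ve<\ve_0$, hence $T_\ve^*=T$, and local existence from Theorem \ref{locall-posedness-theorem} upgrades the a priori bound to genuine existence and uniqueness on $[0,T]$.

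I expect the main obstacle to be the singular term $-\tfrac1\ve\CM_{\QQ^{(0)}}\CH^\ve_{\QQ^{(0)}}\QQ_R$ at the higher derivative levels: getting a coercive bound requires both the spectral-gap (nondegeneracy) assumption and a careful treatment of the noncommutativity of $\MP^{\rm out}$ with $\CM_{\QQ^{(0)}}$ and with $\nabla$, so that the cross terms produced when differentiating and when separating kernel/cokernel components are genuinely dominated by $\eta\|\nabla\vv_R\|^2+\tfrac1{\ve^2}(\text{good }\QQ_R\text{ dissipation})$ rather than merely comparable to it; this is the step where the "new and general way" promised in the introduction must do its work.
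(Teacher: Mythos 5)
Your proposal reproduces the paper's framework faithfully: Hilbert expansion with a remainder cubic in $\ve$, a weighted energy functional $\mathfrak{E}$ built so that the $\ve^{-1}$-singular relaxation term is dissipative, positivity of $\CM_{\QQ^{(0)}}$ together with the spectral gap from Assumption~\ref{HQ-3}, the structural cancellation between $\CV_{\QQ^{(0)}}\kappa_R$ and the $\CN_{\QQ^{(0)}}$-stress (since $\CN_{\QQ}=\CV_{\QQ}^T$), estimation of the nonlinear and source terms at the levels $L^2$, $\nabla$, $\Delta$, and a bootstrap/continuation argument on top of Theorem~\ref{locall-posedness-theorem}. This is precisely the strategy of Section~\ref{bipr}.

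Two corrections, one of which is a genuine gap. You locate the eigen-decomposition \eqref{eigen-decomp} at the wrong point. In the remainder estimate the paper never needs to push $\MP^{\rm in}$ past $\CM_{\QQ^{(0)}}$: the $L^2$-level piece of $\mathfrak{E}$ is weighted by $\CM^{-1}_{\QQ^{(0)}}$, so pairing the $\QQ_R$-equation with $\CM^{-1}_{\QQ^{(0)}}\QQ_R$ cancels the $\CM_{\QQ^{(0)}}$ factor without ever splitting $\QQ_R$ into $\MP^{\rm in}$ and $\MP^{\rm out}$ parts; the noncommutativity $\MP^{\rm in}\CM^{(0)}\neq\CM^{(0)}\MP^{\rm in}$ is the decisive obstacle only in \emph{solving} the $O(\ve)$ system for $(\QQ^{(1)}_\top,\vv^{(1)})$, i.e.\ inside Proposition~\ref{existence-Hilbert-expansion-prop}, which you took as given. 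The skipped step is the dangerous one: since $\QQ^{(0)}(\xx,t)=\QQ^{(0)}(\Fp(\xx,t))$ evolves in time, $\CH^\ve_{\QQ^{(0)}}$ inside $\mathfrak{E}$ is time-dependent, and differentiating $\tfrac1\ve\langle\QQ_R,\CH^{\ve}_{\QQ^{(0)}}\QQ_R\rangle$ produces the extra term $\tfrac1\ve\langle\QQ_R,\partial_t(\CH_{\QQ^{(0)}})\QQ_R\rangle$, which is $O(\ve^{-1})$ and cannot be absorbed by Gr\"onwall without a further idea. This is exactly where the paper uses the eigen-decomposition (Lemma~\ref{key-lemma}): writing $\partial_t\CH_{\QQ^{(0)}}$ as in \eqref{dtH}, the $\QQ_\top$--$\QQ_\top$ block vanishes by the orthogonality $\langle\ee_k,\QQ_\top\rangle=0$, the $\QQ_\perp$--$\QQ_\perp$ block is controlled by $\tfrac1\ve\langle\CH^{\ve}_{\QQ^{(0)}}\QQ,\QQ\rangle$, and the cross block, after inverting $\CH_{\QQ^{(0)}}$ on the cokernel, is Young-absorbed into $\|\tfrac1\ve\CH^{\ve}_{\QQ^{(0)}}\QQ\|^2_{L^2}$, i.e.\ into the dissipation $\mathfrak{F}(t)$. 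Without Lemma~\ref{key-lemma} the energy inequality in Proposition~\ref{uniform-bound-prop} does not close, so your argument needs this additional ingredient.
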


To illustrate the idea of the proof Theorem \ref{biaixal-limit-theorem}, we give a short overview. To begin with, we make the Hilbert expansion of the solutions $(\QQ^{\ve},\vv^{\ve})$ with respect to the small parameter $\ve$:
\begin{align*}
\QQ^{\ve}(\xx,t)=&\QQ^{(0)}(\xx,t)+\ve\QQ^{(1)}(\xx,t)+\ve^2\QQ^{(2)}(\xx,t)+\ve^3\QQ^{(3)}(\xx,t)+\ve^3\QQ_R(\xx,t),\\
\vv^{\ve}(\xx,t)=&\vv^{(0)}(\xx,t)+\ve\vv^{(1)}(\xx,t)+\ve^2\vv^{(2)}(\xx,t)+\ve^3\vv_R(\xx,t).
\end{align*}
Substituting the above expansions into the system \eqref{Re-MB-Q-tensor-1}--\eqref{Re-MB-Q-tensor-3} and collecting the terms with the same order of $\ve$, we obtain a series of equations for $(\QQ^{(k)},\vv^{(k)};\QQ^{(3)})(0\leq k\leq2)$ (see Subsection \ref{Hilbert-subsection}).
The $O(\ve^{-1})$ equation requires that $\CJ(\QQ^{(0)})=0$, indicating that $\QQ^{(0)}$ is the stationary point of the bulk energy $F_b(\QQ)$, which is assumed to be the biaxial minimizer.
The $O(1)$ system (\ref{O-0-epsion-system-1})--(\ref{O-0-epsion-system-3}) gives the biaxial frame hydrodynamics \eqref{frame-equation-n1}--\eqref{yuan-incompressible-v}.

To show Theorem \ref{biaixal-limit-theorem}, we need to take care of two points below.
\begin{itemize}
\item The existence of smooth solutions to the equations of $(\QQ^{(k)},\vv^{(k)};\QQ^{(3)})(0\leq k\leq2)$.

\item Uniform boundedness of the remainder.
\end{itemize}

The first point relies on the local existence of smooth solutions to the frame hydrodynamics \eqref{frame-equation-n1}--\eqref{yuan-incompressible-v} on $[0,T]$, which has been established in a recent work \cite{LWX}.
It then requires to solve $\QQ^{(1)}$.
From the work for rod-like molecules \cite{WZZ3,LWZ,LW}, the basic approach is to decompose $\QQ^{(1)}$ into two parts: one in the kernel space ${\rm Ker}\CH_{\QQ^{(0)}}$ and the other in its orthogonal complement.
The latter can be obtained from the equation of $\QQ^{(0)}$, so that we need to derive an equation for the former from the $O(\ve)$ system.
The $O(\ve)$ system has the following abstract form:
\begin{align*}
\frac{\partial\QQ^{(1)}}{\partial t}+\vv^{(0)}\cdot\nabla\QQ^{(1)}=&-\CM^{(0)}\big(\CH_{\QQ^{(0)}}\QQ^{(2)}+\CG(\QQ^{(1)})+\JJ_1\big)+\cdots,\\
\frac{\partial\vv^{(1)}}{\partial t}+\vv^{(0)}\cdot\nabla\vv^{(0)}=&\nabla\cdot\CN^{(0)}\big(\CH_{\QQ^{(0)}}\QQ^{(2)}+\CG(\QQ^{(1)})+\JJ_1\big)+\cdots,
\end{align*}
where the 
the operator $\CM^{(0)}$ is a short notations for $\CM_{\QQ^{(0)}}$, and $\JJ_1$ is a nonlinear term.
%
%
%
Here comes a major difficulty: we can not directly project the nonlinear system of $(\QQ^{(1)},\vv^{(1)})$ into the kernel space ${\rm Ker}\CH_{\QQ^{(0)}}$, since $\MP^{\rm in}$ and $\CM^{(0)}$ are noncommutative, i.e., $\MP^{\rm in}\CM^{(0)}\neq\CM^{(0)}\MP^{\rm in}$.
However, by carefully investigating the intrinsic structure of the system, we could eventually derive a linear system (\ref{QQ1-top-equation-1})--(\ref{vv1-incomp-equation-3}) for the ${\rm Ker}\CH_{\QQ^{(0)}}$-projection of $\QQ^{(1)}$,
which guarantees the existing time to be $[0,T]$.
It requires a clear characterization of the space ${\rm Ker}\CH_{\QQ^{(0)}}$, which will be discussed in Section \ref{key-section}.
The $O(\ve^2)$ system for $(\QQ^{(2)},\vv^{(2)};\QQ^{(3)})$ is solved similarly.


The system for $(\QQ_R,\vv_R)$ takes the form
\begin{align*}
\partial_t\QQ_R=&-\CM_{\QQ^{(0)}}\Big(\frac{1}{\ve}\CH_{\QQ^{(0)}}\QQ_R+\CG(\QQ_R)\Big)+\CV_{\QQ^{(0)}}\kappa_R+\cdots,\\
\partial_t\vv_R=&-\nabla p_R+\nabla\cdot\Big(\CN_{\QQ^{(0)}}\Big(\frac{1}{\ve}\CH_{\QQ^{(0)}}\QQ_R+\CG(\QQ_R)\Big)\Big)+\cdots.
\end{align*}
The main obstacle towards the uniform estimate comes from the singular term $\frac{1}{\ve}\CH_{\QQ^{(0)}}\QQ_R$.
We introduce a suitable energy functional $\mathfrak{E}(t)$ in (\ref{Ef-t-functional}) to handle it. Since the operator $\CH_{\QQ^{(0)}}$ depends on the time $t$, we have to control the singular term $\frac{1}{\ve}\big\langle\QQ_R,\partial_t(\CH_{\QQ^{(0)}})\QQ_R\big\rangle$.
This takes tedious calculations for rod-like molecules \cite{WZZ3}.
However, we point out in this work that it can be done by the eigen-decomposition of the Hessian (see \eqref{eigen-decomp}) straightforwardly.

The proof of Theorem \ref{biaixal-limit-theorem} is presented in Section \ref{bipr}.
Throughout the proof, the estimates on fourth-order tensors still play a key role.

The bulk minimizer of the two-tensor model might also be uniaxial, for which we briefly discuss in Section \ref{genappr}.
As we have discussed in \cite{LX}, the limit model is the Ericksen--Leslie model.
The whole procedure is the same as how we deal with the biaxial nematic phase, only to notice a slight difference in the structure of $\CH_{\QQ^{(0)}}$.
Despite the limit model is the same as that of one-tensor models, it shall be clear that the derivation in the previous works are special and not suitable for the uniaxial limit of the two-tensor model.

\section{Entropy, stationary points and closure approximation} \label{key-section}
In this section, we introduce the entropy term and some crucial results relevant to it, as well as how to use the entropy to define the closure approximation.

We consider two types of the entropy terms, which we call the original entropy and the quasi-entropy.
The original entropy is defined implicitly by minimizing $\int_{SO(3)}\rho\ln\rho\ud\Fq$ with $\QQ$ fixed, which is a standard approach that has been utilized in different cases \cite{KKLS,BM,HLWZZ,XYZ,Taylor,Xu1}.
The quasi-entropy refers to a class of elementary functions as a substitution of the original entropy \cite{Xu3}.
The quasi-entropy significantly simplifies the hydrodynamics,  especially in the view of numerical simulations.
Yet it has been proved that the original entropy and the quasi-entropy share several essential properties and display very close results on homogeneous phase transitions for various molecules \cite{Xu3}.
Therefore, it is of great worthiness to discuss both of them.
In the formal derivation of the biaxial hydrodynamics \cite{LX}, it has been seen that the original entropy and the quasi-entropy lead to the models of the identical form, with differences only in the specific values of coefficients.

In what follows, apart from specifying their definitions and the corresponding closure approximations, we shall discuss the stationary points of the bulk energy and the structure of the Hessian at these points.
In addition, we write down some basic estimates to be utilized in the establishment of the rigorous biaxial limit.
As it turns out, on the above aspects, the original entropy and the quasi-entropy lead to similar results, which will be convenient for the derivations in the forthcoming sections.

\subsection{Original entropy and quasi-entropy}
Let us begin with defining the original entropy for $\QQ$ (for general cases, see Section 5 of \cite{Xu1}).
On minimizing $\int_{SO(3)}\rho\ln\rho\ud\Fq$, we obtain the maximum entropy state
\begin{align}
    &\,\rho(\Fq)=\frac 1Z \exp(B_1\cdot \mm_1^2+B_2\cdot\mm_2^2), \label{maxentstate}
\end{align}
where $Z$ is the normalizing constant, and two second-order symmetric traceless tensors $B_1$ and $B_2$ are Lagrange multipliers for the constraints on $\QQ$,
\begin{align}
    &\,Z=\int_{SO(3)}\exp(B_1\cdot \mm_1^2+B_2\cdot\mm_2^2)\ud\Fq, \nonumber\\
    &\,Q_i=\frac 1 Z \int_{SO(3)}\left(\mm_i^2-\frac 13 \Fi\right)\exp(B_1\cdot \mm_1^2+B_2\cdot\mm_2^2)\ud\Fq. \label{maxent-cnstrt}
\end{align}
The original entropy $F_{\rm orig}$ is obtained by taking \eqref{maxentstate} into $\int_{SO(3)} \rho\ln\rho\ud\Fp$,
\begin{align}
    &\,F_{\rm orig}=B_1\cdot Q_1 +B_2\cdot Q_2-\ln Z. \label{orig-ent}
\end{align}

\begin{proposition}
  The original entropy has the following properties:
  \begin{itemize}
  \item $\BB$ is uniquely determined by $\QQ$, where $\BB=(B_1,B_2)^T$.
  \item $\partial F_{\rm orig}/\partial \QQ=\BB$.
  \item $\partial\QQ/\partial\BB$ is positive definite.
  \end{itemize}
\end{proposition}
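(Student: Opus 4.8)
The plan is to read off all three properties from the observation that $\ln Z$ is the log-partition function of an exponential family on $SO(3)$, combined with the standard calculus of such families and Legendre duality. First I would introduce
\begin{align*}
\Phi(\BB)\eqdefa\ln Z(\BB)=\ln\int_{SO(3)}\exp\big(B_1\cdot\mm_1^2+B_2\cdot\mm_2^2\big)\,\ud\Fq,
\end{align*}
which, because $B_1,B_2$ are traceless (so that $B_i\cdot\mm_i^2=B_i\cdot(\mm_i^2-\Fi/3)$), is a smooth function on the ten-dimensional space $\mathbb{Q}$. Differentiating under the integral sign gives $\partial\Phi/\partial\BB=\big(\langle\mm_1^2-\Fi/3\rangle,\langle\mm_2^2-\Fi/3\rangle\big)=\QQ$, i.e.\ exactly the constraint \eqref{maxent-cnstrt}, where $\langle\cdot\rangle$ denotes the average under the state \eqref{maxentstate}; a second differentiation identifies $\partial\QQ/\partial\BB=\partial^2\Phi/\partial\BB^2$ with the covariance operator
\begin{align*}
\YY\cdot\frac{\partial\QQ}{\partial\BB}\YY=\Big\langle\Big(\sum_{i=1}^2 Y_i\cdot(\mm_i^2-\Fi/3)\Big)^2\Big\rangle-\Big\langle\sum_{i=1}^2 Y_i\cdot(\mm_i^2-\Fi/3)\Big\rangle^2\geq0,\qquad\YY=(Y_1,Y_2)^T\in\mathbb{Q},
\end{align*}
which already yields the semidefiniteness half of the third bullet.

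The substantive step is to promote this to strict positivity, i.e.\ to show that the bracket above vanishes only when $\YY=0$. The variance vanishes iff $\Fq\mapsto\sum_i Y_i\cdot\mm_i^2$ is constant on the support of \eqref{maxentstate}; since that state is strictly positive and the function is real-analytic, it is then constant on all of $SO(3)$. So one needs the linear map $\YY\mapsto\sum_i Y_i\cdot\mm_i^2$ from $\mathbb{Q}$ into $L^2(SO(3))$ to be injective modulo constants --- equivalently, that the ten chosen order-parameter components are linearly independent as functions on $SO(3)$, which is the fact recorded in \cite{Xu1,XC,Xu4}. A self-contained argument is also available: from $\sum_i Y_i\cdot\mm_i^2\equiv c$, averaging over the circle of frames obtained by rotating $(\mm_2,\mm_3)$ about a fixed $\mm_1$ (where the circle-average of $\mm_2^2$ is $\tfrac12(\Fi-\mm_1^2)$) forces $(Y_1-\tfrac12Y_2)\cdot(\nn\otimes\nn)=0$ for every unit vector $\nn$, hence $Y_1=\tfrac12Y_2$ (and $c=0$); feeding this back and averaging about $\mm_2$ (circle-average of $\mm_1^2$ equal to $\tfrac12(\Fi-\mm_2^2)$) gives $Y_2\cdot(\nn\otimes\nn)=0$ for all $\nn$, so $Y_2=0$ and then $Y_1=0$. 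Thus $\partial\QQ/\partial\BB$ is positive definite (third bullet) and $\Phi$ is strictly convex.

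Strict convexity of $\Phi$ makes its gradient map $\BB\mapsto\nabla\Phi(\BB)=\QQ$ injective, which is the first bullet: on the physical range of $\QQ$ the Lagrange multiplier $\BB$ solving \eqref{maxent-cnstrt} is unique. For the second bullet, I would substitute \eqref{maxent-cnstrt} into \eqref{orig-ent} to recognize $F_{\rm orig}(\QQ)=\BB(\QQ)\cdot\QQ-\Phi(\BB(\QQ))$ as the Legendre transform of $\Phi$, and then differentiate in $\QQ$ using $\nabla\Phi(\BB)=\QQ$:
\begin{align*}
\frac{\partial F_{\rm orig}}{\partial\QQ}=\BB+\Big(\QQ-\frac{\partial\Phi}{\partial\BB}\Big)\cdot\frac{\partial\BB}{\partial\QQ}=\BB,
\end{align*}
the chain-rule term cancelling because the bracket vanishes (equivalently, this is the envelope theorem for $F_{\rm orig}(\QQ)=\sup_\BB\{\BB\cdot\QQ-\Phi(\BB)\}$). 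The only genuine obstacle is the injectivity/positive-definiteness step; everything else is routine. I would also note that the identical structural computation, with the quasi-entropy playing the role of $\Phi$, gives the analogous statement there, so the derivations in the following sections can indeed be handled in a unified manner.
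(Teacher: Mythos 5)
Your proof is correct, and it takes a genuinely more self-contained route than the paper's. The paper simply cites Theorem 5.1 of \cite{Xu1} for the uniqueness of $\BB$, whereas you derive it internally from strict convexity of the log-partition function $\Phi(\BB)=\ln Z$, so that the first bullet becomes a corollary of the third rather than an independent external input. For the third bullet the paper writes out the same covariance formula and simply remarks ``which is a covariance matrix,'' tacitly invoking (as it does elsewhere) that the covariance of a strictly positive density is positive definite; you supply the missing piece, namely that the ten order-parameter components $\mm_i^2-\Fi/3$ are linearly independent modulo constants in $L^2(SO(3))$, and your hands-on circle-averaging argument (averaging $\mm_2^2$ about a fixed $\mm_1$ to $\tfrac12(\Fi-\mm_1^2)$, concluding $Y_1=\tfrac12 Y_2$ and $c=0$ from tracelessness, then repeating about $\mm_2$) is a clean and correct way to get that. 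The second bullet is essentially the same computation in both: you phrase it as the envelope theorem for the Legendre transform $F_{\rm orig}=\sup_\BB\{\BB\cdot\QQ-\Phi(\BB)\}$, the paper differentiates $F_{\rm orig}=\BB\cdot\QQ-\ln Z$ directly and cancels the chain-rule terms via the constraint $\partial\ln Z/\partial\BB=\QQ$. What your organization buys is a single logical chain (strict convexity of $\Phi$ $\Rightarrow$ injectivity of $\nabla\Phi$ $\Rightarrow$ all three bullets) and no reliance on an external reference; what the paper buys is brevity. One small point worth making explicit in your write-up is that the passage from variance $\equiv 0$ to ``constant on all of $SO(3)$'' uses that $\rho>0$ everywhere, not just analyticity, but this is immediate from \eqref{maxentstate} and you do mention it.
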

\begin{proof}
The uniqueness of $\BB$ is shown in Theorem 5.1 in \cite{Xu1}.

For the second property, from (\ref{orig-ent}) we have
\begin{align*}
  \frac{\partial F_{\rm orig}}{\partial (Q_1)_{ij}}=&\,(B_1)_{ij}+\frac{\partial (B_1)_{kl}}{\partial (Q_1)_{ij}}(Q_1)_{kl}+\frac{\partial (B_2)_{kl}}{\partial (Q_1)_{ij}}(Q_2)_{kl}-\frac{\partial \ln Z}{\partial (Q_1)_{ij}}\\
  =&\,(B_1)_{ij}+\frac{\partial (B_1)_{kl}}{\partial (Q_1)_{ij}}\left(\frac{\partial \ln Z}{\partial (B_1)_{kl}}-\frac{\delta_{kl}}{3}\right)+\frac{\partial (B_2)_{kl}}{\partial (Q_1)_{ij}}\left(\frac{\partial \ln Z}{\partial (B_2)_{kl}}-\frac{\delta_{kl}}{3}\right)-\frac{\partial \ln Z}{\partial (Q_1)_{ij}}\\
  =&\,(B_1)_{ij}.
\end{align*}
Similar arguments hold for $\partial F_{\rm orig}/\partial Q_2$.

For the third property, direct calculation gives
\begin{align*}
  \frac{\partial\QQ}{\partial\BB}=&\,\left(
  \begin{array}{cc}
    \langle(\mm_1^2-\Fi/3)\otimes(\mm_1^2-\Fi/3)\rangle & \langle(\mm_1^2-\Fi/3)\otimes(\mm_2^2-\Fi/3)\rangle\\
    \langle(\mm_2^2-\Fi/3)\otimes(\mm_1^2-\Fi/3)\rangle & \langle(\mm_2^2-\Fi/3)\otimes(\mm_2^2-\Fi/3)\rangle
  \end{array}
  \right)\\
  &\,-\Big(\langle \mm_1^2-\Fi/3 \rangle,\langle \mm_2^2-\Fi/3 \rangle\Big)^T\Big(\langle \mm_1^2-\Fi/3 \rangle,\langle \mm_2^2-\Fi/3 \rangle\Big),
\end{align*}
which is a covariance matrix.
\end{proof}

The quasi-entropy is defined by the log-determinant of a covariance matrix.
It depends on the truncation order of the tensors, which we need to specify (see \cite{Xu3} for details).
For $\QQ$, we truncate at second order to arrive at
\begin{align}
  \Xi_2\big(\QQ\big)=-\ln\det \Big(Q_1+\frac{\Fi}{3}\Big)-\ln\det\Big( Q_2+\frac{\Fi}{3}\Big)-\ln\det\Big(\frac{\Fi}{3}-Q_1-Q_2\Big),\label{qent-2nd}
\end{align}
where $\Fi$ denotes the second-order identity tensor ($3\times 3$ matrix).
The domain of $\Xi_2$ is restrained in those $\QQ$ such that the three matrices are positive definite:
\begin{align}
  {\rm dom}(\Xi_2)=\left\{\QQ:Q_1+\frac{\Fi}{3},~ Q_2+\frac{\Fi}{3},~\frac{\Fi}{3}-Q_1-Q_2~\text{are positive definite}\right\}. \nonumber
\end{align}

To carry out closure approximation by the quasi-entropy, we need the one truncated at fourth order.
For the high-order tensors appearing in the tensor hydrodynamics, they can be expressed linearly by $Q_1$, $Q_2$ and several symmetric traceless tensors of third- and fourth-order.
The relevant expressions can be found in \cite{Xu1,Xu3,LX}. Here, since these expressions are unused, we choose not to write them down explicitly.
Instead, we denote in short these third- and fourth-order symmetric traceless tensors as $\HH$.


Now, let us write down the quasi-entropy truncated at fourth order.
We introduce a few short notations:
\begin{itemize}
\item For a second-order tensor $U$, we define a $1\times 5$ row vector as
\begin{align*}
    \Phi_2(U)_j
    =(U\cdot\sss_j).
\end{align*}
\item For a third-order tensor $U$, we define a $3\times 5$ matrix,
\begin{align*}
  \Psi_3(U)_{ij}
  =(U\cdot \nn_i\otimes\sss_j).
\end{align*}
\item For a fourth order tensor $U$, we define a $5\times 5$ matrix,
\begin{align*}
  \Psi_4(U)_{ij}
  =(U\cdot \sss_i\otimes\sss_j).
\end{align*}
\end{itemize}
The quasi-entropy at fourth-order is given by
\begin{align}
    &\,\Xi_4(\QQ,\HH)=\nonumber\\
    &\,-\ln\det\left(
    \begin{array}{ccc}
        1 &\, \Phi_2(\langle\mm_1^2-\frac\Fi 3\rangle) &\, \Phi_2(\langle\mm_2^2-\mm_3^2\rangle)
        \vspace{1ex}\\
        \Phi_2(\langle\mm_1^2-\frac\Fi 3\rangle)^T &\,
        \Psi_4\big(\langle(\mm_1^2-\frac\Fi 3)\otimes(\mm_1^2-\frac\Fi 3)\rangle\big) &\,  \Psi_4\big(\langle(\mm_2^2-\mm_3^2)\otimes(\mm_1^2-\frac{\Fi}{3})\rangle\big)
        \vspace{1ex}\\
        \Phi_2(\langle\mm_2^2-\mm_3^2\rangle)^T &\,
        \Psi_4\big(\langle(\mm_2^2-\mm_3^2)\otimes(\mm_1^2-\frac\Fi 3)\rangle\big)^T &\,
        \Psi_4\big(\langle(\mm_2^2-\mm_3^2)\otimes(\mm_2^2-\mm_3^2)\rangle\big)
    \end{array}
    \right)\nonumber\\
    &\,-\ln\det\left(
    \begin{array}{cc}
        \Psi_2(\langle\mm_1^2\rangle) &\, \Psi_3(\langle\mm_1\otimes\mm_2\mm_3\rangle)
        \vspace{1ex}\\
        \Psi_3(\langle\mm_1\otimes\mm_2\mm_3\rangle)^T &\, \Psi_4(\langle\mm_2\mm_3\otimes\mm_2\mm_3\rangle)
    \end{array}
    \right)\nonumber\\
    &\,-\ln\det\left(
    \begin{array}{cc}
        \Psi_2(\langle\mm_2^2\rangle) &\, \Psi_3(\langle\mm_2\otimes\mm_1\mm_3\rangle)
        \vspace{1ex}\\
        \Psi_3(\langle\mm_2\otimes\mm_1\mm_3\rangle)^T &\, \Psi_4(\langle\mm_1\mm_3\otimes\mm_1\mm_3\rangle)
    \end{array}
    \right)\nonumber\\
    &\,-\ln\det\left(
    \begin{array}{cc}
        \Psi_2(\langle\mm_3^2\rangle) &\, \Psi_3(\langle\mm_3\otimes\mm_1\mm_2\rangle)
        \vspace{1ex}\\
        \Psi_3(\langle\mm_3\otimes\mm_1\mm_2\rangle)^T &\, \Psi_4(\langle\mm_1\mm_2\otimes\mm_1\mm_2\rangle)
    \end{array}
    \right).\label{qent-4th}
\end{align}
The domain of $\Xi_4$ is given by those $(\QQ,\HH)$ such that the matrices above are all positive definite.

\begin{proposition}\label{cvx}
The domains of $F_{\rm orig}$, $\Xi_2$, $\Xi_4$ are bounded, convex open sets.
Each of the three functions is strictly convex on the corresponding domain.
\end{proposition}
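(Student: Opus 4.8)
\emph{Strategy.} I would treat the three functions in parallel, splitting the assertion into (i) openness and convexity of the domains, (ii) boundedness, and (iii) strict convexity. Parts (i) and (iii) become essentially formal once one records the classical fact that $X\mapsto-\ln\det X$ is strictly convex on the cone of positive definite symmetric matrices; the only genuinely structural work is the boundedness of ${\rm dom}(\Xi_4)$. For (i): by construction ${\rm dom}(\Xi_2)$ and ${\rm dom}(\Xi_4)$ are finite intersections of sets of the form $\{x:L(x)\succ 0\}$, where $x$ stands for $\QQ$ (resp.\ $(\QQ,\HH)$) and $L$ is an affine map into symmetric matrices --- indeed $Q_i+\Fi/3$, $\Fi/3-Q_1-Q_2$, and each block $\Phi_2(\cdot),\Psi_3(\cdot),\Psi_4(\cdot)$ depends affinely on its argument. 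Since the positive definite cone is open and convex, each such set, hence the intersection, is open and convex. For $F_{\rm orig}$ the domain is the relative interior of the moment body $\CM=\{(\langle\mm_1^2-\Fi/3\rangle,\langle\mm_2^2-\Fi/3\rangle):\rho\in\CP(SO(3))\}$ (this identification, together with the uniqueness of $\BB$, is Theorem~5.1 of \cite{Xu1}); as the image of the weak-$*$ compact convex set $\CP(SO(3))$ under a linear map, $\CM$ is compact and convex, and it is full-dimensional in $\mathbb{Q}$ because the chosen order parameters are linearly independent (see \cite{Xu1,Xu4}), so ${\rm ri}(\CM)$ is open and convex.

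\emph{(ii) Boundedness.} For $F_{\rm orig}$ this is immediate since $\CM$ is the image of probability measures under integration of the bounded quantities $\mm_i^2-\Fi/3$. For $\Xi_2$, positive definiteness of the three matrices forces $Q_1\succ-\Fi/3$ and, using $-Q_2\prec\Fi/3$, also $Q_1\prec\Fi/3-Q_2\prec\tfrac23\Fi$, and symmetrically for $Q_2$; hence ${\rm dom}(\Xi_2)$ is bounded. For $\Xi_4$ I would argue in two steps. First, the $3\times3$ blocks occurring in the four defining matrices encode $\langle\mm_i^2\rangle$, i.e.\ $Q_1+\Fi/3$, $Q_2+\Fi/3$, $\Fi/3-Q_1-Q_2$, so their positive definiteness bounds $\QQ$ exactly as in the $\Xi_2$ case. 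Second, each defining matrix $M$ is positive semidefinite, and ${\rm tr}\,M$ depends on $(\QQ,\HH)$ only through $\QQ$: the $\HH$-linear part of each fourth-order block (e.g.\ of $\langle(\mm_1^2-\Fi/3)\otimes(\mm_1^2-\Fi/3)\rangle$) is the embedding of a symmetric traceless fourth-order tensor, and for any symmetric traceless fourth-order $T$ one has ${\rm tr}\,\Psi_4(T)=T\cdot\big(\sum_i\sss_i\otimes\sss_i\big)=0$, since contracting $T$ against $\sum_i\sss_i\otimes\sss_i$ produces only traces of $T$. Thus ${\rm tr}\,M$ is bounded on ${\rm dom}(\Xi_4)$ by the previous step; a positive semidefinite matrix of bounded trace lies in a bounded set; and since the totality of the $\Phi$--$\Psi$ entries determines $(\QQ,\HH)$ (the $\Phi_2$ blocks recover $\QQ$, the $\Psi_3,\Psi_4$ blocks recover all tensors collected in $\HH$), ${\rm dom}(\Xi_4)$ is bounded.

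\emph{(iii) Strict convexity.} For $F_{\rm orig}$ I would invoke the Proposition just proved: $\partial F_{\rm orig}/\partial\QQ=\BB$, so the Hessian of $F_{\rm orig}$ is the Jacobian $\partial\BB/\partial\QQ=(\partial\QQ/\partial\BB)^{-1}$, which is positive definite because $\partial\QQ/\partial\BB$ is a nondegenerate covariance matrix --- nondegenerate since $\rho=Z^{-1}\exp(B_1\cdot\mm_1^2+B_2\cdot\mm_2^2)>0$ has full support, so no nontrivial linear combination of the coordinates of $\mm_1^2-\Fi/3,\mm_2^2-\Fi/3$ is constant $\rho$-almost everywhere. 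For $\Xi_2$ and $\Xi_4$ I would use that $X\mapsto-\ln\det X$ is strictly convex on the positive definite cone: along a line $t\mapsto M+tN$ in the cone, $\tfrac{d^2}{dt^2}\big(-\ln\det(M+tN)\big)={\rm tr}\big((M+tN)^{-1}N(M+tN)^{-1}N\big)$, which is a sum of squares and is positive whenever $N\neq0$. Hence each summand $-\ln\det L_j(x)$ is convex, and strictly convex along any direction not annihilated by the linear part of $L_j$. For $\Xi_2$, if $\delta Q_1\neq0$ the term $-\ln\det(Q_1+\Fi/3)$ is strictly convex in that direction, and otherwise $\delta Q_2\neq0$ and $-\ln\det(Q_2+\Fi/3)$ is; for $\Xi_4$, the joint injectivity of the $\Phi$--$\Psi$ assembly used in (ii) guarantees that for every nonzero direction some $L_j$ has nonzero directional derivative. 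A finite sum of convex functions at least one of which is strictly convex along each line is strictly convex, so $\Xi_2$ and $\Xi_4$ are strictly convex on their domains.

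\emph{Main obstacle.} Everything except the boundedness of ${\rm dom}(\Xi_4)$ is either formal or already contained in \cite{Xu1}; the point needing care is the confinement of the auxiliary tensors $\HH$, for which the three ingredients above --- the $3\times3$ blocks pinning $\QQ$ as in the $\Xi_2$ case, the traces of the defining matrices being $\HH$-independent via ${\rm tr}\,\Psi_4$ annihilating symmetric traceless fourth-order tensors, and the $\Phi$--$\Psi$ blocks jointly determining $(\QQ,\HH)$ --- are exactly the bookkeeping facts about the quasi-entropy matrices established in \cite{Xu3}.
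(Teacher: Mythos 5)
Your decomposition into openness/convexity, boundedness, and strict convexity is a reasonable way to fill in the argument that the paper itself delegates wholesale to Sections~3 and~4 of \cite{Xu3}, and most of the pieces are sound. However, the structural claim you lean on to confine $\HH$---that ${\rm tr}\,\Psi_4(T)=T\cdot\big(\sum_i\sss_i\otimes\sss_i\big)=0$ for every symmetric traceless fourth-order $T$---is false with the basis used in this paper. The $\sss_i$ are orthogonal but \emph{not} orthonormal: $|\sss_1|^2=2/3$, $|\sss_2|^2=2$, $|\sss_3|^2=|\sss_4|^2=|\sss_5|^2=1/2$. Consequently $\sum_i\sss_i\otimes\sss_i$ is not a multiple of the isotropic projector $P_{klmn}=\tfrac12(\delta_{km}\delta_{ln}+\delta_{kn}\delta_{lm})-\tfrac13\delta_{kl}\delta_{mn}$, and it carries a nonzero fully symmetric traceless fourth-order part. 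In the frame $\nn_i=e_i$, a direct evaluation for fully symmetric traceless $T$ gives
\begin{align*}
T\cdot\Big(\sum_{i=1}^{5}\sss_i\otimes\sss_i\Big)
&=\tfrac{4}{9}T_{1111}+\tfrac{10}{9}T_{2222}+\tfrac{10}{9}T_{3333}
  +\tfrac{5}{9}T_{1122}+\tfrac{5}{9}T_{1133}-\tfrac{7}{9}T_{2233}\\
&=-T_{1122}-T_{1133}-3T_{2233},
\end{align*}
using $T_{1111}=-T_{1122}-T_{1133}$ and its two analogues. This is a nontrivial linear functional on the nine-dimensional space of such $T$, so ${\rm tr}\,M$ does depend on $\HH$, and your ``bounded trace, positive semidefinite, hence bounded matrix'' step collapses exactly where it is needed.

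The intended mechanism is salvaged by replacing ${\rm tr}\,M$ with the weighted diagonal sum $\sum_i M_{ii}/|\sss_i|^2$. Since $\sum_i\sss_i\otimes\sss_i/|\sss_i|^2=P$ and $T\cdot P=\tfrac23 T_{kkll}=0$ for fully symmetric traceless $T$, this weighted sum is indeed $\HH$-independent; and because the weights $1/|\sss_i|^2$ lie between fixed positive constants, boundedness of the weighted trace of a positive semidefinite $M$ still confines $M$ to a bounded set, after which the joint $\Phi$--$\Psi$ injectivity recovers $(\QQ,\HH)$ as you intended. The remaining parts of your argument---openness and convexity via affine preimages of the positive definite cone, the moment-body characterisation of ${\rm dom}(F_{\rm orig})$, the $\Xi_2$ confinement, the covariance-Hessian for $F_{\rm orig}$, and strict convexity of $-\ln\det$ compositions along with joint injectivity---are consistent with the standard picture; but since the paper's own proof is just a one-line citation, you are the one supplying the bookkeeping, which is precisely why getting the normalisation of $\sss_i$ right matters.
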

\begin{proof}
They are special cases of Section 3 and 4 in \cite{Xu3}.
\end{proof}
Furthermore, if the tensors in $\Xi_2$ (resp. $\Xi_4$) are given by some density in \eqref{maxentstate}, they must lie within the domain of $\Xi_2$ (resp. $\Xi_4$), since the covariance matrix of a density function is positive definite.
A short remark is given here that it is still open whether the inverse of the above claim holds.
However, in the current work we could keep our discussion within the domain of $F_{\rm orig}$, which we will explain later in this section.

\begin{proposition}\label{entropy-smooth-prop}
The functions $F_{\rm orig}$ and $\Xi_2$ are $C^{\infty}$ with respect to $\QQ$ in the corresponding domain. The function $\Xi_4$ is $C^{\infty}$ with respect to $(\QQ,\HH)$ in its domain.
\end{proposition}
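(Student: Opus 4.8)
The plan is to treat the explicitly given functions $\Xi_2,\Xi_4$ and the implicitly defined function $F_{\rm orig}$ by different means: the former by a direct composition argument, the latter through the inverse function theorem.

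First I would dispose of $\Xi_2$ and $\Xi_4$. Each is a finite sum of terms of the form $-\ln\det M$, where $M$ is a symmetric matrix whose entries are affine functions of the argument. For $\Xi_2$ this is evident from \eqref{qent-2nd}, where the three matrices are $Q_i+\Fi/3$ $(i=1,2)$ and $\Fi/3-Q_1-Q_2$. For $\Xi_4$ it follows because, as recalled in this section, every high-order moment occurring in \eqref{qent-4th} is a linear combination of $Q_1$, $Q_2$, the components of $\HH$, and constants; hence the assignment $\QQ\mapsto M(\QQ)$ (resp. $(\QQ,\HH)\mapsto M(\QQ,\HH)$) is polynomial, in particular $C^\infty$. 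On the domain of $\Xi_2$ (resp. $\Xi_4$) every such $M$ is by definition positive definite, and on the open convex cone of positive definite symmetric matrices the map $M\mapsto\ln\det M$ is $C^\infty$, since $\det$ is a polynomial, is strictly positive there, and $\log$ is $C^\infty$ on $(0,\infty)$. Composing these $C^\infty$ maps shows that $\Xi_2$ and $\Xi_4$ are $C^\infty$ on their respective domains.

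For $F_{\rm orig}$ I would introduce the map $\Psi$ sending $\BB=(B_1,B_2)^T$ to $\QQ$ through the constraint \eqref{maxent-cnstrt}, defined on the whole linear space of pairs of symmetric traceless $3\times 3$ tensors. Since $SO(3)$ is compact and the integrand $\exp(B_1\cdot\mm^2_1+B_2\cdot\mm^2_2)$ is $C^\infty$ (in fact real-analytic) in $\BB$, differentiation under the integral sign applies, so $Z(\BB)$ and all the moment integrals, hence $\Psi$ and $\ln Z$, are $C^\infty$ in $\BB$ (recall $Z>0$). The proposition listing the properties of the original entropy supplies the two facts I need: $\BB$ is uniquely determined by $\QQ$, so $\Psi$ is injective and its image is precisely ${\rm dom}(F_{\rm orig})$; and $\partial\QQ/\partial\BB=D\Psi$ is positive definite, hence invertible, at every point. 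By the inverse function theorem $\Psi$ is a local $C^\infty$ diffeomorphism, and together with injectivity it is a global $C^\infty$ diffeomorphism of the $\BB$-space onto ${\rm dom}(F_{\rm orig})$, which is open by Proposition \ref{cvx}. Consequently $\BB=\Psi^{-1}(\QQ)$ depends $C^\infty$ on $\QQ$, and therefore $F_{\rm orig}(\QQ)=\Psi^{-1}(\QQ)\cdot\QQ-\ln Z\big(\Psi^{-1}(\QQ)\big)$ is a composition of $C^\infty$ maps; equivalently, its gradient $\partial F_{\rm orig}/\partial\QQ=\BB$ is $C^\infty$ by the same proposition. Either way, $F_{\rm orig}$ is $C^\infty$ on its domain.

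The composition argument for $\Xi_2,\Xi_4$ and the differentiation under the integral sign are routine. The only step that genuinely requires the structural input of the earlier proposition is the passage from pointwise invertibility of $\partial\QQ/\partial\BB$ to a global diffeomorphism onto the full open domain of $F_{\rm orig}$; I expect this to be the crux, everything else being formal.
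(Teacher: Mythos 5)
Your proof is correct and follows essentially the same route as the paper: for $\Xi_2,\Xi_4$ the paper simply invokes that they are elementary functions, which your composition of polynomial entries with $\ln\det$ on the positive-definite cone makes precise; for $F_{\rm orig}$ the paper likewise relies on smoothness of $\QQ$ in $\BB$, positive definiteness of $\partial\QQ/\partial\BB$, and $\partial F_{\rm orig}/\partial\QQ=\BB$, which is exactly the inverse-function-theorem argument you spell out. Your version is merely a more explicit rendering of the paper's terse proof.
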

\begin{proof}
  For $\Xi_2$ and $\Xi_4$, the result is obvious because they are elementary functions.
  For the original entropy $F_{\rm orig}$, notice that $\partial F_{\rm orig}/\partial\QQ=\BB$ and that $\partial\QQ/\partial\BB$ is positive definite.
  It is easy to verify by direct calculation that $\QQ$ is smooth with respect to $\BB$, so that $\BB$ is smooth with respect to $\QQ$, which already gives the smoothness of $F_{\rm orig}$.
\end{proof}

\subsection{Stationary points and the linearized operator}
Let us look into the bulk energy \eqref{free-energy-bulk}.
Now, we can specify the entropy term $F_{\rm entropy}$.
It may take the original entropy $F_{\rm orig}$ or the quasi-entropy $\nu\Xi_2$.
The free parameter $\nu$ is introduced to attain a match between the original entropy and the quasi-entropy.
We choose $\nu=5/9$ here, which is proposed in Section 6.1.2 in \cite{Xu3}.

To study the limit $\ve\to 0$, we need to characterize the minimizer of the bulk energy. The up-to-date theoretical result \cite{XZ2,Xu3} is given below.
\begin{proposition}\label{bulk-minimum}
Assume that the matrix
$\displaystyle \left(\begin{array}{cc}
    c_{02} &\, c_{04} \\
    c_{04} &\, c_{03}
\end{array}\right)
$
is not negative definite, or is negative but $c_{04}^2/c_{03}-c_{02}\le 2$.
For the bulk energy $F_b$ given in \eqref{free-energy-bulk}, where the entropy term $F_{\rm entropy}$ takes either $F_{\rm orig}$ or $\nu\Xi_2$ with $\nu=5/9$, at the stationary points $Q_1$ and $Q_2$ have a shared eigenframe.
\end{proposition}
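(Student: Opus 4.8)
The plan is to prove the equivalent statement that $[Q_1,Q_2]=0$ at every stationary point. Two symmetric matrices share an eigenframe if and only if they commute, and $[Q_1,Q_2]=0$ is in turn equivalent to $Q_1,Q_2$ being writable as in \eqref{mnmzform} in a common orthonormal frame (one direction is the spectral theorem, the other a rewriting using $\sum_k(\nn_k\otimes\nn_k-\Fi/3)=0$). I would carry the argument out in detail for the quasi-entropy $F_{\rm entropy}=\nu\Xi_2$ with $\nu=5/9$, where everything is purely algebraic, and then indicate the modification needed for $F_{\rm orig}$.

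For $F_{\rm entropy}=\nu\Xi_2$, set $P_1=Q_1+\Fi/3$, $P_2=Q_2+\Fi/3$, $P_3=\Fi/3-Q_1-Q_2$, so $P_1+P_2+P_3=\Fi$ and each $P_i$ is positive definite on $\mathrm{dom}(\Xi_2)$. Since $\partial(\nu\Xi_2)/\partial Q_1=\frac{5}{9}(-P_1^{-1}+P_3^{-1})$ and similarly for $Q_2$, the stationarity conditions $\MS(\partial F_b/\partial Q_i)=0$ say that
\[
\Phi_1:=-\tfrac{5}{9}P_1^{-1}+\tfrac{5}{9}P_3^{-1}+c_{02}P_1+c_{04}P_2,\qquad \Phi_2:=-\tfrac{5}{9}P_2^{-1}+\tfrac{5}{9}P_3^{-1}+c_{04}P_1+c_{03}P_2
\]
are each a scalar multiple of $\Fi$ (the scalar absorbing the trace removed by $\MS$); write $\Phi_1=\mu_1\Fi$, $\Phi_2=\mu_2\Fi$. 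Put $C:=[P_1,P_2]$ and use $[P_2,P_1^{-1}]=P_1^{-1}CP_1^{-1}$: commuting $\Phi_1=\mu_1\Fi$ and $\Phi_2=\mu_2\Fi$ with $P_2$ and eliminating $[P_3^{-1},P_2]$ gives $\frac{5}{9}C=(c_{04}-c_{02})P_1CP_1$; commuting both with $P_1$ instead gives, symmetrically, $\frac{5}{9}C=(c_{04}-c_{03})P_2CP_2$. (If $c_{04}=c_{02}$ or $c_{04}=c_{03}$, these already force $C=0$.) For positive-definite symmetric $P$, the linear map $C\mapsto PCP$ on antisymmetric matrices equals $\det(P)P^{-1}$ under $\mathfrak{so}(3)\cong\mathbb R^3$ (concretely, the axis of $PCP$ is $\det(P)P^{-1}$ applied to the axis of $C$); hence $PCP=\alpha C$ with $C\neq0$ forces the axis $\omega$ of $C$ to be an eigenvector of $P$, with $\alpha$ the product of the other two eigenvalues. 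So, if $C\neq0$, its axis $\omega$ is a common eigenvector of $P_1$ and $P_2$ (hence of $P_3$); after rotating $\omega$ to $\nn_3$, the $P_i$ are block-diagonal $2\oplus1$ and everything reduces to the $2\times2$ blocks $\hat P_1,\hat P_2,\hat P_3$.

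Assume, for a contradiction, $C\neq0$. By $\hat M\hat C\hat M=(\det\hat M)\hat C$ for $2\times2$ symmetric $\hat M$ and antisymmetric $\hat C$, the two identities give $\det\hat P_1=\frac{5}{9(c_{04}-c_{02})}$ and $\det\hat P_2=\frac{5}{9(c_{04}-c_{03})}$. Substituting $(\det\hat P_1)\hat P_1^{-1}=({\rm tr}\,\hat P_1)\mathrm{Id}_2-\hat P_1$ into the $2\times2$ restriction of $\Phi_1=\mu_1\Fi$, the $\hat P_1$-terms combine to $c_{04}\hat P_1$, and with $\hat P_1+\hat P_2=\mathrm{Id}_2-\hat P_3$ one is left with $\frac{5}{9}\hat P_3^{-1}-c_{04}\hat P_3=\tau\,\mathrm{Id}_2$ for a scalar $\tau$; the restriction of $\Phi_2=\mu_2\Fi$ yields the same. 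If $\hat P_3$ were scalar then $\hat P_2=\mathrm{Id}_2-\hat P_3-\hat P_1$ would commute with $\hat P_1$, contradicting $C\neq0$; so $\hat P_3$ has two distinct eigenvalues, necessarily the two roots of $c_{04}q^2+\tau q-\frac{5}{9}=0$, whence $\det\hat P_3=-\frac{5}{9c_{04}}$. Positive-definiteness of $\hat P_1,\hat P_2,\hat P_3$ then forces $c_{04}<0$ and $c_{02},c_{03}<c_{04}$ — hence $c_{02}<0$ and $c_{02}c_{03}>c_{04}^2$, i.e. $D_0$ is negative definite — and $\sum_i{\rm tr}\,\hat P_i=2$ together with ${\rm tr}\,\hat P_i\ge 2\sqrt{\det\hat P_i}$ forces $\frac{1}{\sqrt{c_{04}-c_{02}}}+\frac{1}{\sqrt{c_{04}-c_{03}}}+\frac{1}{\sqrt{-c_{04}}}\le\frac{3}{\sqrt5}$, which a short estimate turns into $c_{04}^2/c_{03}-c_{02}>2$. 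Either way the hypothesis is contradicted, so $C=0$ and $[Q_1,Q_2]=0$. For $F_{\rm entropy}=F_{\rm orig}$ the stationarity reads $B_i+(D_0\QQ)_i=0$ with $\BB=\partial F_{\rm orig}/\partial\QQ$, and the same commutator bookkeeping — together with the identity $[Q_1,B_1]+[Q_2,B_2]=0$, valid for every density of the form \eqref{maxentstate} (obtained from $\int_{SO(3)}\ML_k\rho\,\ud\Fq=0$) — again reduces matters to a two-dimensional problem; the one genuinely new point is that the analogue of the determinant identity now involves the fourth moments of $\rho$, so the positive-definiteness of the covariance $\partial\QQ/\partial\BB$ has to replace the elementary $2\times2$ adjugate identity. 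This is exactly the estimate carried out in \cite{XZ2}, from which the original-entropy case follows.

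I expect the main obstacle to be this last step for $F_{\rm orig}$: unlike $\Xi_2$, the original entropy provides no closed algebraic relation between $B_i$ and $Q_i$, so the two-dimensional reduction and the exclusion of $C\neq0$ must be run through moment and covariance inequalities for the Bingham-type density rather than matrix algebra. The second delicate point, common to both entropies, is verifying the sharp threshold — that the constraints $P_i$ positive definite, $P_1+P_2+P_3=\Fi$, together with the determinant identities, really do force $c_{04}^2/c_{03}-c_{02}>2$ in the excluded regime — which is an elementary but careful optimization.
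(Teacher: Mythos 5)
The paper does not give a proof of Proposition \ref{bulk-minimum}: it is stated as a cited result, with the reader pointed to \cite{XZ2,Xu3} (``The up-to-date theoretical result \cite{XZ2,Xu3} is given below''). So there is no in-paper argument to compare yours against; I can only assess the proof you constructed.

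Your argument for the quasi-entropy case checks out in full. Setting $P_1=Q_1+\Fi/3$, $P_2=Q_2+\Fi/3$, $P_3=\Fi/3-Q_1-Q_2$, the stationarity $\MS(\partial F_b/\partial Q_i)=0$ does reduce to $\Phi_i$ being scalar multiples of $\Fi$, and the elimination of $[P_3^{-1},P_2]$ (resp.\ $[P_3^{-1},P_1]$) from the pair $[\Phi_1,P_2]=[\Phi_2,P_2]=0$ (resp.\ $[\Phi_1,P_1]=[\Phi_2,P_1]=0$) yields precisely $\tfrac{5}{9}C=(c_{04}-c_{02})P_1CP_1$ and $\tfrac{5}{9}C=(c_{04}-c_{03})P_2CP_2$. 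The identification $C\mapsto PCP \ \leftrightarrow\ \omega\mapsto(\det P)P^{-1}\omega$ on $\mathfrak{so}(3)\cong\mathbb{R}^3$ is correct (it suffices to verify it on a diagonalizing basis), so a nonzero $C$ forces its axis to be a common eigenvector of $P_1,P_2,P_3$, after which the $2\oplus 1$ block reduction and the $2\times 2$ adjugate identity do give $\det\hat P_1=\tfrac{5}{9(c_{04}-c_{02})}$, $\det\hat P_2=\tfrac{5}{9(c_{04}-c_{03})}$, and the quadratic $c_{04}q^2+\tau q-\tfrac59=0$ for the eigenvalues of $\hat P_3$ with $\det\hat P_3=-\tfrac{5}{9c_{04}}$. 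Positive definiteness then forces $c_{02},c_{03}<c_{04}<0$ and hence $D_0$ negative definite, and the AM--GM step
\begin{align*}
2=\sum_i\operatorname{tr}\hat P_i\ \ge\ 2\Big(\tfrac{1}{\sqrt{c_{04}-c_{02}}}+\tfrac{1}{\sqrt{c_{04}-c_{03}}}+\tfrac{1}{\sqrt{-c_{04}}}\Big)\cdot\tfrac{\sqrt5}{3}
\end{align*}
gives the constraint you state. The ``short estimate'' you invoke at the end does close: with $a=c_{04}-c_{02}$, $b=c_{04}-c_{03}$, $c=-c_{04}$, $u=a^{-1/2}$, $v=b^{-1/2}$, $w=c^{-1/2}$ one has $c_{04}^2/c_{03}-c_{02}=a+\tfrac{bc}{b+c}=\tfrac{1}{u^2}+\tfrac{1}{v^2+w^2}>\tfrac{1}{u^2}+\tfrac{1}{(v+w)^2}\ge\tfrac{8}{(u+v+w)^2}\ge\tfrac{40}{9}>2$. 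So in the quasi-entropy case your derivation actually yields the sharper threshold $40/9$; the proposition's blanket threshold $2$ is evidently chosen to also cover the $F_{\rm orig}$ case, for which, as you say yourself, the determinant identities have no closed algebraic form and one must run moment/covariance inequalities for the Bingham-type density as in \cite{XZ2}. Your proof is therefore correct where it is carried out and defers for $F_{\rm orig}$ exactly where the paper itself defers.
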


In the cases where the eigenframe of $Q_1$ and $Q_2$ are identical, they can be written in the form
\begin{align}\label{Qi-biaxial}
Q_i=s_i\Big(\nn^2_1-\frac{\Fi}{3}\Big)+b_i(\nn^2_2-\nn^2_3),\quad i=1,2.
\end{align}
Numerical studies \cite{S-J-P,LGRA,XZ1,XYZ,Xu3} indicate that even if the conditions of Proposition \ref{bulk-minimum} do not hold, at each local energy minimizer (i.e. not saddle points) $Q_1$ and $Q_2$ still have an identical eigenframe.
Furthermore, depending on the coefficients, the global energy minimum could be either uniaxial (where $b_i=0$) or biaxial (where at least one $b_i\ne 0$).
To fix the idea, we assume that under certain coefficients $c_{02}$, $c_{03}$, $c_{04}$, we have a biaxial global minimum $\QQ^{(0)}=(Q_1^{(0)},Q_2^{(0)})^T$ of the form \eqref{Qi-biaxial}.

With the form \eqref{Qi-biaxial}, the scalars $s_i$ and $b_i$ shall satisfy
\begin{align}
    \frac 23 s_i+\frac 13>0,\quad \frac 13-\frac 13 s_i\pm b_i>0,\qquad i=1,2,3, \label{range-2nd}
\end{align}
where we define $s_3=-s_1-s_2$ and $b_3=-b_1-b_2$.
This requirement originates from the domain of the entropy term, which has been discussed previously \cite{XYZ}.

It is significant to notice that the bulk energy is rotational invariant.
Thus, when changing the frame $\Fp=(\nn_1,\nn_2,\nn_3)$ in the biaxial minimizer $\QQ^{(0)}$, it still gives a minimizer.
Therefore, the biaxial equilibrium state is not a single point, but a three-dimensional manifold.
We would like to write this manifold as $\QQ^{(0)}(\Fp)$.
The tangential space of this manifold at certin $\Fp$ is spanned by $\bxi_k\triangleq \ML_k\QQ^{(0)}(\Fp),\,k=1,2,3$.
The three $\bxi_k$ lie within the kernel of the Hessian \begin{align}\label{Hessian}
    \CH_{\QQ^{(0)}}=\partial_{\QQ}^2F_b|_{\QQ^{(0)}}.
\end{align}
This can be recognized by $\CJ(\QQ^{(0)}(\Fp))=0$, because $\QQ^{(0)}(\Fp)$ is an minimizer of $F_b$. Acting the operator $\ML_k$ on it, we obtain
\begin{align}
  0=\ML_k\CJ(\QQ^{(0)}(\Fp))=\CH_{\QQ^{(0)}}\bxi_k. \label{xiker}
\end{align}

On the other hand, since $\QQ^{(0)}$ is a minimizer, the eigenvalues of the Hessian $\CH_{\QQ^{(0)}}$ are nonnegative. Therefore, we have the following results.
\begin{proposition}\label{prop-linearized-operator}
  For a given stationary point $\QQ^{(0)}=\QQ^{(0)}(\Fp)$, the linearized operator $\CH_{\QQ^{(0)}}$ satisfies the following properties:
  \begin{itemize}
  \item It holds $\CH_{\QQ^{(0)}}\QQ\in ({\rm Ker}\CH_{\QQ^{(0)}})^{\perp}$;
  \item There exists a constant $C_0>0$ such that for any $\QQ\in({\rm Ker}\CH_{\QQ^{(0)}})^{\perp}$,
    \begin{align*}
      (\CH_{\QQ^{(0)}}\QQ)\cdot\QQ\geq C_0|\QQ|^2.
    \end{align*}
  \item $\CH_{\QQ^{(0)}}$ is a one to one map on $({\rm Ker}\CH_{\QQ^{(0)}})^{\perp}$ and its inverse $\CH^{-1}_{\QQ^{(0)}}$ exists.
  \end{itemize}
\end{proposition}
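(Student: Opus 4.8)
The plan is to observe that $\CH_{\QQ^{(0)}}$ is a self-adjoint, positive semidefinite linear operator on the ten-dimensional inner-product space $\mathbb{Q}$, and then to read off all three assertions from its spectral decomposition. Two preliminary observations make this work. First, by Proposition~\ref{entropy-smooth-prop} the entropy term, and hence $F_b$, is $C^{\infty}$ on its domain, which is open by Proposition~\ref{cvx}; since $\QQ^{(0)}$ satisfies the strict inequalities \eqref{range-2nd} it lies in the interior of that domain, so $\CH_{\QQ^{(0)}}=\partial_{\QQ}^2F_b|_{\QQ^{(0)}}$ is a well-defined bilinear form on $\mathbb{Q}$, and equality of mixed partials makes it symmetric, i.e.\ self-adjoint with respect to the dot product on $\mathbb{Q}$. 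Second, since $\QQ^{(0)}=\QQ^{(0)}(\Fp)$ is, by assumption, a local minimizer of $F_b$, the second-order necessary condition gives $(\CH_{\QQ^{(0)}}\QQ)\cdot\QQ\ge 0$ for every $\QQ\in\mathbb{Q}$. This nonnegativity is the only place where minimality, rather than mere stationarity, enters.

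Next I would invoke the spectral theorem for self-adjoint operators on $\mathbb{Q}$: pick an orthonormal eigenbasis $\{\EE_j\}_{j=1}^{10}$ with $\CH_{\QQ^{(0)}}\EE_j=\lambda_j\EE_j$ and $\lambda_j\ge 0$. Then ${\rm Ker}\,\CH_{\QQ^{(0)}}={\rm span}\{\EE_j:\lambda_j=0\}$ and $({\rm Ker}\,\CH_{\QQ^{(0)}})^{\perp}={\rm span}\{\EE_j:\lambda_j>0\}$. Writing $\QQ=\sum_j c_j\EE_j$ gives $\CH_{\QQ^{(0)}}\QQ=\sum_{\lambda_j>0}\lambda_j c_j\EE_j\in({\rm Ker}\,\CH_{\QQ^{(0)}})^{\perp}$, which is the first bullet. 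For the second, set $C_0\eqdefa\min\{\lambda_j:\lambda_j>0\}>0$ (if $\CH_{\QQ^{(0)}}$ vanishes identically all three claims are trivial, so this set may be assumed nonempty). If $\QQ\in({\rm Ker}\,\CH_{\QQ^{(0)}})^{\perp}$ then $c_j=0$ whenever $\lambda_j=0$, whence $(\CH_{\QQ^{(0)}}\QQ)\cdot\QQ=\sum_{\lambda_j>0}\lambda_j c_j^2\ge C_0\sum_j c_j^2=C_0|\QQ|^2$.

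For the third bullet, the first bullet already shows that $\CH_{\QQ^{(0)}}$ restricts to an endomorphism of $({\rm Ker}\,\CH_{\QQ^{(0)}})^{\perp}$; if $\QQ$ lies in this subspace with $\CH_{\QQ^{(0)}}\QQ=0$, the coercivity estimate forces $C_0|\QQ|^2\le 0$, so $\QQ=0$ and the restriction is injective, hence bijective on the finite-dimensional space $({\rm Ker}\,\CH_{\QQ^{(0)}})^{\perp}$, with explicit inverse $\CH_{\QQ^{(0)}}^{-1}\QQ=\sum_{\lambda_j>0}\lambda_j^{-1}(\QQ\cdot\EE_j)\EE_j$. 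As for the main difficulty: there is essentially no hard analytic step here — everything reduces to the spectral theorem on a finite-dimensional space. The only substantive input is the positive semidefiniteness of the Hessian, which is precisely the second-order necessary condition at the biaxial minimizer; were $\QQ^{(0)}$ a saddle-type stationary point, the second and third assertions would fail, so the minimality assumption cannot be dispensed with.
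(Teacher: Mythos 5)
Your proof is correct and takes essentially the same route as the paper: the paper's argument is simply to write the eigen-decomposition $\CH_{\QQ^{(0)}}=\sum_{j=1}^l\lambda_j\ee_j\otimes\ee_j$ with $\lambda_j>0$ on $({\rm Ker}\CH_{\QQ^{(0)}})^{\perp}$ and read off the three properties. You have merely filled in the preliminary observations (self-adjointness from smoothness of the entropy, positive semidefiniteness from the second-order minimality condition) that the paper takes for granted.
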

\begin{proof}
  We may choose a basis of $({\rm Ker}\CH_{\QQ^{(0)}})^{\perp}$ as $\ee_1,\cdots,\ee_l$. The operator $\CH_{\QQ^{(0)}}$ can be written as
  \begin{equation}
    \CH_{\QQ^{(0)}}=\sum_{j=1}^l\lambda_j\ee_j\otimes \ee_j,\quad \lambda_j>0. \label{eigen-decomp}
  \end{equation}
  The three statements can be deduced easily using the above representation.
\end{proof}

To fully characterize the Hessian, we adopt the assumption below.
\begin{assumption}\label{HQ-3}
  ${\rm Ker}\CH_{\QQ^{(0)}}={\rm span}\{\bxi_1,\bxi_2,\bxi_3\}$.
\end{assumption}
Actually, we have proved that $\bxi_k\in{\rm Ker}\CH_{\QQ^{(0)}}$.
The meaning of this assumption is that the three-dimensional biaxial minimizer manifold will not lose its stability when the coefficients in the bulk energy are perturbed.
This assumption certainly does not always hold, as it will be broken in the case of phase transitions.
In other words, by adopting this assumption we consider the coefficients far from the critical values that give rise to phase transitions.
In Appendix, we provide a simple numerical example as an evidence for this assumption to hold for biaxial minimizers.

Under Assumption \ref{HQ-3}, we have $l=\dim({\rm Ker}\CH_{\QQ^{(0)}})^{\perp}=\dim\mathbb{Q}-\dim({\rm Ker}\CH_{\QQ^{(0)}})=7$ in \eqref{eigen-decomp}.
Another thing to be noticed is that $\bxi_k$ and $\ee_k$ depend on the frame $\Fp$. Later, we will need their derivatives.

Define
\begin{align}
    \MP^{\rm in}\QQ=\sum_{j=1}^3\QQ\cdot\bxi_j\frac{\bxi_j}{|\bxi_j|^2},\quad
    \MP^{\rm out}\QQ=\sum_{j=1}^7\QQ\cdot\ee_j\frac{\ee_j}{|\ee_j|^2},\label{kerproj}
\end{align}
which are the projections onto the space ${\rm Ker}\CH_{\QQ^{(0)}}$ and $({\rm Ker}\CH_{\QQ^{(0)}})^{\perp}$, respectively.

\subsection{Closure by entropy minimization}

Corresponding to the choice of the entropy term $F_{\rm entropy}$, the closure approximation can also be done in two ways.
To avoid ambiguity, we assume that $\QQ$ lies within ${\rm dom}F_{\rm orig}$. The closure approximation aims to calculate the high-order tensors $\HH$ from $\QQ$.

If the entropy term is given by $F_{\rm orig}$, one could use the maximum entropy state \eqref{maxentstate} to calculate $\HH$.

\begin{proposition}
If $\HH$ is calculated from \eqref{maxentstate}, it is $C^{\infty}$ with respect to $\QQ$.
\end{proposition}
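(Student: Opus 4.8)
The plan is to show that the map $\QQ\mapsto\HH$ defined through the maximum entropy state is a composition of smooth maps, by passing through the Lagrange multiplier $\BB=(B_1,B_2)^T$ as an intermediate variable. Recall from the earlier propositions that on ${\rm dom}\,F_{\rm orig}$ the correspondence $\QQ\leftrightarrow\BB$ is a bijection, that $\partial F_{\rm orig}/\partial\QQ=\BB$, and that $\partial\QQ/\partial\BB$ is positive definite (in particular invertible). The key observation is that every entry of $\HH$ is, by definition, a moment of the density \eqref{maxentstate}:
\begin{align*}
\HH(\BB)=\frac1{Z(\BB)}\int_{SO(3)}T(\Fq)\exp\big(B_1\cdot\mm_1^2+B_2\cdot\mm_2^2\big)\,\ud\Fq,
\end{align*}
where $T(\Fq)$ runs over the relevant third- and fourth-order symmetric traceless tensor-valued functions of $\Fq$, and $Z(\BB)$ is the normalizing constant from \eqref{maxent-cnstrt}.

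First I would establish that $\BB\mapsto\HH(\BB)$ is $C^\infty$ on the open set of admissible multipliers. This is the routine step: the integrand is, for each fixed $\Fq\in SO(3)$, a real-analytic function of the finitely many real parameters in $\BB$, and since $SO(3)$ is compact one may differentiate under the integral sign arbitrarily many times, with all derivatives bounded locally uniformly in $\BB$. The same reasoning applied to $Z(\BB)$ together with $Z>0$ shows $1/Z(\BB)$ is smooth, so the quotient $\HH(\BB)$ is smooth; similarly $\QQ(\BB)$ is smooth, which is exactly the fact invoked in the proof of Proposition \ref{entropy-smooth-prop}.

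Next I would invoke the inverse function theorem. Since $\partial\QQ/\partial\BB$ is positive definite, hence a linear isomorphism at every point, and $\BB\mapsto\QQ(\BB)$ is smooth, the inverse map $\QQ\mapsto\BB(\QQ)$ is $C^\infty$ on ${\rm dom}\,F_{\rm orig}$ — this is precisely the argument already used in Proposition \ref{entropy-smooth-prop} to get smoothness of $F_{\rm orig}$, so I can cite it directly. Composing, $\QQ\mapsto\BB(\QQ)\mapsto\HH(\BB(\QQ))$ is a composition of $C^\infty$ maps and therefore $C^\infty$, which is the claim.

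The main obstacle, such as it is, is not analytic difficulty but bookkeeping: one must make sure that the tensors collectively denoted $\HH$ really are all obtainable as moments of \eqref{maxentstate} — i.e. that the closure approximation "via the maximum entropy state" is exactly the prescription $\HH=\langle T\rangle_{\rho}$ with $\rho$ the maximizer — and that the domain on which we work is contained in ${\rm dom}\,F_{\rm orig}$ so that $\BB(\QQ)$ is well-defined and smooth; both of these are guaranteed by the standing assumptions and the remark following Proposition \ref{cvx}. Everything else reduces to differentiation under an integral over a compact group, which is standard.
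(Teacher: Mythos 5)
Your proof is correct and follows the same route as the paper: smoothness of $\HH$ in $\BB$ by differentiation under the integral over the compact group $SO(3)$, then smoothness of $\BB$ in $\QQ$ via the inverse function theorem (Proposition \ref{entropy-smooth-prop}), then composition. The paper states this in two sentences and leaves the first step to the reader; you simply spell out what the paper calls "easily verified."
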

\begin{proof}
  It can be verify easily that $\HH$ is $C^{\infty}$ with respect to $\BB$.
  Proposition \ref{entropy-smooth-prop} tells us $\BB$ is $C^{\infty}$ with respect to $\QQ$.
\end{proof}

The closure approximation by \eqref{maxentstate} can be equivalently formulated as a constrained minimization \cite{Xu3}.
Based on this formulation, one can naturally define the closure approximation by the quasi-entropy.
Specifically, when $F_{\rm entropy}$ takes $\nu\Xi_2$, we use $\Xi_4$ for closure approximation.
The high-order tensors are solved through
\begin{equation}
    \min \Xi_4(\QQ,\HH),\quad s.t.~ \QQ \text{ given}. \label{closure-qe}
\end{equation}
From Proposition \ref{cvx} and the discussion below it, the solution exists and is unique.
Because of strict convexity, it equivalently demands
\begin{equation}
    \partial_{\HH}\Xi_4=0.\label{closure-qe-d}
\end{equation}
%
%
%
%
%
The smoothness statement still holds.
\begin{proposition}
When $\HH$ is solved from \eqref{closure-qe-d}, it is $C^{\infty}$ with respect to $\QQ$.
\end{proposition}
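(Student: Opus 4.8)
The plan is to run the classical implicit function theorem on the stationarity condition \eqref{closure-qe-d}, viewing $\HH$ as an implicitly defined function of $\QQ$. First I would fix a point $\QQ_0\in{\rm dom}F_{\rm orig}$ and, by the discussion following Proposition \ref{cvx}, observe that the maximum entropy state at $\QQ_0$ supplies a value $\HH_0$ with $(\QQ_0,\HH_0)\in{\rm dom}(\Xi_4)$, so the constrained minimization \eqref{closure-qe} is solvable there. By Proposition \ref{cvx} the solution $\HH_0$ is unique and, by strict convexity in $\HH$, it is characterized by $\partial_{\HH}\Xi_4(\QQ_0,\HH_0)=0$. Define $G(\QQ,\HH)\eqdefa\partial_{\HH}\Xi_4(\QQ,\HH)$; by Proposition \ref{entropy-smooth-prop} the function $\Xi_4$ is $C^{\infty}$ on its (open) domain, hence $G$ is $C^{\infty}$ near $(\QQ_0,\HH_0)$ and $G(\QQ_0,\HH_0)=0$.

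The key step is to check that the partial Jacobian $\partial_{\HH}G=\partial^2_{\HH\HH}\Xi_4(\QQ_0,\HH_0)$ is invertible. This is exactly where strict convexity is used quantitatively: since $\Xi_4$ is strictly convex on its domain (Proposition \ref{cvx}), its full Hessian is positive definite at every interior point, and in particular the diagonal block $\partial^2_{\HH\HH}\Xi_4$ is a positive definite, hence invertible, symmetric matrix at $(\QQ_0,\HH_0)$. The implicit function theorem then yields a neighborhood of $\QQ_0$ and a $C^{\infty}$ map $\QQ\mapsto\HH(\QQ)$ with $G(\QQ,\HH(\QQ))=0$; by the uniqueness of the minimizer this local solution coincides with the one produced by \eqref{closure-qe}. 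Since $\QQ_0\in{\rm dom}F_{\rm orig}$ was arbitrary, the map is $C^{\infty}$ on all of ${\rm dom}F_{\rm orig}$.

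The only genuine obstacle is making sure the setup is legitimate rather than the differentiation itself: one must confirm that for each $\QQ$ in the region of interest the constrained problem \eqref{closure-qe} actually has its minimizer in the \emph{interior} of ${\rm dom}(\Xi_4)$ (so that the first-order condition \eqref{closure-qe-d} holds and the IFT applies). This is guaranteed by the remark after Proposition \ref{cvx}: the covariance matrices built from the maximum entropy density \eqref{maxentstate} are strictly positive definite, so $(\QQ,\HH)$ lies in the open domain; and since the domain is convex and $\Xi_4\to+\infty$ at its boundary (being a sum of $-\ln\det$ terms), the minimizer cannot escape to the boundary. Everything else — smoothness of $\Xi_4$, positive definiteness of the relevant Hessian block, and the conclusion — is immediate from the propositions already established. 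Hence $\HH$ is $C^{\infty}$ with respect to $\QQ$, which also implies, via the linear expressions relating the fourth-order tensors $\CR_i,\CV_{Q_i},\dots$ to $Q_1,Q_2,\HH$, that all high-order tensors appearing in the hydrodynamics are smooth functions of $\QQ$.
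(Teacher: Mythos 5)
Your argument is correct and is essentially the paper's proof made explicit: the paper simply differentiates the first-order condition \eqref{closure-qe-d} to write $\partial\HH/\partial\QQ=-(\partial^2_{\HH}\Xi_4)^{-1}\partial_{\QQ}\partial_{\HH}\Xi_4$ and invokes positive definiteness of $\partial^2_{\HH}\Xi_4$ together with smoothness of $\Xi_4$, which is precisely the implicit function theorem step you spell out. The extra care you take about interiority of the minimizer is a point the paper delegates to the remark following Proposition \ref{cvx} rather than repeating inside the proof, so there is no substantive difference in approach.
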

\begin{proof}
Taking derivative with respect to $\QQ$ on \eqref{closure-qe}, we obtain
\begin{align*}
\frac{\partial\HH}{\partial\QQ}=-(\partial_{\HH}^2\Xi_4)^{-1}\partial_{\QQ}\partial_{\HH}\Xi_4.
\end{align*}
Since $\Xi_4$ is $C^{\infty}$ and $\partial_{\HH}^2\Xi_4$ is positive definite, the smoothness is immediately obtained.
\end{proof}

We define $\mathbb{Q}_{\delta}$ as all $\QQ$ in the domain of $F_{\rm orig}$ whose distance to the boundary is at least $\delta$.
\begin{align}
    \mathbb{Q}_{\delta}=\big\{\QQ\in\mathbb{Q}:{\rm d}(\QQ,\partial{\rm dom}F_{\rm orig})\ge \delta\big\}. \label{Q-delta}
\end{align}
It gives a bounded closed set, which is compact.
Therefore, for any $\delta$, uniform estimates hold for derivatives of $\HH$.
\begin{proposition}\label{high-tensor-smooth-prop}
For arbitrary small enough $\delta>0$ and nonnegative integer $k$, there exists $C_{\delta,k}>0$ such that $|\partial_{\QQ}^k\HH|\le C_{\delta,k}$ for $\QQ\in\mathbb{Q}_{\delta}$.
\end{proposition}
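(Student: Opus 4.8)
The plan is to combine the smoothness results already established with the compactness of $\mathbb{Q}_\delta$. By Proposition \ref{high-tensor-smooth-prop}'s predecessors, whether $\HH$ is obtained from the maximum entropy state \eqref{maxentstate} or from the quasi-entropy minimization \eqref{closure-qe-d}, the map $\QQ\mapsto\HH$ is $C^\infty$ on the open set ${\rm dom}F_{\rm orig}$. Hence every fixed derivative $\partial_{\QQ}^k\HH$ is a continuous function on ${\rm dom}F_{\rm orig}$. First I would observe that, by Proposition \ref{cvx}, ${\rm dom}F_{\rm orig}$ is a bounded convex open set, so its closure is compact and $\mathbb{Q}_\delta$, being the subset of points at distance at least $\delta$ from the boundary, is a closed subset of this compact closure — thus $\mathbb{Q}_\delta$ is itself compact. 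Moreover $\mathbb{Q}_\delta$ is contained in the open set ${\rm dom}F_{\rm orig}$ on which $\partial_{\QQ}^k\HH$ is defined and continuous.

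The second step is the routine extreme value argument: a continuous real-valued function (here $|\partial_{\QQ}^k\HH|$, the norm of the tensor-valued derivative) on a nonempty compact set attains its maximum, which is finite. Setting $C_{\delta,k}$ to be this maximum (or any larger constant) gives $|\partial_{\QQ}^k\HH|\le C_{\delta,k}$ for all $\QQ\in\mathbb{Q}_\delta$, as claimed. One should be slightly careful about the phrase ``arbitrary small enough $\delta>0$'': one needs $\delta$ small enough that $\mathbb{Q}_\delta$ is nonempty, which holds because the biaxial minimizer $\QQ^{(0)}$ lies in the interior of ${\rm dom}F_{\rm orig}$ (the strict inequalities \eqref{range-2nd}), so for $\delta$ below the distance from $\QQ^{(0)}$ to the boundary the set $\mathbb{Q}_\delta$ contains at least $\QQ^{(0)}$.

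There is no real obstacle here; the statement is a packaging of previously proven facts, and the only thing to get right is citing the correct prior propositions (smoothness of $\HH$ in each of the two closure schemes, and convexity/boundedness of the domain from Proposition \ref{cvx}) and noting that the two closure approaches can be treated in one stroke because both yield a $C^\infty$ dependence of $\HH$ on $\QQ$. If I wanted to make the constant slightly more explicit I could note $C_{\delta,k}$ can be taken nonincreasing in $\delta$, since $\mathbb{Q}_{\delta'}\subset\mathbb{Q}_\delta$ when $\delta'\ge\delta$, but this refinement is not needed for the applications in Sections \ref{loc-wellp} and \ref{bipr}. The mild subtlety worth a sentence in the write-up is simply the justification that $\mathbb{Q}_\delta$ is compact, since ${\rm dom}F_{\rm orig}$ itself is only open; this is where Proposition \ref{cvx}'s boundedness is essential.
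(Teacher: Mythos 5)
Your argument is correct and is exactly the paper's proof, which consists of the single sentence ``It is deduced immediately from smoothness and the fact that $\mathbb{Q}_{\delta}$ is compact.'' You have simply spelled out the two ingredients (continuity of $\partial_{\QQ}^k\HH$ from the preceding smoothness propositions, compactness of $\mathbb{Q}_\delta$ via Proposition~\ref{cvx}) and the extreme value theorem, together with the unstated nonemptiness of $\mathbb{Q}_\delta$ for small $\delta$; no substantive difference from the paper's route.
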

\begin{proof}
It is deduced immediately from smoothness and the fact that $\mathbb{Q}_{\delta}$ is compact.
\end{proof}

Below, we list some basic estimates derived from the properties we stated above. We denote the derivative of the entropy term with respect to $\QQ$ by $\CS(\QQ)=\partial_{\QQ}F_{\rm entropy}$, in which $F_{\rm entropy}$ can be taken as the original entropy or the quasi-entropy. Lemma \ref{SS-defference-lemma} and Lemma \ref{SS-Lip-lemma} are direct consequences of Proposition \ref{high-tensor-smooth-prop}, Lemma \ref{lem:composition} and Lemma \ref{lem:difference}.
\begin{lemma}\label{SS-defference-lemma}
For any $\delta>0, k\in\mathbb{N}$ and constant tensor $\QQ^*=(Q^*_1,Q^*_2)^T$, there exists a positive constant $C_{\delta}$ depending on $\delta$ such that if $\QQ(\xx)\in\mathbb{Q}_{\delta}$, then
\begin{align*}
\|\CS(\QQ)-\CS(\QQ^*)\|_{H^k}\leq C_{\delta}\|\QQ-\QQ^*\|_{H^k}.
\end{align*}
\end{lemma}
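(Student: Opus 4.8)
The plan is to deduce Lemma \ref{SS-defference-lemma} from two more primitive facts: first, that $\CS=\partial_{\QQ}F_{\rm entropy}$ is a smooth function of $\QQ$ with all derivatives bounded uniformly on the compact set $\mathbb{Q}_{\delta}$ (this is essentially Proposition \ref{high-tensor-smooth-prop} together with the smoothness statements of Proposition \ref{entropy-smooth-prop}); and second, general Moser-type composition and difference estimates for functions composed with Sobolev maps (the lemmas the text calls Lemma \ref{lem:composition} and Lemma \ref{lem:difference}). So the real content is just a careful bookkeeping argument combining these inputs.

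First I would record the pointwise picture. Since $\QQ(\xx)\in\mathbb{Q}_{\delta}$ for all $\xx$, and $\mathbb{Q}_{\delta}$ is a compact subset of the open set ${\rm dom}\,F_{\rm orig}$ on which $\CS$ is $C^{\infty}$, there is a constant $C_{\delta}$ (depending also on $k$) such that $|\partial_{\QQ}^j\CS(\QQ)|\le C_{\delta}$ on $\mathbb{Q}_{\delta}$ for all $0\le j\le k+1$. In particular $\CS$ restricted to $\mathbb{Q}_{\delta}$ is Lipschitz. One subtlety: the constant tensor $\QQ^*$ need not itself lie in $\mathbb{Q}_{\delta}$ — but it is the biaxial minimizer, hence lies in the interior of the domain, so after shrinking $\delta$ if necessary we may assume $\QQ^*\in\mathbb{Q}_{\delta}$ as well, or else enlarge the compact set to a slightly fattened $\mathbb{Q}_{\delta'}\supset\mathbb{Q}_{\delta}\cup\{\QQ^*\}$ on which the same bounds hold; either way the pointwise Lipschitz bound $|\CS(\QQ(\xx))-\CS(\QQ^*)|\le C_{\delta}|\QQ(\xx)-\QQ^*|$ is available, which handles the $k=0$ case immediately since $L^2$ norms are just integrals of squares of these pointwise quantities.

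For $k\ge 1$ I would write $\CS(\QQ)-\CS(\QQ^*)=G(\QQ)-G(\QQ^*)$ where $G$ is the smooth map $\CS$, and apply the difference estimate (Lemma \ref{lem:difference}): it expresses $G(\QQ)-G(\QQ^*)$ as an integral $\int_0^1 DG\big(\QQ^*+\theta(\QQ-\QQ^*)\big)\,\ud\theta\,(\QQ-\QQ^*)$, and then the $H^k$ norm is controlled by the product of the $H^k$ norm of $\QQ-\QQ^*$ with a quantity depending only on sup-norm bounds of $\QQ-\QQ^*$ (which are finite since $\QQ$ ranges in a bounded set) and on sup-norms of derivatives of $DG$ along the segment. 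Here one must check that the whole segment $\{\QQ^*+\theta(\QQ(\xx)-\QQ^*):\theta\in[0,1],\,\xx\in\mathbb{R}^3\}$ stays inside the compact set where the derivative bounds hold — this is the one genuinely necessary verification, and it follows from convexity of ${\rm dom}\,F_{\rm orig}$ (Proposition \ref{cvx}) together with the fact that both endpoints are in $\mathbb{Q}_{\delta}$, so (by convexity of the distance-to-boundary function for a convex set) the segment lies in $\mathbb{Q}_{\delta}$ too. The Moser estimate then also needs control of $\|\QQ-\QQ^*\|_{L^\infty}$ and $\|\QQ-\QQ^*\|_{H^k}$, the former bounded by the diameter of $\mathbb{Q}_{\delta}$ and absorbable into $C_{\delta}$.

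The main obstacle — such as it is — is not a deep one: it is purely the matching of hypotheses, namely ensuring that the composition/difference lemmas apply because the argument tensors stay in a fixed compact subset of the domain uniformly in $\xx$ and along the interpolating segment. Once the convexity of ${\rm dom}\,F_{\rm orig}$ is invoked to keep the segment in $\mathbb{Q}_{\delta}$, and the uniform derivative bounds of Proposition \ref{high-tensor-smooth-prop} are invoked to bound $DG,\,D^2G,\dots$, the inequality drops out by a direct application of Lemma \ref{lem:difference} followed by Lemma \ref{lem:composition} to the coefficient functions, with $C_{\delta}$ depending on $\delta$ and $k$ but not on $\QQ$ or on the (bounded) size of $\QQ-\QQ^*$. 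I would present this in essentially one short paragraph in the actual proof, citing the two auxiliary lemmas and Proposition \ref{high-tensor-smooth-prop}.
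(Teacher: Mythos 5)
Your overall scaffolding (compactness of $\mathbb{Q}_{\delta}$, uniform derivative bounds from Proposition \ref{high-tensor-smooth-prop} / Proposition \ref{entropy-smooth-prop}, convexity of the domain from Proposition \ref{cvx}) is exactly what the one-line proof in the paper has in mind, and the $k=0$ case is fine. The problem is with the tool you pick for $k\ge 1$. You lean on Lemma \ref{lem:difference}, but the bound in that lemma is
\[
\|F(u)-F(v)\|_{H^k}\le C(\|u\|_{L^\infty},\|v\|_{L^\infty})\bigl(1+\|u\|_{H^{k'}}+\|v\|_{H^{k'}}\bigr)\|u-v\|_{H^k},\qquad k'=\max\{k,2\},
\]
which has an extra factor $(1+\|u\|_{H^{k'}}+\|v\|_{H^{k'}})$ that you are silently dropping when you claim the coefficient "depends only on sup-norm bounds." Worse, with $v=\QQ^*$ a nonzero constant on $\mathbb{R}^3$ the quantity $\|v\|_{H^{k'}}$ is literally infinite, so the lemma cannot be applied in this form at all; and even after the natural translation the surviving factor $1+\|\QQ-\QQ^*\|_{H^{k'}}$ would give a constant depending on the $H^{k'}$ size of $\QQ-\QQ^*$, which is precisely the weaker conclusion recorded as the third item of Lemma \ref{SS-Lip-lemma} — not the uniform $C_\delta$ claimed in Lemma \ref{SS-defference-lemma}.

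The clean route is Lemma \ref{lem:composition} alone: set $G(\RR)\eqdefa\CS(\QQ^*+\RR)-\CS(\QQ^*)$, extend $G$ smoothly (with globally bounded derivatives) off the compact translate of $\mathbb{Q}_{\delta}$ so that $G\in C^{\infty}(\mathbb{R}^{10})$ with $G(0)=0$; this is legitimate because $\CS$ is $C^\infty$ on the open domain and Proposition \ref{high-tensor-smooth-prop} gives uniform $C^{k+1}$ bounds on $\mathbb{Q}_\delta$. Then $\CS(\QQ)-\CS(\QQ^*)=G(\QQ-\QQ^*)$ and Lemma \ref{lem:composition} directly yields $\|G(\QQ-\QQ^*)\|_{H^k}\le C(\|\QQ-\QQ^*\|_{L^\infty})\|\QQ-\QQ^*\|_{H^k}$, with the sup-norm argument bounded by the diameter of $\mathbb{Q}_\delta$ plus ${\rm d}(\QQ^*,\mathbb{Q}_\delta)$, so the leading constant is an honest $C_\delta$. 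In short: the paper cites both Lemma \ref{lem:composition} and Lemma \ref{lem:difference} for the pair Lemma \ref{SS-defference-lemma}/Lemma \ref{SS-Lip-lemma} — the composition lemma is what gives the present statement, the difference lemma is what gives the third item of Lemma \ref{SS-Lip-lemma} — and you should not route the present estimate through Lemma \ref{lem:difference}.
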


\begin{lemma}\label{SS-Lip-lemma}
For any $\delta>0$, there exists a positive constant $C_{\delta}$ depending on $\delta$ such that
\begin{align*}
|\CS(\QQ')-\CS(\QQ'')|\leq C_{\delta}|\QQ'-\QQ''|,
\end{align*}
where $\QQ'=(Q'_1,Q'_2)^T,\QQ''=(Q''_1,Q''_2)^T\in\mathbb{Q}_{\delta}$.
Consequently, it follows that
\begin{align*}
|\partial_i\CS(\QQ)|\leq C_{\delta}|\partial_i\QQ|.
\end{align*}
Further, for any $k\in\mathbb{N}$, there exists a constant $C=C(\delta,\|\QQ'-\QQ^*\|_{H^k},\|\QQ''-\QQ^*\|_{H^k})$, such that
\begin{align*}
\|\CS(\QQ')-\CS(\QQ'')\|_{H^k}\leq C(\delta,\|\QQ'-\QQ^*\|_{H^k},\|\QQ''-\QQ^*\|_{H^k})\|\QQ'-\QQ''\|_{H^k}.
\end{align*}
\end{lemma}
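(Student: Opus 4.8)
The plan is to establish Lemma \ref{SS-Lip-lemma} by combining the uniform smoothness of the entropy derivative on the compact set $\mathbb{Q}_\delta$ (Proposition \ref{high-tensor-smooth-prop}) with the generic composition and difference estimates (the lemmas referenced as Lemma \ref{lem:composition} and Lemma \ref{lem:difference}). First I would observe that $\CS(\QQ)=\partial_{\QQ}F_{\rm entropy}(\QQ)$ is the composition of a fixed $C^\infty$ map (whose domain includes $\mathbb{Q}_\delta$) with the function $\QQ(\xx)$; when $F_{\rm entropy}=\nu\Xi_2$ this is elementary, and when $F_{\rm entropy}=F_{\rm orig}$ the smoothness comes from Proposition \ref{entropy-smooth-prop}. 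In either case, Proposition \ref{high-tensor-smooth-prop} (or its analogue for $\CS$ directly, since $\CS$ is built from $\HH$ and $\QQ$ via smooth operations) furnishes, for every $\delta$ and every order of differentiation, a uniform bound $\sup_{\QQ\in\mathbb{Q}_\delta}|\partial_{\QQ}^k\CS(\QQ)|\le C_{\delta,k}$.

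The pointwise Lipschitz bound $|\CS(\QQ')-\CS(\QQ'')|\le C_\delta|\QQ'-\QQ''|$ is then immediate from the mean value theorem along the segment joining $\QQ'$ and $\QQ''$: since $\mathbb{Q}_\delta$ is convex (it is the sublevel set determined by the distance to $\partial{\rm dom}F_{\rm orig}$, and ${\rm dom}F_{\rm orig}$ is convex by Proposition \ref{cvx}), the segment stays in $\mathbb{Q}_\delta$, where $|\partial_{\QQ}\CS|\le C_{\delta,1}$. Applying this with $\QQ'=\QQ(\xx)$ and $\QQ''=\QQ(\xx+h\ee_i)$, dividing by $h$ and letting $h\to0$ gives $|\partial_i\CS(\QQ)|\le C_\delta|\partial_i\QQ|$ at each point where $\QQ$ is differentiable, as claimed.

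For the $H^k$ estimate of the difference, I would expand $\partial^\alpha\big(\CS(\QQ')-\CS(\QQ'')\big)$ for a multi-index $|\alpha|\le k$ via the Faà di Bruno / chain rule, writing the result as a sum of terms each of the form $(\text{a mixed partial of }\CS\text{ of order }\le|\alpha|)$ evaluated at $\QQ'$ or $\QQ''$, multiplied by products of derivatives of $\QQ'$ and $\QQ''$ of orders summing to $|\alpha|$. Grouping so that each summand contains exactly one factor that is a difference — either $\partial_{\QQ}^m\CS(\QQ')-\partial_{\QQ}^m\CS(\QQ'')$ (controlled pointwise by the Lipschitz bound times $|\QQ'-\QQ''|$, then by the $H^k$ norm after the usual product/Moser estimates) or $\partial^\beta\QQ'-\partial^\beta\QQ''$ — and bounding every other factor by $\|\QQ'-\QQ^*\|_{H^k}$, $\|\QQ''-\QQ^*\|_{H^k}$ (note $\QQ^*$ is constant, so $\partial^\beta\QQ'=\partial^\beta(\QQ'-\QQ^*)$ for $|\beta|\ge1$) and the uniform constants $C_{\delta,k}$, one obtains the stated bound with $C=C(\delta,\|\QQ'-\QQ^*\|_{H^k},\|\QQ''-\QQ^*\|_{H^k})$. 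This is essentially the statement of the abstract Lemma \ref{lem:difference}, specialized to $G=\CS$; the role of $\mathbb{Q}_\delta$ being compact is precisely to make the constants in that abstract lemma depend only on $\delta$.

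The only mildly delicate point — the main obstacle, such as it is — is bookkeeping in the $H^k$ difference estimate: one must make sure that the ``lowest-order'' derivative factors are placed in $L^\infty$ and the single highest-order factor in $L^2$ (the standard Moser-type splitting), and that the composition is controlled on the \emph{whole} segment between $\QQ'$ and $\QQ''$, which is why convexity of $\mathbb{Q}_\delta$ is invoked. Once the abstract Lemma \ref{lem:composition} and Lemma \ref{lem:difference} are in hand, there is no genuine analytic difficulty; the proof is a direct application, exactly as the sentence preceding the lemma statements asserts. I would therefore keep the written proof to one or two lines, citing Proposition \ref{high-tensor-smooth-prop}, Lemma \ref{lem:composition}, Lemma \ref{lem:difference}, and the convexity from Proposition \ref{cvx}.
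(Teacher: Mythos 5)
Your proposal is correct and takes essentially the same route as the paper, which dispenses with a written proof and simply states that the lemma is a direct consequence of Proposition \ref{high-tensor-smooth-prop}, Lemma \ref{lem:composition}, and Lemma \ref{lem:difference}. You supply exactly the details that make that citation work: uniform $C^\infty$ bounds for $\CS$ on the compact set $\mathbb{Q}_\delta$, the mean value theorem on the (convex) set $\mathbb{Q}_\delta$ for the pointwise Lipschitz bound and hence $|\partial_i\CS(\QQ)|\le C_\delta|\partial_i\QQ|$, and the specialization of Lemma \ref{lem:difference} (after the shift making the composed map vanish at $0$) for the $H^k$ difference estimate, with the $L^\infty$-dependent constants collapsed to $\delta$-dependent ones by compactness. (One cosmetic remark: $\mathbb{Q}_\delta$ is a superlevel, not sublevel, set of the concave distance-to-boundary function, which is what makes it convex.)
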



Lemma \ref{commutor-YY-QQ-lemma} and Lemma \ref{defference-YY-UU-lemma} are direct corollaries of Lemma \ref{SS-Lip-lemma}, Lemma \ref{lem:product} and Lemma \ref{lem:commutator}.

\begin{lemma}\label{commutor-YY-QQ-lemma}
For any $\delta>0$, there exists a positive constant $C_{\delta}$ depending on $\delta$ such that if $\QQ(\xx)\in\mathbb{Q}_{\delta}$ and $\UU\in\mathbb{R}^{3\times3}$, then for any multiple index $\alpha$, it follows that
\begin{align*}
\|\partial^{\alpha}(\CY_{\QQ}\UU)-\CY_{\QQ}\partial^{\alpha}\UU\|_{L^2}\leq C_{\delta}(\|\nabla\QQ\|_{L^{\infty}}\|\UU\|_{H^{|\alpha|-1}}+\|\nabla\QQ\|_{H^{|\alpha|-1}}\|\UU\|_{L^{\infty}}).
\end{align*}
Furthermore, if $|\alpha|\geq2$, it holds
\begin{align*}
\|\partial^{\alpha}(\CY_{\QQ}\UU)-\CY_{\QQ}\partial^{\alpha}\UU\|_{L^2}\leq C_{\delta}\|\nabla\QQ\|_{H^{|\alpha|}}\|\UU\|_{H^{|\alpha|-1}}.
\end{align*}
Here, the operator $\CY_{\QQ}$ can be taken as $\CM_{\QQ},\CV_{\QQ},\CN_{\QQ}$ and $\CP_{\QQ}$, respectively.
\end{lemma}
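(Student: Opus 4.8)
The plan is to observe that each operator $\CY_{\QQ}\in\{\CM_{\QQ},\CV_{\QQ},\CN_{\QQ},\CP_{\QQ}\}$ acts on the field $\UU$ by a pointwise tensor contraction, $\CY_{\QQ}\UU=\mathcal{A}(\QQ)\UU$, where $\mathcal{A}(\QQ)$ is a fourth-order-tensor-valued function assembled linearly (with constant coefficients such as $\Gamma_i$, $e_i$, $I_{ij}$) from the moments $\CR_i$ and $\CV_{Q_i}$, and therefore, through the closure approximation, a function of $\QQ$ alone. By Proposition \ref{high-tensor-smooth-prop}, $\mathcal{A}$ together with all its $\QQ$-derivatives is bounded on the compact set $\mathbb{Q}_{\delta}$ by a constant depending only on $\delta$; in particular the chain rule gives $|\partial_i(\mathcal{A}(\QQ))|=|\partial_{\QQ}\mathcal{A}(\QQ)\cdot\partial_i\QQ|\le C_{\delta}|\partial_i\QQ|$, which is the analogue for $\mathcal{A}$ of Lemma \ref{SS-Lip-lemma}.

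Next I would record the composition estimate
\[
\|\nabla(\mathcal{A}(\QQ))\|_{L^{\infty}}\le C_{\delta}\|\nabla\QQ\|_{L^{\infty}},\qquad
\|\nabla(\mathcal{A}(\QQ))\|_{H^{k}}\le C_{\delta}\|\nabla\QQ\|_{H^{k}}\quad(k\in\mathbb{N}),
\]
valid whenever $\QQ(\xx)\in\mathbb{Q}_{\delta}$. The $L^{\infty}$ bound is immediate from the chain rule. For the $H^{k}$ bound one expands $\nabla^{j}(\mathcal{A}(\QQ))$ by the Fa\`a di Bruno formula into a sum of terms $\partial_{\QQ}^{m}\mathcal{A}(\QQ)\,(\nabla^{a_{1}}\QQ)\cdots(\nabla^{a_{m}}\QQ)$ with $a_{1}+\cdots+a_{m}=j$, bounds the prefactor by $C_{\delta,m}$, and estimates the remaining $L^{2}$ norm of the product by the multilinear Gagliardo--Nirenberg inequality, using $\|\QQ\|_{L^{\infty}}\le C_{\delta}$ on $\mathbb{Q}_{\delta}$; this is exactly the content of Lemma \ref{lem:composition} and Lemma \ref{lem:product}. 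The crucial point, and the reason the constant depends only on $\delta$, is that $\QQ$ is confined pointwise to the compact set $\mathbb{Q}_{\delta}$, so no higher Sobolev norm of $\QQ$ enters the constant.

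With these ingredients the lemma follows from the Leibniz rule and a Kato--Ponce/Moser-type commutator estimate. Writing
\[
\partial^{\alpha}(\CY_{\QQ}\UU)-\CY_{\QQ}\partial^{\alpha}\UU
=\sum_{0<\beta\le\alpha}\binom{\alpha}{\beta}\,\partial^{\beta}(\mathcal{A}(\QQ))\,\partial^{\alpha-\beta}\UU,
\]
Lemma \ref{lem:commutator} bounds the right-hand side in $L^{2}$ by $C\big(\|\nabla(\mathcal{A}(\QQ))\|_{L^{\infty}}\|\UU\|_{H^{|\alpha|-1}}+\|\nabla(\mathcal{A}(\QQ))\|_{H^{|\alpha|-1}}\|\UU\|_{L^{\infty}}\big)$, and substituting the composition bounds yields the first inequality. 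For $|\alpha|\ge2$ the refined bound is obtained by absorbing $\|\nabla\QQ\|_{L^{\infty}}$ into $\|\nabla\QQ\|_{H^{|\alpha|}}$ and, when $|\alpha|\ge3$, $\|\UU\|_{L^{\infty}}$ into $\|\UU\|_{H^{|\alpha|-1}}$ via $H^{2}(\mathbb{R}^{3})\hookrightarrow L^{\infty}$; the only borderline case $|\alpha|=2$ is handled directly, since there the sole term not already of the desired form is $(\partial^{\alpha}\mathcal{A}(\QQ))\,\UU$, which one estimates by $\|\partial^{\alpha}(\mathcal{A}(\QQ))\|_{L^{3}}\|\UU\|_{L^{6}}\le C_{\delta}\|\nabla\QQ\|_{H^{2}}\|\UU\|_{H^{1}}$ using Gagliardo--Nirenberg on $\partial^{\alpha}(\mathcal{A}(\QQ))$ together with the embedding $H^{1}\hookrightarrow L^{6}$.

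The computations are routine Moser-type manipulations; the only delicate point is the uniformity in $\delta$ of the composition constant, which is precisely why the hypothesis is $\QQ(\xx)\in\mathbb{Q}_{\delta}$ rather than merely $\QQ-\QQ^{*}\in H^{k}$: the pointwise confinement of $\QQ$ to a compact set is what allows the Fa\`a di Bruno prefactors and the $L^{\infty}$ factors in the multilinear Gagliardo--Nirenberg estimate to be absorbed into $C_{\delta}$. I note that the noncommutativity between the closure operators and the differential or projection operators, which is the genuinely new obstacle elsewhere in the paper, plays no role in this particular estimate, so there is no serious difficulty here.
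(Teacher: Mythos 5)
Your overall strategy is the one the paper intends: write $\CY_{\QQ}\UU=\mathcal{A}(\QQ)\UU$ with $\mathcal{A}$ a smooth function of $\QQ$ on $\mathbb{Q}_{\delta}$ (via Proposition \ref{high-tensor-smooth-prop}), then feed the Kato--Ponce commutator estimate of Lemma \ref{lem:commutator} with $g=\mathcal{A}(\QQ)$. That matches the paper's one-line assertion that the lemma is a ``direct corollary'' of Lemma \ref{SS-Lip-lemma}, Lemma \ref{lem:product} and Lemma \ref{lem:commutator}.

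However, your justification of the key composition estimate
\begin{equation*}
\|\nabla(\mathcal{A}(\QQ))\|_{H^{k}}\le C_{\delta}\|\nabla\QQ\|_{H^{k}},\qquad C_{\delta}\text{ depending only on }\delta,
\end{equation*}
contains a genuine gap. You assert that the Fa\`a di Bruno prefactors and the multilinear Moser estimate can be closed ``using $\|\QQ\|_{L^{\infty}}\le C_{\delta}$.'' This is not the $L^{\infty}$ norm that appears. In the Fa\`a di Bruno expansion of $\nabla^{j}(\mathcal{A}(\QQ))$ every factor $\nabla^{a_{l}}\QQ$ has $a_{l}\ge1$ (the $a_{l}=0$ pieces are absorbed into the smooth prefactor), so the product must be read as a product of derivatives of $\nabla\QQ$, and the Moser product estimate then gives
\begin{equation*}
\Big\|\prod_{l=1}^{m}\nabla^{a_{l}}\QQ\Big\|_{L^{2}}\le C\,\|\nabla\QQ\|_{L^{\infty}}^{\,m-1}\,\|\nabla\QQ\|_{H^{j-m}},
\end{equation*}
not $C\|\QQ\|_{L^{\infty}}^{\,m-1}\|\nabla\QQ\|_{H^{j-1}}$. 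For every term with $m\ge2$ (which appear as soon as $j\ge 2$, i.e., $k\ge1$) the estimate therefore picks up powers of $\|\nabla\QQ\|_{L^{\infty}}$, and the pointwise confinement $\QQ(\xx)\in\mathbb{Q}_{\delta}$ bounds $\|\QQ\|_{L^{\infty}}$ but says nothing about $\|\nabla\QQ\|_{L^{\infty}}$. A simple check: for $|\alpha|=2$ the term $\mathcal{A}''(\QQ)(\nabla\QQ)^{2}\UU$ yields, after any sensible H\"older or Gagliardo--Nirenberg splitting, a bound of the shape $\|\nabla\QQ\|_{L^{\infty}}\|\nabla\QQ\|_{H^{1}}\|\UU\|_{L^{\infty}}$ or $\|\nabla\QQ\|_{H^{1}}^{2}\|\UU\|_{L^{\infty}}$, quadratic in $\nabla\QQ$, not the linear form the lemma asserts with a constant depending only on $\delta$.

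The correct conclusion is that the commutator constant must, in addition to $\delta$, depend on lower-order norms of $\nabla\QQ$ --- for instance on $\|\nabla\QQ\|_{L^{\infty}}$, or on $\|\QQ-\QQ^{*}\|_{H^{k}}$ as is done explicitly in Lemma \ref{SS-Lip-lemma}, whose constant is written $C(\delta,\|\QQ'-\QQ^{*}\|_{H^{k}},\|\QQ''-\QQ^{*}\|_{H^{k}})$ rather than $C_{\delta}$ alone. In every application of Lemma \ref{commutor-YY-QQ-lemma} in the paper such Sobolev norms of $\QQ$ are already uniformly controlled (e.g.\ $\|\QQ^{(n)}-\QQ^{*}\|_{H^{s+1}}\le C_{0}$ in the iteration, or by Proposition \ref{existence-Hilbert-expansion-prop} for the expansion coefficients), so the result as used is fine; but your explicit claim that the pointwise confinement alone makes the constant $\delta$-only is incorrect and should be replaced by ``$C_{\delta}$ may additionally depend on $\|\nabla\QQ\|_{L^{\infty}}$ (or $\|\QQ-\QQ^{*}\|_{H^{k}}$), as in Lemma \ref{SS-Lip-lemma}.''
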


\begin{lemma}\label{defference-YY-UU-lemma}
For any $\delta>0$ and $k\in\mathbb{N}$, there exists positive constants $C_1=C_1(\delta)$ and $C_2=C_2(\delta,\|\QQ'-\QQ^*\|_{H^k},\|\QQ''-\QQ^*\|_{H^k})$ such that
\begin{align*}
\|\CY_{\QQ'}\UU-\CY_{\QQ''}\UU\|_{H^k}\leq C_1\|\UU\|_{H^k}\|\QQ'-\QQ''\|_{L^{\infty}}+C_2\|\UU\|_{L^{\infty}}\|\QQ'-\QQ''\|_{H^k}.
\end{align*}
Furthermore, if $0\leq k\leq 2$, there exists a positive constant $C=C(\delta,\|\QQ'-\QQ^*\|_{H^2},\|\QQ''-\QQ^*\|_{H^2})$ such that
\begin{align*}
\|\CY_{\QQ'}\UU-\CY_{\QQ''}\UU\|_{H^k}\leq &C\|\UU\|_{H^2}\|\QQ'-\QQ''\|_{H^k},\\
\|\CY_{\QQ'}\UU-\CY_{\QQ''}\UU\|_{H^k}\leq &C\|\UU\|_{H^k}\|\QQ'-\QQ''\|_{H^2}.
\end{align*}
Here, the operator $\CY_{\QQ}$ can be taken as $\CM_{\QQ},\CV_{\QQ},\CN_{\QQ}$ and $\CP_{\QQ}$, respectively.
\end{lemma}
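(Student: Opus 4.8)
The plan is to reduce the whole lemma to a single pointwise product structure plus the estimates already available for the entropy derivative. The starting point is the observation that each of the four operators $\CY_{\QQ}\in\{\CM_{\QQ},\CV_{\QQ},\CN_{\QQ},\CP_{\QQ}\}$ acts on its argument $\UU$ by a \emph{pointwise} linear contraction against a fourth-order tensor: there is a tensor $T_{\CY}(\QQ)$, assembled from $\CR_1,\dots,\CR_5,\CV_{Q_1},\CV_{Q_2}$ and the fixed data $\MS,\Gamma_i,I_{ii},e_i,\zeta$, such that $(\CY_{\QQ}\UU)=T_{\CY}(\QQ):\UU$ pointwise. Through the closure approximation these coefficient tensors are linear in $\QQ$ and in the high-order tensors $\HH=\HH(\QQ)$, so by Proposition \ref{high-tensor-smooth-prop} each component of $T_{\CY}$ is a $C^{\infty}$ function of $\QQ$ with all derivatives bounded uniformly on the compact set $\mathbb{Q}_{\delta}$. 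Hence one has the purely algebraic identity
\[
\CY_{\QQ'}\UU-\CY_{\QQ''}\UU=\big(T_{\CY}(\QQ')-T_{\CY}(\QQ'')\big):\UU ,
\]
and, since the proof of Lemma \ref{SS-Lip-lemma} uses nothing about $\CS$ beyond smoothness and uniform bounds on $\mathbb{Q}_{\delta}$ together with Lemma \ref{lem:composition} and Lemma \ref{lem:difference}, the very same argument yields for $T_{\CY}$ the pointwise bound $|T_{\CY}(\QQ')-T_{\CY}(\QQ'')|\le C_{\delta}|\QQ'-\QQ''|$ and the $H^k$ bound $\|T_{\CY}(\QQ')-T_{\CY}(\QQ'')\|_{H^k}\le C(\delta,\|\QQ'-\QQ^*\|_{H^k},\|\QQ''-\QQ^*\|_{H^k})\|\QQ'-\QQ''\|_{H^k}$.

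For the first inequality I would apply the Moser-type product estimate Lemma \ref{lem:product} with $f=T_{\CY}(\QQ')-T_{\CY}(\QQ'')$ and $g=\UU$, getting
\[
\|\CY_{\QQ'}\UU-\CY_{\QQ''}\UU\|_{H^k}\le C\big(\|f\|_{L^{\infty}}\|\UU\|_{H^k}+\|f\|_{H^k}\|\UU\|_{L^{\infty}}\big),
\]
and then substitute the pointwise Lipschitz bound for $\|f\|_{L^{\infty}}$ (costing $\|\QQ'-\QQ''\|_{L^{\infty}}$) into the first term and the $H^k$ Lipschitz bound for $\|f\|_{H^k}$ (costing $\|\QQ'-\QQ''\|_{H^k}$) into the second. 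This produces exactly the asserted estimate with $C_1=C_1(\delta)$ and $C_2=C_2(\delta,\|\QQ'-\QQ^*\|_{H^k},\|\QQ''-\QQ^*\|_{H^k})$.

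For the range $0\le k\le2$ I would instead use the two one-sided product bounds available in $\mathbb{R}^3$, $\|fg\|_{H^k}\le C\|f\|_{H^2}\|g\|_{H^k}$ and $\|fg\|_{H^k}\le C\|f\|_{H^k}\|g\|_{H^2}$, both consequences of Lemma \ref{lem:product} and the embedding $H^2\hookrightarrow L^{\infty}$. Taking $f=T_{\CY}(\QQ')-T_{\CY}(\QQ'')$, $g=\UU$, the first choice together with $\|f\|_{H^k}\le C\|\QQ'-\QQ''\|_{H^k}$ gives the bound with $\|\UU\|_{H^2}\|\QQ'-\QQ''\|_{H^k}$, and the second choice together with $\|f\|_{H^2}\le C\|\QQ'-\QQ''\|_{H^2}$ gives the bound with $\|\UU\|_{H^k}\|\QQ'-\QQ''\|_{H^2}$; since $k\le2$ one may absorb $\|\QQ'-\QQ^*\|_{H^k}\le\|\QQ'-\QQ^*\|_{H^2}$ so that the constant depends only on $\delta$ and the $H^2$ norms, matching the statement. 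The computation is identical for all four $\CY_{\QQ}$ and for both closure choices (original entropy or quasi-entropy), since Section \ref{key-section} supplies the uniform smooth dependence of $T_{\CY}$ on $\QQ$ in every case.

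There is no deep obstacle here — this is genuinely a corollary — and essentially all the content sits in the first paragraph: one must make explicit that \emph{every} coefficient tensor occurring in $\CM_{\QQ},\CV_{\QQ},\CN_{\QQ},\CP_{\QQ}$ is, via closure, a smooth function of $\QQ$ with derivative bounds uniform over $\mathbb{Q}_{\delta}$, so that the $\CS$-type estimates of Lemma \ref{SS-Lip-lemma} transfer verbatim. The only point demanding a little care afterwards is bookkeeping the dependence of constants so that the pointwise-Lipschitz factor feeds the $L^{\infty}$-type term while the $H^k$-Lipschitz factor feeds the $H^k$-type term, and that for $k\le 2$ the Sobolev embedding collapses all norms to $H^2$.
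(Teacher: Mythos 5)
Your proof is correct and follows exactly the route the paper intends: the paper itself only remarks that this lemma is a direct corollary of Lemma \ref{SS-Lip-lemma}, Lemma \ref{lem:product} and Lemma \ref{lem:commutator}, and your argument supplies precisely the missing bookkeeping — identifying $\CY_{\QQ}\UU$ as a pointwise contraction against a coefficient tensor $T_{\CY}(\QQ)$ that is smooth in $\QQ$ with uniform bounds on $\mathbb{Q}_{\delta}$ (Proposition \ref{high-tensor-smooth-prop}), then applying the difference estimate and the Moser product estimate to $T_{\CY}(\QQ')-T_{\CY}(\QQ'')$ and $\UU$. No gaps.
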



\section{Local well-posedness of smooth solutions}\label{loc-wellp}

This section is devoted to studying the local well-posedness of smooth solutions to the system (\ref{Re-MB-Q-tensor-1})--(\ref{Re-MB-Q-tensor-3}).
The major estimates come directly from the basic estimates in Section \ref{key-section}.

For the integer $s\geq2$, we define the space $\mathbb{X}$ as follows:
\begin{align*}
\mathbb{X}(\delta,T,C_0)\eqdefa&\Big\{(\QQ,\vv):\QQ\in\mathbb{Q}_{\delta/2}, ~\|\QQ-\QQ^*\|_{H^{s+1}}+\|\CG(\QQ)\|_{L^2_tH^s_x}\\
&\qquad\qquad\quad+\|\vv\|_{H^s}+\|\nabla\vv\|_{L^2_tH^s_x}\leq C_0,\quad\text{a.e.}~t\in[0,T]\Big\}.
\end{align*}
If the solution $(\QQ,\vv)\in\mathbb{X}$, then by the Sobolev imbedding theorem, it follows that
\begin{align*}
\|\QQ\|_{L^{\infty}}+\|\nabla\QQ\|_{L^{\infty}}+\|\vv\|_{L^{\infty}}\leq C(C_0).
\end{align*}
The proof of Theorem \ref{locall-posedness-theorem} is mainly based on the iterative argument and the closed energy estimate.

\subsection{Linearized system and iteration scheme}
In order to define a sequence $\{(\QQ^{(n)},\vv^{(n)})\}_{n\in\mathbb{N}}$ of approximate solutions to the system (\ref{Re-MB-Q-tensor-1})--(\ref{Re-MB-Q-tensor-3}), we follow an iterative scheme.
First, we set
\begin{align*}
\big(\QQ^{(0)}(\xx,t),\vv^{(0)}(\xx,t)\big)=(\QQ_I(\xx),\vv_I(\xx))\in\mathbb{X}(\delta,T,C_0).
\end{align*}
Assume that $(\QQ^{(n)},\vv^{(n)})\in\mathbb{X}(\delta,T,C_0)$.
We construct $(\QQ^{(n+1)},\vv^{(n+1)})$ by solving the following linearized system:
\begin{align}
&\frac{\partial\QQ^{n+1}}{\partial t}+\vv^{(n)}\cdot\nabla\QQ^{(n+1)}=-\CM_{\QQ^{(n)}}\Big(\frac{1}{\ve}\CJ(\QQ^{(n)})+\CG(\QQ^{(n+1)})\Big)+\CV_{\QQ^{(n)}}\kappa^{(n+1)},\label{approximate-eq-1}\\
&\Big(\frac{\partial\vv^{(n+1)}}{\partial t}+\vv^{(n)}\cdot\nabla\vv^{(n+1)}\Big)_i=-\partial_i p^{(n+1)}+\eta\Delta v^{(n+1)}_i+\partial_j\big(\CP_{\QQ^{(n)}}\kappa^{(n+1)}\big)_{ij}\nonumber\\
&\qquad\qquad+\partial_j\CN_{\QQ^{(n)}}\Big(\frac{1}{\ve}\CJ(\QQ^{(n)})+\CG(\QQ^{(n+1)})\Big)_{ij}+\Big(\frac{1}{\ve}\CJ(\QQ^{(n)})+\CG(\QQ^{(n+1)})\Big)\cdot\partial_i\QQ^{(n)},\label{approximate-eq-2}\\
&\nabla\cdot\vv^{(n+1)}=0,\label{approximate-eq-3}
\end{align}
where $\CJ(\QQ^{(n)})$ and $\CG(\QQ^{(n+1)})_{jk}$ are given by, respectively,
\begin{align*}
\CJ(\QQ^{(n)})=&\MS\CS(\QQ^{(n)})+D_0\QQ^{(n)},\\
\CG(\QQ^{(n+1)})_{jk}=&-D_1(\Delta\QQ^{(n+1)})_{jk}-D_2\MS(\partial_j\partial_i\QQ^{(n+1)}_{ik}).
\end{align*}
Furthermore, 
the initial data satisfies
\begin{align*}
\big(\QQ^{(n+1)}(\xx,t),\vv^{(n+1)}(\xx,t)\big)=(\QQ_I(\xx),\vv_I(\xx)).
\end{align*}
The existence of the solution $(\QQ^{(n+1)},\vv^{(n+1)})$ to the linearized system (\ref{approximate-eq-1})--(\ref{approximate-eq-3}) can be guaranteed by the classical parabolic theory (see \cite{BCD} for instance). The next task is to prove local a priori estimates, that is, for some suitably chosen $T>0$, it holds  $(\QQ^{(n+1)},\vv^{(n+1)})\in\mathbb{X}$.

For the integer $s\geq2$, we define the following two energy functionals:
\begin{align*}
E_s(\QQ,\vv)\eqdefa&~\frac{1}{2}\|\QQ-\QQ^*\|^2_{L^2}+\CF_e[\nabla\QQ]+\frac{1}{2}\|\vv\|^2_{L^2}+\CF_e[\nabla^{s+1}\QQ]+\frac{1}{2}\|\nabla^s\vv\|^2_{L^2},\\
F_s(\QQ,\vv)\eqdefa&~\|\CG(\QQ)\|^2_{L^2}+\|\nabla^s\CG(\QQ)\|^2_{L^2}+\|\nabla\vv\|^2_{L^2}+\|\nabla^{s+1}\vv\|^2_{L^2},
\end{align*}
where $
\CF_e[\nabla\QQ]=\int_{\mathbb{R}^3}F_e(\nabla\QQ)\ud\xx$.
It follows from Sobolev's interpolation theorem that
\begin{align*}
E_s\sim\|\QQ-\QQ^*\|^2_{L^2}+\|\nabla\QQ\|^2_{H^s}+\|\vv\|^2_{H^s},\quad F_s\sim\|\nabla\QQ\|^2_{H^{s+1}}+\|\nabla\vv\|^2_{H^s}.
\end{align*}
Assume that $E^{(n)}_s=E_s(\QQ^{(n)},\vv^{(n)})$. In order to prove $(\QQ^{(n+1)},\vv^{(n+1)})\in\mathbb{X}$, we need to establish the closed energy estimate
\begin{align*}
\frac{\ud}{\ud t}E^{(n+1)}_s+\nu F^{(n+1)}_s\leq C(\delta,C_0,\nu)(1+E^{(n+1)}_s),
\end{align*}
for some small $\nu>0$. The proof is split into three steps.

{\it Step 1. $L^2$-estimate for $\QQ^{(n+1)}-\QQ^*$}. First of all, using the definition of $\CJ(\QQ)$ in (\ref{CJ-QQ}) and Lemma \ref{SS-Lip-lemma}, we deduce that
\begin{align}\label{CJ-QQ-n-estimate-L2}
\|\CJ(\QQ^{(n)})\|_{L^2}=&\|\MS\big(\CS(\QQ^{(n)})-\CS(\QQ^*)\big)+D_0(\QQ^{(n)}-\QQ^*)\|_{L^2}\nonumber\\
\leq&C_{\delta}\|\QQ^{(n)}-\QQ^*\|_{L^2}\leq C(\delta,C_0).
\end{align}
Taking the $L^2$-inner product on the equation (\ref{approximate-eq-1}) with $\QQ^{(n+1)}-\QQ^*$ and using  (\ref{CJ-QQ-n-estimate-L2}), we arrive at
\begin{align}\label{QQ-n+1-estimate-L2}
&\frac{1}{2}\frac{\ud }{\ud t}\|\QQ^{(n+1)}-\QQ^*\|^2_{L^2}=\langle\partial_t\QQ^{(n+1)},\QQ^{(n+1)}-\QQ^*\rangle\nonumber\\
&\quad=-\frac{1}{\ve}\big\langle\CM_{\QQ^{(n)}}\CJ(\QQ^{(n)}),\QQ^{(n+1)}-\QQ^*\big\rangle\nonumber\\
&\qquad-\big\langle\CM_{\QQ^{(n)}}\CG(\QQ^{(n+1)}),\QQ^{(n+1)}-\QQ^*\big\rangle+\big\langle\CV_{\QQ^{(n)}}\kappa^{(n+1)},\QQ^{(n+1)}-\QQ^*\big\rangle\nonumber\\
&\quad\leq \Big(C(\delta,C_0)+C_{\delta}\|\CG(\QQ^{(n+1)})\|_{L^2}+C_{\delta}\|\nabla\vv^{(n+1)}\|_{L^2}\Big)\|\QQ^{(n+1)}-\QQ^*\|_{L^2}\nonumber\\
&\quad\leq C(\delta,C_0)\big((E^{(n+1)}_s)^{1/2}+E^{(n+1)}_s\big).
\end{align}

{\it Step 2. $L^2$-estimate for $(\nabla\QQ^{(n+1)},\vv^{(n+1)})$}. Taking the $L^2$-inner product on the equation (\ref{approximate-eq-1}) with $\CG(\QQ^{(n+1)})$, and noticing the fact that the operator $\CM_{\QQ^{(n)}}$ is positive definite, we obtain
\begin{align}\label{CE-n+1-QQ-estimate-L2}
&\frac{\ud}{\ud t}\CF_e[\nabla\QQ^{(n+1)}]=\langle\partial_t\QQ^{(n+1)},\CG(\QQ^{(n+1)})\rangle\nonumber\\
&\quad=-\langle\vv^{(n)}\cdot\nabla\QQ^{(n+1)},\CG(\QQ^{(n+1)})\rangle-\frac{1}{\ve}\big\langle\CM_{\QQ^{(n)}}\CJ(\QQ^{(n)}),\CG(\QQ^{n+1})\big\rangle\nonumber\\
&\qquad-\big\langle\CM_{\QQ^{(n)}}\CG(\QQ^{(n)}),\CG(\QQ^{n+1})\big\rangle+\big\langle\CV_{\QQ^{(n)}}\kappa^{(n+1)},\CG(\QQ^{n+1})\big\rangle\nonumber\\
&\quad\leq\|\vv^{(n)}\|_{L^{\infty}}\|\nabla\QQ^{(n+1)}\|_{L^2}\|\CG(\QQ^{n+1})\|_{L^2}+C(\delta,C_0)\|\CG(\QQ^{(n+1)})\|_{L^2}\nonumber\\
&\qquad-\nu\|\CG(\QQ^{(n+1)})\|^2_{L^2}+\big\langle\CV_{\QQ^{(n)}}\kappa^{(n+1)},\CG(\QQ^{n+1})\big\rangle\nonumber\\
&\quad\leq C(\delta,C_0,\nu)(1+E^{(n+1)}_s)-\nu\|\CG(\QQ^{(n+1)})\|^2_{L^2}+\big\langle\CV_{\QQ^{(n)}}\kappa^{(n+1)},\CG(\QQ^{n+1})\big\rangle.
\end{align}
On the other hand, taking the inner product on the equation (\ref{approximate-eq-2}) with $\vv^{(n+1)}$, noticing that $\CM_{\QQ^{(n)}}$ is positive definite, we deduce that
\begin{align}\label{vv-n+1-estimate-L2}
&\frac{1}{2}\frac{\ud}{\ud t}\|\vv^{(n+1)}\|^2_{L^2}+\eta\|\nabla\vv^{(n+1)}\|^2_{L^2}\nonumber\\
&\quad=-\big\langle\CP_{\QQ^{(n)}}\kappa^{(n+1)},\nabla\vv^{(n+1)}\big\rangle-\frac{1}{\ve}\big\langle\CN_{\QQ^{(n)}}\CJ(\QQ^{(n)}),\nabla\vv^{(n+1)}\big\rangle\nonumber\\
&\qquad-\big\langle\CN_{\QQ^{(n)}}\CG(\QQ^{(n+1)}),\nabla\vv^{(n+1)}\big\rangle+\frac{1}{\ve}\big\langle\CJ(\QQ^{(n)})\cdot\partial_i\QQ^{(n)},(\vv^{(n+1)})_i\big\rangle\nonumber\\
&\qquad+\big\langle\CG(\QQ^{(n+1)})\cdot\partial_i\QQ^{(n)},(\vv^{(n+1)})_i\big\rangle\nonumber\\
&\quad\leq C(\delta,C_0)\|\nabla\vv^{(n+1)}\|_{L^2}-\big\langle\CN_{\QQ^{(n)}}\CG(\QQ^{(n+1)}),\nabla\vv^{(n+1)}\big\rangle\nonumber\\
&\qquad+C(\delta,C_0)\|\nabla\QQ^{(n)}\|_{H^2}\|\vv^{n+1}\|_{L^2}+C\|\nabla\QQ^{(n)}\|_{H^2}\|\nabla\QQ^{(n+1)}\|_{L^2}\|\nabla\vv^{(n+1)}\|_{L^2}\nonumber\\
&\quad\leq C(\delta,C_0,\nu)(1+E^{(n+1)}_s)-\big\langle\CN_{\QQ^{(n)}}\CG(\QQ^{(n+1)}),\nabla\vv^{(n+1)}\big\rangle
+\delta\|\nabla\vv^{n+1}\|^2_{L^2},
\end{align}
where $\delta$ represents a small positive constant to be determined later.
Consequently, noticing $\CN_{\QQ^{(n)}}=\CV^T_{\QQ^{(n)}}$ and combining (\ref{CE-n+1-QQ-estimate-L2}) with (\ref{vv-n+1-estimate-L2}), we arrive at
\begin{align}\label{QQ-vv-estimate-L2}
&\frac{\ud}{\ud t}\Big(\CF_e[\nabla\QQ^{(n+1)}]+\frac{1}{2}\|\vv^{(n+1)}\|^2_{L^2}\Big)+\nu\|\CG(\QQ^{(n+1)})\|^2_{L^2}\nonumber\\
&\qquad+(\eta-\delta)\|\nabla\vv^{(n+1)}\|^2_{L^2}\leq C(\delta,C_0,\nu)(1+E^{(n+1)}_s).
\end{align}

{\it Step 3. $L^2$-estimate for $(\nabla^{s+1}\QQ^{(n+1)},\nabla^s\vv^{(n+1)})$}. We first consider the estimate of the higher order derivative for $\QQ^{(n+1)}$. Acting the differential operator $\nabla^s$ on the equation (\ref{approximate-eq-1}) and taking the inner product with $\nabla^s\CG(\QQ^{(n+1)})$, we get
\begin{align}\label{CE-high-estimate-L2}
&\frac{\ud}{\ud t}\CF_e[\nabla^{s+1}\QQ^{(n+1)}]=\langle\nabla^s\partial_t\QQ^{(n+1)},\nabla^s\CG(\QQ^{n+1})\rangle\nonumber\\
&\quad=-\big\langle\nabla^s(\vv^{(n)}\cdot\nabla\QQ^{(n+1)}),\nabla^s\CG(\QQ^{(n+1)})\big\rangle
-\frac{1}{\ve}\big\langle\nabla^s\big(\CM_{\QQ^{(n)}}\CJ(\QQ^{(n)})\big),\nabla^s\CG(\QQ^{(n+1)})\big\rangle\nonumber\\
&\qquad-\big\langle\nabla^s\big(\CM_{\QQ^{(n)}}\CG(\QQ^{(n+1)})\big),\nabla^s\CG(\QQ^{(n+1)})\big\rangle+\big\langle\nabla^s\big(\CV_{\QQ^{(n)}}\kappa^{(n+1)}\big),\nabla^s\CG(\QQ^{(n+1)})\big\rangle\nonumber\\
&\quad\eqdefa I_1+I_2+I_3+I_4.
\end{align}
Now we estimate (\ref{CE-high-estimate-L2}) term by term as follows. Using Lemma \ref{SS-Lip-lemma}
and Lemma \ref{lem:product}, the terms $I_1$ and $I_2$ can be handled as
\begin{align*}
I_1\leq& C\|\vv^{(n)}\|_{H^s}\|\nabla\QQ^{(n+1)}\|_{H^s}\|\nabla^s\CG(\QQ^{(n+1)})\|_{L^2}\\
\leq& C(\delta,C_0)\big(E^{(n+1)}_sF^{(n+1)}_s\big)^{1/2},\\
I_2\leq&C_{\delta}\|\QQ^{(n)}-\QQ^*\|_{H^{s+1}}\|\nabla^{s-1}\CG(\QQ^{n+1})\|_{L^2}\leq C(\delta,C_0)\big(E^{(n+1)}_s\big)^{1/2}.
\end{align*}
Moreover, taking advantage of Lemma \ref{commutor-YY-QQ-lemma}, we derive that
\begin{align*}
I_3=&-\big\langle\CM_{\QQ^{(n)}}\big(\nabla^s\CG(\QQ^{(n+1)})\big),\nabla^s\CG(\QQ^{(n+1)})\big\rangle\\
&-\big\langle[\nabla^s,\CM_{\QQ^{(n)}}]\CG(\QQ^{(n+1)}),\nabla^s\CG(\QQ^{(n+1)})\big\rangle\\
\leq&-\nu\|\nabla^s\CG(\QQ^{(n+1)})\|^2_{L^2}+C_{\delta}\|\QQ^{(n)}-\QQ^*\|_{H^s}\|\CG(\QQ^{(n+1)})\|_{H^{s-1}}\|\nabla^s\CG(\QQ^{(n+1)})\|_{L^2}\\
\leq&-\nu\|\nabla^s\CG(\QQ^{(n+1)})\|^2_{L^2}+C(\delta,C_0)\big(E^{(n+1)}_sF^{(n+1)}_s\big)^{1/2},\\
I_4=&\big\langle\CV_{\QQ^{(n)}}\big(\nabla^s\kappa^{(n+1)}\big),\nabla^s\CG(\QQ^{(n+1)})\big\rangle\\
&+\big\langle[\nabla^s,\CV_{\QQ^{(n)}}]\kappa^{(n+1)},\nabla^s\CG(\QQ^{(n+1)})\big\rangle\\
\leq&\big\langle\CV_{\QQ^{(n)}}\big(\nabla^s\kappa^{(n+1)}\big),\nabla^s\CG(\QQ^{(n+1)})\big\rangle\\
&+C_{\delta}\|\QQ^{(n)}-\QQ^*\|_{H^s}\|\nabla\vv^{(n+1)}\|_{H^{s-1}}\|\nabla^s\CG(\QQ^{(n+1)})\|_{L^2}\\
\leq&\big\langle\CV_{\QQ^{(n)}}\big(\nabla^s\kappa^{(n+1)}\big),\nabla^s\CG(\QQ^{(n+1)})\big\rangle+C(\delta,C_0)\big(E^{(n+1)}_sF^{(n+1)}_s\big)^{1/2}.
\end{align*}
Plugging the above estimates into (\ref{CE-high-estimate-L2}) leads to
\begin{align}\label{CE-high-estimate-L2-final}
&\frac{\ud}{\ud t}\CF_e[\nabla^{s+1}\QQ^{(n+1)}]+\nu\|\nabla^s\CG(\QQ^{(n+1)})\|^2_{L^2}\nonumber\\
&\quad\leq \big\langle\CV_{\QQ^{(n)}}\big(\nabla^s\kappa^{(n+1)}\big),\nabla^s\CG(\QQ^{(n+1)})\big\rangle+C(\delta,C_0)\big(E^{(n+1)}_sF^{(n+1)}_s\big)^{1/2}.
\end{align}

We now turn to the estimate of the higher order derivative of $\vv^{(n+1)}$. Similarly, applying the equation (\ref{approximate-eq-2}), we deduce that
\begin{align}\label{vv-high-estimate-L2}
&\frac{1}{2}\frac{\ud}{\ud t}\|\nabla^s\vv^{(n+1)}\|^2_{L^2}+\eta\|\nabla^{s+1}\vv^{n+1}\|^2_{L^2}\nonumber\\
&\quad=-\big\langle\nabla^s(\vv^{(n)}\cdot\nabla\vv^{(n+1)}),\nabla^s\vv^{(s+1)}\big\rangle-\big\langle\nabla^s(\CP_{\QQ^{(n)}}\kappa^{(n+1)}),\nabla^{s+1}\vv^{(n+1)}\big\rangle\nonumber\\
&\qquad-\frac{1}{\ve}\big\langle\nabla^s\big(\CN_{\QQ^{(n)}}\CJ(\QQ^{(n)})\big),\nabla^{s+1}\vv^{(n+1)}\big\rangle-\big\langle\nabla^s\big(\CN_{\QQ^{(n)}}\CG(\QQ^{(n+1)})\big),\nabla^{s+1}\vv^{(n+1)}\big\rangle\nonumber\\
&\qquad+\frac{1}{\ve}\big\langle\nabla^s\big(\CJ(\QQ^{(n)})\cdot\partial_i\QQ^{(n)}\big),\nabla^s(\vv^{(n+1)})_i\big\rangle+\big\langle\nabla^s\big(\CG(\QQ^{(n+1)})\cdot\partial_i\QQ^{(n)}\big),\nabla^s(\vv^{(n+1)})_i\big\rangle\nonumber\\
&\quad\eqdefa I'_1+I'_2+I'_3+I'_4+I'_5+I'_6.
\end{align}
In virtue of Lemma \ref{SS-Lip-lemma}
and Lemma \ref{lem:product}, the terms $I'_1, I'_3$ and $I'_5$ can be estimated as follows:
\begin{align*}
I'_1\leq&C\|\vv^{(n)}\|_{H^s}\|\nabla\vv^{(n+1)}\|_{H^s}\|\nabla^s\vv^{(n+1)}\|_{L^2}\\
\leq&C(\delta,C_0)\big(E^{(n+1)}_s+(E^{(n+1)}_sF^{(n+1)}_s)^{1/2}\big),\\
I'_3\leq&C_{\delta}\|\nabla\QQ^{(n)}\|_{H^s}\|\nabla^s\vv^{(n+1)}\|_{L^2}\leq C(\delta,C_0)(E^{(n+1)}_s)^{1/2},\\
I'_5\leq&C\|\CJ(\QQ^{(n)})\|_{H^s}\|\nabla\QQ^{(n)}\|_{H^s}\|\nabla^s\vv^{(n+1)}\|_{L^2}\leq C(\delta,C_0)(E^{(n+1)}_s)^{1/2}.
\end{align*}
Applying Lemma \ref{commutor-YY-QQ-lemma} and Lemma \ref{lem:commutator}, along with the fact that $\CP_{\QQ^{(n)}}$ is positive definite, we have
\begin{align*}
I'_2=&-\big\langle\CP_{\QQ^{(n)}}(\nabla^s\kappa^{(n+1)}),\nabla^{s+1}\vv^{(n+1)}\big\rangle\\
&-\big\langle[\nabla^s,\CP_{\QQ^{(n)}}]\kappa^{(n+1)},\nabla^{s+1}\vv^{n+1}\big\rangle\\
\leq&C(\delta,C_0)\|\nabla\QQ^{(n)}\|_{H^s}\|\nabla\vv^{(n+1)}\|_{H^{s-1}}\|\nabla^{s+1}\vv^{(n+1)}\|_{L^2}\\
\leq&C(\delta,C_0)(E^{(n+1)}_s)^{1/2},\\
I'_4=&-\big\langle\CN_{\QQ^{(n)}}\big(\nabla^s\CG(\QQ^{(n+1)})\big),\nabla^{s+1}\vv^{(n+1)}\big\rangle\\
&-\big\langle[\nabla^s,\CN_{\QQ^{(n)}}]\CG(\QQ^{(n+1)}),\nabla^{s+1}\vv^{(n+1)}\big\rangle\\
\leq&-\big\langle\CN_{\QQ^{(n)}}\big(\nabla^s\CG(\QQ^{(n+1)})\big),\nabla^{s+1}\vv^{(n+1)}\big\rangle\\
&+C_{\delta}\|\QQ^{(n)}-\QQ^*\|_{H^s}\|\CG(\QQ^{(n+1)})\|_{H^{s-1}}\|\nabla^{s+1}\vv^{(n+1)}\|_{L^2}\\
\leq&-\big\langle\CN_{\QQ^{(n)}}\big(\nabla^s\CG(\QQ^{(n+1)})\big),\nabla^{s+1}\vv^{(n+1)}\big\rangle+C(\delta,C_0)(E^{(n+1)}_sF^{(n+1)}_s)^{1/2}\\
I'_6=&\Big\langle\nabla^{s}\partial_j\Big(\frac{\partial F_e(\nabla\QQ^{(n+1)})}{\partial(\partial_j\QQ^{(n+1)})}\cdot\partial_i\QQ^{(n)}\Big),\nabla^{s}v^{(n+1)}_i\Big\rangle\\
=&-\Big\langle\nabla^{s}\Big(\frac{\partial F_e(\nabla\QQ^{(n+1)})}{\partial(\partial_j\QQ^{(n+1)})}\cdot\partial_i\QQ^{(n)}\Big),\partial_j\nabla^{s}v^{(n+1)}_i\Big\rangle\\
\leq&C\|\nabla\QQ^{(n)}\|_{H^s}\|\nabla\QQ^{(n+1)}\|_{H^s}\|\nabla\vv^{(n+1)}\|_{H^s}\leq C(C_0)(E^{(n+1)}_sF^{(n+1)}_s)^{1/2},
\end{align*}
where for the estimate of $I'_6$ we have used the following identity:
\begin{align}\label{additional-pressure-term}
\CG(\QQ)\cdot\partial_i\QQ=-\partial_j\Big(\frac{\partial F_e(\nabla\QQ)}{\partial(\partial_j\QQ)}\cdot\partial_i\QQ\Big)-\partial_i\widetilde{p}
\end{align}
with $\widetilde{p}$ being an additional pressure term that can be adsorbed into the pressure $p$.
Then, from the above estimates of $I'_i(i=1,\cdots,6)$ and (\ref{vv-high-estimate-L2}), it follows that
\begin{align}\label{vv-high-estimate-L2-final}
&\frac{1}{2}\frac{\ud}{\ud t}\|\nabla^s\vv^{(n+1)}\|^2_{L^2}+\eta\|\nabla^{s+1}\vv^{n+1}\|^2_{L^2}\nonumber\\
&\quad\leq-\big\langle\CN_{\QQ^{(n)}}\big(\nabla^s\CG(\QQ^{(n+1)})\big),\nabla^{s+1}\vv^{(n+1)}\big\rangle\nonumber\\
&\qquad+C(\delta,C_0)(E^{(n+1)}_s)^{1/2}\Big(1+(F^{(n+1)}_s)^{1/2}\Big).
\end{align}
Therefore, combining (\ref{QQ-n+1-estimate-L2}), (\ref{QQ-vv-estimate-L2}), (\ref{CE-high-estimate-L2-final}) with (\ref{vv-high-estimate-L2-final}), we arrive at
\begin{align}\label{Es-n+1-estimate}
\frac{1}{2}\frac{\ud}{\ud t}E^{(n+1)}_s+\nu F^{(n+1)}_s\leq C(\delta,C_0)(1+E^{(n+1)}_s),
\end{align}
for sufficiently small $\nu>0$.
The Gronwall's inequality implies that for any $t\in[0,T]$, there holds
\begin{align*}
E^{(n+1)}_s(t)\leq&\big(1+E^{(n+1)}_s(0)\big)\exp\big(C(\delta,C_0)t\big)-1\\
=&\big(1+E_s(\QQ_I,\vv_I)\big)\exp\big(C(\delta,C_0)t\big)-1.
\end{align*}
Consequently, if we take $T_0>0$ such that
\begin{align*}
C(\delta,C_0)T_0\leq \ln(1+C_0)-\ln\big(1+E_s(\QQ_I,\vv_I)\big),
\end{align*}
then it follows that $\sup\limits_{0\leq t\leq T_0}E^{(n+1)}_s(t)\leq C_0$.

On the other hand, using the equation (\ref{approximate-eq-1}) we derive
\begin{align*}
&\Big\|\int^t_0\partial_t\QQ^{(n+1)}(\xx,t)\ud t\Big\|_{L^{\infty}}\leq\int^t_0\|\partial_t\QQ^{(n+1)}(\xx,t)\|_{H^2}\ud t\\
&\quad\leq C(\delta,C_0)\int^t_0\Big(\|\CG(\QQ^{(n+1)})\|_{H^2}+\|\nabla\vv^{(n+1)}\|_{H^2}+\|\QQ^{(n+1)}-\QQ^*\|_{H^3}+1\Big)\ud t\\
&\quad\leq C(\delta,C_0)t,
\end{align*}
which together with $\QQ_I\in\mathbb{Q}_{\delta}$ implies that $\QQ^{(n+1)}\in\mathbb{Q}_{\delta/2}$ for $t\in[0,T_0]$, if taking $T_0>0$ sufficiently small. Thus, we arrive at $(\QQ^{(n+1)},\vv^{(n+1)})\in\mathbb{X}(\delta,T,C_0)$ for $T\leq T_0$.

\subsection{Convergence of the sequence}
The subsection will be devoted to showing that the approximate solution sequence $\{(\QQ^{(\ell)},\vv^{(\ell)})\}_{\ell\in\mathbb{N}}$ is a Cauchy sequence, and to finishing the proof of Theorem \ref{locall-posedness-theorem}.

For this purpose, we define
\begin{align*}
&\delta^{\ell+1}_{\QQ}=\QQ^{(\ell+1)}-\QQ^{(\ell)},\quad \delta^{\ell}_{\CM}=\CM_{\QQ^{(\ell)}}-\CM_{\QQ^{(\ell-1)}},\quad\delta^{\ell}_{\CV}=\CV_{\QQ^{(\ell)}}-\CV_{\QQ^{(\ell-1)}},\\
&\delta^{\ell}_{\CN}=\CN_{\QQ^{(\ell)}}-\CN_{\QQ^{(\ell-1)}},\quad
\delta^{\ell}_{\CP}=\CP_{\QQ^{(\ell)}}-\CP_{\QQ^{(\ell-1)}},\quad
\delta^{\ell}_{\CJ}=\CJ(\QQ^{(\ell)})-\CJ(\QQ^{(\ell-1)}),\\ &\delta^{\ell+1}_{\vv}=\vv^{(\ell+1)}-\vv^{(\ell)},\quad \delta^{\ell+1}_{\kappa}=\kappa^{(\ell+1)}-\kappa^{(\ell)},\quad \delta^{\ell+1}_p=p^{(\ell+1)}-p^{(\ell)}.
\end{align*}
Assume that $(\QQ^{(\ell+1)},\vv^{(\ell+1)})$ and $(\QQ^{(\ell)},\vv^{(\ell)})$ are two solutions to the linearized system (\ref{approximate-eq-1})--(\ref{approximate-eq-3}) with the same initial data.
Taking the difference between the equations for $(\QQ^{(\ell+1)},\vv^{(\ell+1)})$ and $(\QQ^{(\ell)},\vv^{(\ell)})$, we deduce that
\begin{align}
\frac{\partial\delta^{\ell+1}_{\QQ}}{\partial t}+\vv^{(\ell)}\cdot\nabla\delta^{\ell+1}_{\QQ}=&~\CM_{\QQ^{(\ell)}}\CG(\delta^{\ell+1}_{\QQ})+\CV_{\QQ^{(\ell)}}\delta^{\ell+1}_{\kappa}+\delta\FF^{\ell}_1,\label{difference-equation-1}\\
\Big(\frac{\partial\delta^{\ell+1}_{\vv}}{\partial t}+\vv^{(\ell)}\cdot\nabla\delta^{\ell+1}_{\vv}\Big)_i=&~-\partial_i\delta^{\ell+1}_p+\eta\Delta\delta^{\ell+1}_{v_i}+\partial_j\big(\CP_{\QQ^{(\ell)}}\delta^{\ell+1}_{\kappa}\big)_{ij}\nonumber\\
&+\partial_j\big(\CN_{\QQ^{(\ell)}}\CG(\delta^{\ell+1}_{\QQ})\big)_{ij}+\CG(\delta^{\ell+1}_{\QQ})\cdot\partial_i\QQ^{(\ell)}+\partial_j(\delta\FF^{\ell}_2)_{ij},\label{difference-equation-2}\\
\nabla\cdot\delta^{\ell+1}_{\vv}=&~0,\label{difference-equation-3}
\end{align}
where $\delta\FF^{\ell}_1$ and $\delta\FF^{\ell}_2$ are given by
\begin{align*}
\delta\FF^{\ell}_1=&-\delta^{\ell}_{\vv}\cdot\nabla\QQ^{(\ell)}+\frac{1}{\ve}\big(\CM_{\QQ^{(\ell)}}\delta^{\ell}_{\CJ}+\delta^{\ell}_{\CM}\CJ(\QQ^{(\ell-1)})\big)+\delta^{\ell}_{\CM}\CG(\QQ^{(\ell)})+\delta^{\ell}_{\CV}\kappa^{(\ell)},\\
\partial_j(\delta\FF^{\ell}_2)_{ij}=&~\partial_j\Big(-\delta^{\ell}_{\vv}\otimes\vv^{(\ell)}+\delta^{\ell}_{\CP}\kappa^{(\ell)}+\frac{1}{\ve}\big(\CN_{\QQ^{(\ell)}}\delta^{\ell}_{\CJ}+\delta^{\ell}_{\CN}\CJ(\QQ^{(\ell-1)})\big)\Big)+\delta^{\ell}_{\CN}\CG(\QQ^{(\ell)})\\
&+\frac{1}{\ve}\Big(\delta^{\ell}\cdot\partial\QQ^{\ell}+\CJ(\QQ^{(\ell-1)})\cdot\partial_i\delta^{(\ell)}_{\QQ}\Big).
\end{align*}
Using Lemma \ref{defference-YY-UU-lemma} and integrating by parts, we obtain
\begin{align*}
\|(\delta\FF^{\ell}_1,\delta\FF^{\ell}_2)\|_{L^2}\leq&C(\delta,C_0)(\|\delta^{\ell}_{\QQ}\|_{H^1}+\|\delta^{\ell}_{\vv}\|_{L^2}).
\end{align*}

Similar to the argument in (\ref{QQ-vv-estimate-L2}), we can prove that there exist a sufficiently small $\nu>0$ and $C(\delta,C_0,\nu)>0$, such that
\begin{align}\label{tilde-E0l-estimate}
&\frac{\ud}{\ud t}\widetilde{E}^{(\ell+1)}_0(t)+\frac{\eta}{2}\|\nabla\delta^{\ell+1}_{\vv}\|^2_{L^2}+\nu\|\CG(\delta^{\ell+1}_{\QQ})\|^2_{L^2}\nonumber\\
&\quad\leq C(\delta,C_0,\nu)\big(\|\delta^{\ell+1}_{\vv}\|^2_{L^2}+\|\delta^{\ell+1}_{\QQ}\|^2_{H^1}+\|\delta^{\ell}_{\vv}\|^2_{L^2}+\|\delta^{\ell}_{\QQ}\|^2_{H^1}\big),
\end{align}
where
\begin{align*}
\widetilde{E}^{(\ell)}_0(t)\eqdefa\frac{1}{2}\|\delta^{\ell}_{\QQ}\|^2_{L^2}+\frac{1}{2}\|\delta^{\ell}_{\vv}\|^2_{L^2}+\CF_e(\nabla\delta^{\ell}_{\QQ}).
\end{align*}
Then, from (\ref{tilde-E0l-estimate}) we know
\begin{align*}
\frac{\ud}{\ud t}\widetilde{E}^{(\ell+1)}_0(t)\leq C\Big(\widetilde{E}^{(\ell)}_0(t)+\widetilde{E}^{(\ell+1)}_0(t)\Big),
\end{align*}
which further implies that
\begin{align*}
\widetilde{E}^{(\ell+1)}_0(t)\leq C\int^t_0\exp\big(C(t-\tau)\big)\widetilde{E}^{(\ell)}_0(\tau)\ud\tau\leq C\int^T_0\exp\big(C(T-\tau)\big)\ud\tau\sup_{t\in(0,T]}\widetilde{E}^{(\ell)}_0(t).
\end{align*}
Thus, taking $T<T_0$ small enough such that $C\int^T_0\exp\big(C(T-\tau)\big)\ud\tau\leq\frac{1}{2}$, we arrive at
\begin{align*}
\sup_{t\in(0,T]}\widetilde{E}^{(\ell+1)}_0(t)\leq\frac{1}{2}\sup_{t\in(0,T]}\widetilde{E}^{(\ell)}_0(t).
\end{align*}
This implies that $\{(\QQ^{(\ell)},\vv^{(\ell)})\}_{\ell\in\mathbb{N}}$ is a Cauchy sequence. More precisely, there exists the limits $\QQ-\QQ^*\in C([0,T];H^1(\mathbb{R}^3))$ and $\vv\in C([0,T];L^2(\mathbb{R}^3))$ such that
\begin{align*}
&\QQ^{(n)}-\QQ^*\rightarrow \QQ-\QQ^*\in C([0,T];H^1(\mathbb{R}^3)),\\
&\vv^{(n)}\rightarrow \vv\in C([0,T];L^2(\mathbb{R}^3)).
\end{align*}
Applying the uniform bounds and the Sobolev's interpolation theorem, there holds
\begin{align*}
&\QQ^{(n)}-\QQ^*\rightarrow \QQ-\QQ^*\in C([0,T];H^{s'+1}(\mathbb{R}^3)),\\
&\vv^{(n)}\rightarrow \vv\in C([0,T];H^{s'}(\mathbb{R}^3)),
\end{align*}
for any $s'\in(0,s)$. Therefore, the limit $(\QQ,\vv)$ is just the classical solution to the system (\ref{Re-MB-Q-tensor-1})--(\ref{Re-MB-Q-tensor-3}). Following the proof of convergence for the sequence $\{(\QQ^{(n)},\vv^{(n)})\}_{n\in\mathbb{N}}$, the uniqueness of the limit $(\QQ,\vv)$ can be obtained by the similar energy estimate. Furthermore, by the standard regularity argument for parabolic system, we obtain
\begin{align*}
\QQ-\QQ^*\in C([0,T];H^{s+1}(\mathbb{R}^3)),\quad \vv\in C([0,T];H^s(\mathbb{R}^3))\cap L^2([0,T];H^{s+1}(\mathbb{R}^3)).
\end{align*}
We omit the details here. This completes the proof of Theorem \ref{locall-posedness-theorem}.

\section{Rigorous biaxial limit of two-tensor hydrodynamics}\label{bipr}
In this section, based on the Hilbert expansion of solutions with respect to $\ve$, we rigorously derive the biaxial frame hydrodynamics from the two-tensor hydrodynamics.

\subsection{The Hilbert expansion}\label{Hilbert-subsection}
Let $(\QQ^{\ve},\vv^{\ve})$ be a solution to the system (\ref{Re-MB-Q-tensor-1})--(\ref{Re-MB-Q-tensor-3}). We make the following Hilbert expansion:
\begin{align}
\QQ^{\ve}=&\sum^3_{k=0}\ve^k\QQ^{(k)}+\ve^3\QQ_R\eqdefa\widetilde{\QQ}+\ve^3\QQ_R,\label{QQ-Hilbert-expansion}\\
\vv^{\ve}=&\sum^2_{k=0}\ve^k\vv^{(k)}+\ve^3\vv_R\eqdefa\widetilde{\vv}+\ve^3\vv_R,\label{vv-Hilbert-expansion}
\end{align}
where $\QQ^{(k)}(0\leq k\leq3)$ and $\vv^{(l)}(0\leq l\leq2)$ are independent of $\ve$, and $(\QQ_R,\vv_R)$ represent the remainder term depending upon $\ve$.

By the Taylor expansion, we obtain
\begin{align}
\CJ(\QQ^{\ve})=&\CJ(\QQ^{(0)})+\ve\CH_{\QQ^{(0)}}\QQ^{(1)}+\ve^2\big(\CH_{\QQ^{(0)}}\QQ^{(2)}+\JJ_1\big)+\ve^3\big(\CH_{\QQ^{(0)}}\QQ^{(3)}+\JJ_2\big)\nonumber\\
&+\ve^3\CH_{\QQ^{(0)}}\QQ_R+\ve^4\CJ^{\ve}_R,
\end{align}
where $\JJ_1,\JJ_2$, $\CJ^{\ve}_R$ are given by
\begin{align*}
\JJ_1=&\frac{1}{2}\big(\CJ''(\QQ^{(0)})\QQ^{(1)}\big)\cdot\QQ^{(1)},\\
\JJ_2=&\frac{1}{2}\big(\CJ''(\QQ^{(0)})\QQ^{(1)}\big)\cdot\QQ^{(2)}+\frac{1}{2}\big(\CJ''(\QQ^{(0)})\QQ^{(2)}\big)\cdot\QQ^{(1)}\\
&+\frac{1}{3!}\big(\CJ'''(\QQ^{(0)})\QQ^{(1)}\QQ^{(1)}\big)\cdot\QQ^{(1)},\\
\CJ^{\ve}_R=&\frac{1}{2}\sum\limits_{\mbox{\tiny$\begin{array}{c}
1\leq i,j\leq 3\\
i+j\geq4\end{array}$}}\ve^{i+j-4}\big(\CJ''(\QQ^{(0)})\QQ^{(i)}\big)\cdot\QQ^{(j)}\\
&+\frac{1}{3!}\sum_{\mbox{\tiny$\begin{array}{c}
i+j+k\geq4\\
\text{at least two of}~i,j,k~\text{are not zero}\end{array}$}}
\ve^{i+j+k-4}\Big(\CJ'''(\QQ^{(0)})\QQ^{(i)}\QQ^{(j)}\Big)\cdot\QQ^{(k)}\\
&+\frac{1}{4!}\CJ^{(4)}\big(\QQ^{(0)}+\theta_1\overline{\QQ}^{\ve}\big)(\overline{\QQ}^{\ve})^4+\big(\CJ''(\QQ_0+\theta_2\ve\overline{\QQ}^{\ve})\overline{\QQ}^{\ve}\big)\cdot\QQ_R\\
&+\frac{1}{2}\ve^2\big(\CJ''(\widetilde{\QQ}+\theta_3\ve^3\QQ_R)\QQ_R\big)\cdot\QQ_R,\quad\forall ~\theta_l\in(0,1),~l=1,2,3,
\end{align*}
with $\overline{\QQ}^{\ve}=\QQ^{(1)}+\ve\QQ^{(2)}+\ve^2\QQ^{(3)}$.

Since $\CM_{\QQ},\CV_{\QQ},\CN_{\QQ}$ and $\CP_{\QQ}$ are functions of $\QQ$, we have the following expansions:
\begin{align}\label{CM-CV-CN-CP-out-expansions}
\left\{
\begin{array}{l}
\CM_{\QQ^{\ve}}=\sum\limits^3_{k=0}\ve^k\CM^{(k)}+\ve^3\CM_R+\ve^4\mathfrak{R}_{\CM},\quad \CV_{\QQ^{\ve}}=\sum\limits^3_{k=0}\ve^k\CV^{(k)}+\ve^3\CV_R+\ve^4\mathfrak{R}_{\CV},\vspace{1ex}\\
\CN_{\QQ^{\ve}}=\sum\limits^3_{k=0}\ve^k\CN^{(k)}+\ve^3\CN_R+\ve^4\mathfrak{R}_{\CN},\quad \CP_{\QQ^{\ve}}=\sum\limits^3_{k=0}\ve^k\CP^{(k)}+\ve^3\CP_R+\ve^4\mathfrak{R}_{\CP},
\end{array}
\right.
\end{align}
where $\CM^{(k)},\CV^{(k)},\CN^{(k)}$ and $\CP^{(k)} (k\geq0)$ are given by, respectively,
\begin{align*}
&\CM^{(k)}=\left(
  \begin{array}{cc}
    \CM^{(k)}_{11} &\, \CM^{(k)}_{12}\vspace{0.5ex} \\
    \CM^{(k)}_{12} &\, \CM^{(k)}_{22} \\
  \end{array}
\right)=
\left(
  \begin{array}{cc}
    \Gamma_2\CR^{(k)}_{4}+\Gamma_3\CR^{(k)}_{3} &\, -\Gamma_3\CR^{(k)}_{3} \vspace{0.5ex}\\
    -\Gamma_3\CR^{(k)}_{3} &\, \Gamma_1\CR^{(k)}_{5}+\Gamma_3\CR^{(k)}_{3} \\
  \end{array}
\right), \\
&\CV^{(k)}=\left(\begin{array}{c}
    \CV^{(k)}_{Q_1}  \vspace{0.5ex}\\
    \CV^{(k)}_{Q_2}
\end{array}\right),
\quad
\CN^{(k)}=(\CN^{(k)}_{Q_1},\CN^{(k)}_{Q_2})=\big((\CV^{(k)}_{Q_1})^T, (\CV^{(k)}_{Q_2})^T\big),\\
&\CP^{(k)}=\zeta\big(I_{22}\CR^{(k)}_{1}+I_{11}\CR^{(k)}_{2}+e_1I_{11}\CR^{(k)}_{3}\big).
\end{align*}
Here, $\CM^{(0)},\CV^{(0)},\CN^{(0)},\CP^{(0)}$ are those calculated from closure approximation at $\QQ^{(0)}$. Again, $\CM^{(k)}$, $\CV^{(k)}, \CN^{(k)},\CP^{(k)}(1\leq k\leq3)$ merely depend on $\QQ^{(k)}(0\leq k\leq3)$ but are independent of $\ve$, and 
are polynomials of degree $k$ with respect to $\QQ^{(k)}(1\leq k\leq3)$, respectively. Moreover, $\CM_R,\CV_R,\CN_R$ and $\CP_R$ depend on $\QQ^{(k)}(0\leq k\leq3)$ and $\QQ_R$, and are all linear with respect to $\QQ_R$. All higher order terms with respect to $\ve$ are contained in the terms $\ve^4\mathfrak{R}_{\CM},\ve^4\mathfrak{R}_{\CV},\ve^4\mathfrak{R}_{\CN}$ and $\ve^4\mathfrak{R}_{\CP}$, respectively.

We are now in a position to write down the expansion of the system (\ref{Re-MB-Q-tensor-1})--(\ref{Re-MB-Q-tensor-3}) with the small parameter $\ve$ and collect the terms (independent of the remainder term $(\QQ_R,\vv_R)$) with the same order of $\ve$. More specifically, we have the following:

$\bullet$ {\it The $O(\ve^{-1})$ system}:
\begin{align}\label{O-1-epsion-system}
\CM^{(0)}\CJ(\QQ^{(0)})=0\Rightarrow\CJ(\QQ^{(0)})=0,
\end{align}
because $\CM^{(0)}$ is positive definite.

$\bullet$ {\it Zeroth order term in $\ve$}:
\begin{align}
\frac{\partial\QQ^{(0)}}{\partial t}+\vv^{(0)}\cdot\nabla\QQ^{(0)}=&-\CM^{(0)}\big(\CH_{\QQ^{(0)}}\QQ^{(1)}+\CG(\QQ^{(0)})\big)+\CV^{(0)}\kappa^{(0)},\label{O-0-epsion-system-1}\\
\Big(\frac{\partial\vv^{(0)}}{\partial t}+\vv^{(0)}\cdot\nabla\vv^{(0)}\Big)_i=&-\partial_ip^{(0)}+\eta\Delta v^{(0)}_i+\partial_j\big(\CP^{(0)}\kappa^{(0)}\big)_{ij}\nonumber\\
&+\partial_j\Big(\CN^{(0)}\big(\CH_{\QQ^{(0)}}\QQ^{(1)}+\CG(\QQ^{(0)})\big)_{ij}\Big)\nonumber\\
&+\big(\CH_{\QQ^{(0)}}\QQ^{(1)}+\CG(\QQ^{(0)})\big)\cdot\partial_i\QQ^{(0)},\label{O-0-epsion-system-2}\\
\nabla\cdot\vv^{(0)}=&~0.\label{O-0-epsion-system-3}
\end{align}

$\bullet$ {\it First order term in $\ve$}:
\begin{align}
\frac{\partial\QQ^{(1)}}{\partial t}+\vv^{(0)}\cdot\nabla\QQ^{(1)}=&-\CM^{(0)}\big(\CH_{\QQ^{(0)}}\QQ^{(2)}+\CG(\QQ^{(1)})+\JJ_1\big)+\CV^{(0)}\kappa^{(1)}+\FF_1,\label{O-1-epsion-system-1}\\
\Big(\frac{\partial\vv^{(1)}}{\partial t}+\vv^{(0)}\cdot\nabla\vv^{(1)}\Big)_i=&-\partial_ip^{(1)}+\eta\Delta v^{(1)}_i+\partial_j\big(\CP^{(0)}\kappa^{(1)}\big)_{ij}\nonumber\\
&+\partial_j\Big(\CN^{(0)}\big(\CH_{\QQ^{(0)}}\QQ^{(2)}+\CG(\QQ^{(1)})+\JJ_1\big)_{ij}\Big)\nonumber\\
&+\big(\CH_{\QQ^{(0)}}\QQ^{(2)}+\CG(\QQ^{(1)})+\JJ_1\big)\cdot\partial_i\QQ^{(0)}+\GG_1,\label{O-1-epsion-system-2}\\
\nabla\cdot\vv^{(1)}=&~0,\label{O-1-epsion-system-3}
\end{align}
where $\FF_1$ and $\GG_1$ are given by
\begin{align*}
\FF_1=&-\vv^{(1)}\cdot\nabla\QQ^{(0)}-\CM^{(1)}\big(\CH_{\QQ^{(0)}}\QQ^{(1)}+\CG(\QQ^{(0)})\big)+\CV^{(1)}\kappa^{(0)},\\
\GG_1=&-\vv^{(1)}\cdot\nabla\vv^{(0)}+\partial_j\big(\CP^{(1)}\kappa^{(0)}\big)_{ij}+\partial_j\Big(\CN^{(1)}\big(\CH_{\QQ^{(0)}}\QQ^{(1)}+\CG(\QQ^{(0)})\big)_{ij}\Big)\\
&+\big(\CH_{\QQ^{(0)}}\QQ^{(1)}+\CG(\QQ^{(0)})\big)\cdot\partial_i\QQ^{(1)}.
\end{align*}

$\bullet$ {\it Second order term in $\ve$}:
\begin{align}
\frac{\partial\QQ^{(2)}}{\partial t}+\vv^{(0)}\cdot\nabla\QQ^{(2)}=&-\CM^{(0)}\big(\CH_{\QQ^{(0)}}\QQ^{(3)}+\CG(\QQ^{(2)})+\JJ_2\big)+\CV^{(0)}\kappa^{(2)}+\FF_2,\label{O-2-epsion-system-1}\\
\Big(\frac{\partial\vv^{(2)}}{\partial t}+\vv^{(0)}\cdot\nabla\vv^{(2)}\Big)_i=&-\partial_ip^{(2)}+\eta\Delta v^{(2)}_i+\partial_j\big(\CP^{(0)}\kappa^{(2)}\big)_{ij}\nonumber\\
&+\partial_j\Big(\CN^{(0)}\big(\CH_{\QQ^{(0)}}\QQ^{(3)}+\CG(\QQ^{(2)})+\JJ_2\big)_{ij}\Big)\nonumber\\
&+\big(\CH_{\QQ^{(0)}}\QQ^{(3)}+\CG(\QQ^{(2)})+\JJ_2\big)\cdot\partial_i\QQ^{(0)}+\GG_2,\label{O-2-epsion-system-2}\\
\nabla\cdot\vv^{(2)}=&~0,\label{O-2-epsion-system-3}
\end{align}
where $\FF_2$ and $\GG_2$ are given by
\begin{align*}
\FF_2=&-\vv^{(2)}\cdot\nabla\QQ^{(0)}-\vv^{(1)}\cdot\nabla\QQ^{(1)}-\CM^{(2)}\big(\CH_{\QQ^{(0)}}\QQ^{(1)}+\CG(\QQ^{(0)})\big)\\
&-\CM^{(1)}\big(\CH_{\QQ^{(0)}}\QQ^{(2)}+\CG(\QQ^{(1)})+\JJ_1\big)+\CV^{(2)}\kappa^{(0)}+\CV^{(1)}\kappa^{(1)},\\
\GG_2=&-\vv^{(2)}\cdot\nabla\vv^{(0)}-\vv^{(1)}\cdot\nabla\vv^{(1)}+\partial_j\big(\CP^{(2)}\kappa^{(0)}\big)_{ij}+\partial_j\big(\CP^{(1)}\kappa^{(1)}\big)_{ij}\\
&+\partial_j\Big(\CN^{(2)}\big(\CH_{\QQ^{(0)}}\QQ^{(1)}+\CG(\QQ^{(0)})\big)_{ij}\Big)+\partial_j\Big(\CN^{(1)}\big(\CH_{\QQ^{(0)}}\QQ^{(2)}+\CG(\QQ^{(1)})+\JJ_1\big)_{ij}\Big)\\
&+\big(\CH_{\QQ^{(0)}}\QQ^{(2)}+\CG(\QQ^{(1)})+\JJ_1\big)\cdot\partial_i\QQ^{(1)}+\big(\CH_{\QQ^{(0)}}\QQ^{(1)}+\CG(\QQ^{(0)})\big)\cdot\partial_i\QQ^{(2)}.
\end{align*}

The problem becomes how to solve $(\QQ^{(k)},\vv^{(k)})(0\leq k\leq2)$ and $\QQ^{(3)}$ from the above system (\ref{O-1-epsion-system})--(\ref{O-2-epsion-system-3}).
First of all, the $O(\ve^{-1})$ system in (\ref{O-1-epsion-system}) implies that $\CJ(\QQ^{(0)})=0$,
which will be taken as the biaxial global minimum with the following form:
\begin{align}\label{biaxial-global-minimum}
Q^{(0)}_i(\xx,t)=s_i\Big(\nn^2_1(\xx,t)-\frac{1}{3}\Fi\Big)+b_i\big(\nn^2_2(\xx,t)-\nn^2_3(\xx,t)\big),\quad i=1,2,
\end{align}
for some orthornomal frame $\Fp=(\nn_1,\nn_2,\nn_3)\in SO(3)$.

It can be easily observed that the system of order $O(\ve^k)(k=0,1,2)$ are not closed, since the evolution equations of the leading terms $\QQ^{(k)}(k=0,1,2)$ contain the corresponding non-leading terms $\QQ^{(k+1)}$. However, the zero-eigenvalue subspace ${\rm Ker} \CH_{\QQ^{(0)}}$ of the Hessian of the bulk energy can be utilized to cancel the non-leading terms, and thus closing the system of the leading order.
%
%
In particular, the evolution equations of the frame $\Fp=(\nn_1,\nn_2,\nn_3)$ are determined by the $O(1)$ system (\ref{O-0-epsion-system-1})--(\ref{O-0-epsion-system-3}).
To accompolish this, we take dot product with $(\CM^{(0)})^{-1}\bxi_j$ on \eqref{O-0-epsion-system-1}.
Since $\bxi_j\in {\rm Ker} \CH_{\QQ^{(0)}}$, we obtain
\begin{align*}
    0=\bxi_j\cdot\Big[\big(\CM^{(0)}\big)^{-1}\big(\dot{\QQ}^{(0)}-\CV^{(0)}\kappa^{(0)}\big)+\CG(\QQ^{(0)})\Big].
\end{align*}
By letting $\QQ^{(0)}$ take \eqref{biaxial-global-minimum}, the biaxial frame hydrodynamics is deduced.
This is exactly what has been done in \cite{LX}.

On the other hand, assume that we have derived the equations for $(\Fp,\vv)$ as above.
It implies that $\big(\CM^{(0)}\big)^{-1}\big(\dot{\QQ}^{(0)}-\CV^{(0)}\kappa^{(0)}\big)+\CG(\QQ^{(0)})$ belongs to $({\rm span}\{\bxi_1,\bxi_2,\bxi_3\})^{\perp}$.
By Assumption 1, this term belongs to $({\rm Ker} \CH_{\QQ^{(0)}})^{\perp}$.
Therefore, there exists $\QQ^{(1)}$ satisfying
\begin{align}
-\CH_{\QQ^{(0)}}\QQ^{(1)}=\big(\CM^{(0)}\big)^{-1}\big(\dot{\QQ}^{(0)}-\CV^{(0)}\kappa^{(0)}\big)+\CG(\QQ^{(0)}). \label{QQ-1-out}
\end{align}
This observation is crucial for the construction of $\QQ^{(1)}$, which we will clarify later.

\subsection{Existence of the Hilbert expansion} \label{existen-hilbert-sub}

In this subsection, we show the existence of the Hilbert expansion. In other words, we will show how to
solve $(\QQ^{(k)}, \vv^{(k)}),\,(1 \leq k \leq 2)$ and $\QQ^{(3)}$ from the system (\ref{O-1-epsion-system-1})--(\ref{O-2-epsion-system-3}) and derive the corresponding estimates.
To be more specific, we prove the following proposition.

\begin{proposition}\label{existence-Hilbert-expansion-prop}
Let $(\Fp,\vv^{(0)})$ be a smooth solution to the biaxial frame system  \eqref{frame-equation-n1}--\eqref{yuan-incompressible-v} obtained from the system \eqref{O-0-epsion-system-1}--\eqref{O-0-epsion-system-3} on $[0,T]$, satisfying
\begin{align*}
(\nabla\Fp,\vv^{(0)})\in C([0,T];H^{\ell})\quad \text{for}~\ell\geq20.
\end{align*}
Then, there exists the solution $(\QQ^{(k)},\vv^{(k)})(k=0,1,2)$ and $\QQ^{(3)}\in ({\rm Ker}\CH_{\QQ^{(0)}})^{\perp}$ of the system \eqref{O-1-epsion-system-1}--\eqref{O-2-epsion-system-3} satisfying
\begin{align*}
(\nabla\QQ^{(k)},\vv^{(k)})\in C([0,T];H^{\ell-4k}) (k=0,1,2),\quad \QQ^{(3)}\in C([0,T];H^{\ell-11}).
\end{align*}
\end{proposition}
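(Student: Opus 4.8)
The plan is to build $(\QQ^{(k)},\vv^{(k)})$ for $k=0,1,2$ and $\QQ^{(3)}$ order by order, alternating an \emph{algebraic} step — which fixes the $({\rm Ker}\,\CH_{\QQ^{(0)}})^{\perp}$-component by inverting the Hessian — with a \emph{parabolic} step — which fixes the ${\rm Ker}\,\CH_{\QQ^{(0)}}$-component together with the velocity correction by solving a \emph{linear} coupled system on the full interval $[0,T]$. For the base order, take $\QQ^{(0)}$ to be the biaxial minimizer \eqref{biaxial-global-minimum} built from the given frame $\Fp$; then $\CJ(\QQ^{(0)})=0$ and the $O(\ve^{-1})$ relation \eqref{O-1-epsion-system} hold automatically. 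Since $\QQ^{(0)}$ is a smooth (polynomial) function of $\Fp$, and the frame equations \eqref{frame-equation-n1}--\eqref{yuan-incompressible-v} express $\partial_t\Fp$ through at most second spatial derivatives of $\Fp$ (via the terms $\ML_k\CF_{Bi}$), the hypothesis $(\nabla\Fp,\vv^{(0)})\in C([0,T];H^{\ell})$ transfers to $(\nabla\QQ^{(0)},\vv^{(0)})\in C([0,T];H^{\ell})$ with the corresponding control of $\partial_t\QQ^{(0)}$.

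For the algebraic step, observe that by \eqref{QQ-1-out} the $O(1)$ equation \eqref{O-0-epsion-system-1} is equivalent to $\CH_{\QQ^{(0)}}\QQ^{(1)}=-(\CM^{(0)})^{-1}(\dot{\QQ}^{(0)}-\CV^{(0)}\kappa^{(0)})-\CG(\QQ^{(0)})$. Its right-hand side lies in $({\rm Ker}\,\CH_{\QQ^{(0)}})^{\perp}$ precisely because $(\Fp,\vv^{(0)})$ solves the frame hydrodynamics (here Assumption \ref{HQ-3} enters), so by Proposition \ref{prop-linearized-operator} one may apply $\CH_{\QQ^{(0)}}^{-1}$ to define $\MP^{\rm out}\QQ^{(1)}$; the presence of $\CG(\QQ^{(0)})$ and $\dot{\QQ}^{(0)}$ costs two derivatives. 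The same device, applied to the $\QQ$-equations at orders $O(\ve)$ and $O(\ve^2)$, will produce $\MP^{\rm out}\QQ^{(2)}$ and then $\QQ^{(3)}=\MP^{\rm out}\QQ^{(3)}$ once $\QQ^{(1)},\QQ^{(2)}$ are in hand — and only the $({\rm Ker}\,\CH_{\QQ^{(0)}})^{\perp}$-part of $\QQ^{(3)}$ is needed, since any ${\rm Ker}\,\CH_{\QQ^{(0)}}$-part can be absorbed into $\QQ_R$.

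For the parabolic step, write $\MP^{\rm in}\QQ^{(1)}=\sum_{j=1}^{3}g_j\bxi_j$. Projecting \eqref{O-1-epsion-system-1}, after multiplication by $(\CM^{(0)})^{-1}$, against $\bxi_j$ circumvents the noncommutativity of $\MP^{\rm in}$ and $\CM^{(0)}$ and kills the unknown term $\CH_{\QQ^{(0)}}\QQ^{(2)}$; moreover the quadratic-in-$g$ part of $\JJ_1$ also drops out, because twice differentiating $\CJ(\QQ^{(0)}(\Fp))=0$ along the frame gives $\CJ''(\QQ^{(0)})[\bxi_k,\bxi_l]\in({\rm Ker}\,\CH_{\QQ^{(0)}})^{\perp}$, hence $\bxi_j\cdot\CJ''(\QQ^{(0)})[\bxi_k,\bxi_l]=0$. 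What is left is a linear second-order system for $(g_1,g_2,g_3)$ whose principal part reads $\sum_k A_{jk}\dot{g}_k=-\sum_k \CB_{jk}g_k+\cdots$, with $A_{jk}=\bxi_j\cdot(\CM^{(0)})^{-1}\bxi_k$ symmetric positive definite and $(\CB_{jk})$ a matrix of second-order elliptic operators inheriting the coercivity of the elastic energy $F_e$ on the kernel directions, so the system is uniformly parabolic. Coupling it with the velocity and pressure equations \eqref{O-1-epsion-system-2}--\eqref{O-1-epsion-system-3} — into which one substitutes $\CH_{\QQ^{(0)}}\QQ^{(2)}+\CG(\QQ^{(1)})+\JJ_1=-(\CM^{(0)})^{-1}(\dot{\QQ}^{(1)}-\CV^{(0)}\kappa^{(1)}-\FF_1)$, a combination linear in the unknowns $(g,\vv^{(1)})$ and their first derivatives — gives a linear parabolic--Stokes system for $(\MP^{\rm in}\QQ^{(1)},\vv^{(1)},p^{(1)})$, solvable on all of $[0,T]$ by linear theory. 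The uniform bound comes from an energy estimate patterned on the dissipation law of the tensor model: pairing the velocity equation with $\vv^{(1)}$ and the $g$-equation with a suitable multiple of $\MP^{\rm in}\QQ^{(1)}$, the coupling terms cancel since $\CN^{(0)}=(\CV^{(0)})^{T}$, while $\eta>0$, positive definiteness of $\CP^{(0)}$, and parabolicity of the $g$-system furnish the dissipation; Gronwall then yields $(\nabla\QQ^{(1)},\vv^{(1)})\in C([0,T];H^{\ell-4})$, the extra loss beyond the algebraic step coming from $\CG(\QQ^{(1)}_{\rm out})$ and the stress divergences. Reading off $\MP^{\rm out}\QQ^{(2)}$ from \eqref{O-1-epsion-system-1} and repeating the parabolic step at order $O(\ve^2)$ (note $\JJ_2$ is linear in $\QQ^{(2)}$, and its kernel projection simplifies analogously) yields $(\nabla\QQ^{(2)},\vv^{(2)})\in C([0,T];H^{\ell-8})$ and then $\QQ^{(3)}\in C([0,T];H^{\ell-11})$; this cumulative loss is why $\ell\ge20$ is assumed. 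The fourth-order-tensor estimates of Section \ref{key-section} (Lemmas \ref{SS-Lip-lemma}--\ref{defference-YY-UU-lemma}) are used throughout to bound the coefficients $\CM^{(k)},\CV^{(k)},\CN^{(k)},\CP^{(k)}$ and the forcings $\FF_k,\GG_k$.

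The main obstacle is making the velocity equation in the parabolic step genuinely linear with coefficients bounded on all of $[0,T]$: a naive substitution of the algebraic formula for $\MP^{\rm out}\QQ^{(k+1)}$ would reintroduce $\dot{\QQ}^{(k)}\sim\Delta\MP^{\rm in}\QQ^{(k)}$ under a further divergence $\partial_j$, i.e.\ a third-order term. The resolution is to keep the combination $\CH_{\QQ^{(0)}}\QQ^{(k+1)}+\CG(\QQ^{(k)})+\JJ_k$ intact, rewrite it through the $\QQ^{(k)}$-equation, and then exploit $\CN^{(0)}=(\CV^{(0)})^{T}$ in a coupled energy estimate — the linear counterpart of the tensor-model dissipation identity. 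Verifying the positive definiteness of $A_{jk}$ and of $(\CB_{jk})$, together with the precise derivative accounting at each order, forms the technical core of the argument.
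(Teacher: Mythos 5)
Your proposal follows essentially the same route as the paper's proof: solve $\MP^{\rm out}\QQ^{(k)}$ algebraically by inverting $\CH_{\QQ^{(0)}}$, apply $(\CM^{(0)})^{-1}$ to the $\QQ$-equation before projecting onto ${\rm Ker}\,\CH_{\QQ^{(0)}}$ to sidestep the noncommutativity of $\MP^{\rm in}$ and $\CM^{(0)}$, note that the quadratic part of $\JJ_k$ on kernel directions lies in $({\rm Ker}\,\CH_{\QQ^{(0)}})^{\perp}$, keep the combination $\CH_{\QQ^{(0)}}\QQ^{(k+1)}+\CG(\QQ^{(k)})+\JJ_k$ intact in the velocity equation, and close the estimate by pairing with that combination and with $\vv^{(k)}$ so the $\CN^{(0)}=(\CV^{(0)})^T$ terms cancel. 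The only cosmetic difference is your justification that $\bxi_j\cdot\CJ''(\QQ^{(0)})[\bxi_k,\bxi_l]=0$ via twice differentiating $\CJ(\QQ^{(0)}(\Fp))=0$ along the frame, versus the paper's Leibniz-rule manipulation — both are valid and give the same fact.
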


Let us decompose $\QQ^{(1)}$ according to ${\rm Ker}\CH_{\QQ^{(0)}}$, i.e. $\QQ^{(1)}=\QQ_{\top}^{(1)}+\QQ_{\bot}^{(1)}$ with $\QQ_{\top}^{(1)}\in{\rm Ker}\CH_{\QQ^{(0)}}$ and $\QQ_{\bot}^{(1)}\in({\rm Ker}\CH_{\QQ^{(0)}})^{\perp}$.
Assume that we already have a smooth solution $(\QQ^{(0)}(\Fp),\vv)$.
Before showing Proposition \ref{existence-Hilbert-expansion-prop}, we present a lemma about the material derivative of $\QQ^{(1)}$. In what follows, $L(\cdot)$ represents a linear function with the coefficients belonging to $C([0,T];H^{\ell-1})$ and $R\in C([0,T];H^{\ell-3})$ some function depending only on $\Fp, \vv^{(0)}$ and $\QQ_{\bot}^{(1)}$.

\begin{lemma}\label{lem:proj-marterial-deriva-QQ}
It holds
\begin{align*}
\MP^{\rm out}(\dot{\QQ}^{(1)})=&L(\QQ_{\top}^{(1)})+R,\quad
\MP^{\rm in}(\dot{\QQ}^{(1)})=\dot{\QQ}_{\top}^{(1)}+L(\QQ_{\top}^{(1)})+R,
\end{align*}
where $\dot{\QQ}_{\top}^{(1)}=(\partial_t+\vv^{(0)}\cdot\nabla)\QQ_{\top}^{(1)}$.
\end{lemma}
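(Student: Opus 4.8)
The plan is to exploit the splitting $\QQ^{(1)}=\QQ_{\top}^{(1)}+\QQ_{\bot}^{(1)}$ together with two structural facts that make the projectors and $\QQ_{\bot}^{(1)}$ explicit in the frame. First, since $\QQ^{(0)}(\Fp)$ is a fixed polynomial in the entries of $\Fp$ and $\ML_j$ acts algebraically on polynomials of the $\nn_i$ (via $\ML_k\nn_i=\epsilon^{ijk}\nn_j$), each $\bxi_j=\ML_j\QQ^{(0)}(\Fp)$ — and hence $\bxi_j/|\bxi_j|^2$, $\ee_j/|\ee_j|^2$ and the projectors $\MP^{\rm in},\MP^{\rm out}$ — is a smooth function of $\Fp$ alone, carrying no spatial derivatives. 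Second, by \eqref{QQ-1-out} we have $\QQ_{\bot}^{(1)}=-\CH_{\QQ^{(0)}}^{-1}\big[(\CM^{(0)})^{-1}(\dot{\QQ}^{(0)}-\CV^{(0)}\kappa^{(0)})+\CG(\QQ^{(0)})\big]$, with $\CH_{\QQ^{(0)}}^{-1}$ the inverse on $({\rm Ker}\,\CH_{\QQ^{(0)}})^{\perp}$; since $\CM^{(0)},\CV^{(0)}$ and $\CH_{\QQ^{(0)}}^{-1}$ are smooth functions of $\QQ^{(0)}=\QQ^{(0)}(\Fp)$ (the last using Assumption \ref{HQ-3}), while $\CG(\QQ^{(0)})$ contributes $\nabla^2\QQ^{(0)}$ and $\dot{\QQ}^{(0)}$ contributes $\partial_t\QQ^{(0)}$ — which the frame equations \eqref{frame-equation-n1}--\eqref{frame-equation-n3} express through at most $\nabla^2\Fp$ and $\nabla\vv^{(0)}$ — the tensor $\QQ_{\bot}^{(1)}$ is an explicit smooth function of $(\Fp,\nabla\Fp,\nabla^2\Fp,\vv^{(0)},\nabla\vv^{(0)})$, hence determined by the frame solution and lying in $C([0,T];H^{\ell-1})$.

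Next I would write $\dot{\QQ}^{(1)}=\dot{\QQ}_{\top}^{(1)}+\dot{\QQ}_{\bot}^{(1)}$ and treat the two summands separately. For the kernel part, expand $\QQ_{\top}^{(1)}=\MP^{\rm in}\QQ_{\top}^{(1)}=\sum_{j=1}^{3}\beta_j\bxi_j$ with $\beta_j=(\QQ_{\top}^{(1)}\cdot\bxi_j)/|\bxi_j|^2$, so that the Leibniz rule gives
\[
\dot{\QQ}_{\top}^{(1)}=\sum_{j=1}^{3}\dot{\beta}_j\,\bxi_j+L_2\big(\QQ_{\top}^{(1)}\big),\qquad
L_2\big(\QQ_{\top}^{(1)}\big):=\sum_{j=1}^{3}\frac{\QQ_{\top}^{(1)}\cdot\bxi_j}{|\bxi_j|^2}\,\dot{\bxi}_j,
\]
where the first sum lies in ${\rm Ker}\,\CH_{\QQ^{(0)}}$ and $L_2$ is linear in $\QQ_{\top}^{(1)}$. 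Since $\dot{\bxi}_j=(\partial_{\Fp}\bxi_j):\dot{\Fp}$ and $\dot{\Fp}$ is recovered from the frame equations as a function of $(\Fp,\nabla^2\Fp,\nabla\vv^{(0)})$, the coefficients of $L_2$ — and of $\MP^{\rm in}L_2$ and $\MP^{\rm out}L_2$ — lie in $C([0,T];H^{\ell-1})$. Applying $\MP^{\rm out}$ and $\MP^{\rm in}$, and using $\MP^{\rm out}\big(\sum_j\dot{\beta}_j\bxi_j\big)=0$ together with $\sum_j\dot{\beta}_j\bxi_j=\dot{\QQ}_{\top}^{(1)}-L_2(\QQ_{\top}^{(1)})$, I obtain $\MP^{\rm out}(\dot{\QQ}_{\top}^{(1)})=\MP^{\rm out}L_2(\QQ_{\top}^{(1)})$ and $\MP^{\rm in}(\dot{\QQ}_{\top}^{(1)})=\dot{\QQ}_{\top}^{(1)}-\MP^{\rm out}L_2(\QQ_{\top}^{(1)})$, which are exactly of the forms $L(\QQ_{\top}^{(1)})$ and $\dot{\QQ}_{\top}^{(1)}+L(\QQ_{\top}^{(1)})$.

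For the complement part, $\dot{\QQ}_{\bot}^{(1)}=\partial_t\QQ_{\bot}^{(1)}+\vv^{(0)}\cdot\nabla\QQ_{\bot}^{(1)}$: differentiating the explicit representation of $\QQ_{\bot}^{(1)}$ above and eliminating $\partial_t\Fp$, $\partial_t\vv^{(0)}$ and the pressure $p^{(0)}$ through the frame hydrodynamics, the Navier--Stokes equation and the elliptic problem for $p^{(0)}$ coming from \eqref{yuan-incompressible-v}, one sees that $\dot{\QQ}_{\bot}^{(1)}$ is a function of the frame solution alone, involving at most $\nabla^4\Fp$ and $\nabla^3\vv^{(0)}$, whence $\dot{\QQ}_{\bot}^{(1)}\in C([0,T];H^{\ell-3})$ and it depends only on $\Fp,\vv^{(0)}$ (equivalently, on $\QQ_{\bot}^{(1)}$). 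Taking $R:=\MP^{\rm out}(\dot{\QQ}_{\bot}^{(1)})$ in the first identity and $R:=\MP^{\rm in}(\dot{\QQ}_{\bot}^{(1)})$ in the second, and adding the kernel and complement contributions to $\dot{\QQ}^{(1)}$, yields the two asserted formulas. I expect the only delicate point to be this last regularity bookkeeping — carefully counting how many spatial derivatives of $(\Fp,\vv^{(0)})$, including the nonlocal pressure term, survive after each time substitution so as to land in $H^{\ell-3}$ — while the algebraic part of the decomposition is routine once the two structural facts above are in hand.
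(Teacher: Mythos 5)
Your proof is correct and follows essentially the same route as the paper's: expand $\QQ_{\top}^{(1)}$ in the kernel basis $\{\bxi_j\}$, observe that the material derivative splits into a part lying in ${\rm Ker}\,\CH_{\QQ^{(0)}}$ (which vanishes under $\MP^{\rm out}$) plus a part $\sum_j\beta_j\dot{\bxi}_j$ that is linear in $\QQ_{\top}^{(1)}$, and absorb $\dot{\QQ}_{\bot}^{(1)}$ into $R$. The paper's proof states the same decomposition more tersely without tracking regularity; you additionally make explicit why the $L$-coefficients land in $H^{\ell-1}$ and $R$ in $H^{\ell-3}$ by eliminating $\partial_t\Fp$, $\partial_t\vv^{(0)}$ via the frame and Navier--Stokes equations, which fills in a gap the paper leaves implicit.
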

\begin{proof}
Recall that
\begin{align*}
\bxi_k(\Fp)=\ML_k\QQ^{(0)}(\Fp),\quad k=1,2,3.
\end{align*}
Then, for $\QQ_{\top}^{(1)}\in{\rm Ker}\CH_{\QQ^{(0)}}$, we have
$
\QQ_{\top}^{(1)}=\sum\limits^3_{k=1}a_k(t)\bxi_k(\Fp),
$
from which we obtain
\begin{align*}
\dot{\QQ}_{\top}^{(1)}=&(\partial_t+\vv^{(0)}\cdot\nabla)\QQ_{\top}^{(1)}\\
=&\sum\limits^3_{k=1}a'_k(t)\bxi_k(\Fp)+\sum\limits^3_{k=1}a_k(t)(\partial_t+\vv^{(0)}\cdot\nabla)\bxi_k(\Fp),
\end{align*}
which leads to
\begin{align*}
\MP^{\rm out}(\dot{\QQ}_{\top}^{(1)})=&\sum\limits^3_{k=1}a_k(t)\MP^{\rm out}\Big((\partial_t+\vv^{(0)}\cdot\nabla)\bxi_k(\Fp)\Big)
\eqdefa L(\QQ^{(1)}_{\top}).
\end{align*}
The reason why we can regard $\MP^{\rm out}(\dot{\QQ}_{\top}^{(1)})$ as a linear term of $\QQ^{(1)}_{\top}$ is that they are both linear with respect to the coefficients $a_k(t)$.
Using $\QQ^{(1)}=\QQ_{\top}^{(1)}+\QQ_{\bot}^{(1)}$, we deduce that
\begin{align*}
\dot{\QQ}^{(1)}=&\MP^{\rm in}(\dot{\QQ}^{(1)})+\MP^{\rm out}(\dot{\QQ}^{(1)})\\
=&\MP^{\rm in}(\dot{\QQ}^{(1)}_{\top}+\dot{\QQ}^{(1)}_{\perp})+\MP^{\rm out}(\dot{\QQ}^{(1)}_{\top}+\dot{\QQ}^{(1)}_{\perp})\\
=&\MP^{\rm in}(\dot{\QQ}^{(1)}_{\top})+L(\QQ^{(1)}_{\top})+R.
\end{align*}
Thus, taking the projections $\MP^{\rm out}$ and $\MP^{\rm in}$, respectively, we have
\begin{align*}
\MP^{\rm out}(\dot{\QQ}^{(1)})=&L(\QQ_{\top}^{(1)})+R,\\
\MP^{\rm in}(\dot{\QQ}^{(1)})=&\dot{\QQ}^{(1)}-\MP^{\rm out}(\dot{\QQ}^{(1)})=\dot{\QQ}_{\top}^{(1)}+L(\QQ_{\top}^{(1)})+R.
\end{align*}
\end{proof}

{\it Proof of Proposition \ref{existence-Hilbert-expansion-prop}}.
Suppose that $(\Fp,\vv^{(0)})$ is a solution to the biaxial frame system \eqref{frame-equation-n1}--\eqref{yuan-incompressible-v} on $[0,T]$ and satisfy
\begin{align*}
(\nabla\Fp,\vv^{(0)})\in C([0,T];H^{\ell})\quad \text{for}~\ell\geq20.
\end{align*}
Since $\QQ^{(0)}=\QQ^{(0)}(\Fp)$ is a function of the frame $\Fp$ and takes the form (\ref{biaxial-global-minimum}), it follows that $\QQ^{(0)}\in C([0,T];H^{\ell+1})$.

We solve $\QQ_{\bot}^{(1)}$ by rewriting \eqref{QQ-1-out} as
\begin{align*}
\QQ_{\bot}^{(1)}=-\CH^{-1}_{\QQ^{(0)}}\Big(\big(\CM^{(0)}\big)^{-1}\big(\dot{\QQ}^{(0)}-\CV^{(0)}\kappa^{(0)}\big)+\CG(\QQ^{(0)})\Big)\in C([0,T];H^{\ell-1}).
\end{align*}
Here, the inverse $\CH^{-1}_{\QQ^{(0)}}$ is well-defined within $({\rm Ker}\CH_{\QQ^{(0)}})^{\perp}$ due to Proposition \ref{prop-linearized-operator}.
Thus, the existence of $(\QQ^{(1)}, \vv^{(1)})$ can be reduced to solving $(\QQ^{(1)}_{\top}, \vv^{(1)})$.
The key observation is that $(\QQ^{(1)}_{\top}, \vv^{(1)})$ satisfies a linear dissipative system, which we derive below, although the system seems nonlinear at first glance due to the term $\JJ_1$.

Note that $\CM^{(1)},\CV^{(1)}$ are linear functions of $\QQ^{(1)}$. Consequently, using \eqref{QQ-1-out}, the term $\FF_1$ can be expressed by
\begin{align*}
\FF_1(\QQ^{(1)})=L(\QQ^{(1)}_{\top},\vv^{(1)})+R.
\end{align*}
Since $\JJ_1$ is a quadratic function of $\QQ^{(1)}$ and $\QQ^{(1)}=\QQ_{\top}^{(1)}+\QQ_{\bot}^{(1)}$, there has
\begin{align*}
\JJ_1(\QQ^{(1)})=\JJ_1(\QQ^{(1)}_{\top})+L(\QQ^{(1)}_{\top})+R.
\end{align*}
We claim that $\JJ_1(\QQ^{(1)}_{\top})\in({\rm Ker}\CH_{\QQ^{(0)}})^{\perp}$. In fact, it suffices to prove that
\begin{align*}
  \JJ_1(\QQ^{(1)}_{\top})\cdot\ML_{\alpha}\QQ^{(0)}=0,\quad\alpha=1,2,3.
\end{align*}
By the definition of $\JJ_1$ and $\QQ_{\top}^{(1)}\in{\rm Ker}\CH_{\QQ^{(0)}}$, we deduce that
\begin{align*}
\JJ_1(\QQ^{(1)}_{\top})\cdot\ML_{\alpha}\QQ^{(0)}=&\frac{1}{2}\CJ''(\QQ^{(0)})_{ijklpq}(\QQ^{(1)}_{\top})_{kl}(\QQ^{(1)}_{\top})_{pq}\ML_{\alpha}\QQ^{(0)}_{ij}\\
=&\frac{1}{2}\ML_{\alpha}\big[\CJ'(\QQ^{(0)})_{klpq}(\QQ^{(1)}_{\top})_{kl}\big](\QQ^{(1)}_{\top})_{pq}-\frac{1}{2}\CJ'(\QQ^{(0)})_{klpq}\ML_{\alpha}(\QQ^{(1)}_{\top})_{kl}(\QQ^{(1)}_{\top})_{pq}\\
=&0.
\end{align*}

We are now in a position to derive the system for $(\QQ^{(1)}_{\top},\vv^{(1)})$.
By observing the equation (\ref{O-1-epsion-system-1}), we find that $\CH_{\QQ^{(0)}}\QQ^{(2)}+\JJ_1(\QQ^{(1)}_{\top})\in ({\rm Ker}\CH_{\QQ^{(0)}})^{\perp}$.
This motivates us to take use of the projection operator $\MP^{\rm in}$.
However, we can not cancel non-leading terms by taking the projection $\MP^{\rm in}$ on the equation (\ref{O-1-epsion-system-1}) directly because of $\MP^{\rm in}\CM^{(0)}\neq\CM^{(0)}\MP^{\rm in}$.
Therefore, to obtain the closed linear system of leading terms, we need to investigate the intrinsic structure of the equations (\ref{O-1-epsion-system-1})--(\ref{O-1-epsion-system-3}).

Let us denote
\begin{align*}
&\dot{\QQ}^{(1)}_{\top}=(\partial_t+\vv^{(0)}\cdot\nabla)\QQ^{(1)}_{\top},\quad \dot{\vv}^{(1)}=(\partial_t+\vv^{(0)}\cdot\nabla)\vv^{(1)},\\
&\MA_1=\MP^{\rm in}(\CG(\QQ^{(1)}_{\top})),\quad \MB_1=\MP^{\rm in}\big[(\CM^{(0)})^{-1}\CV^{(0)}\kappa^{(1)}\big],\\
&\MA_2=\MP^{\rm out}(\CG(\QQ^{(1)}_{\top})),\quad \MB_2=\MP^{\rm out}\big[(\CM^{(0)})^{-1}\CV^{(0)}\kappa^{(1)}\big].
\end{align*}
Acting $(\CM^{(0)})^{-1}$ on (\ref{O-1-epsion-system-1}), we derive that
\begin{align}\label{ker-QQ1-eqaution-first}
(\CM^{(0)})^{-1}\dot{\QQ}^{(1)}_{\top}=&-\big(\CH_{\QQ^{(0)}}\QQ^{(2)}+\CG(\QQ^{(1)}_{\top})+\JJ_1(\QQ^{(1)}_{\top})\big)+(\CM^{(0)})^{-1}\CV^{(0)}\kappa^{(1)}\nonumber\\
&+L(\QQ^{(1)}_{\top},\vv^{(1)})+R.
\end{align}
Now we impose the projection $\MP^{\rm in}$ on (\ref{ker-QQ1-eqaution-first}).
Since $\CH_{\QQ^{(0)}}\QQ^{(2)}+\JJ_1(\QQ^{(1)}_{\top})\in ({\rm Ker}\CH_{\QQ^{(0)}})^{\perp}$, we derive from Lemma \ref{lem:proj-marterial-deriva-QQ} that
\begin{align}\label{dotQQ-top1-equ}
\MP^{\rm in}\dot{\QQ}^{(1)}_{\top}=\MP^{\rm in}(\widetilde{\CM}^{(0)})^{-1}(-\MA_1+\MB_1)+L(\QQ^{(1)}_{\top},\vv^{(1)})+R,
\end{align}
where $\widetilde{\CM}^{(0)}\eqdefa\MP^{\rm in}(\CM^{(0)})^{-1}\MP^{\rm in}$ is symmetric positive definite.
On the other hand, we impose the projection $\MP^{\rm out}$ on (\ref{ker-QQ1-eqaution-first}) to obtain
\begin{align*}
\MP^{\rm out}\big((\CM^{(0)})^{-1}\dot{\QQ}^{(1)}_{\top}\big)=-\big(\CH_{\QQ^{(0)}}\QQ^{(2)}+\JJ_1(\QQ^{(1)}_{\top})\big)-\MA_2+\MB_2+L(\QQ^{(1)}_{\top},\vv^{(1)})+R,
\end{align*}
which, together with $\MP^{\rm out}(\dot{\QQ}^{(1)}_{\top})=L(\QQ^{(1)}_{\top})$ and (\ref{dotQQ-top1-equ}), implies that
\begin{align}\label{CH-QQ0+CG+JJ1}
\widetilde{\mu}_{\QQ^{(1)}_{\top}}&\eqdefa\CH_{\QQ^{(0)}}\QQ^{(2)}+\CG(\QQ^{(1)}_{\top})+\JJ_1(\QQ^{(1)}_{\top})\nonumber\\
&=\MA_1+\MB_2-\MP^{\rm out}(\CM^{(0)})^{-1}\MP^{\rm in}\dot{\QQ}^{(1)}_{\top}+L(\QQ^{(1)}_{\top},\vv^{(1)})+R\nonumber\\
&=\MA_1+\MB_2-\MP^{\rm out}(\CM^{(0)})^{-1}\MP^{\rm in}(\widetilde{\CM}^{(0)})^{-1}(-\MA_1+\MB_1)+L(\QQ^{(1)}_{\top},\vv^{(1)})+R.
\end{align}

Thus, substituting (\ref{CH-QQ0+CG+JJ1}) into the equation (\ref{O-1-epsion-system-2}), together with (\ref{dotQQ-top1-equ}), we obtain the following closed linear system for $(\QQ^{(1)}_{\top},\vv^{(1)})$:
\begin{align}
\dot{\QQ}^{(1)}_{\top}=&-\CM^{(0)}\widetilde{\mu}_{\QQ^{(1)}_{\top}}+\CV^{(0)}\kappa^{(1)}+L(\QQ^{(1)}_{\top},\vv^{(1)})+R,\label{QQ1-top-equation-1}\\
\dot{v}^{(1)}_i=&-\partial_ip^{(1)}+\eta\Delta v^{(1)}_i+\partial_j\big(\CP^{(0)}\kappa^{(1)}\big)_{ij}\nonumber\\
&+\partial_j\Big(\CN^{(0)}\big(\widetilde{\mu}_{\QQ^{(1)}_{\top}}+L(\QQ^{(1)}_{\top},\vv^{(1)})+R\big)\Big)_{ij}\nonumber\\
&+\widetilde{\mu}_{\QQ^{(1)}_{\top}}\cdot\partial_i\QQ^{(0)}+L(\QQ^{(1)}_{\top},\vv^{(1)})+R,\label{vv1-ker-equation-2}\\
\nabla\cdot\vv^{(1)}=&~0.\label{vv1-incomp-equation-3}
\end{align}

In order to prove the unique solvability of the linear system (\ref{QQ1-top-equation-1})--(\ref{vv1-incomp-equation-3}), we establish an a priori estimate for the energy
\begin{align*}
\CE_{\ell}(t)\eqdefa\sum^{\ell-4}_{k=0}\Big(\|\partial^k\vv^{(1)}\|^2_{L^2}+\big\langle\partial^k\QQ^{(1)}_{\top},\CG(\partial^k\QQ^{(1)}_{\top})\big\rangle\Big)+\|\QQ^{(1)}_{\top}\|^2_{L^2}.
\end{align*}
Specifically, we show that there exists a positive constant $C$ such that
\begin{align}
\frac{\ud}{\ud t}\CE_{\ell}(t)\leq C\big(\CE_{\ell}(t)+\|R(t)\|_{H^{\ell-3}}\big),
\end{align}
where the solution $(\QQ^{(1)}_{\top},\vv^{(1)})$ satisfies $(\nabla\QQ^{(1)}_{\top},\vv^{(1)})\in C([0,T];H^{\ell-4})$.
It suffices to prove the case of $k=0$ due to similar proof for the general case. The corresponding energy functional is defined as
\begin{align*}
\CE_1(t)=\|\vv^{(1)}\|^2_{L^2}+\|\QQ^{(1)}_{\top}\|^2_{L^2}+\langle\QQ^{(1)}_{\top},\CG(\QQ^{(1)}_{\top})\rangle.
\end{align*}

To begin with, it follows from (\ref{QQ1-top-equation-1}) and (\ref{CH-QQ0+CG+JJ1}) that
\begin{align}\label{QQ1-top-L2}
\frac{1}{2}\frac{\ud}{\ud t}\|\QQ^{(1)}_{\top}\|^2_{L^2}=&\big\langle-\CM^{(0)}\widetilde{\mu}_{\QQ^{(1)}_{\top}}+\CV^{(0)}\kappa^{(1)},\QQ^{(1)}_{\top}\big\rangle+\langle L(\QQ^{(1)}_{\top},\vv^{(1)})+R,\QQ^{(1)}_{\top}\rangle\nonumber\\
\leq&\delta\|\nabla\vv^{(1)}\|^2_{L^2}+C_{\delta}\|\QQ^{(1)}_{\top}\|^2_{H^1}+C(\|\vv^{(1)}\|^2_{L^2}+\|R\|^2_{L^2}).
\end{align}
Taking the inner product on (\ref{QQ1-top-equation-1}) with $\widetilde{\mu}_{\QQ^{(1)}_{\top}}$, and on (\ref{vv1-ker-equation-2}) with $\vv^{(1)}$, respectively, we deduce that
\begin{align}\label{vv1-QQ1-deriv-L2}
&\frac{\ud}{\ud t}\Big(\langle\QQ^{(1)}_{\top},\CG(\QQ^{(1)}_{\top})\rangle+\frac{1}{2}\|\vv^{(1)}\|^2_{L^2}\Big)=\langle\partial_t\QQ^{(1)}_{\top},\CG(\QQ^{(1)}_{\top})\rangle+\langle\partial_t\vv^{(1)},\vv^{(1)}\rangle\nonumber\\
&\quad=\underbrace{-\big\langle\dot{\QQ}^{(1)}_{\top},\CH_{\QQ^{(0)}}\QQ^{(2)}+\JJ_1(\QQ^{(1)}_{\top})\big\rangle}_{J_1}\underbrace{-\langle\vv^{(0)}\cdot\nabla\QQ^{(1)}_{\top},\CG(\QQ^{(1)}_{\top})\rangle}_{J_2}\nonumber\\
&\qquad-\big\langle\CM^{(0)}\widetilde{\mu}_{\QQ^{(1)}_{\top}},\widetilde{\mu}_{\QQ^{(1)}_{\top}}\big\rangle+\underbrace{\big\langle\CV^{(0)}\kappa^{(1)},\widetilde{\mu}_{\QQ^{(1)}_{\top}}\big\rangle}_{J_3}+\underbrace{\big\langle L(\QQ^{(1)}_{\top},\vv^{(1)})+R,\widetilde{\mu}_{\QQ^{(1)}_{\top}}\big\rangle}_{J_4}\nonumber\\
&\qquad-\eta\|\nabla\vv^{(1)}\|^2_{L^2}-\langle\CP^{(0)}\kappa^{(1)},\kappa^{(1)}\rangle\underbrace{-\big\langle\CN^{(0)}\widetilde{\mu}_{\QQ^{(1)}_{\top}},\kappa^{(1)}\big\rangle}_{J_5}\nonumber\\
&\qquad\underbrace{-\big\langle \CN^{(0)}\big(L(\QQ^{(1)}_{\top},\vv^{(1)})+R\big),\kappa^{(1)}\big\rangle+\langle\widetilde{\mu}_{\QQ^{(1)}_{\top}}\cdot\partial_i\QQ^{(0)},\vv^{(1)}\rangle}_{J_6}\nonumber\\
&\qquad+\underbrace{\big\langle L(\QQ^{(1)}_{\top},\vv^{(1)})+R,\vv^{(1)}\big\rangle}_{J_7}.
\end{align}
It can be easily seen that $J_3+J_5=0$.
Using the definition of $\widetilde{\mu}_{\QQ^{(1)}_{\top}}$, we obtain
\begin{align*}
J_2+J_4+J_6+J_7\leq\delta\|\nabla\vv^{(1)}\|^2_{L^2}+C_{\delta}(\|\vv^{(1)}\|^2_{L^2}+\|\QQ^{(1)}_{\top}\|^2_{H^1}+\|R\|^2_{H^1}).
\end{align*}
where we have used the following fact: for any $\QQ=(Q_1,Q_2)^T\in\mathbb{Q}$, it holds
\begin{align*}
&-\langle\vv^{(0)}\cdot\nabla\QQ,\CG(\QQ)\rangle\\
&\quad=-\int_{\mathbb{R}^3}v^{(0)}_{\alpha}\partial_{\alpha}\QQ_{kl}\Big(D_1\Delta\QQ_{kl}+D_2\MS(\partial_k\partial_m\QQ_{lm})\Big)\ud\xx\\
&\quad=\int_{\mathbb{R}^3}\Big(-\partial_mv^{(0)}_{\alpha}\partial_{\alpha}\QQ_{jk}D_1\partial_m\QQ_{jk}-D_2\big(\partial_lv^{(0)}_{\alpha}\partial_{\alpha}\QQ_{kl}\partial_m\QQ_{km}+\partial_kv^{(0)}_{\alpha}\partial_{\alpha}\QQ_{kl}\partial_m\QQ_{lm}\big)\Big)\ud\xx\\
&\quad\leq C\|\QQ\|^2_{H^1}.
\end{align*}
It remains to estimate the term $J_1$. Noting that $\CH_{\QQ^{(0)}}\QQ^{(2)}+\JJ_1(\QQ^{(1)}_{\top})\in ({\rm Ker}\CH_{\QQ^{(0)}})^{\perp}$ and using Lemma \ref{lem:proj-marterial-deriva-QQ}, we derive that
\begin{align*}
J_1=&-\big\langle\MP^{\rm out}(\dot{\QQ}^{(1)}_{\top}),\CH_{\QQ^{(0)}}\QQ^{(2)}+\JJ_1(\QQ^{(1)}_{\top})\big\rangle\\
=&-\big\langle L(\QQ^{(1)}_{\top}),\widetilde{\mu}_{\QQ^{(1)}_{\top}}-\CG(\QQ^{(1)}_{\top})\big\rangle\\
\leq&\delta\|\nabla\vv^{(1)}\|^2_{L^2}+C_{\delta}(\|\vv^{(1)}\|^2_{L^2}+\|\QQ^{(1)}_{\top}\|^2_{H^1}+\|R\|^2_{L^2}).
\end{align*}
Thus, plugging the above terms $J_i(i=1,\cdots,7)$ into (\ref{vv1-QQ1-deriv-L2}) and using (\ref{QQ1-top-L2}), we arrive at
\begin{align*}
\frac{\ud}{\ud t}\CE_1(t)\leq C(\CE_1(t)+\|R\|^2_{H^1}),
\end{align*}
which implies the existence of the solution $(\QQ^{(1)}_{\top},\vv^{(1)})$.

Hence, the solution $(\QQ^{(1)},\vv^{(1)})$ can be uniquely
determined. In a similar argument, we can solve $(\QQ^{(2)},\vv^{(2)})$ and $\QQ^{(3)}$ by the system (\ref{O-2-epsion-system-1})--(\ref{O-2-epsion-system-3}).
Here we omit the details.

\subsection{System for the remainder and uniform estimates}\label{remainder-subsection}

This subsection will be devoted to deriving the remainder system and the uniform estimates for the remainder.
Proposition \ref{existence-Hilbert-expansion-prop} tells us that $(\nabla \QQ^{(k)},\vv^{(k)})\in C([0, T]; H^{\ell-4k})$ for $k=0,1,2$ and $\QQ^{(3)}\in C([0, T]; H^{\ell-11})$.
Hence, in what follows, $\QQ^{(k)}$ and $\vv^{(k)}$ will be treated as known functions.
We denote by $C$ a constant depending on $\sum ^2_{k=0}\sup_{t\in [0,T]}\|\vv^{(k)}(t)\|_{H^{\ell-4k}}$ and
$\sum^3_{k=0}\sup_{t\in [0,T]}\| \QQ^{(k)}(t)\|_{H^{\ell+1-4k}}$, and independent of $\ve$.

Recall the definitions (\ref{QQ-Hilbert-expansion}) and (\ref{vv-Hilbert-expansion}), the remainder is written as
\begin{align}\label{QR-vR}
\QQ_R=\frac{1}{\ve}(\QQ^{\ve}-\widetilde{\QQ}),\quad \vv_R=\frac{1}{\ve}(\vv^{\ve}-\widetilde{\vv}),
\end{align}
where $\QQ_R$ and $\vv_R$ depend on $\ve$.
In order to derive the evolution equations of the
remainder of $(\QQ_R,\vv_R)$, we express the system of $(\QQ^{\ve},\vv^{\ve})$ in the abstract form below:
\begin{align}
\partial_t\QQ^{\ve}=&-\frac{1}{\ve}\CM_{\QQ^{\ve}}\CJ(\QQ^{\ve})+\VV(\QQ^{\ve},\vv^{\ve}),\label{abstract-equ-1}\\
\partial_tv^{\ve}_i=&\mathbb{P}_{\rm div}\Big[\partial_j\Big(\frac{1}{\ve}\CN_{\QQ^{\ve}}\CJ(\QQ^{\ve})+\NN(\QQ^{\ve},\vv^{\ve})\Big)_{ij}+\KK(\QQ^{\ve})_i\Big],
\end{align}
where $\mathbb{P}_{\rm div}$ is a projection operator mapping a vector field into its solenoidal part, and $\VV,\NN, \KK$ are given by
\begin{align*}
&\VV(\QQ,\vv)=-\CM_{\QQ}\CG(\QQ)+\CV_{\QQ}\kappa-\vv\cdot\nabla\QQ,\quad \KK(\QQ)_i=\mu_{\QQ}\cdot\partial_i\QQ,\\
&\NN(\QQ,\vv)=\CN_{\QQ}\CG(\QQ)-\vv\otimes\vv+2\eta\A+\CP_{\QQ}\kappa.
\end{align*}
Consequently, we deduce that
\begin{align}
\partial_t\QQ_R=&-\frac{1}{\ve^4}\Big(\CM_{\QQ^{\ve}}\CJ(\QQ^{\ve})-\CM_{\widetilde{\QQ}}\CJ(\widetilde{\QQ})\Big)+\frac{1}{\ve^3}\Big(\VV(\QQ^{\ve},\vv^{\ve})-\VV(\widetilde{\QQ},\widetilde{\vv})\Big)+\RR_1(\widetilde{\QQ},\widetilde{\vv}),\label{QQR-equation-chu}\\
(\partial_t\vv_R)_i=&~\mathbb{P}_{\rm div}\partial_j\Big[\frac{1}{\ve^4}\Big(\CN_{\QQ^{\ve}}\CJ(\QQ^{\ve})-\CN_{\widetilde{\QQ}}\CJ(\widetilde{\QQ})\Big)+\frac{1}{\ve^3}\Big(\NN(\QQ^{\ve},\vv^{\ve})-\NN(\widetilde{\QQ},\widetilde{\vv})\Big)\Big]_{ij}\nonumber\\
&+\mathbb{P}_{\rm div}\frac{1}{\ve^3}\Big(\KK(\QQ^{\ve})-\KK(\widetilde{\QQ})\Big)_i+\RR_2(\widetilde{\QQ},\widetilde{\vv}),\label{vvR-equation-chu}
\end{align}
where $\RR_i(\widetilde{\QQ},\widetilde{\vv})(i=1,2)$ are expressed by
\begin{align*}
\RR_1(\widetilde{\QQ},\widetilde{\vv})=&\frac{1}{\ve^3}\Big(-\frac{1}{\ve}\CM_{\widetilde{\QQ}}\CJ(\widetilde{\QQ})+\VV(\widetilde{\QQ},\widetilde{\vv})-\partial_t\widetilde{\QQ}\Big),\\
\RR_2(\widetilde{\QQ},\widetilde{\vv})=&\mathbb{P}_{\rm div}\frac{1}{\ve^3}\Big[\partial_j\Big(\frac{1}{\ve}\CN_{\widetilde{\QQ}}\CJ(\widetilde{\QQ})+\NN(\widetilde{\QQ},\widetilde{\vv})\Big)_{ij}+\KK(\widetilde{\QQ})-\partial_t\widetilde{v}_i\Big].
\end{align*}

There is no doubt that this is a rather tedious task if we want to precisely express the right-hand
terms of the system (\ref{QQR-equation-chu})--(\ref{vvR-equation-chu}) due to its high nonlinearity derived from the closure approximation.
To simplify the presentation, we introduce a notation $\mathfrak{R}$, called ${\it good~ terms}$, to stand for terms satisfying
\begin{align}\label{good-terms}
\|\mathfrak{R}\|_{L^2}+\ve\|\nabla\mathfrak{R}\|_{L^2}+\ve^2\|\Delta\mathfrak{R}\|_{L^2} \leq C(\ve E)(1+E+\ve F)+\ve f(E),
\end{align}
where $C(\cdot)$ and $f(\cdot): \mathbb{R}^+\cup\{0\}\rightarrow\mathbb{R}^+\cup\{0\}$ are increasing functions.
They may depend on $\|\QQ^{(k)}\|(k=0,\cdots,3)$ and parameters of the system but is independent of $\ve$. Here, $E$ and $F$ in (\ref{good-terms}) are defined as, respectively,
\begin{align}
E\eqdefa&\|\QQ_R\|_{H^1}+\ve\|\Delta\QQ_R\|_{L^2}+\ve^2\|\nabla\Delta\QQ_R\|_{L^2}+\|\vv_R\|_{L^2}+\ve\|\nabla\vv_R\|_{L^2}+\ve^2\|\Delta\vv_R\|_{L^2},\label{E-norm}\\
F\eqdefa&\ve\|\nabla\CG(\QQ_R)\|_{L^2}+\ve^2\|\Delta\CG(\QQ_R)\|_{L^2}+\ve^2\|\Delta\nabla\vv_R\|_{L^2}.\label{F-norm}
\end{align}
Let us give some examples of good terms that can be absorbed into $\mathfrak{R}$.
Using the Sobolev embedding theorem, for $k=0,1,2$ and some positive constant $C$, it follows that
\begin{align*}
&\ve^k\|\QQ_R\|_{H^k}+\ve^k\|\vv_R\|_{H^k}\leq E,\quad \ve\|\QQ_R\|_{L^{\infty}}+\ve^2\|\vv_R\|_{L^{\infty}}\leq CE,\\
&\ve^{k+1}\|\CG(\QQ_R)\|_{H^k}+\ve^3\|\nabla\vv_R\|_{L^{\infty}}\leq C(E+\ve F).
\end{align*}
In addition, since $\|\QQ^{(0)}-\QQ^{*}\|_{H^k}, \|\QQ^{(i)}\|_{H^k}(k\leq3,1\leq i\leq3)$ can be all controlled by a constant independent of $\ve$, there holds
\begin{align*}
\|\QQ^{\ve}-\QQ^{*}\|_{H^k}\leq C+\ve^3\|\QQ_R\|_{H^k}\leq C(\ve E),\quad \|\vv^{\ve}\|_{H^k}\leq C(\ve E).
\end{align*}
It should be emphasized that
the key feature of the good terms $\mathfrak{R}$ lies in the right-hand side being controlled
by $C(1+E)$ as $\ve\rightarrow0$. Thus, we can deduce a closed energy estimate uniformly in $\ve$ (see
Proposition \ref{uniform-bound-prop} below).

Armed with the definition of good term $\mathfrak{R}$, let us derive the right-hand terms of the system (\ref{QQR-equation-chu})--(\ref{vvR-equation-chu}).
First of all, by means of the choices of $\QQ^{(k)}(0\leq k\leq3), \vv^{(l)}(0\leq l\leq 2)$ by its regularity in Proposition \ref{existence-Hilbert-expansion-prop}, it can be seen that $\|\RR_k(\widetilde{\QQ},\widetilde{\vv})\|_{H^2}(k=1,2)$ are all controlled by a constant uniformly in $\ve$, thus can be absorbed into
the good terms $\mathfrak{R}$.

For the remaining terms in the
system (\ref{QQR-equation-chu})--(\ref{vvR-equation-chu}), we have the following two lemmas:

\begin{lemma}\label{singular-remainder-le}
It holds
\begin{align}
\CM_{\QQ^{\ve}}\CJ(\QQ^{\ve})-\CM_{\widetilde{\QQ}}\CJ(\widetilde{\QQ})=&\ve^3\CM_{\QQ^{(0)}}(\CH_{\QQ^{(0)}}\QQ_R)+\ve^4\mathfrak{R},\label{CMQQve-CMwidQQ}\\
\CN_{\QQ^{\ve}}\CJ(\QQ^{\ve})-\CN_{\widetilde{\QQ}}\CJ(\widetilde{\QQ})=&\ve^3\CN_{\QQ^{(0)}}(\CH_{\QQ^{(0)}}\QQ_R)+\ve^4\mathfrak{R}.\label{CNQQve-CNwidQQ}
\end{align}
\end{lemma}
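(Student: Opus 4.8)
The plan is to extract from $\CJ(\QQ^\ve)$ its single singular ($\ve^{-1}$-scale) contribution $\ve^3\CH_{\QQ^{(0)}}\QQ_R$ and push everything else into the good-term class. Since $\QQ^\ve=\widetilde\QQ+\ve^3\QQ_R$ and $\CJ$ is $C^\infty$ on $\mathbb{Q}_{\delta}$ (Proposition~\ref{entropy-smooth-prop} together with the smoothness of the closure, Proposition~\ref{high-tensor-smooth-prop}), I would apply Taylor's formula with integral remainder,
\[
\CJ(\QQ^\ve)-\CJ(\widetilde\QQ)=\ve^3\,\CJ'(\widetilde\QQ)[\QQ_R]+\ve^6\!\int_0^1(1-\tau)\,\CJ''(\widetilde\QQ+\tau\ve^3\QQ_R)[\QQ_R,\QQ_R]\,\ud\tau,
\]
the integral form being needed so that spatial derivatives can later be taken under the integral. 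Because $\widetilde\QQ-\QQ^{(0)}=\ve\QQ^{(1)}+\ve^2\QQ^{(2)}+\ve^3\QQ^{(3)}=O(\ve)$ with Sobolev norms bounded uniformly in $\ve$ (Proposition~\ref{existence-Hilbert-expansion-prop}), and $\CJ'(\QQ^{(0)})=\CH_{\QQ^{(0)}}$ (this is precisely the coefficient of $\ve$ in the Taylor expansion of $\CJ$ recorded before the lemma), Lemma~\ref{SS-Lip-lemma} and Lemma~\ref{defference-YY-UU-lemma} give $\|\CJ'(\widetilde\QQ)-\CH_{\QQ^{(0)}}\|_{H^k}\le C\ve$ on the relevant range of $k$, so $\CJ'(\widetilde\QQ)[\QQ_R]=\CH_{\QQ^{(0)}}\QQ_R+\ve\,\mathfrak R$. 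The integral term carries a prefactor $\ve^6=\ve^4\cdot\ve^2$, and using $\ve\|\QQ_R\|_{L^\infty}\le CE$, $\ve^2\|\QQ_R\|_{H^2}\le C\ve E$ together with the product estimate (Lemma~\ref{lem:product}), one checks that $\ve^2\CJ''(\cdots)[\QQ_R,\QQ_R]$ satisfies the three bounds in \eqref{good-terms}. This yields $\CJ(\QQ^\ve)-\CJ(\widetilde\QQ)=\ve^3\CH_{\QQ^{(0)}}\QQ_R+\ve^4\mathfrak R$.

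Next I would split the product as
\[
\CM_{\QQ^\ve}\CJ(\QQ^\ve)-\CM_{\widetilde\QQ}\CJ(\widetilde\QQ)
=\CM_{\QQ^\ve}\big(\CJ(\QQ^\ve)-\CJ(\widetilde\QQ)\big)+\big(\CM_{\QQ^\ve}-\CM_{\widetilde\QQ}\big)\CJ(\widetilde\QQ).
\]
Into the first bracket I insert the expansion just obtained: $\CM_{\QQ^\ve}(\ve^4\mathfrak R)$ is again a good term (Lemma~\ref{commutor-YY-QQ-lemma} handles derivatives landing on $\CM_{\QQ^\ve}$), while $\ve^3\CM_{\QQ^\ve}\CH_{\QQ^{(0)}}\QQ_R=\ve^3\CM_{\QQ^{(0)}}\CH_{\QQ^{(0)}}\QQ_R+\ve^3(\CM_{\QQ^\ve}-\CM_{\QQ^{(0)}})\CH_{\QQ^{(0)}}\QQ_R$, and the last piece is $\ve^4\mathfrak R$ since $\|\QQ^\ve-\QQ^{(0)}\|_{H^k}\le C\ve+\ve^3\|\QQ_R\|_{H^k}$ and Lemma~\ref{defference-YY-UU-lemma} gives $\|\CM_{\QQ^\ve}\UU-\CM_{\QQ^{(0)}}\UU\|_{H^k}\le C\|\QQ^\ve-\QQ^{(0)}\|\,\|\UU\|$. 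For the second bracket, $\QQ^\ve-\widetilde\QQ=\ve^3\QQ_R$ and Lemma~\ref{defference-YY-UU-lemma} give $\|\CM_{\QQ^\ve}\UU-\CM_{\widetilde\QQ}\UU\|_{H^k}\le C\ve^3(\cdots)$, whereas $\CJ(\widetilde\QQ)=\ve\CH_{\QQ^{(0)}}\QQ^{(1)}+O(\ve^2)$ because $\CJ(\QQ^{(0)})=0$, so $\|\CJ(\widetilde\QQ)\|_{H^k}\le C\ve$; the product is $O(\ve^4)$ and lies in $\mathfrak R$. Collecting the pieces gives \eqref{CMQQve-CMwidQQ}, and \eqref{CNQQve-CNwidQQ} follows by the same computation since $\CN_\QQ=\CV_\QQ^T$ and Lemmas~\ref{commutor-YY-QQ-lemma}–\ref{defference-YY-UU-lemma} apply verbatim to $\CV_\QQ$ and $\CN_\QQ$.

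The genuinely technical part — and the step I expect to be the main obstacle — is verifying that every error term meets \emph{all three} bounds in \eqref{good-terms} simultaneously (in $L^2$, in $\ve\|\nabla\cdot\|_{L^2}$, and in $\ve^2\|\Delta\cdot\|_{L^2}$). This forces one to balance, term by term, the available powers of $\ve$ against the number of spatial derivatives falling on $\QQ_R$ (directly, or through $\CH_{\QQ^{(0)}}$, $\CM_{\QQ^\ve}$, or $\CG$); the norms $E$ and $F$ in \eqref{E-norm}–\eqref{F-norm} are calibrated exactly for this, e.g.\ $\ve^2\|\QQ_R\|_{H^2}\le\ve E$ and $\ve^3\|\nabla\vv_R\|_{L^\infty}\le C(E+\ve F)$. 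The tightest cases are $\ve^3(\CM_{\QQ^\ve}-\CM_{\QQ^{(0)}})\CH_{\QQ^{(0)}}\QQ_R$ and the quadratic Taylor remainder in the $\Delta$-weighted norm, where one invokes the Moser product estimate together with the uniform bounds on $\partial_\QQ^k\HH$ from Proposition~\ref{high-tensor-smooth-prop} to make the closure-generated coefficients and their derivatives harmless. No new idea beyond the Section~\ref{key-section} estimates is needed.
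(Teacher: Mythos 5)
Your proposal is correct and follows essentially the same route as the paper: Taylor-expand $\CJ$ around $\widetilde\QQ$ to isolate $\ve^3\CH_{\QQ^{(0)}}\QQ_R$, then split $\CM_{\QQ^\ve}\CJ(\QQ^\ve)-\CM_{\widetilde\QQ}\CJ(\widetilde\QQ)$ by adding and subtracting cross terms, using Lemma~\ref{defference-YY-UU-lemma} to replace $\CM_{\QQ^\ve}$ or $\CM_{\widetilde\QQ}$ by $\CM_{\QQ^{(0)}}$ up to $\ve^4\mathfrak{R}$-error, and the bound $\CJ(\widetilde\QQ)=O(\ve)$ (from $\CJ(\QQ^{(0)})=0$) to kill the remaining piece. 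The only cosmetic difference is the choice of which factor ($\CM_{\QQ^\ve}$ vs.\ $\CM_{\widetilde\QQ}$) to keep on the Taylor-expanded bracket; both lead to the same estimates.
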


\begin{proof}
By the Taylor expansion, for any $\theta\in(0,1)$, we have
\begin{align*}
&\|\CJ(\QQ^{\ve})-\CJ(\widetilde{\QQ})-\ve^3\CJ'(\widetilde{\QQ})\QQ_R\|_{H^k}\\
&\quad=\Big\|\ve^6\int^1_0\theta\CJ''(\widetilde{\QQ}+\theta\ve^3\QQ_R)\QQ_R\cdot\QQ_R\ud\theta\Big\|_{H^k}\\
&\quad\leq\ve^6C(\ve^3\|\QQ_R\|_{H^2})\|\QQ_R\|_{H^k}\|\QQ_R\|_{H^2}\leq \ve^5C(\ve E)E\|\QQ_R\|_{H^k},
\end{align*}
which implies
$\CJ(\QQ^{\ve})-\CJ(\widetilde{\QQ})-\ve^3\CH_{\widetilde{\QQ}}\QQ_R=\ve^4\mathfrak{R}$. Consequently, we derive that
\begin{align*}
\CM_{\widetilde{\QQ}}\CJ(\QQ^{\ve})-\CM_{\widetilde{\QQ}}\CJ(\widetilde{\QQ})=&\ve^3\CM_{\widetilde{\QQ}}\big(\CH_{\widetilde{\QQ}}\QQ_R\big)+\ve^4\mathfrak{R}\\
=&\ve^3\CM_{\QQ^{(0)}}\big(\CH_{\QQ^{(0)}}\QQ_R\big)+\ve^4\mathfrak{R}.
\end{align*}
On the one hand, we infer from Lemma \ref{defference-YY-UU-lemma} that
\begin{align*}
&\|\CM_{\QQ^{\ve}}\CJ(\widetilde{\QQ})-\CM_{\widetilde{\QQ}}\CJ(\widetilde{\QQ})\|_{H^k}\\
&\quad\leq C(\|\ve^3\QQ_R\|_{H^2})\|\ve^3\QQ_R\|_{H^1}\big\|\CJ(\QQ^{\ve})-\CJ(\widetilde{\QQ})\big\|_{H^2}\\
&\quad\leq C(\|\ve^3\QQ_R\|_{H^2})\ve^4\|\QQ_R\|_{H^k}(1+\|\ve^2\QQ_R\|_{H^2})\leq \ve^4C(\ve E)\|\QQ_R\|_{H^k}(1+\ve E),
\end{align*}
which implies $\CM_{\QQ^{\ve}}\CJ(\widetilde{\QQ})-\CM_{\widetilde{\QQ}}\CJ(\widetilde{\QQ})\in\mathfrak{R}$. The other identity (\ref{CNQQve-CNwidQQ}) is obtained with a similar argument.
\end{proof}

\begin{lemma}\label{VVNNKK-remainder-le}
For the terms $\VV,\NN$ and $\KK$, it holds
\begin{align}
\VV(\QQ^{\ve},\vv^{\ve})-\VV(\widetilde{\QQ},\widetilde{\vv})=&\ve^3\big(-\CM_{\QQ^{(0)}}\CG(\QQ_R)+\CV_{\QQ^{(0)}}\kappa_R\big)+\ve^3\mathfrak{R},\label{VV-remainder}\\
\NN(\QQ^{\ve},\vv^{\ve})-\NN(\widetilde{\QQ},\widetilde{\vv})=&\ve^3\big(\CN_{\QQ^{(0)}}\CG(\QQ_R)+2\eta\A_R+\CP_{\QQ^{(0)}}\kappa_R\big)+\ve^3\mathfrak{R},\label{NN-remainder}\\
\KK(\QQ^{\ve})-\KK(\widetilde{\QQ})=&\ve^3\nabla\cdot\mathfrak{R},\label{KK-remainder}
\end{align}
where $\kappa_R=(\nabla\vv_R)^T$ and $\A_R=\frac{1}{2}(\kappa_R+\kappa^T_R)$.
\end{lemma}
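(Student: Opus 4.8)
The plan is to expand the three differences term by term, substituting $\QQ^{\ve}=\widetilde{\QQ}+\ve^{3}\QQ_R$ and $\vv^{\ve}=\widetilde{\vv}+\ve^{3}\vv_R$ (so that $\kappa^{\ve}=\widetilde{\kappa}+\ve^{3}\kappa_R$ and $\A^{\ve}=\widetilde{\A}+\ve^{3}\A_R$), isolating the leading $O(\ve^{3})$ contribution and checking that everything else is a good term in the sense of \eqref{good-terms}. For $\VV$, I would split $\VV(\QQ^{\ve},\vv^{\ve})-\VV(\widetilde{\QQ},\widetilde{\vv})$ into the contributions of $-\CM_{\QQ}\CG(\QQ)$, $\CV_{\QQ}\kappa$ and $-\vv\cdot\nabla\QQ$. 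Since $\CG$ is linear, $\CG(\QQ^{\ve})=\CG(\widetilde{\QQ})+\ve^{3}\CG(\QQ_R)$, so $\CM_{\QQ^{\ve}}\CG(\QQ^{\ve})-\CM_{\widetilde{\QQ}}\CG(\widetilde{\QQ})=\ve^{3}\CM_{\QQ^{(0)}}\CG(\QQ_R)+\ve^{3}(\CM_{\QQ^{\ve}}-\CM_{\QQ^{(0)}})\CG(\QQ_R)+(\CM_{\QQ^{\ve}}-\CM_{\widetilde{\QQ}})\CG(\widetilde{\QQ})$; the last two terms are estimated by Lemma \ref{defference-YY-UU-lemma}, using $\QQ^{\ve}-\QQ^{(0)}=O(\ve)$ and $\QQ^{\ve}-\widetilde{\QQ}=\ve^{3}\QQ_R$, together with the good-term inequalities $\ve^{k+1}\|\CG(\QQ_R)\|_{H^{k}}\le C(E+\ve F)$ recorded after \eqref{good-terms}. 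The $\CV_{\QQ}\kappa$ term is treated identically with $\CG$ replaced by $\kappa$, producing $\ve^{3}\CV_{\QQ^{(0)}}\kappa_R$ plus good terms, and the convection term equals $\ve^{3}(\vv_R\cdot\nabla\QQ^{\ve}+\widetilde{\vv}\cdot\nabla\QQ_R)$, which is $\ve^{3}\mathfrak{R}$ once one systematically places $L^{\infty}$ norms on the smooth Hilbert pieces $\widetilde{\QQ},\widetilde{\vv}$ and $L^{2}$ on the factors carrying $\QQ_R,\vv_R$; the purely remainder-quadratic piece $\ve^{6}\vv_R\cdot\nabla\QQ_R$ carries enough powers of $\ve$ to beat the $\ve^{-k}$ growth of $\|\QQ_R\|_{H^{k}}$. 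The identity for $\NN$ follows the same pattern: $\CN_{\QQ}\CG(\QQ)$ and $\CP_{\QQ}\kappa$ behave like $\CM_{\QQ}\CG(\QQ)$ and $\CV_{\QQ}\kappa$, the map $\kappa\mapsto\A$ is linear so $2\eta(\A^{\ve}-\widetilde{\A})=2\ve^{3}\eta\A_R$ exactly, and $\vv^{\ve}\otimes\vv^{\ve}-\widetilde{\vv}\otimes\widetilde{\vv}$ is handled like the convection term.

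The delicate identity is the one for $\KK$, because of the singular factor $\ve^{-1}$ in $\mu_{\QQ}=\ve^{-1}\CJ(\QQ)+\CG(\QQ)$. I would first put $\KK$ in Ericksen-stress form: since $\partial_i\QQ$ is symmetric traceless, $\CJ(\QQ)\cdot\partial_i\QQ=\tfrac{\partial F_b}{\partial\QQ}(\QQ)\cdot\partial_i\QQ=\partial_i\big(F_b(\QQ)\big)$, and by \eqref{additional-pressure-term} $\CG(\QQ)\cdot\partial_i\QQ=-\partial_j\big(\tfrac{\partial F_e(\nabla\QQ)}{\partial(\partial_j\QQ)}\cdot\partial_i\QQ\big)-\partial_i\widetilde{p}$; hence $\KK(\QQ)_i=\partial_j\Sigma_{ij}(\QQ)$ with $\Sigma_{ij}(\QQ)=\big(\tfrac1\ve F_b(\QQ)-\widetilde{p}\big)\delta_{ij}-\tfrac{\partial F_e(\nabla\QQ)}{\partial(\partial_j\QQ)}\cdot\partial_i\QQ$ (its scalar part is a pressure and is in any case annihilated by $\mathbb{P}_{\rm div}$ in \eqref{vvR-equation-chu}). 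Then $\KK(\QQ^{\ve})-\KK(\widetilde{\QQ})=\partial_j\big(\Sigma_{ij}(\QQ^{\ve})-\Sigma_{ij}(\widetilde{\QQ})\big)$ and it remains to show $\Sigma(\QQ^{\ve})-\Sigma(\widetilde{\QQ})=\ve^{3}\mathfrak{R}$. The $F_e$-part is a difference of a quadratic expression in $\nabla\QQ$ and is treated exactly as for $\VV$ and $\NN$. For the $\ve^{-1}F_b$-part I use the Taylor expansion of $F_b$ about $\widetilde{\QQ}$ and the identity $\CJ(\QQ^{(0)})=0$ exactly as in Lemma \ref{singular-remainder-le}: it forces $\tfrac{\partial F_b}{\partial\QQ}(\widetilde{\QQ})=\CJ(\widetilde{\QQ})=O(\ve)$, so $F_b(\QQ^{\ve})-F_b(\widetilde{\QQ})=\ve^{3}\CJ(\widetilde{\QQ})\cdot\QQ_R+O(\ve^{6})=O(\ve^{4})$, whence $\ve^{-1}\big(F_b(\QQ^{\ve})-F_b(\widetilde{\QQ})\big)$, and likewise its first and second gradients, are $\ve^{3}$ times good terms once the $L^{\infty}$-on-$\widetilde{\QQ}$ bookkeeping above is respected.

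The step I expect to be the main obstacle is precisely the $\KK$ identity: a naive splitting of $\ve^{-1}\CJ(\QQ^{\ve})\cdot\partial_i\QQ^{\ve}-\ve^{-1}\CJ(\widetilde{\QQ})\cdot\partial_i\widetilde{\QQ}$ destroys both the divergence structure and the needed cancellation, and after dividing by $\ve^{3}$ an incautious estimate of $\nabla$ of the $\ve^{-1}F_b$-term (e.g. using $\|\QQ_R\|_{L^{\infty}}\lesssim E/\ve$) appears to blow up as $\ve\to0$. The resolution is the Ericksen-stress reorganization, which keeps everything in divergence form, together with the cancellation $\CJ(\QQ^{(0)})=0$, which supplies the missing power of $\ve$; after that, the only remaining work is routine bookkeeping matching derivative counts against the weighted norms $E$ and $F$ of \eqref{E-norm}--\eqref{F-norm}, which closes because the highest remainder derivatives that occur ($\Delta\QQ_R$, $\nabla\Delta\QQ_R$, $\Delta\nabla\vv_R$) are exactly those carried by $E$ and $F$.
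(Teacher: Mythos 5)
Your proposal is correct and follows essentially the same route as the paper: for $\VV$ and $\NN$ you decompose each sub-term, use linearity of $\CG$ and of $\kappa\mapsto\A$, and then apply Lemma~\ref{defference-YY-UU-lemma} together with the recorded good-term inequalities, exactly as the paper does by splitting off pieces $\MM_1,\MM_2,\MM_3$; and for $\KK$ you also invoke the Ericksen-stress reorganization via \eqref{additional-pressure-term} and then exploit the bilinearity of $\Sigma(\QQ,\widehat\QQ)$ to write the difference as $\ve^3(\Sigma(\QQ^\ve,\QQ_R)+\Sigma(\QQ_R,\widetilde\QQ))$. The only divergence is that for the $\ve^{-1}F_b$ contribution you offer a backup Taylor-expansion argument using $\CJ(\QQ^{(0)})=0$; the paper instead silently absorbs $\ve^{-1}\CJ(\QQ)\cdot\partial_i\QQ=\partial_i\big(\ve^{-1}F_b(\QQ)\big)$ entirely into the pressure $\widetilde p$, so no Taylor step is needed once one notes (as you also do) that this gradient is a pressure term and is killed by $\mathbb{P}_{\rm div}$. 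Your extra paragraph is therefore not wrong, but it is superfluous; the pressure observation alone already closes \eqref{KK-remainder}.
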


\begin{proof}
To begin with, for $0\leq k\leq 2$, we have
\begin{align*}
\ve^k\|\vv^{\ve}\cdot\nabla\QQ^{\ve}-\widetilde{\vv}\cdot\nabla\widetilde{\QQ}\|_{H^k}\leq& \ve^k\|\vv^{\ve}\cdot\nabla\QQ^{\ve}-\vv^{\ve}\cdot\nabla\widetilde{\QQ}\|_{H^k}+\ve^k\|\vv^{\ve}\cdot\nabla\widetilde{\QQ}-\widetilde{\vv}\cdot\nabla\widetilde{\QQ}\|_{H^k}\\
\leq&\ve^3\|\ve^k\QQ_R\|_{H^k}\|\vv^{\ve}\|_{L^{\infty}}+\ve^3\|\ve^k\vv_R\|_{H^k}\|\nabla\widetilde{\QQ}\|_{L^{\infty}}\\
\leq&\ve^3C(1+\ve E)E.
\end{align*}
According to (\ref{QR-vR}), we write
\begin{align*}
\CM_{\QQ^{\ve}}\CG(\QQ^{\ve})-\CM_{\widetilde{\QQ}}\CG(\widetilde{\QQ})=&\underbrace{\CM_{\QQ^{\ve}}\CG(\widetilde{\QQ})-\CM_{\widetilde{\QQ}}\CG(\widetilde{\QQ})}_{\MM_1}+\underbrace{\ve^3\big(\CM_{\QQ^{\ve}}\CG(\QQ_R)-\CM_{\widetilde{\QQ}}\CG(\QQ_R)\big)}_{\MM_2}\\
&+\underbrace{\ve^3\big(\CM_{\widetilde{\QQ}}\CG(\QQ_R)-\CM_{\QQ^{(0)}}\CG(\QQ_R)\big)}_{\MM_3}+\ve^3\CM_{\QQ^{(0)}}\CG(\QQ_R).
\end{align*}
Then, for $0\leq k\leq 2$, we derive from Lemma \ref{defference-YY-UU-lemma} that
\begin{align*}
\|\MM_1\|_{H^k}\leq& C(\|\QQ^{\ve}\|_{L^{\infty}},\|\widetilde{\QQ}\|_{L^{\infty}},\|\QQ^{\ve}-\QQ^*\|_{H^k},\|\widetilde{\QQ}-\QQ^*\|_{H^k})\|\ve^3\QQ_R\|_{H^k}\|\CG(\widetilde{\QQ})\|_{H^{k+2}},\\
\|\MM_2\|_{H^k}\leq&\ve^3C(\|\QQ^{\ve}\|_{L^{\infty}},\|\widetilde{\QQ}\|_{L^{\infty}})\|\CG(\QQ_R)\|_{H^k}\|\ve^3\QQ_R\|_{L^{\infty}}\\
&+\ve^3C(\|\QQ^{\ve}\|_{L^{\infty}},\|\widetilde{\QQ}\|_{L^{\infty}},\|\QQ^{\ve}-\QQ^*\|_{H^k},\|\widetilde{\QQ}-\QQ^*\|_{H^k})\|\CG(\QQ_R)\|_{L^{\infty}}\|\ve^3\QQ_R\|_{H^k},\\
\|\MM_3\|_{H^k}\leq &\ve^3C\big(\|\QQ^{(0)}\|_{L^{\infty}},\|\widetilde{\QQ}\|_{L^{\infty}},\|\QQ^{(0)}-\QQ^*\|_{H^{k+2}},\|\widetilde{\QQ}-\QQ^*\|_{H^{k+2}}\big)\\
&\cdot\|\widetilde{\QQ}-\QQ^{(0)}\|_{H^{k+2}}\|\CG(\QQ_R)\|_{H^k}.
\end{align*}
Noting the following simple fact:
\begin{align*}
\|\widetilde{\QQ}-\QQ^{(0)}\|_{H^{k+2}}=\ve\|\QQ^{(1)}+\ve\QQ^{(2)}+\ve^2\QQ^{(3)}\|_{H^{k+2}}\leq CE,
\end{align*}
we deduce that
\begin{align*}
&\ve^k\|\MM_1+\MM_2\|_{H^k}\leq C(\ve E)\ve^3E+C(\ve E)\ve^4E(E+F),\\
&\ve^{k}\|\MM_3\|_{H^k}\leq \ve^3C(E+\ve F).
\end{align*}
Thus, we obtain
\begin{align*}
-\big(\CM_{\QQ^{\ve}}\CG(\QQ^{\ve})-\CM_{\widetilde{\QQ}}\CG(\widetilde{\QQ})\big)=-\ve^3\CM_{\QQ^{(0)}}\CG(\QQ_R)+\ve^3\mathfrak{R}.
\end{align*}
Those terms containing the operators $\CV_{\QQ}, \CN_{\QQ}$ and $\CP_{\QQ}$ in (\ref{VV-remainder})--(\ref{NN-remainder}) can be handled in similar arguments, since these operators share the same properties with $\CM_{\QQ}$. Hence, we have shown (\ref{VV-remainder}) and (\ref{NN-remainder}).

Recalling (\ref{additional-pressure-term}), we have
\begin{align*}
\KK(\QQ)_i=\partial_j\Sigma_{ij}(\QQ,\QQ)-\partial_i\widetilde{p},\quad \Sigma_{ij}(\QQ,\widehat{\QQ})\eqdefa-\frac{\partial F_e(\nabla\QQ)}{\partial(\partial_j\QQ)}\cdot\partial_i\widehat{\QQ}.
\end{align*}
We complete the proof of the lemma by
\begin{align*}
\ve^k\|\Sigma(\QQ^{\ve},\QQ^{\ve})-\Sigma(\widetilde{\QQ},\widetilde{\QQ})\|_{H^k}=&\ve^{k+3}\|\Sigma(\QQ^{\ve},\QQ_R)+\Sigma(\QQ_R,\widetilde{\QQ})\|_{H^k}\\
\leq&\ve^3C\|\ve^k\nabla\QQ_R\|_{H^k}(1+\|\ve^3\nabla\QQ_R\|_{L^{\infty}})\\
\leq&\ve^3C(1+\ve E)E.
\end{align*}
\end{proof}

Using Lemma \ref{singular-remainder-le} and Lemma \ref{VVNNKK-remainder-le}, we arrive at the following remainder
system:
\begin{align}
\partial_t\QQ_R=&-\CM_{\QQ^{(0)}}\Big(\frac{1}{\ve}\CH_{\QQ^{(0)}}\QQ_R+\CG(\QQ_R)\Big)+\CV_{\QQ^{(0)}}\kappa_R+\mathfrak{R},\label{QvR-remainder-equation-1}\\
\partial_t\vv_R=&-\nabla p_R+\eta\Delta\vv_R+\nabla\cdot(\CP_{\QQ^{(0)}}\kappa_R)\nonumber\\
&+\nabla\cdot\Big(\CN_{\QQ^{(0)}}\Big(\frac{1}{\ve}\CH_{\QQ^{(0)}}\QQ_R+\CG(\QQ_R)\Big)\Big)+\nabla\cdot\mathfrak{R}+\mathfrak{R},\label{QvR-remainder-equation-2}\\
\nabla\cdot\vv_R=&~0.\label{QvR-remainder-equation-3}
\end{align}
It can be observed that the remainder system (\ref{QvR-remainder-equation-1})--(\ref{QvR-remainder-equation-3}) involves the singular term $\ve^{-1}\CH_{\QQ^{(0)}}\QQ_R$ in $\ve$. Therefore, as shown in \cite{WZZ3,LWZ,LW}, in order to obtain the uniform energy estimates,
we construct the following energy functionals:
\begin{align}
\mathfrak{E}(t)\eqdefa&\frac{1}{2}\int_{\mathbb{R}^3}\Big[\Big(|\vv_R|^2+\big(\CM^{-1}_{\QQ^{(0)}}\QQ_R\big)\cdot\QQ_R+\frac{1}{\ve}\big(\CH^{\ve}_{\QQ^{(0)}}\QQ_R\big)\cdot\QQ_R\Big)\nonumber\\
&+\ve^2\Big(|\nabla\vv_R|^2+\frac{1}{\ve}\big(\CH^{\ve}_{\QQ^{(0)}}\partial_i\QQ_R\big)\cdot\partial_i\QQ_R\Big)\nonumber\\
&+\ve^4\Big(|\Delta\vv_R|^2+\frac{1}{\ve}\big(\CH^{\ve}_{\QQ^{(0)}}\Delta\QQ_R\big)\cdot\Delta\QQ_R\Big)\Big]\ud\xx,\label{Ef-t-functional}\\
\mathfrak{F}(t)\eqdefa&\int_{\mathbb{R}^3}\Big[\Big(\eta|\nabla\vv_R|^2+\frac{1}{\ve}\CM_{\QQ^{(0)}}\big(\CH^{\ve}_{\QQ^{(0)}}\QQ_R\big)\cdot\big(\CH^{\ve}_{\QQ^{(0)}}\QQ_R\big)\Big)\nonumber\\
&+\ve^2\Big(\eta|\Delta\vv_R|^2+\frac{1}{\ve}\CM_{\QQ^{(0)}}\big(\CH^{\ve}_{\QQ^{(0)}}\partial_i\QQ_R\big)\cdot\big(\CH^{\ve}_{\QQ^{(0)}}\partial_i\QQ_R\big)\Big)\nonumber\\
&+\ve^4\Big(\eta|\nabla\Delta\vv_R|^2+\frac{1}{\ve}\CM_{\QQ^{(0)}}\big(\CH^{\ve}_{\QQ^{(0)}}\Delta\QQ_R\big)\cdot\big(\CH^{\ve}_{\QQ^{(0)}}\Delta\QQ_R\big)\Big)\Big]\ud\xx,\label{Ff-t-functional}
\end{align}
where $\eta>0$ and $\CH^{\ve}_{\QQ^{(0)}}\QQ_R=\CH_{\QQ^{(0)}}\QQ_R+\ve\CG(\QQ_R)$.

Using the the fact that $\CH_{\QQ^{(0)}}$ is positive semi-definite and $\CM^{-1}_{\QQ^{(0)}}$ is positive definite, we immediately obtain the following lemma (cf. \cite{WZZ3,LWZ}).
\begin{lemma}\label{basic-lemma-remainder}
There exists a positive constant $C$, such that
\begin{align*}
\|\QQ_R\|_{H^1}+\|(\ve\nabla^2\QQ_R,\ve^2\nabla^3\QQ_R)\|_{L^2}+\|(\vv_R,\ve\nabla\vv_R,\ve^2\nabla^2\vv_R)\|_{L^2}\leq&C\mathfrak{E}^{\frac{1}{2}},\\
\big\|\big(\ve^{-1}\CH^{\ve}_{\QQ^{(0)}}\QQ_R,\nabla\CH^{\ve}_{\QQ^{(0)}}\QQ_R,\ve\Delta\CH^{\ve}_{\QQ^{(0)}}\QQ_R\big)\big\|_{L^2}\leq&C(\mathfrak{E}+\mathfrak{F})^{\frac{1}{2}},\\
\big\|(\ve\nabla\CG(\QQ_R),\ve^2\Delta\CG(\QQ_R))\big\|_{L^2}+\big\|(\nabla\vv_R,\ve\nabla^2\vv_R,\ve^2\nabla^3\vv_R)\big\|_{L^2}\leq& C(\mathfrak{E}+\mathfrak{F})^{\frac{1}{2}}.
\end{align*}
\end{lemma}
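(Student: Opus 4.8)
The plan is to read off all three inequalities directly from the definitions of $\mathfrak{E}$ and $\mathfrak{F}$, using only coercivity and boundedness of the coefficient operators, in the spirit of \cite{WZZ3,LWZ}. First I would record the structural facts that make this work. Since $\QQ^{(0)}(\xx)$ takes values in the compact manifold of biaxial minimizers, Proposition \ref{high-tensor-smooth-prop} gives that $\CM_{\QQ^{(0)}(\xx)}$, $\CM^{-1}_{\QQ^{(0)}(\xx)}$ and $\CH_{\QQ^{(0)}(\xx)}$ are bounded uniformly in $\xx$ (hence in $\ve$), with $\CM_{\QQ^{(0)}}$ and $\CM^{-1}_{\QQ^{(0)}}$ uniformly positive definite, while by Proposition \ref{prop-linearized-operator} one has $(\CH_{\QQ^{(0)}(\xx)}Y)\cdot Y\ge 0$ for all symmetric-traceless pairs $Y$ and $(\CH_{\QQ^{(0)}(\xx)}Y)\cdot Y\ge C_0|Y|^2$ on $({\rm Ker}\,\CH_{\QQ^{(0)}})^{\perp}$. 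Next, $\CG$ is the constant-coefficient variational operator of $\CF_e$, so integration by parts gives $\langle\CG(\QQ),\QQ\rangle=2\CF_e[\nabla\QQ]$, and the elastic positive-definiteness conditions yield $c\|\nabla\QQ\|_{L^2}^2\le\langle\CG(\QQ),\QQ\rangle\le C\|\nabla\QQ\|_{L^2}^2$; moreover $\CG$ commutes with $\partial^\alpha$. Finally I would keep in mind $\CH^{\ve}_{\QQ^{(0)}}\QQ=\CH_{\QQ^{(0)}}\QQ+\ve\CG(\QQ)$.

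For the first inequality I would bound $\mathfrak{E}$ from below term by term. The block $(\CM^{-1}_{\QQ^{(0)}}\QQ_R)\cdot\QQ_R$ integrates to at least $c\|\QQ_R\|_{L^2}^2$. The block $\frac1\ve\langle\CH^{\ve}_{\QQ^{(0)}}\QQ_R,\QQ_R\rangle=\frac1\ve\langle\CH_{\QQ^{(0)}}\QQ_R,\QQ_R\rangle+\langle\CG(\QQ_R),\QQ_R\rangle\ge c\|\nabla\QQ_R\|_{L^2}^2$, using that the first summand is pointwise nonnegative and the second equals $2\CF_e[\nabla\QQ_R]$; applying the same splitting to $\partial_i\QQ_R$ and to $\Delta\QQ_R$ shows the $\ve^2$- and $\ve^4$-weighted entropy blocks dominate $\ve^2\|\nabla^2\QQ_R\|_{L^2}^2$ and $\ve^4\|\nabla^3\QQ_R\|_{L^2}^2$. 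The remaining $|\vv_R|^2$, $\ve^2|\nabla\vv_R|^2$, $\ve^4|\Delta\vv_R|^2$ give the velocity norms, and summing proves the first line.

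For the second and third inequalities I would use that the positive-definiteness of $\CM_{\QQ^{(0)}}$ makes the three quadratic blocks of $\mathfrak{F}$ control $\|\CH^{\ve}_{\QQ^{(0)}}\QQ_R\|_{L^2}$, $\|\CH^{\ve}_{\QQ^{(0)}}(\nabla\QQ_R)\|_{L^2}$ and $\|\CH^{\ve}_{\QQ^{(0)}}(\Delta\QQ_R)\|_{L^2}$ with the stated weights in $\ve$ (and the $\eta|\nabla\vv_R|^2$, $\ve^2\eta|\Delta\vv_R|^2$, $\ve^4\eta|\nabla\Delta\vv_R|^2$ blocks give the $\vv_R$-part of the third line). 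To turn $\CH^{\ve}_{\QQ^{(0)}}$ acting on a derivative of $\QQ_R$ into a derivative of $\CH^{\ve}_{\QQ^{(0)}}\QQ_R$ I would write $\partial^\alpha(\CH^{\ve}_{\QQ^{(0)}}\QQ_R)=[\partial^\alpha,\CH_{\QQ^{(0)}}]\QQ_R+\CH^{\ve}_{\QQ^{(0)}}(\partial^\alpha\QQ_R)$ and bound the commutator by derivatives of $\QQ^{(0)}$ against lower-order derivatives of $\QQ_R$ already controlled by the first line; this yields the second line. The third line then follows from $\ve\,\partial^\alpha\CG(\QQ_R)=\partial^\alpha(\CH^{\ve}_{\QQ^{(0)}}\QQ_R)-\partial^\alpha(\CH_{\QQ^{(0)}}\QQ_R)$, the first piece covered by the second line and the second by the Leibniz rule together with the first line and the uniform bounds on $\CH_{\QQ^{(0)}}$. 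The only point requiring care is the variable-coefficient commutator $[\partial^\alpha,\CH_{\QQ^{(0)}}]$: since $\CH_{\QQ^{(0)}}$ is a smooth function of $\QQ^{(0)}=\QQ^{(0)}(\Fp)$, its spatial derivatives are bounded in Sobolev norm uniformly in $\ve$ (by the smoothness of $\Fp$ and Proposition \ref{high-tensor-smooth-prop}), so moving derivatives across the coefficient operators never produces a factor of $\ve^{-1}$; once this is noted, the rest is routine weighted product and interpolation estimates.
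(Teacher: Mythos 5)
The paper itself offers no written proof of this lemma---it asserts the conclusion in one sentence from the positive semi-definiteness of $\CH_{\QQ^{(0)}}$ and the positive definiteness of $\CM^{-1}_{\QQ^{(0)}}$, with a pointer to \cite{WZZ3,LWZ}. Your elaboration follows exactly that intended route, and the structural ingredients you isolate are the right ones: uniform two-sided bounds and definiteness of the $\QQ^{(0)}$-dependent coefficient operators coming from Propositions \ref{high-tensor-smooth-prop} and \ref{prop-linearized-operator}, the identity $\langle\CG(\QQ),\QQ\rangle=2\CF_e[\nabla\QQ]$ together with coercivity of $\CF_e$, the constant-coefficient nature of $\CG$, and the control of the variable-coefficient commutators $[\partial^\alpha,\CH_{\QQ^{(0)}}]$ by lower-order norms of $\QQ_R$ already handled by the first line. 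Your treatment of the first inequality is complete, and your reduction of the third line to the second via $\ve\,\partial^\alpha\CG(\QQ_R)=\partial^\alpha(\CH^{\ve}_{\QQ^{(0)}}\QQ_R)-\partial^\alpha(\CH_{\QQ^{(0)}}\QQ_R)$ is correct.

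The one place your argument is not actually carried out is the claim that ``the three quadratic blocks of $\mathfrak{F}$ control $\|\CH^{\ve}_{\QQ^{(0)}}\QQ_R\|_{L^2}$, $\|\CH^{\ve}_{\QQ^{(0)}}\nabla\QQ_R\|_{L^2}$, $\|\CH^{\ve}_{\QQ^{(0)}}\Delta\QQ_R\|_{L^2}$ with the stated weights in $\ve$.'' That is precisely the point that needs checking, and if you take the definition \eqref{Ff-t-functional} literally, it fails: the printed $\mathfrak{F}$ carries the factor $\frac1\ve$ (not $\frac1{\ve^2}$) in front of $\CM_{\QQ^{(0)}}(\CH^{\ve}_{\QQ^{(0)}}\QQ_R)\cdot(\CH^{\ve}_{\QQ^{(0)}}\QQ_R)$ and its two weighted companions, so positive definiteness of $\CM_{\QQ^{(0)}}$ only yields $\ve^{-1}\|\CH^{\ve}_{\QQ^{(0)}}\QQ_R\|_{L^2}^2\lesssim\mathfrak{F}$, i.e.\ $\|\ve^{-1}\CH^{\ve}_{\QQ^{(0)}}\QQ_R\|_{L^2}\lesssim\ve^{-1/2}\mathfrak{F}^{1/2}$ rather than $\lesssim\mathfrak{F}^{1/2}$, and similarly for the derivative terms. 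The lemma as stated requires the dissipation blocks to carry $\frac1{\ve^2}$, which is what the remainder equation \eqref{QvR-remainder-equation-1} actually produces when tested against $\frac1\ve\CH^{\ve}_{\QQ^{(0)}}\QQ_R$: the resulting dissipation is $\frac1{\ve^2}\langle\CM_{\QQ^{(0)}}\CH^{\ve}_{\QQ^{(0)}}\QQ_R,\CH^{\ve}_{\QQ^{(0)}}\QQ_R\rangle$. So the $\frac1\ve$ in \eqref{Ff-t-functional} (and the matching $\frac1\ve$ in the displayed energy identities of Section \ref{remainder-subsection}) appears to be a typo, and under the corrected $\mathfrak{F}$ your argument closes. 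A complete proof should make this $\ve$-weight bookkeeping explicit rather than assert it, since this is the only nontrivial content of the second and third inequalities and it is exactly where the discrepancy hides.
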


\begin{corollary}\label{EF-corollary}
Let $E$ and $F$ be defined by \eqref{E-norm} and \eqref{F-norm}, respectively. Then it follows that
\begin{align*}
E\leq C\mathfrak{E}^{\frac{1}{2}},\quad F\leq C(\mathfrak{E}+\mathfrak{F})^{\frac{1}{2}}.
\end{align*}
\end{corollary}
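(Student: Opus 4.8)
The plan is to deduce both inequalities directly from Lemma \ref{basic-lemma-remainder}, simply by matching the summands of $E$ in \eqref{E-norm} and of $F$ in \eqref{F-norm} against the three estimates provided there. The only elementary ingredient needed is the pointwise bound $|\nabla^{j}\Delta U|\le\sqrt{3}\,|\nabla^{j+2}U|$ on $\mathbb{R}^{3}$, which holds because the Laplacian is a contraction of the Hessian.

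For $E\le C\mathfrak{E}^{1/2}$ I would observe that $\|\QQ_{R}\|_{H^{1}}$, $\|\vv_{R}\|_{L^{2}}$ and $\ve\|\nabla\vv_{R}\|_{L^{2}}$ already appear on the left-hand side of the first estimate of Lemma \ref{basic-lemma-remainder}, while $\ve\|\Delta\QQ_{R}\|_{L^{2}}\le\sqrt{3}\,\|\ve\nabla^{2}\QQ_{R}\|_{L^{2}}$, $\ve^{2}\|\nabla\Delta\QQ_{R}\|_{L^{2}}\le\sqrt{3}\,\|\ve^{2}\nabla^{3}\QQ_{R}\|_{L^{2}}$ and $\ve^{2}\|\Delta\vv_{R}\|_{L^{2}}\le\sqrt{3}\,\|\ve^{2}\nabla^{2}\vv_{R}\|_{L^{2}}$ are all dominated by that same estimate; adding the six contributions then yields the claim. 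For $F\le C(\mathfrak{E}+\mathfrak{F})^{1/2}$ I would use the third estimate of Lemma \ref{basic-lemma-remainder} to bound $\ve\|\nabla\CG(\QQ_{R})\|_{L^{2}}$ and $\ve^{2}\|\Delta\CG(\QQ_{R})\|_{L^{2}}$, and the same estimate together with $\ve^{2}\|\Delta\nabla\vv_{R}\|_{L^{2}}\le\sqrt{3}\,\|\ve^{2}\nabla^{3}\vv_{R}\|_{L^{2}}$ to bound the last summand of $F$.

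I do not expect any real obstacle at this level: all of the analysis has already been packaged into Lemma \ref{basic-lemma-remainder}. The genuine work lies in that lemma, namely in showing that the quadratic forms in \eqref{Ef-t-functional}--\eqref{Ff-t-functional} dominate from below the $L^{2}$ norms listed there. This would rest on the coercivity of $\CH_{\QQ^{(0)}}$ on $({\rm Ker}\,\CH_{\QQ^{(0)}})^{\perp}$ (Proposition \ref{prop-linearized-operator}), the positive-definiteness of $\CM_{\QQ^{(0)}}$ and of $\CM_{\QQ^{(0)}}^{-1}$, the uniform bounds on these operators and their derivatives over $\mathbb{Q}_{\delta}$ (Proposition \ref{high-tensor-smooth-prop}), and the fact that for $\ve$ small the perturbed form $\CH^{\ve}_{\QQ^{(0)}}=\CH_{\QQ^{(0)}}+\ve\CG$ still splits along the decomposition ${\rm Ker}\,\CH_{\QQ^{(0)}}\oplus({\rm Ker}\,\CH_{\QQ^{(0)}})^{\perp}$, with the $\ve\CG$ contribution supplying, after an integration by parts that uses the positive-definiteness of the elastic coefficient matrices $D_{1},D_{2}$, precisely the gradient bounds on $\QQ_{R}$ and the bounds on $\CG(\QQ_{R})$.
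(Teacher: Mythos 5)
Your proof is correct and matches the paper's implicit argument: the corollary is indeed an immediate consequence of Lemma \ref{basic-lemma-remainder}, with each summand of $E$ dominated term-by-term by the first estimate there and each summand of $F$ by the third, the only extra ingredient being the elementary bound $\|\nabla^{j}\Delta U\|_{L^2}\le C\|\nabla^{j+2}U\|_{L^2}$. The paper offers no separate proof of this corollary, so your term-matching is exactly what is intended.
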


The a priori estimate for the remainder $(\QQ_R,\vv_R)$ is stated as follows.

\begin{proposition}\label{uniform-bound-prop}
There exist two functions $C$ and $f$ depending on $(\QQ^{(k)},\vv^{(k)})$ and the parameters of the
system (but independent of $\ve$), such that if $(\QQ_R, \vv_R)$ be a smooth solution to the system \eqref{QvR-remainder-equation-1}--\eqref{QvR-remainder-equation-3}
on $[0, T]$, then for any $t\in[0, T]$, it satisfies
\begin{align*}
\frac{\ud}{\ud t}\mathfrak{E}(t)+\mathfrak{F}(t)\leq C(\ve E)(1+\mathfrak{E})+\ve f(\mathfrak{E})+C(\ve E)\ve\mathfrak{F}.
\end{align*}
\end{proposition}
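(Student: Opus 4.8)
The plan is to run a three–tier weighted energy estimate whose tiers are dictated by the three blocks of $\mathfrak{E}$ and $\mathfrak{F}$. At the base level we test the $\QQ_R$--equation \eqref{QvR-remainder-equation-1} against $\CM^{-1}_{\QQ^{(0)}}\QQ_R+\tfrac1\ve\CH^{\ve}_{\QQ^{(0)}}\QQ_R$ and the $\vv_R$--equation \eqref{QvR-remainder-equation-2} against $\vv_R$; then we repeat after applying $\nabla$, testing against $\tfrac1\ve\CH^{\ve}_{\QQ^{(0)}}\partial_i\QQ_R$ resp.\ $\partial_i\vv_R$ with weight $\ve^2$, and after applying $\Delta$, testing against $\tfrac1\ve\CH^{\ve}_{\QQ^{(0)}}\Delta\QQ_R$ resp.\ $\Delta\vv_R$ with weight $\ve^4$. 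Summing the three tiers reconstructs $\tfrac{\ud}{\ud t}\mathfrak{E}(t)$ on the left up to the time--derivatives of the $\QQ^{(0)}$--dependent coefficients $\CM^{-1}_{\QQ^{(0)}}$ and $\CH^{\ve}_{\QQ^{(0)}}$, which are isolated and treated separately. Incompressibility annihilates the pressure terms, the viscous terms produce $-\eta\|\nabla\vv_R\|^2_{L^2}$, $-\ve^2\eta\|\Delta\vv_R\|^2_{L^2}$, $-\ve^4\eta\|\nabla\Delta\vv_R\|^2_{L^2}$, and $\CP_{\QQ^{(0)}}$ being positive definite gives further nonnegative contributions; together these form the $\vv_R$--part of $\mathfrak{F}$.

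The first structural ingredient is the cancellation between rotational convection and elastic stress: because $\CN_{\QQ^{(0)}}=\CV^{T}_{\QQ^{(0)}}$, the term $\langle\CV_{\QQ^{(0)}}\kappa_R,\tfrac1\ve\CH^{\ve}_{\QQ^{(0)}}\QQ_R\rangle$ arising from testing \eqref{QvR-remainder-equation-1} against $\tfrac1\ve\CH^{\ve}_{\QQ^{(0)}}\QQ_R$ exactly cancels $\langle\nabla\cdot(\CN_{\QQ^{(0)}}\tfrac1\ve\CH^{\ve}_{\QQ^{(0)}}\QQ_R),\vv_R\rangle$ from \eqref{QvR-remainder-equation-2} after integration by parts, and likewise at each differentiated level up to commutators. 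What is left from the $\QQ_R$--equation is the genuinely dissipative term $-\tfrac1{\ve^2}\langle\CM_{\QQ^{(0)}}(\CH^{\ve}_{\QQ^{(0)}}\QQ_R),\CH^{\ve}_{\QQ^{(0)}}\QQ_R\rangle$ (with its $\nabla,\Delta$ analogues), which is a factor $\ve^{-1}$ stronger than the matching term in $\mathfrak{F}$. Writing $\CH^{\ve}_{\QQ^{(0)}}\QQ_R=\CH_{\QQ^{(0)}}\QQ_R+\ve\CG(\QQ_R)$ and using the positive definiteness of $\CM_{\QQ^{(0)}}$ together with the coercivity $(\CH_{\QQ^{(0)}}\QQ_R)\cdot\QQ_R\ge C_0|\MP^{\rm out}\QQ_R|^2$ of Proposition \ref{prop-linearized-operator}, this dominates $c\,\ve^{-2}\|\MP^{\rm out}\QQ_R\|^2_{L^2}+c\|\CG(\QQ_R)\|^2_{L^2}$ after a Young absorption of the cross term $\ve^{-1}\langle\CM_{\QQ^{(0)}}\CH_{\QQ^{(0)}}\QQ_R,\CG(\QQ_R)\rangle$. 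Together with $\langle\CG(\QQ_R),\QQ_R\rangle\ge c\|\nabla\QQ_R\|^2_{L^2}$ (from the $\CG$--part of $\tfrac1\ve\CH^{\ve}_{\QQ^{(0)}}\QQ_R$) and the coercivity of the $\CM^{-1}_{\QQ^{(0)}}$--term inside $\mathfrak{E}$ itself, this identifies $\mathfrak{E}$ and $\mathfrak{F}$ with the norms in Lemma \ref{basic-lemma-remainder} and Corollary \ref{EF-corollary}, which we use freely afterwards.

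The main obstacle is the singular coefficient--derivative term $\tfrac1{2\ve}\langle(\partial_t\CH_{\QQ^{(0)}})\QQ_R,\QQ_R\rangle$ and its higher analogues $\tfrac{\ve^{2m}}{2\ve}\langle(\partial_t\CH_{\QQ^{(0)}})\partial^m\QQ_R,\partial^m\QQ_R\rangle$, $m=1,2$, which appear because $\CH_{\QQ^{(0)}}$ depends on $t$ through $\Fp(\xx,t)$. Here the eigen--decomposition \eqref{eigen-decomp}, $\CH_{\QQ^{(0)}}=\sum_{j=1}^{7}\lambda_j\,\ee_j\otimes\ee_j$ with $\lambda_j>0$, does the job at once: differentiating gives $\langle(\partial_t\CH_{\QQ^{(0)}})U,U\rangle=\sum_j\big((\partial_t\lambda_j)(U\cdot\ee_j)^2+2\lambda_j(U\cdot\ee_j)(U\cdot\partial_t\ee_j)\big)$, so every summand carries a factor $U\cdot\ee_j=(\MP^{\rm out}U)\cdot\ee_j$, while expanding $\partial_t\ee_j$ in the basis $\{\ee_k\}\cup\{\bxi_k/|\bxi_k|\}$ yields $|U\cdot\partial_t\ee_j|\le C(\|\MP^{\rm out}U\|+\|U\|)$. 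Taking $U=\QQ_R$ and using $\|\MP^{\rm out}\QQ_R\|_{L^2}\le C(\ve\mathfrak{E})^{1/2}$, $\|\QQ_R\|_{L^2}\le C\mathfrak{E}^{1/2}$ gives $\tfrac1\ve|\langle(\partial_t\CH_{\QQ^{(0)}})\QQ_R,\QQ_R\rangle|\le\tfrac{C}{\ve}\|\MP^{\rm out}\QQ_R\|_{L^2}\|\QQ_R\|_{L^2}\le\delta\,\tfrac1{\ve^2}\|\MP^{\rm out}\QQ_R\|^2_{L^2}+C_\delta\mathfrak{E}$, and the first term is swallowed by the strong $\ve^{-2}$ dissipation above; the $\ve^2,\ve^4$ weights on the upper tiers are precisely what makes the identical Young trick close with the corresponding pieces of $\mathfrak{F}$. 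The remaining coefficient derivatives ($\partial_t\CM^{-1}_{\QQ^{(0)}}$, and the spatial derivatives of $\CH^{\ve}_{\QQ^{(0)}}$ when $\nabla,\Delta$ hit the coefficient) are nonsingular because $\QQ^{(0)}(\Fp)$ is smooth with all derivatives bounded on $[0,T]$ (Proposition \ref{existence-Hilbert-expansion-prop}, Proposition \ref{high-tensor-smooth-prop}), hence contribute $O(\mathfrak{E})$.

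Finally, the commutators $[\partial^m,\CY_{\QQ^{(0)}}]$ for $\CY\in\{\CM,\CV,\CN,\CP\}$ are bounded by Lemma \ref{commutor-YY-QQ-lemma} in terms of $\|\nabla\QQ^{(0)}\|_{H^{\ell-1}}$ (a fixed constant) times lower--order remainder norms, so after the $\ve^m$ weights they are $O(\mathfrak{E}^{1/2})$ times the dissipative factors they pair with and are absorbed by Young; the Hessian commutators $\ve^{2m-1}[\partial^m,\CH_{\QQ^{(0)}}]\QQ_R$, though formally $O(\ve^{-1})$, involve only derivatives of $\QQ_R$ up to order $m-1$ multiplied by bounded derivatives of $\QQ^{(0)}$, so after the $\ve^{2m}$ weight they are controlled by $\mathfrak{E}+\delta\mathfrak{F}$. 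The good terms $\mathfrak{R}$ are handled by their defining bound \eqref{good-terms} together with Corollary \ref{EF-corollary}: every pairing $\langle\mathfrak{R},\cdot\rangle$, $\langle\nabla\cdot\mathfrak{R},\cdot\rangle$ and $\tfrac1\ve\langle\mathfrak{R},\CH^{\ve}_{\QQ^{(0)}}\partial^m\QQ_R\rangle$ is, after Young, at most $C(\ve E)(1+\mathfrak{E}+\ve\mathfrak{F})+\ve f(\mathfrak{E})+\delta\mathfrak{F}$. Collecting the three tiers, choosing $\delta$ small to absorb all $\delta\mathfrak{F}$ into the dissipation on the left, and using $E\le C\mathfrak{E}^{1/2}$, $F\le C(\mathfrak{E}+\mathfrak{F})^{1/2}$, yields the asserted inequality $\tfrac{\ud}{\ud t}\mathfrak{E}(t)+\mathfrak{F}(t)\le C(\ve E)(1+\mathfrak{E})+\ve f(\mathfrak{E})+C(\ve E)\ve\mathfrak{F}$. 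The delicate point throughout is uniformity in $\ve$: it rests on the kernel identity $\CH_{\QQ^{(0)}}\QQ_R=\CH_{\QQ^{(0)}}\MP^{\rm out}\QQ_R$, on the genuine dissipation scaling like $\ve^{-2}$ versus the singular energy scaling like $\ve^{-1}$, and on the exact $\ve$--weighting built into $\mathfrak{E}$ and $\mathfrak{F}$.
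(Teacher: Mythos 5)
Your proposal follows the same architecture as the paper's Steps 1--4: base, $\nabla$, $\Delta$ tiers weighted by $1,\ve^2,\ve^4$, each tested against $\tfrac1\ve\CH^{\ve}_{\QQ^{(0)}}\partial^m\QQ_R$ and $\partial^m\vv_R$, with the $\CN_{\QQ^{(0)}}=\CV_{\QQ^{(0)}}^T$ cancellation, the positive definiteness of $\CM_{\QQ^{(0)}},\CP_{\QQ^{(0)}}$, the commutator bounds of Lemma~\ref{commutor-YY-QQ-lemma}, and the good-term bound \eqref{good-terms} together with Corollary~\ref{EF-corollary}. The use of the eigen-decomposition \eqref{eigen-decomp} for the singular time-derivative is also the paper's mechanism (Lemma~\ref{key-lemma}). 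So the overall route is the same.

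However, there is a genuine gap in the absorption of the singular term. You bound
\[
\tfrac1\ve\big|\langle(\partial_t\CH_{\QQ^{(0)}})\QQ_R,\QQ_R\rangle\big|
\le\tfrac{C}{\ve}\|\MP^{\rm out}\QQ_R\|_{L^2}\|\QQ_R\|_{L^2}
\le\delta\,\tfrac1{\ve^2}\|\MP^{\rm out}\QQ_R\|^2_{L^2}+C_\delta\mathfrak{E},
\]
and then claim the first term is swallowed by the $\ve^{-2}$ dissipation $\tfrac1{\ve^2}\langle\CM_{\QQ^{(0)}}(\CH^{\ve}_{\QQ^{(0)}}\QQ_R),\CH^{\ve}_{\QQ^{(0)}}\QQ_R\rangle$, on the grounds that the latter dominates $c\ve^{-2}\|\MP^{\rm out}\QQ_R\|^2_{L^2}+c\|\CG(\QQ_R)\|^2_{L^2}$. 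That domination claim is incorrect. Writing $\mu_R=\CH^{\ve}_{\QQ^{(0)}}\QQ_R=\CH_{\QQ^{(0)}}\QQ_R+\ve\CG(\QQ_R)$ and Young-ing the cross term $\tfrac{2}{\ve}\langle\CM\CH\QQ_R,\CG(\QQ_R)\rangle$ yields a \emph{lower} bound of the form $\tfrac{1}{2\ve^2}\langle\CM\CH\QQ_R,\CH\QQ_R\rangle-\langle\CM\CG(\QQ_R),\CG(\QQ_R)\rangle$, i.e.\ the $\|\CG(\QQ_R)\|^2_{L^2}$ contribution appears with a minus sign and is \emph{lost}, not gained. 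Consequently the dissipation controls only $\tfrac1{\ve^2}\|\CH\QQ_R\|^2$ up to a $\|\CG(\QQ_R)\|^2_{L^2}$ error, and this error is not uniformly $O(\mathfrak{E}+\mathfrak{F})$ (the uniform bounds from Lemma~\ref{basic-lemma-remainder} control $\ve\|\nabla\CG(\QQ_R)\|_{L^2}$ and $\ve\|\Delta\QQ_R\|_{L^2}$, so $\|\CG(\QQ_R)\|^2_{L^2}\lesssim\ve^{-2}\mathfrak{E}$). Thus $\delta\ve^{-2}\|\MP^{\rm out}\QQ_R\|^2$ is not, in fact, absorbable by the dissipation via the naive inequality $\|\MP^{\rm out}\QQ_R\|\lesssim\|\mu_R\|+\ve\|\CG(\QQ_R)\|$.

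The paper's Lemma~\ref{key-lemma} handles exactly this point differently and correctly. Rather than estimating $\tfrac1\ve\|\MP^{\rm out}\QQ_R\|\|\QQ_R\|$ and Young-ing, it rewrites the cross term as
\[
\tfrac1\ve\langle\QQ_{\perp},(\partial_t\CH_{\QQ^{(0)}})\QQ_{\top}\rangle
=\big\langle\CH^{-1}_{\QQ^{(0)}}\big[(\partial_t\CH_{\QQ^{(0)}})\QQ_{\top}\big],\tfrac1\ve\CH^{\ve}_{\QQ^{(0)}}\QQ_R\big\rangle
-\big\langle\CH^{-1}_{\QQ^{(0)}}\big[(\partial_t\CH_{\QQ^{(0)}})\QQ_{\top}\big],\CG(\QQ_R)\big\rangle,
\]
so that the singular factor is \emph{literally} $\tfrac1\ve\CH^{\ve}_{\QQ^{(0)}}\QQ_R$ and can be Young-ed against the genuine dissipation of $\|\tfrac1\ve\CH^{\ve}_{\QQ^{(0)}}\QQ_R\|^2$; the residual $\CG(\QQ_R)$ pairing is then integrated by parts against the smooth test function $\CH^{-1}_{\QQ^{(0)}}[(\partial_t\CH_{\QQ^{(0)}})\QQ_{\top}]$, producing only $C(\|\nabla\QQ_R\|^2_{L^2}+\|\QQ_R\|^2_{L^2})\le C\mathfrak{E}$ rather than the uncontrolled $\|\CG(\QQ_R)\|^2_{L^2}$. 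To repair your proof you should replace the chain $\tfrac1\ve\|\MP^{\rm out}\QQ_R\|\|\QQ_R\|\to\delta\ve^{-2}\|\MP^{\rm out}\QQ_R\|^2+\cdots$ by this rewriting (equivalently: observe that $(U\cdot\ee_j)=\lambda_j^{-1}(\CH_{\QQ^{(0)}}U)\cdot\ee_j$, so that $\tfrac1\ve\langle(\partial_t\CH_{\QQ^{(0)}})U,U\rangle$ is an inner product of $\tfrac1\ve\CH_{\QQ^{(0)}}U=\tfrac1\ve\CH^{\ve}_{\QQ^{(0)}}U-\CG(U)$ against a bounded vector field, and integrate by parts on the $\CG(U)$ piece). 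The rest of your argument is consistent with the paper.
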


To ensure the closure of energy estimates, we need to control the singular term $\frac{1}{\ve}\big\langle\QQ_R,\partial_t\big(\CH_{\QQ^{(0)}}\big)\QQ_R\big\rangle$, which comes from the $L^2$-inner product $\frac{1}{\ve}\frac{\ud}{\ud t}\big\langle\QQ_R,\CH^{\ve}_{\QQ^{(0)}}\QQ_R\big\rangle$.
The singular term possesses a good upper bound as a result of the eigen-decomposition \eqref{eigen-decomp}, given in the following lemma.

\begin{lemma}\label{key-lemma}
For any $\delta>0$, there exists a positive constant $C=C(\delta,\|\nabla_{t,\xx}\Fp\|_{L^{\infty}},\|\nabla\partial_t\Fp\|_{L^{\infty}})$ such that for any $\QQ\in\mathbb{Q}$, it follows that
\begin{align*}
\frac{1}{\ve}\big\langle\QQ,\partial_t\big(\CH_{\QQ^{(0)}}\big)\QQ\big\rangle\leq\delta\Big\|\frac{1}{\ve}\CH^{\ve}_{\QQ^{(0)}}\QQ\Big\|^2_{L^2}+C_{\delta}\Big(\frac{1}{\ve}\big\langle\CH^{\ve}_{\QQ^{(0)}}\QQ,\QQ\big\rangle+\|\QQ\|^2_{L^2}\Big),
\end{align*}
where $\CH^{\ve}_{\QQ^{(0)}}\QQ=\CH_{\QQ^{(0)}}\QQ+\ve\CG(\QQ)$.
\end{lemma}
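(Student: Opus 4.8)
The idea is to exploit the eigen-decomposition \eqref{eigen-decomp} of $\CH_{\QQ^{(0)}}$ directly, rather than attempting to bound $\partial_t(\CH_{\QQ^{(0)}})$ by a brute-force expansion of all the entropy derivatives. Recall from \eqref{eigen-decomp} that on $({\rm Ker}\CH_{\QQ^{(0)}})^{\perp}$ we may write $\CH_{\QQ^{(0)}}=\sum_{j=1}^{7}\lambda_j\,\ee_j\otimes\ee_j$ with $\lambda_j>0$, where both $\lambda_j$ and $\ee_j$ depend on $\xx,t$ only through the frame $\Fp(\xx,t)$, hence are smooth and have bounded space-time derivatives controlled by $\|\nabla_{t,\xx}\Fp\|_{L^\infty}$. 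Differentiating in $t$,
\begin{align*}
\partial_t\big(\CH_{\QQ^{(0)}}\big)=\sum_{j=1}^{7}\Big[(\partial_t\lambda_j)\,\ee_j\otimes\ee_j+\lambda_j\,(\partial_t\ee_j)\otimes\ee_j+\lambda_j\,\ee_j\otimes(\partial_t\ee_j)\Big],
\end{align*}
so $\big\langle\QQ,\partial_t(\CH_{\QQ^{(0)}})\QQ\big\rangle$ is a sum of terms of the form $\int (\partial_t\lambda_j)(\QQ\cdot\ee_j)^2$ and $\int \lambda_j(\QQ\cdot\ee_j)(\QQ\cdot\partial_t\ee_j)$, all with pointwise-bounded coefficients.

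The key observation is that each factor $\QQ\cdot\ee_j$ is, up to the bounded weight $\lambda_j^{-1/2}$ or $\lambda_j^{-1}$, controlled by $\CH_{\QQ^{(0)}}\QQ$ or by $(\CH_{\QQ^{(0)}}\QQ)\cdot\QQ$. Precisely, writing $\MP^{\rm out}\QQ=\sum_j c_j\ee_j/|\ee_j|^2$ with $c_j=\QQ\cdot\ee_j$, we have pointwise $|\CH_{\QQ^{(0)}}\QQ|^2=\sum_j\lambda_j^2 c_j^2/|\ee_j|^2$ and $(\CH_{\QQ^{(0)}}\QQ)\cdot\QQ=\sum_j\lambda_j c_j^2/|\ee_j|^2$; since $\lambda_j\ge\lambda_{\min}>0$ uniformly (compactness of the minimizer manifold plus Assumption \ref{HQ-3}), this gives $|c_j|\le C|\CH_{\QQ^{(0)}}\QQ|$ and $c_j^2\le C(\CH_{\QQ^{(0)}}\QQ)\cdot\QQ$ pointwise. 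Therefore a typical term is estimated, using Cauchy--Schwarz in $\xx$ and then Young's inequality, by
\begin{align*}
\frac1\ve\Big|\int(\partial_t\lambda_j)c_j^2\,\ud\xx\Big|\le \frac{C}{\ve}\int|\CH_{\QQ^{(0)}}\QQ|\,|c_j|\,\ud\xx\le \delta\Big\|\frac1\ve\CH_{\QQ^{(0)}}\QQ\Big\|_{L^2}^2+C_\delta\Big\|\CH_{\QQ^{(0)}}^{1/2}\QQ\,\big(\text{weighted}\big)\Big\|_{L^2}^2,
\end{align*}
and the second piece is $\le C_\delta\,\frac1\ve\langle\CH_{\QQ^{(0)}}\QQ,\QQ\rangle$ after inserting the uniform lower bound on $\lambda_j$. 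The cross terms $\int\lambda_j c_j(\QQ\cdot\partial_t\ee_j)$ are handled identically, noting $|\QQ\cdot\partial_t\ee_j|\le C|\QQ|$ and splitting $\frac1\ve|c_j|\,|\QQ|\le\delta\|\frac1\ve\CH_{\QQ^{(0)}}\QQ\|^2+C_\delta(\frac1\ve\langle\CH_{\QQ^{(0)}}\QQ,\QQ\rangle+\|\QQ\|_{L^2}^2)$. Finally, to pass from $\CH_{\QQ^{(0)}}$ to $\CH^{\ve}_{\QQ^{(0)}}=\CH_{\QQ^{(0)}}+\ve\CG$, observe $\langle\CH^{\ve}_{\QQ^{(0)}}\QQ,\QQ\rangle=\langle\CH_{\QQ^{(0)}}\QQ,\QQ\rangle+\ve\langle\CG(\QQ),\QQ\rangle\ge\langle\CH_{\QQ^{(0)}}\QQ,\QQ\rangle$ (since $\CG$ is, modulo an integration by parts, the positive-definite elastic part), while $\|\frac1\ve\CH^{\ve}_{\QQ^{(0)}}\QQ\|_{L^2}$ dominates $\|\frac1\ve\CH_{\QQ^{(0)}}\QQ\|_{L^2}$ up to $\|\CG(\QQ)\|_{L^2}\le C\|\QQ\|_{H^2}$, which after adjusting $\delta$ is absorbed; one must only be slightly careful that the lower-order $\|\QQ\|_{L^2}^2$-type leftovers all fit into the stated right-hand side.

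\textbf{Main obstacle.} The genuinely delicate point is not the Young-inequality bookkeeping but making rigorous the claim that $\lambda_j,\ee_j$ (and hence $\partial_t\lambda_j,\partial_t\ee_j$) are smooth functions of $\Fp$ with derivatives uniformly bounded on the minimizer manifold: eigenvalues/eigenvectors of a symmetric matrix are only smooth where eigenvalues are simple, so if some $\lambda_j$ are repeated one cannot differentiate individual $\ee_j$. This is circumvented by differentiating the \emph{operator} $\CH_{\QQ^{(0)}(\Fp)}$ rather than its spectral pieces: $\partial_t\CH_{\QQ^{(0)}}=(\partial^3_{\QQ}F_b)|_{\QQ^{(0)}}[\partial_t\QQ^{(0)}]$ is manifestly smooth in $\Fp$ (by Proposition \ref{entropy-smooth-prop} and compactness of $\mathbb{Q}_\delta$), and $\partial_t\QQ^{(0)}=\sum_k(\ML_k\QQ^{(0)})\,(\text{bounded})=\sum_k\bxi_k\,(\text{bounded})$ by the chain rule, with the bounded factors controlled by $\|\nabla_{t,\xx}\Fp\|_{L^\infty}$. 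One then only needs the spectral \emph{gap} $\lambda_{\min}>0$ between the kernel and its complement — which is exactly the content of Proposition \ref{prop-linearized-operator} together with Assumption \ref{HQ-3} and compactness — not the simplicity of individual eigenvalues. With $\partial_t\CH_{\QQ^{(0)}}$ in hand as a bounded symmetric operator that annihilates ${\rm Ker}\CH_{\QQ^{(0)}}$ is \emph{not} needed; what is used is simply $|\QQ^{\!\top}(\partial_t\CH_{\QQ^{(0)}})\QQ|\le C|\MP^{\rm out}\QQ|^2+C|\MP^{\rm out}\QQ|\,|\QQ|$ pointwise, after which the estimate proceeds as above.
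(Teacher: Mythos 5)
Your overall strategy is the same as the paper's: use the eigen-decomposition \eqref{eigen-decomp}, split $\QQ=\QQ_\top+\QQ_\perp$, exploit the spectral gap $\lambda_j\ge\lambda_{\min}>0$ to control $\QQ_\perp$ by $\CH_{\QQ^{(0)}}\QQ$, and observe the $\QQ_\top$--$\QQ_\top$ piece vanishes. But there is a genuine gap at the very last step, the passage from $\CH_{\QQ^{(0)}}$ to $\CH^\ve_{\QQ^{(0)}}$, which you wave away as ``after adjusting $\delta$ is absorbed.''

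Concretely, your intermediate Young splits produce a term $\delta\|\tfrac1\ve\CH_{\QQ^{(0)}}\QQ\|_{L^2}^2$. Writing $\tfrac1\ve\CH_{\QQ^{(0)}}\QQ=\tfrac1\ve\CH^\ve_{\QQ^{(0)}}\QQ-\CG(\QQ)$ and squaring gives $\delta\|\tfrac1\ve\CH_{\QQ^{(0)}}\QQ\|^2\le 2\delta\|\tfrac1\ve\CH^\ve_{\QQ^{(0)}}\QQ\|^2+2\delta\|\CG(\QQ)\|_{L^2}^2$, and $\|\CG(\QQ)\|_{L^2}^2\sim\|\Delta\QQ\|_{L^2}^2$ is a \emph{second-order} quantity. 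The right-hand side of the lemma only contains $\tfrac1\ve\langle\CH^\ve_{\QQ^{(0)}}\QQ,\QQ\rangle+\|\QQ\|_{L^2}^2$, which controls $\tfrac1\ve\|\MP^{\rm out}\QQ\|_{L^2}^2+\|\nabla\QQ\|_{L^2}^2+\|\QQ\|_{L^2}^2$ but \emph{not} $\|\Delta\QQ\|_{L^2}^2$; and in the application (Step 4 of the remainder estimate), a leftover $\delta\|\CG(\QQ_R)\|_{L^2}^2$ would contribute $O(\delta\,\mathfrak{E}/\ve)$, which is not uniform in $\ve$. So this remnant genuinely cannot be absorbed, no matter how you tune $\delta$. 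The paper circumvents this precisely by \emph{not} estimating $\|\tfrac1\ve\CH_{\QQ^{(0)}}\QQ\|$ in $L^2$: it keeps the cross term as a pairing $\langle\CH^{-1}_{\QQ^{(0)}}(\partial_t\CH_{\QQ^{(0)}}\QQ_\top),\tfrac1\ve\CH_{\QQ^{(0)}}\QQ\rangle$, inserts $\tfrac1\ve\CH_{\QQ^{(0)}}\QQ=\tfrac1\ve\CH^\ve_{\QQ^{(0)}}\QQ-\CG(\QQ)$ \emph{before} applying Young, and then integrates the $\CG(\QQ)$ pairing by parts. Since $\CH^{-1}_{\QQ^{(0)}}(\partial_t\CH_{\QQ^{(0)}}\QQ_\top)$ is linear in $\QQ$ with smooth coefficients (whence the constant depends on $\|\nabla\partial_t\Fp\|_{L^\infty}$), the integrated-by-parts term is bounded by $C(\|\nabla\QQ\|_{L^2}^2+\|\QQ\|_{L^2}^2)$, which \emph{is} controlled by the stated right-hand side. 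This integration-by-parts step is the essential ingredient your proof is missing.

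A smaller point: the concern you raise about smoothness of individual $\lambda_j,\ee_j$ when eigenvalues repeat is actually settled by rotational invariance of $F_b$: for a biaxial minimizer $\QQ^{(0)}(\Fp)$, the Hessian at $\Fp$ is the conjugate of the Hessian at a fixed $\Fp_0$ by the rotation taking $\Fp_0$ to $\Fp$, so the $\lambda_j$ are \emph{constants} (in particular $\partial_t\lambda_j\equiv 0$, which is why the paper's formula \eqref{dtH} has no such term) and the $\ee_j(\Fp)$ may be chosen as smooth rotated copies of $\ee_j(\Fp_0)$. Your alternative route (differentiate the operator $\partial_t\CH_{\QQ^{(0)}}=(\partial_\QQ^3F_b)|_{\QQ^{(0)}}[\partial_t\QQ^{(0)}]$ directly) is also fine, but is more machinery than needed here.
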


\begin{proof}
Recall the decomposition $\QQ=\QQ_{\top}+\QQ_{\perp}$ with $\QQ_{\top}\in{\rm Ker}\CH_{\QQ^{(0)}}$ and $\QQ_{\perp}\in({\rm ker}\CH_{\QQ^{(0)}})^{\perp}$.
Then, we obtain
\begin{align}\label{key-lemma-decomposition}
\frac{1}{\ve}\big\langle\QQ,\partial_t\big(\CH_{\QQ^{(0)}}\big)\QQ\big\rangle=&\frac{1}{\ve}\big\langle\QQ_{\top},\partial_t\big(\CH_{\QQ^{(0)}}\big)\QQ_{\top}\big\rangle+\frac{2}{\ve}\big\langle\QQ_{\perp},\partial_t\big(\CH_{\QQ^{(0)}}\big)\QQ_{\top}\big\rangle\nonumber\\
&+\frac{1}{\ve}\big\langle\QQ_{\perp},\partial_t\big(\CH_{\QQ^{(0)}}\big)\QQ_{\perp}\big\rangle.
\end{align}
We deal with three terms on the right-hand side, respectively.
From \eqref{eigen-decomp},
we deduce that
\begin{align}
\partial_t\big(\CH_{\QQ^{(0)}}\big)=\sum_{k=1}^7\lambda_k\Big(\frac{\partial \ee_k}{\partial\Fp} \frac{\partial\Fp}{\partial t}\otimes \ee_k+\ee_k\otimes\frac{\partial \ee_k}{\partial\Fp}\frac{\partial\Fp}{\partial t}\Big). \label{dtH}
\end{align}
Due to the orthogonality $\langle \ee_k,\QQ_{\top}\rangle=0$, the first term on the right-hand side of (\ref{key-lemma-decomposition}) vanishes. For the second term, we derive from Proposition \ref{prop-linearized-operator} that
\begin{align*}
&\frac{1}{\ve}\big\langle\QQ_{\perp},\partial_t\big(\CH_{\QQ^{(0)}}\big)\QQ_{\top}\big\rangle=\Big\langle\CH^{-1}_{\QQ^{(0)}}\big(\partial_t\big(\CH_{\QQ^{(0)}}\big)\QQ_{\top}\big),\frac{1}{\ve}\CH_{\QQ^{(0)}}\QQ\Big\rangle\\
&\quad=\Big\langle\CH^{-1}_{\QQ^{(0)}}\big(\partial_t\big(\CH_{\QQ^{(0)}}\big)\QQ_{\top}\big),\frac{1}{\ve}\CH^{\ve}_{\QQ^{(0)}}\QQ\Big\rangle-\big\langle\CH^{-1}_{\QQ^{(0)}}\big(\partial_t\big(\CH_{\QQ^{(0)}}\big)\QQ_{\top}\big),\CG(\QQ)\big\rangle\\
&\quad\leq C\|\partial_t\Fp\|_{L^{\infty}}\|\QQ_{\top}\|_{L^2}\Big\|\frac{1}{\ve}\CH^{\ve}_{\QQ^{(0)}}\QQ\Big\|_{L^2}+C_1(\|\nabla\QQ\|^2_{L^2}+\|\QQ\|^2_{L^2}),
\end{align*}
where $C_1$ depends on $\|\nabla_{t,\xx}\Fp\|_{L^{\infty}}$ and $\|\nabla\partial_t\Fp\|_{L^{\infty}}$. Using Proposition \ref{prop-linearized-operator} again, the third term on the right-hand side of (\ref{key-lemma-decomposition}) can be estimated as
\begin{align*}
\frac{1}{\ve}\big\langle\QQ_{\perp},\partial_t\big(\CH_{\QQ^{(0)}}\big)\QQ_{\perp}\big\rangle\leq&C\|\partial_t\Fp\|_{L^{\infty}}\frac{1}{\ve}\|\QQ_{\perp}\|^2_{L^2}\leq C\|\partial_t\Fp\|_{L^{\infty}}\frac{1}{\ve}\big\langle\CH_{\QQ^{(0)}}\QQ,\QQ\big\rangle\\
\leq& C\|\partial_t\Fp\|_{L^{\infty}}\frac{1}{\ve}\big\langle\CH^{\ve}_{\QQ^{(0)}}\QQ,\QQ\big\rangle,
\end{align*}
where we have used the fact that $\langle\QQ,\CG(\QQ)\rangle\geq0$. This completes the proof of the lemma.
\end{proof}

{\it Proof of Proposition \ref{uniform-bound-prop}}.
The proof will be divided into four steps.

{\it Step 1. $L^2$-estimate}.  From the system of remainder (\ref{QvR-remainder-equation-1})--(\ref{QvR-remainder-equation-3}) and Lemma \ref{basic-lemma-remainder}, we deduce that
\begin{align}\label{QQR-remiander-L2-estimate}
&\big\langle\partial_t\QQ_R,\CM^{-1}_{\QQ^{(0)}}\QQ_R\big\rangle+\frac{1}{\ve}\langle(\CH^{\ve}_{\QQ^{(0)}}\QQ_R),\QQ_R\rangle\nonumber\\
&\quad=\big\langle\CM^{-1}_{\QQ^{(0)}}(\CV_{\QQ^{(0)}}\kappa_R),\QQ_R\big\rangle+\langle\mathfrak{R},\CM^{-1}_{\QQ^{(0)}}\QQ_R\rangle\nonumber\\
&\quad\leq C\|\QQ_R\|_{L^2}(\|\nabla\vv_R\|_{L^2}+\|\mathfrak{R}\|_{L^2})\leq \delta_0\mathfrak{F}+C_{\delta_0}\mathfrak{E}+C\|\mathfrak{R}\|^2_{L^2},
\end{align}
together with the following relation:
\begin{align}\label{QQ-vv-R-remiander-L2-estimate}
&\langle\partial_t\vv_R,\vv_R\rangle+\Big\langle\partial_t\QQ_R,\frac{1}{\ve}\CH^{\ve}_{\QQ^{(0)}}\QQ_R\Big\rangle\nonumber\\
&+\eta\|\nabla\vv_R\|^2_{L^2}+\frac{1}{\ve}\big\langle\CM_{\QQ^{(0)}}\big(\CH^{\ve}_{\QQ^{(0)}}\QQ_R\big),\CH^{\ve}_{\QQ^{(0)}}\QQ_R\big\rangle\nonumber\\
&\quad=-\big\langle\CP_{\QQ^{(0)}}\kappa_R,\kappa_R\big\rangle-\Big\langle\CN_{\QQ^{(0)}}\Big(\frac{1}{\ve}\CH^{\ve}_{\QQ^{(0)}}\QQ_R\Big),\kappa_R\Big\rangle+\langle\nabla\cdot\mathfrak{R}+\mathfrak{R},\vv_R\rangle\nonumber\\
&\qquad+\Big\langle\CV_{\QQ^{(0)}}\kappa_R,\frac{1}{\ve}\CH^{\ve}_{\QQ^{(0)}}\QQ_R\Big\rangle+\Big\langle\mathfrak{R},\frac{1}{\ve}\CH^{\ve}_{\QQ^{(0)}}\QQ_R\Big\rangle\nonumber\\
&\quad\leq \delta_0\mathfrak{F}+C_{\delta_0}\mathfrak{E}+C\|\mathfrak{R}\|^2_{L^2},
\end{align}
where we have used the relation $\CN_{\QQ^{(0)}}=\CV^T_{\QQ^{(0)}}$ and the positive definiteness of $\CP_{\QQ^{(0)}}$.

{\it Step 2. $H^1$-estimate}.
We apply the derivative $\partial_i$ on (\ref{QvR-remainder-equation-1}) and take the $L^2$-inner
product with $\frac{1}{\ve}\CH^{\ve}_{\QQ^{(0)}}\partial_i\QQ_R$. Again by acting $\partial_i$ on (\ref{QvR-remainder-equation-2}) and taking the $L^2$-inner product with
$\partial_i\vv_R$, we deduce that
\begin{align}\label{H1-remainder-estimate}
&\ve^2\big\langle\partial_t\partial_i\vv_R,\partial_i\vv_R\big\rangle+\ve\big\langle\partial_t\partial_i\QQ_R,\CH^{\ve}_{\QQ^{(0)}}\partial_i\QQ_R\big\rangle+\ve^2\eta\|\nabla\partial_i\vv_R\|^2_{L^2}\nonumber\\
&\quad=-\ve^2\big\langle\partial_i(\CP_{\QQ^{(0)}}\kappa_R),\partial_i\kappa_R\big\rangle-\ve\big\langle\partial_i\big(\CN_{\QQ^{(0)}}(\CH^{\ve}_{\QQ^{(0)}}\QQ_R)\big),\nabla\partial_i\vv_R\big\rangle\nonumber\\
&\qquad-\big\langle\partial_i\big(\CM_{\QQ^{(0)}}\CH^{\ve}_{\QQ^{(0)}}\QQ_R\big),\CH^{\ve}_{\QQ^{(0)}}\partial_i\QQ_R\big\rangle+\ve\big\langle\partial_i\big(\CV_{\QQ^{(0)}}\kappa_R\big),\CH^{\ve}_{\QQ^{(0)}}\partial_i\QQ_R\big\rangle\nonumber\\
&\qquad+\ve^2\langle\nabla\cdot\partial_i\mathfrak{R}+\partial_i\mathfrak{R},\partial_i\vv_R\rangle+\ve\big\langle\partial_i\mathfrak{R},\CH^{\ve}_{\QQ^{(0)}}\partial_i\QQ_R\big\rangle\nonumber\\
&\quad\eqdefa I+II+III+IV+V+VI.
\end{align}
The terms on the right-hand sides can be estimated as follows:
\begin{align*}
I\leq&-\ve^2\big\langle\CP_{\QQ^{(0)}}\partial_i\kappa_R,\partial_i\kappa_R\big\rangle+C\|\ve\nabla\vv_R\|_{L^2}\|\ve\nabla\partial_i\QQ_R\|_{L^2}
\leq\delta_0\mathfrak{F}+C\mathfrak{E},\\
II\leq&-\ve\big\langle\CN_{\QQ^{(0)}}(\CH^{\ve}_{\QQ^{(0)}}\partial_i\QQ_R),\nabla\partial_i\vv_R\big\rangle\\
&+\ve\Big\langle[\CM_{\QQ^{(0)}}\CH_{\QQ^{(0)}},\partial_i]\QQ_R+\ve[\CM_{\QQ^{(0)}}\CG,\partial_i]\QQ_R,\nabla\partial_i\vv_R\Big\rangle\\
\leq&-\ve\big\langle\CN_{\QQ^{(0)}}(\CH^{\ve}_{\QQ^{(0)}}\partial_i\QQ_R),\nabla\partial_i\vv_R\big\rangle+C(\|\QQ_R\|_{L^2}+\ve\|\QQ_R\|_{H^2})\|\ve\nabla\partial_i\vv_R\|_{L^2}\\
\leq&-\ve\big\langle\CN_{\QQ^{(0)}}(\CH^{\ve}_{\QQ^{(0)}}\partial_i\QQ_R),\nabla\partial_i\vv_R\big\rangle+\delta_0\mathfrak{F}+C\mathfrak{E},\\
III\leq&-\big\langle\CM_{\QQ^{(0)}}\big(\CH^{\ve}_{\QQ^{(0)}}\partial_i\QQ_R\big),\CH^{\ve}_{\QQ^{(0)}}\partial_i\QQ_R\big\rangle+\big\langle[\CM_{\QQ^{(0)}}\CH^{\ve}_{\QQ^{(0)}},\partial_i]\QQ_R,\CH^{\ve}_{\QQ^{(0)}}\partial_i\QQ_R\big\rangle\\
\leq&-\big\langle\CM_{\QQ^{(0)}}\big(\CH^{\ve}_{\QQ^{(0)}}\partial_i\QQ_R\big),\CH^{\ve}_{\QQ^{(0)}}\partial_i\QQ_R\big\rangle+C(\|\QQ_R\|_{L^2}+\ve\|\QQ_R\|_{H^2})\|\CH^{\ve}_{\QQ^{(0)}}\partial_i\QQ_R\|_{L^2}\\
\leq&-\big\langle\CM_{\QQ^{(0)}}\big(\CH^{\ve}_{\QQ^{(0)}}\partial_i\QQ_R\big),\CH^{\ve}_{\QQ^{(0)}}\partial_i\QQ_R\big\rangle+\delta_0\mathfrak{F}+C\mathfrak{E},\\
IV\leq&\ve\big\langle\CV_{\QQ^{(0)}}\partial_i\kappa_R,\CH^{\ve}_{\QQ^{(0)}}\partial_i\QQ_R\big\rangle+\ve\|\nabla\vv_R\|_{L^2}\|\CH^{\ve}_{\QQ^{(0)}}\partial_i\QQ_R\|_{L^2}\\
\leq&\ve\big\langle\CV_{\QQ^{(0)}}\partial_i\kappa_R,\CH^{\ve}_{\QQ^{(0)}}\partial_i\QQ_R\big\rangle+\delta_0\mathfrak{F}+C\mathfrak{E},\\
V\leq&\|\ve\partial_i\mathfrak{R}\|_{L^2}\|\ve\nabla\partial_i\vv_R\|_{L^2}
+\|\ve\partial_i\mathfrak{R}\|_{L^2}\|\ve\partial_i\vv_R\|_{L^2}\\
\leq&\delta_0 \Ff+C\Ef+C\|\ve\partial_i\mathfrak{R}\|^2_{L^2},\\
VI\leq&\|\ve\partial_i\mathfrak{R}\|_{L^2}\|\CH^{\ve}_{\QQ^{(0)}}\partial_i\QQ_R\|_{L^2}
\leq \delta_0 \Ff+C\|\ve\partial_i\mathfrak{R}\|^2_{L^2}.
\end{align*}
Consequently, substituting the above estimates into (\ref{H1-remainder-estimate}) and using the cancelation relation yields
\begin{align}\label{H1-remainder-estimate-final}
&\ve^2\big\langle\partial_t\partial_i\vv_R,\partial_i\vv_R\big\rangle+\ve\big\langle\partial_t\partial_i\QQ_R,\CH^{\ve}_{\QQ^{(0)}}\partial_i\QQ_R\big\rangle\nonumber\\
&+\ve^2\eta\|\nabla\partial_i\vv_R\|^2_{L^2}+\big\langle\CM_{\QQ^{(0)}}\big(\CH^{\ve}_{\QQ^{(0)}}\partial_i\QQ_R\big),\CH^{\ve}_{\QQ^{(0)}}\partial_i\QQ_R\big\rangle\nonumber\\
&\quad\leq \delta_0 \Ff+C\mathfrak{E}+C\|\ve\partial_i\mathfrak{R}\|^2_{L^2}.
\end{align}

{\it Step 3. $H^2$-estimate}. Similar to Step 2, we first apply the derivative operator $\Delta$ on
(\ref{QvR-remainder-equation-1}), then multiply with $\frac{1}{\ve}\CH^{\ve}_{\QQ^{(0)}}\Delta\QQ_R$ and integrate the resulting identity on $\mathbb{R}^3$.
Again applying the operator $\Delta$ on (\ref{QvR-remainder-equation-2}) and taking the $L^2$-inner product with $\Delta\vv_R$
enable us to derive the following equality:
\begin{align}\label{QQvvR-remainder-H2}
&\ve^4\big\langle\partial_t\Delta\vv_R,\Delta\vv_R\big\rangle+\ve^3\big\langle\partial_t\Delta\QQ_R,\CH^{\ve}_{\QQ^{(0)}}\Delta\QQ_R\big\rangle\nonumber\\
&\quad=-\ve^4\eta\|\nabla\Delta\vv_R\|^2_{L^2}-\ve^4\big\langle\CP_{\QQ^{(0)}}\Delta\kappa_R,\Delta\kappa_R\big\rangle-\ve^3\big\langle\CN_{\QQ^{(0)}}(\CH^{\ve}_{\QQ^{(0)}}\Delta\QQ_R),\Delta\kappa_R\big\rangle\nonumber\\
&\qquad-\ve^2\big\langle\CM_{\QQ^{(0)}}(\CH^{\ve}_{\QQ^{(0)}}\Delta\QQ_R),\CH^{\ve}_{\QQ^{(0)}}\Delta\QQ_R\big\rangle+\ve^3\big\langle\CV_{\QQ^{(0)}}\Delta\kappa
_R,\CH^{\ve}_{\QQ^{(0)}}\Delta\QQ_R\big\rangle\nonumber\\
&\qquad\underbrace{-\ve^4\big\langle[\Delta,\CP_{\QQ^{(0)}}]\kappa_R,\Delta\kappa_R\big\rangle+\ve^3\big\langle[\Delta,\CV_{\QQ^{(0)}}]\kappa_R,\CH^{\ve}_{\QQ^{(0)}}\Delta\QQ_R\big\rangle}_{\CI_1}\nonumber\\
&\qquad\underbrace{-\ve^3\big\langle[\Delta,\CN_{\QQ^{(0)}}\CH^{\ve}_{\QQ^{(0)}}]\QQ_R,\Delta\kappa_R\big\rangle
-\ve^2\big\langle[\Delta,\CM_{\QQ^{(0)}}\CH^{\ve}_{\QQ^{(0)}}]\QQ_R,\CH^{\ve}_{\QQ^{(0)}}\Delta\QQ_R\big\rangle}_{\CI_2}\nonumber\\
&\qquad+\underbrace{\ve^4\big\langle\nabla\cdot\Delta\mathfrak{R}+\Delta\mathfrak{R},\Delta\vv_R\big\rangle+\ve^3\big\langle\Delta\mathfrak{R},\CH^{\ve}_{\QQ^{(0)}}\Delta\QQ_R\big\rangle}_{\CI_3}.
\end{align}
For the terms $\CI_i(i=1,2,3)$, using Lemma \ref{commutor-YY-QQ-lemma} and Lemma \ref{basic-lemma-remainder}, we derive that
\begin{align*}
\CI_1\leq&C\|\ve^2\kappa_R\|_{H^1}\|\ve^2\Delta\kappa_R\|_{L^2}+C\|\ve^2\kappa_R\|_{H^1}\|\ve\CH^{\ve}_{\QQ^{(0)}}\Delta\QQ_R\|_{L^2}
\leq\delta_0\mathfrak{F}+C\mathfrak{E},\\
\CI_2\leq&C(\|\ve\QQ_R\|_{H^1}+\|\ve^2\CG(\QQ_R)\|_{H^1})\big(\|\ve^2\Delta\kappa_R\|_{L^2}+\|\ve\CH^{\ve}_{\QQ^{(0)}}\Delta\QQ_R\|_{L^2}\big)
\leq\delta_0\mathfrak{F}+C\mathfrak{E},\\
\CI_3\leq&C\big(\|\ve^2\nabla\Delta\vv_R\|_{L^2}+\|\ve^2\Delta\vv_R\|_{L^2}+\|\ve\CH^{\ve}_{\QQ^{(0)}}\Delta\QQ_R\|_{L^2}\big)\|\ve^2\Delta\mathfrak{R}\|_{L^2}\\
\leq&\delta_0\mathfrak{F}+C(\mathfrak{E}+\|\ve^2\Delta\mathfrak{R}\|^2_{L^2}).
\end{align*}
Combining the above estimates with (\ref{QQvvR-remainder-H2}) yields
\begin{align}\label{QQvvR-remainder-H2-final}
&\ve^4\big\langle\partial_t\Delta\vv_R,\Delta\vv_R\big\rangle+\ve^3\big\langle\partial_t\Delta\QQ_R,\CH^{\ve}_{\QQ^{(0)}}\Delta\QQ_R\big\rangle\nonumber\\
&\quad\leq-\ve^4\eta\|\nabla\Delta\vv_R\|^2_{L^2}-\ve^2\big\langle\CM_{\QQ^{(0)}}(\CH^{\ve}_{\QQ^{(0)}}\Delta\QQ_R),\CH^{\ve}_{\QQ^{(0)}}\Delta\QQ_R\big\rangle\nonumber\\
&\qquad+\delta_0\mathfrak{F}+C(\mathfrak{E}+\|\ve^2\Delta\mathfrak{R}\|^2_{L^2}).
\end{align}

{\it Step 4. Closure of error estimates}. Recall the definition of $\CH^{\ve}_{\QQ^{(0)}}\QQ_R$ and Lemma \ref{key-lemma}.
It follows that
\begin{align*}
\frac{1}{\ve}\frac{\ud}{\ud t}\big\langle\QQ_R,\CH^{\ve}_{\QQ^{(0)}}\QQ_R\big\rangle=&\frac{2}{\ve}\big\langle\partial_t\QQ_R,\CH^{\ve}_{\QQ^{(0)}}\QQ_R\big\rangle+\frac{1}{\ve}\big\langle\QQ_R,\partial_t\big(\CH_{\QQ^{(0)}}\big)\QQ_R\big\rangle\\
\leq&\frac{2}{\ve}\big\langle\partial_t\QQ_R,\CH^{\ve}_{\QQ^{(0)}}\QQ_R\big\rangle+\delta\Big\|\frac{1}{\ve}\CH^{\ve}_{\QQ^{(0)}}\QQ_R\Big\|^2_{L^2}\\
&+C_{\delta}\Big(\frac{1}{\ve}\big\langle\CH^{\ve}_{\QQ^{(0)}}\QQ_R,\QQ_R\big\rangle+\|\QQ_R\|^2_{L^2}\Big),
\end{align*}
which further implies that
\begin{align*}
\frac{1}{2\ve}\frac{\ud}{\ud t}\big\langle\QQ_R,\CH^{\ve}_{\QQ^{(0)}}\QQ_R\big\rangle\leq \frac{1}{\ve}\big\langle\partial_t\QQ_R,\CH^{\ve}_{\QQ^{(0)}}\QQ_R\big\rangle+\delta\mathfrak{F}+C\mathfrak{E}.
\end{align*}
In a similar argument, we obtain the following inequalities:
\begin{align*}
 \frac{\ve}{2}\frac{\ud}{\ud t}\big\langle \partial_i\QQ_R,\CH^{\ve}_{\QQ^{(0)}}\partial_i\QQ_R\big\rangle
 \leq&\ve\big\langle\partial_t\partial_i\QQ_R,\CH^{\ve}_{\QQ^{(0)}}\partial_i\QQ_R\big\rangle+\delta\Ff+C\Ef,\\
 \frac{\ve^3}{2}\frac{\ud}{\ud t}\big\langle \Delta \QQ_R,\CH^{\ve}_{\QQ^{(0)}}\Delta\QQ_R\big\rangle
 \leq&\ve^3\big\langle\partial_t\Delta Q_R,\CH^{\ve}_{\QQ^{(0)}}\Delta\QQ_R\big\rangle+\delta\Ff+C\Ef.
\end{align*}
Therefore, together with (\ref{QQR-remiander-L2-estimate})--(\ref{QQ-vv-R-remiander-L2-estimate}), (\ref{H1-remainder-estimate-final}) and (\ref{QQvvR-remainder-H2-final}), by using the property of {\it good terms} $\mathfrak{R}$ and Corollary \ref{EF-corollary}, we arrive at
\begin{align*}
 \frac{1}{2}\frac{\ud}{\ud t}\Ef(t)+\Ff(t)\leq& \delta\Ff+C_\delta\Ef+\|\mathfrak{R}\|_{L^2}^2
 +\|\ve\nabla\mathfrak{R}\|_{L^2}^2+\|\ve^2\Delta\mathfrak{R}\|_{L^2}^2\\
 \leq&\delta\Ff+C_\delta\Ef+C(\ve E)(1+E+\ve F)+\ve f(E)\\
 \leq&\delta\Ff+C_\delta\Ef+C(\ve^2\Ef)(1+\Ef+\ve^2\Ff)+\ve^2f(\Ef),
\end{align*}
which concludes the proof of Proposition \ref{uniform-bound-prop} by taking $\delta$ enough small.

\subsection{Proof of Theorem \ref{biaixal-limit-theorem}}
Given the initial data $(\QQ^{\ve}(\xx,0),\vv^{\ve}(\xx,0))\in H^3\times H^2$, Theorem \ref{locall-posedness-theorem} tells us
that there exists a maximal time $T_\ve>0$ and a unique solution $(\QQ^{\ve},\vv^{\ve})$ to the system (\ref{Re-MB-Q-tensor-1})--(\ref{Re-MB-Q-tensor-3}) such that
\begin{align*}
\QQ^{\ve}\in C([0,T_{\ve});H^3),\quad\vv^{\ve}\in C([0,T_{\ve});H^2)\cap L^2(0,T_{\ve};H^3).
\end{align*}
From Proposition \ref{uniform-bound-prop}, we have
\begin{align*}
\frac{\ud }{\ud t}\Ef(t)+\Ff(t)\leq
 C(\ve \Ef)\big(1+\Ef\big)+\ve f(\Ef)+C(\ve \Ef)\ve\Ff,
\end{align*}
for any $t\in [0,T_{\ve}]$. Under the assumptions of Theorem \ref{locall-posedness-theorem}, it follows that
\begin{align*}
\Ef(0)\leq C_1\Big(\|\vv_{I,R}^\ve\|_{H^2}+\|\QQ_{I,R}^\ve\|_{H^3}+\ve^{-1}\|\MP^{\rm out}(\QQ^{\ve}_{I,R})\|_{L^2}\Big)\le C_1 E_0.
\end{align*}
Let $E_1=(2+C_1E_0){e}^{T}-2>\Ef(0)$, and
\begin{align*}
T_1=\sup\{t\in[0,T_\ve]: \Ef(t)\leq E_1\}.
\end{align*}
If we take $\ve_0$ small enough such that
\begin{align*}
 C(\ve_0 E_1)\leq 1,\quad\ve_0f(E_1)\leq 1,\quad\ve_0\leq\frac{1}{2}.
\end{align*}
Then, for $t\leq T_1$, it holds $\frac{\ud}{\ud t}\Ef(t)\leq 2+\Ef$.
Hence, by means of a continuous argument we conclude that $T\leq T_\ve$ and $\Ef(t)\leq E_1$ for $t\in[0,T]$. This completes the proof of Theorem \ref{biaixal-limit-theorem}.

\section{Rigorous uniaxial limit of two-tensor hydrodynamics}
\label{genappr}

We have mentioned that the minimizer of the bulk energy $F_b$ may be uniaxial.
In this case, under the limit $\ve\to 0$ the two-tensor hydrodynamics is reduced to the Ericksen--Leslie model, for which the formal derivation is given in \cite{LX}.
Following the procedure in this work, the rigorous uniaixal limit can also be established.
In what follows, we point out the main differences from the biaxial limit, with comparison with previous works on the uniaxial limit of one-tensor models.




The uniaxial minimizer has the following form:
\begin{align}\label{uniaxial-forms}
Q^{(0)}_i=s_i\Big(\nn^2_1-\frac{\Fi}{3}\Big),\quad i=1,2,
\end{align}
where $s_i(i=1,2)$ are two scalars, and $\nn_1$ may take any unit vector.
Thus, the uniaxial minimizer (\ref{uniaxial-forms}) determines a two-dimensional manifold.
Let us denote $\CH_{\nn_1}\eqdefa\CH_{\QQ^{(0)}}=\CJ'(\QQ^{(0)})$.
Analogous to \eqref{xiker}, the tangential space of the manifold at $\nn_1$ gives belongs to the zero-eigenvector space of the Hessian of $\CH_{\nn_1}$.
Here, we still assume that the tangential space and the zero-eigenvalue space are identical (cf. Assumption \ref{HQ-3}).
In other words, ${\rm Ker}\CH_{\nn_1}$ is a two-dimensional subspace of $\mathbb{Q}$.
It is worth noting that ${\rm Ker}\CH_{\nn_1}$ depends on $s_i$.
This is different from one-tensor models, where such kernel is independent of the scalar, given by
$
\{\nn_1\otimes\nn_1'+\nn_1'\otimes\nn_1:\nn_1'\perp \nn_1\}.
$


The kernel space ${\rm Ker}\CH_{\nn_1}$ is used to cancel the non-leading terms in the Hilbert expansion (cf. the discussion below \eqref{biaxial-global-minimum}).
Then, the uniaxial version of the $O(1)$ system  \eqref{O-0-epsion-system-1}--\eqref{O-0-epsion-system-3} can be reduced to the Ericksen--Leslie model.

The solvability of the $\QQ^{(k)}(k=1,2,3)$ and $\vv^{(k)}(k=1,2)$ is also handled by projecting $\QQ^{(k)}$ into the two spaces ${\rm Ker}\CH_{\nn_1}$ and $({\rm Ker}\CH_{\nn_1})^{\perp}$ by $\MP^{\rm in}$ and $\MP^{\rm out}$, respectively.
Taking the solvability of the $O(\ve)$ system (\ref{O-1-epsion-system-1})--(\ref{O-1-epsion-system-3}) as an example, the key ingredient is to derive a closed system for $(\QQ^{(1)}_{\top},\vv^{(1)})$ and to show that such a system is linear and has a closed energy estimate.
However, in the uniaxial case, we still suffer from the difficulty that the projection $\MP^{\rm in}$ and the operator $\CM^{(0)}$ are noncommutative.
Actually, the commutativity in the previous works \cite{WZZ3,LWZ,LW} for one-tensor models is a special result from the fact that their kernel is independent of scalar order parameters.
This implies that the method by directly taking the projection $\MP^{\rm in}$ on the $O(\ve)$ system will no longer be available, since $\CM^{(0)}\CH_{\QQ^{(0)}}\QQ^{(2)}\notin{\rm Ker}\CH_{\QQ^{(0)}}$ even if $\CH_{\QQ^{(0)}}\QQ^{(2)}\in{\rm Ker}\CH_{\QQ^{(0)}}$. Thus, we here overcome the difficulty by the same way in Subsection \ref{existen-hilbert-sub}. The only difference is that the kernel space ${\rm Ker}\CH_{\QQ^{(0)}}$ is a two-dimensional but not a three-dimensional subspace.



For the system for the remainder, we also need to deal with the singular term $\frac{1}{\ve}\big\langle\QQ_R,\partial_t\big(\CH_{\QQ^{(0)}}\big)\QQ_R\big\rangle$ in $\ve$.
In the uniaxial case, we still have the eigen-decomposition in the form \eqref{dtH}, but the number of the $\ee_k$ is now eight instead of seven.
The other estimates are established identically.

\appendix

\section{Biaxial minimizer and its Hessian}
We give one numerical example to illustrate why we can claim Assumption 1.
In the bulk energy $F_b$, we take the entropy term as the quasi-entropy $\nu\Xi_2$, and the coefficients as
$$
c_{02}=-35\nu,\quad c_{03}=-20\nu,\quad c_{04}=-20\nu.
$$
Numerical experiments indicate that the minimizer is biaxial with
$$
s_1\approx 0.6263,\quad b_1\approx-0.0526,\quad s_2\approx -0.2377,\quad b_2\approx 0.2890.
$$
At this minimizer, let us look into the eigenvalues of the Hessian.
There are three eigenvalues very close to zero (with absolute values $<10^{-8}$).
The other eigenvalues are all positive, with the smallest one taking the value $\approx 8.4870$.

\section{Some basic estimates in Sobolev spaces}

The following product estimates and commutator estimates are  well-known, see \cite{Triebel,BCD} for example, and frequently used in this paper.

\begin{lemma}\label{lem:product}
Let $s\geq 0$. Then for any multi-index $\alpha,\beta,\gamma,\delta$, it follows that
\begin{align*}
\|\partial^\alpha f\partial^\beta g\|_{H^s}\leq &~ C\big(\|f\|_{L^\infty}\|g\|_{H^{s+|\alpha|+|\beta|}}
+\|g\|_{L^\infty}\|f\|_{H^{s+|\alpha|+|\beta|}}\big),\\
\|\partial^\alpha f\partial^\beta g\|_{H^s}\leq &~ C\|f\|_{H^{s+|\alpha|+|\gamma|}}\|g\|_{H^{s+|\beta|+|\delta|}},\quad\text{ if } s+|\gamma|+|\delta|\geq 2.
\end{align*}
In particular, there holds
\begin{align*}
\|fg\|_{H^s}\leq &~ C\big(\|f\|_{L^\infty}\|g\|_{H^{s}}+\|g\|_{L^\infty}\|f\|_{H^{s}}\big); \\
\|fg\|_{H^s}\leq &~ C\|f\|_{H^s}\|g\|_{H^{s}},\quad\text{ if } s\geq 2;\\
\|fg\|_{H^k}\leq &~ C\min\{ \|f\|_{H^k}\|g\|_{H^{2}}, \|f\|_{H^2}\|g\|_{H^{k}}\},\quad\text{ if } 0\leq k\leq 2.
\end{align*}
\end{lemma}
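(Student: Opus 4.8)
The plan is to reduce all the assertions to one base tame inequality and then redistribute derivatives; throughout, all norms are over $\mathbb{R}^3$, so the only Sobolev embeddings needed are $H^1\hookrightarrow L^6$, $H^{1/2}\hookrightarrow L^3$ and $H^2\hookrightarrow L^\infty$, and the statements are the classical Moser--Kato--Ponce and bilinear product estimates. First I would prove the base tame bound $\|fg\|_{H^s}\le C(\|f\|_{L^\infty}\|g\|_{H^s}+\|g\|_{L^\infty}\|f\|_{H^s})$ for every real $s\ge 0$: for integer $s$ this follows from the Leibniz rule together with the Gagliardo--Nirenberg inequalities $\|\partial^\beta h\|_{L^{2s/|\beta|}}\le C\|h\|_{L^\infty}^{1-|\beta|/s}\|h\|_{\dot{H}^s}^{|\beta|/s}$ and Young's inequality, while for non-integer $s$ I would use Bony's paraproduct decomposition $fg=T_fg+T_gf+R(f,g)$ and the standard Littlewood--Paley bounds for the three pieces, exactly as in \cite{BCD,Triebel}. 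The algebra property $\|fg\|_{H^s}\le C\|f\|_{H^s}\|g\|_{H^s}$ for $s\ge 2$ is then immediate from $H^2(\mathbb{R}^3)\hookrightarrow L^\infty$ and $\|h\|_{H^2}\le\|h\|_{H^s}$. For the low-regularity bilinear bound $\|fg\|_{H^k}\le C\min\{\|f\|_{H^k}\|g\|_{H^2},\|f\|_{H^2}\|g\|_{H^k}\}$ with $0\le k\le 2$, by symmetry it suffices to bound the linear multiplication operator $M_g\colon f\mapsto fg$; a direct Leibniz computation using the three embeddings above gives $\|M_g\|_{H^0\to H^0}+\|M_g\|_{H^2\to H^2}\le C\|g\|_{H^2}$, and complex interpolation then yields $\|M_g\|_{H^k\to H^k}\le C\|g\|_{H^2}$ for all $k\in[0,2]$.

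Next I would handle the estimates carrying the extra derivatives $\partial^\alpha$, $\partial^\beta$. Set $m=s+|\alpha|+|\beta|$ and apply the Leibniz formula to $\partial^\gamma(\partial^\alpha f\,\partial^\beta g)$ for $|\gamma|\le s$: this is a finite sum of terms $\partial^{\alpha'}f\,\partial^{\beta'}g$ with $|\alpha'|+|\beta'|\le m$, each of which I bound in $L^2$ by H\"older with exponents $2m/|\alpha'|$ and $2m/|\beta'|$, then by the Gagliardo--Nirenberg--Moser inequality, then by Young's inequality; summing gives the tame right-hand side $\|f\|_{L^\infty}\|g\|_{H^m}+\|g\|_{L^\infty}\|f\|_{H^m}$, which is the first displayed estimate. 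For the second displayed estimate I would instead spend $|\gamma|$ derivatives on $f$ and $|\delta|$ on $g$ and replace the $L^\infty$ norms by $H^2$ norms via $H^2\hookrightarrow L^\infty$; the hypothesis $s+|\gamma|+|\delta|\ge 2$ is exactly the condition that leaves enough smoothness to afford this embedding. Non-integer $s$ is then covered either by interpolating between the two neighbouring integer estimates or by running the Bony decomposition directly on $\partial^\alpha f\,\partial^\beta g$, and the remaining particular cases ($\|fg\|_{H^s}$ with $\alpha=\beta=0$) are just instances of the above.

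There is no genuine analytic obstacle here --- the lemma is standard and is quoted as such --- so the only real work is bookkeeping: keeping the tame $L^\infty\times H^m$ structure intact while redistributing the $m$ derivatives between the two factors, and pinning down the sharp smoothness threshold (here $s+|\gamma|+|\delta|\ge 2$ in dimension three) below which the non-tame bilinear bound would fail. Accordingly I would write out the base estimates of the first paragraph carefully and present the multi-index versions as routine consequences, citing \cite{BCD,Triebel} for the Littlewood--Paley facts used in the non-integer range.
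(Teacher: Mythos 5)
The paper does not prove Lemma~\ref{lem:product}; it records it in Appendix~B as a standard fact, stating only that the product estimates ``are well-known'' and pointing to \cite{Triebel,BCD}, so there is no in-paper argument to compare against. Your sketch supplies exactly the classical Moser/Kato--Ponce route found in those references: Leibniz plus Gagliardo--Nirenberg interpolation and Young's inequality for integer $s$, Bony's paraproduct decomposition for non-integer $s$, and the algebra property of $H^2(\mathbb{R}^3)$ together with complex interpolation of the multiplication operator $M_g$ for the low-regularity case $0\le k\le 2$. The overall structure is sound, and the feature you identify as the ``sharp smoothness threshold'' is indeed the content of the hypothesis $s+|\gamma|+|\delta|\ge 2$. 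One point of the write-up is slightly too loose: for the second displayed inequality, the phrase ``replace the $L^\infty$ norms by $H^2$ norms via $H^2\hookrightarrow L^\infty$'' suggests a pure $(L^\infty,L^2)$ H\"{o}lder pairing after Leibniz, which is not enough exactly at the boundary $s+|\gamma|+|\delta|=2$; e.g.\ the instance $\|fg\|_{L^2}\le C\|f\|_{H^1}\|g\|_{H^1}$ needs an $L^4\times L^4$ or $L^3\times L^6$ split, not $L^\infty\times L^2$. Since you already list $H^{1/2}\hookrightarrow L^3$ and $H^1\hookrightarrow L^6$ among your tools (and mention the Bony alternative), the fix is simply to make the H\"{o}lder exponents term-dependent in the Leibniz bookkeeping rather than using a single $(\infty,2)$ pair throughout; with that adjustment the sketch is a correct proof of a statement the paper leaves as a citation.
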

\begin{lemma}\label{lem:composition}
Let $s\geq 0$ and $F(\cdot)\in C^\infty(\mathbb{R}^d)$ with $F(0)=0$. Then we have
\begin{align*}
 \|F(f)\|_{H^s}\le C(\|f\|_{L^\infty})\|f\|_{H^s}.
\end{align*}

\end{lemma}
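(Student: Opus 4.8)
The plan is to prove this classical Moser-type composition estimate by reducing to an integer smoothness index and combining the Fa\`a di Bruno formula with the Gagliardo--Nirenberg interpolation inequalities, keeping track throughout that the resulting constant depends on $\|f\|_{L^\infty}$ only. The hypothesis $F(0)=0$ enters already in the base case $s=0$: by the mean value theorem, for almost every $\xx$ one has $|F(f)(\xx)|\le\big(\sup_{|t|\le\|f\|_{L^\infty}}|F'(t)|\big)\,|f(\xx)|$, so that $\|F(f)\|_{L^2}\le C(\|f\|_{L^\infty})\|f\|_{L^2}$ with $C(r)\eqdefa\sup_{|t|\le r}|F'(t)|$ increasing in $r$. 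This $L^2$ bound also absorbs the lowest-order contribution in the general estimate.

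For an integer $s\ge1$ and a multi-index $\alpha$ with $1\le|\alpha|\le s$, I would use the Fa\`a di Bruno expansion
\[
\partial^\alpha\big(F(f)\big)=\sum_{k=1}^{|\alpha|}\ \sum_{\substack{\beta_1+\cdots+\beta_k=\alpha\\ |\beta_i|\ge1}}c_{\beta_1\cdots\beta_k}\,F^{(k)}(f)\,\partial^{\beta_1}f\cdots\partial^{\beta_k}f,
\]
a finite sum. Each coefficient obeys $\|F^{(k)}(f)\|_{L^\infty}\le\sup_{|t|\le\|f\|_{L^\infty}}|F^{(k)}(t)|$, an increasing function of $\|f\|_{L^\infty}$. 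Writing $\ell=|\alpha|$, the Gagliardo--Nirenberg inequalities give $\|\partial^{\beta_i}f\|_{L^{p_i}}\le C\,\|f\|_{L^\infty}^{1-|\beta_i|/\ell}\,\|f\|_{\dot H^{\ell}}^{|\beta_i|/\ell}$ with $p_i=2\ell/|\beta_i|$; since $\sum_i 1/p_i=\sum_i|\beta_i|/(2\ell)=\tfrac12$, H\"older's inequality yields
\[
\big\|\partial^{\beta_1}f\cdots\partial^{\beta_k}f\big\|_{L^2}\le\prod_{i=1}^k\|\partial^{\beta_i}f\|_{L^{p_i}}\le C\,\|f\|_{L^\infty}^{\,k-1}\,\|f\|_{\dot H^{\ell}}.
\]
Because $\ell\le s$, the homogeneous seminorm $\|f\|_{\dot H^{\ell}}$ (the $L^2$ norm of the order-$\ell$ derivatives) is dominated by $\|f\|_{H^s}$, so each term above is bounded by $C(\|f\|_{L^\infty})\|f\|_{H^s}$. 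Summing over the finitely many terms of the expansion and over all $|\alpha|\le s$, and adding the $s=0$ bound, gives $\|F(f)\|_{H^s}\le C(\|f\|_{L^\infty})\|f\|_{H^s}$ for every integer $s\ge0$, with $C(\cdot)$ assembled from $\sup_{|t|\le\|f\|_{L^\infty}}|F^{(k)}(t)|$, $0\le k\le s$, hence depending on $\|f\|_{L^\infty}$ only.

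For non-integer $s=m+\sigma$ with $m\in\mathbb{N}$ and $\sigma\in(0,1)$, I would control $\|D^m(F(f))\|_{L^2}$ by the integer estimate just proved and control the Gagliardo seminorm $[D^m(F(f))]_{H^\sigma}$ by expanding $D^m(F(f))$ again by Fa\`a di Bruno and bounding each product by the fractional Leibniz rule together with the pointwise Lipschitz estimate $|F^{(k)}(f)(\xx)-F^{(k)}(f)(\yy)|\le C(\|f\|_{L^\infty})|f(\xx)-f(\yy)|$ and fractional Gagliardo--Nirenberg; alternatively this fractional case is precisely the content of the composition results in \cite{Triebel,BCD}, which may simply be invoked. \textbf{Main obstacle.} The only genuinely delicate point is ensuring the constant involves $\|f\|_{L^\infty}$ and no higher norm of $f$; this is exactly what the Gagliardo--Nirenberg step achieves, trading $L^\infty$ control of the lower-order derivatives against a single power of the top-order norm, together with the fact that every factor $F^{(k)}(f)$ is a composition evaluated at $f$, whose range lies in $[-\|f\|_{L^\infty},\|f\|_{L^\infty}]$. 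Everything else is routine bookkeeping of the Fa\`a di Bruno coefficients and of the H\"older exponents.
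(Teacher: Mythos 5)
Your proof is correct. The paper itself gives no argument for this lemma -- it is stated as a well-known fact with a pointer to \cite{Triebel,BCD} -- and what you have written is precisely the standard Moser-type proof contained in those references: the base case from $F(0)=0$ and the mean value theorem, the Fa\`a di Bruno expansion, and the Gagliardo--Nirenberg/H\"older step that converts $L^\infty$ control of the lower-order factors into a single power of the top-order norm, so that the constant depends on $\|f\|_{L^\infty}$ alone. The only cosmetic point is that here $F\in C^\infty(\mathbb{R}^d)$ acts on a vector-valued $f$, so the Fa\`a di Bruno formula involves mixed partials $\partial^k F$ in the components of $f$; this changes nothing in the estimates. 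Deferring the non-integer case to the cited composition theorems is consistent with how the paper treats the lemma (and in fact only integer $s$ is ever used in the body of the paper).
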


\begin{lemma}\label{lem:commutator}
Assume that $\alpha$ is a multiple index. Then it follows that
\begin{align*}
\|[\partial^{\alpha}, g]f\|_{L^2}\leq C\big(\|\nabla g\|_{L^\infty}\|f\|_{H^{|\alpha|-1}}
+\|\nabla g\|_{H^{|a|-1}}\|f\|_{L^\infty}\big).
\end{align*}
In particular, if $|\alpha|\geq 2$, there holds
\begin{align*}
\|[\partial^{\alpha}, g]f\|_{L^2}\leq C\|g\|_{H^{|\alpha|+1}}\|f\|_{H^{|\alpha|-1}},\quad \|[\partial^{\alpha+1}, g]f\|_{L^2}\leq C\|g\|_{H^{|\alpha|+1}}\|f\|_{H^{|\alpha|}}.
\end{align*}

\end{lemma}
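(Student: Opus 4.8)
\textbf{Proof proposal for Theorem \ref{biaixal-limit-theorem}.}

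The plan is to run the Hilbert-expansion machinery already set up in the excerpt, in three stages: (i) construct the expansion functions $\big(\QQ^{(1)},\QQ^{(2)},\QQ^{(3)},\vv^{(1)},\vv^{(2)}\big)$; (ii) derive the remainder system and close the $\ve$-uniform energy estimate for $\mathfrak{E}(t)$; (iii) combine with local well-posedness (Theorem \ref{locall-posedness-theorem}) via a continuation argument to get existence on the full interval $[0,T]$ with the stated bound. Step (i) is exactly Proposition \ref{existence-Hilbert-expansion-prop}: starting from the given smooth frame solution $(\Fp,\vv^{(0)})$ with $(\nabla\Fp,\vv^{(0)})\in C([0,T];H^\ell)$ for $\ell\ge 20$, one sets $\QQ^{(0)}=\QQ^{(0)}(\Fp)$ of the form \eqref{biaxial-global-minimum}, solves $\QQ^{(1)}_\bot$ algebraically from \eqref{QQ-1-out} using the invertibility of $\CH_{\QQ^{(0)}}$ on $({\rm Ker}\,\CH_{\QQ^{(0)}})^\perp$ (Proposition \ref{prop-linearized-operator}), then solves the \emph{linear} dissipative system \eqref{QQ1-top-equation-1}--\eqref{vv1-incomp-equation-3} for $(\QQ^{(1)}_\top,\vv^{(1)})$ via the energy $\CE_\ell(t)$; the key structural fact making the $O(\ve)$ system linear is $\JJ_1(\QQ^{(1)}_\top)\in({\rm Ker}\,\CH_{\QQ^{(0)}})^\perp$ together with Lemma \ref{lem:proj-marterial-deriva-QQ}. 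The $O(\ve^2)$ system for $(\QQ^{(2)},\vv^{(2)};\QQ^{(3)})$ is handled identically, with a loss of derivatives accounted for by $\ell-4k$ and $\ell-11$.

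For step (ii), write the remainder as in \eqref{QR-vR} and, using Lemma \ref{singular-remainder-le} and Lemma \ref{VVNNKK-remainder-le}, reduce the difference of the full system and the expanded system to the remainder system \eqref{QvR-remainder-equation-1}--\eqref{QvR-remainder-equation-3}, where all non-singular nonlinearities are absorbed into the \emph{good terms} $\mathfrak{R}$ satisfying \eqref{good-terms}. The core estimate is Proposition \ref{uniform-bound-prop}: differentiating $\mathfrak{E}(t)$ in time, using that $\CM^{-1}_{\QQ^{(0)}}$ is positive definite and $\CH_{\QQ^{(0)}}$ positive semi-definite, testing the $\QQ_R$-equation against $\CM^{-1}_{\QQ^{(0)}}\QQ_R$ and $\ve^{-1}\CH^\ve_{\QQ^{(0)}}\QQ_R$ (and their $\partial_i$- and $\Delta$-derivatives against the corresponding weighted quantities), one obtains cancellations between the rotational-convection and elastic-stress terms via $\CN_{\QQ^{(0)}}=\CV^T_{\QQ^{(0)}}$, controls the commutators by Lemma \ref{commutor-YY-QQ-lemma}, and arrives at
\[
\tfrac12\tfrac{\ud}{\ud t}\mathfrak{E}(t)+\mathfrak{F}(t)\le \delta\mathfrak{F}+C_\delta\mathfrak{E}+C(\ve E)(1+E+\ve F)+\ve f(E),
\]
which, after Corollary \ref{EF-corollary} ($E\le C\mathfrak{E}^{1/2}$, $F\le C(\mathfrak{E}+\mathfrak{F})^{1/2}$) and $\delta$ small, becomes $\tfrac{\ud}{\ud t}\mathfrak{E}+\mathfrak{F}\le C(\ve E)(1+\mathfrak{E})+\ve f(\mathfrak{E})+C(\ve E)\ve\mathfrak{F}$.

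The step I expect to be the main obstacle is controlling the singular term $\tfrac1\ve\langle\QQ_R,\partial_t(\CH_{\QQ^{(0)}})\QQ_R\rangle$ arising from $\tfrac1\ve\tfrac{\ud}{\ud t}\langle\QQ_R,\CH^\ve_{\QQ^{(0)}}\QQ_R\rangle$, since $\CH_{\QQ^{(0)}}$ is time-dependent through $\Fp(\xx,t)$. The resolution is Lemma \ref{key-lemma}: using the eigen-decomposition \eqref{eigen-decomp}, write $\partial_t(\CH_{\QQ^{(0)}})$ as in \eqref{dtH}, split $\QQ_R=\QQ_{R,\top}+\QQ_{R,\perp}$; the $\top$-$\top$ piece vanishes by orthogonality $\langle\ee_k,\QQ_{R,\top}\rangle=0$, the cross term is bounded by $\delta\|\ve^{-1}\CH^\ve_{\QQ^{(0)}}\QQ_R\|^2_{L^2}+C_\delta(\|\QQ_R\|^2_{H^1}+\|\QQ_R\|^2_{L^2})$ via $\CH^{-1}_{\QQ^{(0)}}$ on $({\rm Ker})^\perp$, and the $\perp$-$\perp$ piece is controlled by $\tfrac1\ve\langle\CH_{\QQ^{(0)}}\QQ_R,\QQ_R\rangle$ since $\|\QQ_{R,\perp}\|^2_{L^2}\le C\langle\CH_{\QQ^{(0)}}\QQ_R,\QQ_R\rangle$ by Proposition \ref{prop-linearized-operator} and $\langle\QQ_R,\CG(\QQ_R)\rangle\ge 0$. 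This is precisely where the two-tensor case gains over the rod-like computations in \cite{WZZ3}. For step (iii): by Theorem \ref{locall-posedness-theorem} applied to initial data $(\QQ^\ve_I,\vv^\ve_I)\in(\QQ^*+H^3)\times H^2$ there is a maximal $T_\ve>0$ and a unique solution; the hypothesis on $(\QQ^\ve_{I,R},\vv^\ve_{I,R})$ gives $\mathfrak{E}(0)\le C_1E_0$; set $E_1=(2+C_1E_0)e^T-2$ and $T_1=\sup\{t\in[0,T_\ve]:\mathfrak{E}(t)\le E_1\}$; choosing $\ve_0$ so small that $C(\ve_0E_1)\le1$, $\ve_0 f(E_1)\le1$, $\ve_0\le\tfrac12$, the differential inequality yields $\tfrac{\ud}{\ud t}\mathfrak{E}\le 2+\mathfrak{E}$ on $[0,T_1]$, so $\mathfrak{E}(t)\le E_1$ there; a standard continuation argument then forces $T\le T_\ve$ and $\mathfrak{E}(t)\le E_1$ on $[0,T]$, which is the assertion.
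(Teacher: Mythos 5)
Your proposal does not address the statement at hand. The statement is Lemma \ref{lem:commutator}, the Kato--Ponce-type commutator estimate
$\|[\partial^{\alpha},g]f\|_{L^2}\le C(\|\nabla g\|_{L^\infty}\|f\|_{H^{|\alpha|-1}}+\|\nabla g\|_{H^{|\alpha|-1}}\|f\|_{L^\infty})$
from the appendix of basic Sobolev estimates. What you have written is instead an outline of the proof of Theorem \ref{biaixal-limit-theorem} (the rigorous biaxial limit via the Hilbert expansion). However accurate that outline may be as a summary of Sections \ref{Hilbert-subsection}--\ref{remainder-subsection}, none of it bears on the commutator inequality, so as a proof of the stated lemma it is vacuous; moreover it is circular in spirit, since the energy estimates you invoke (e.g.\ the bounds on $I_3$, $I_4$, $\CI_1$, $\CI_2$ in Sections \ref{loc-wellp} and \ref{bipr}) themselves rely on Lemma \ref{lem:commutator} through Lemma \ref{commutor-YY-QQ-lemma}.

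For the record, the paper does not prove this lemma either; it is quoted as classical with references to Triebel and Bahouri--Chemin--Danchin. A self-contained argument is short: by the Leibniz rule,
\begin{align*}
[\partial^{\alpha},g]f=\partial^{\alpha}(gf)-g\,\partial^{\alpha}f=\sum_{0<\beta\le\alpha}\binom{\alpha}{\beta}\,\partial^{\beta}g\,\partial^{\alpha-\beta}f,
\end{align*}
and every term has $|\beta|\ge 1$, so one writes $\partial^{\beta}g=\partial^{\beta'}(\nabla g)$ with $|\beta'|=|\beta|-1$ and applies the product estimate of Lemma \ref{lem:product} (with $s=0$, to the pair $\nabla g$ and $f$, $|\beta'|+|\alpha-\beta|=|\alpha|-1$) to get exactly $C(\|\nabla g\|_{L^\infty}\|f\|_{H^{|\alpha|-1}}+\|\nabla g\|_{H^{|\alpha|-1}}\|f\|_{L^\infty})$. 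The two particular inequalities for $|\alpha|\ge 2$ then follow from the Sobolev embeddings $\|\nabla g\|_{L^\infty}\le C\|g\|_{H^{|\alpha|+1}}$ and $\|f\|_{L^\infty}\le C\|f\|_{H^{|\alpha|-1}}$ in $\mathbb{R}^3$ (and, for the second one, from the first bound with $\alpha$ replaced by $\alpha+1$ together with the second form of the product estimate). You should supply an argument of this kind, or an explicit citation, rather than the Hilbert-expansion material.
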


\begin{lemma}\label{lem:difference}
Let $\Omega$ be a convex domain in $\mathbb{R}^d$ and $k\ge 0$ be an integer. Assume
$F(\cdot)\in C^\infty(\Omega)$ and $k'=\max\{k,2\}$. Then it follows that
\begin{align*}
\|F(u)-F(v)\|_{H^k}\leq C(\|u\|_{L^\infty},\|v\|_{L^\infty})(1+\|u\|_{H^{k'}}+\|v\|_{H^{k'}} )\|u-v\|_{H^k}.
\end{align*}

\end{lemma}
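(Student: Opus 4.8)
\emph{Mean value representation.} The plan is to reduce the estimate to the fundamental theorem of calculus together with the Leibniz rule and Gagliardo--Nirenberg interpolation, treating the cases $k=0$ and $k\ge 1$ slightly differently. Since $\Omega$ is convex and $u(\xx),v(\xx)\in\Omega$, the segment $v(\xx)+t\big(u(\xx)-v(\xx)\big)$, $t\in[0,1]$, lies in $\Omega$, so that
\[
F(u)-F(v)=\Big(\int_0^1 F'\big(v+t(u-v)\big)\,\ud t\Big)(u-v)\eqdefa H\cdot(u-v).
\]
Because $F\in C^\infty(\Omega)$, the function $H$ --- and likewise every pointwise value $F^{(j)}\big(v+t(u-v)\big)$ --- is bounded by a constant $C(\|u\|_{L^\infty},\|v\|_{L^\infty})$, namely the supremum of the relevant derivative of $F$ over the convex hull of the ranges of $u$ and $v$. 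For $k=0$ this already gives $\|F(u)-F(v)\|_{L^2}\le\|H\|_{L^\infty}\|u-v\|_{L^2}$, which is stronger than the assertion.

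\emph{Telescoping the derivatives for $k\ge 1$.} One cannot differentiate the product $H\cdot(u-v)$ directly, since $H$ need not lie in $H^k$; instead one differentiates $F(u)$ and $F(v)$ separately. For $|\alpha|\le k$, repeated use of the Leibniz and chain rules gives
\[
\partial^\alpha F(u)=\sum c_{j,\beta}\,F^{(j)}(u)\prod_{i=1}^{j}\partial^{\beta_i}u,\qquad\sum_i|\beta_i|=|\alpha|,\ |\beta_i|\ge 1,
\]
and the analogous expansion for $v$. Subtracting and telescoping each term by means of
\[
F^{(j)}(u)\prod\partial^{\beta_i}u-F^{(j)}(v)\prod\partial^{\beta_i}v=\big(F^{(j)}(u)-F^{(j)}(v)\big)\prod\partial^{\beta_i}u+F^{(j)}(v)\sum_{\ell}\Big(\prod_{i<\ell}\partial^{\beta_i}v\Big)\partial^{\beta_\ell}(u-v)\Big(\prod_{i>\ell}\partial^{\beta_i}u\Big),
\]
together with $F^{(j)}(u)-F^{(j)}(v)=(u-v)\int_0^1 F^{(j+1)}\big(v+t(u-v)\big)\,\ud t$, one expresses $\partial^\alpha\big(F(u)-F(v)\big)$ as a finite sum of products, each consisting of a uniformly bounded factor, exactly one \emph{difference factor} (either $u-v$ or $\partial^{\beta_\ell}(u-v)$ with $1\le|\beta_\ell|\le k$), and a product of derivatives of $u$ and $v$ whose orders sum to at most $k$.

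\emph{Closing the estimate.} Each such term is bounded in $L^2$ by H\"older's inequality and Gagliardo--Nirenberg: the whole available derivative budget is assigned to the difference factor, which is then controlled by $\|u-v\|_{H^k}$, while the remaining factors (products of derivatives of $u,v$ of total order $\le k$, times a bounded composite of $F$) are controlled by $C(\|u\|_{L^\infty},\|v\|_{L^\infty})(1+\|u\|_{H^{k'}}+\|v\|_{H^{k'}})$. This is exactly where $k'=\max\{k,2\}$ enters: when $k\le 1$ some of the H\"older splittings require $L^\infty$- or $L^4$-control of the non-difference factors (e.g.\ $\|\nabla v\|_{L^4}$ or $\|u-v\|_{L^4}$), and estimating these by a Sobolev norm forces one up to two derivatives, i.e.\ the $H^2$ norm; for $k\ge 2$, $H^k$ is an algebra (Lemma \ref{lem:product}) and $k'=k$ already suffices. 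Summing over $|\alpha|\le k$ and adding the $k=0$ bound gives the claim. The main obstacle is the bookkeeping of this finite sum --- checking that in \emph{every} term the interpolation exponents close up so that precisely one factor of $\|u-v\|_{H^k}$ is extracted while the remainder is absorbed into the stated prefactor, the borderline cases $k\in\{0,1\}$ being the ones that pin down $k'=\max\{k,2\}$; a secondary point is ensuring (via convexity of $\Omega$) that all arguments $v+t(u-v)$ stay where $F$ and its derivatives are defined and bounded.
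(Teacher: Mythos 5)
Your argument is correct, but for $k\ge 1$ it takes a genuinely different route from the paper. The paper first normalizes $F$ by subtracting the linear part so that $F'(0)=0$; this makes the composite $H=\int_0^1F'\big(v+t(u-v)\big)\,\ud t$ itself an $H^k$ function by Lemma \ref{lem:composition}, with $\|H\|_{H^k}\le C(\|u\|_{L^\infty},\|v\|_{L^\infty})(\|u\|_{H^k}+\|v\|_{H^k})$, and the whole estimate then falls out of a single application of the product estimate of Lemma \ref{lem:product} to $(u-v)\cdot H$ (plus the elementary $L^2$ and $H^1$ cases done by hand). So your remark that one ``cannot differentiate the product $H\cdot(u-v)$ directly'' identifies a real obstruction ($H$ need not decay), but the paper removes it by the $F'(0)=0$ reduction rather than by abandoning the product form. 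Your alternative --- telescoping the Fa\`a di Bruno expansion of $\partial^\alpha F(u)-\partial^\alpha F(v)$ so that each term carries exactly one difference factor, then closing with H\"older and Gagliardo--Nirenberg --- is the classical Moser-type argument: more self-contained (it does not lean on Lemmas \ref{lem:product}--\ref{lem:composition}) and it makes transparent why $k'=\max\{k,2\}$ is forced by the borderline splittings at $k\in\{0,1\}$, at the cost of substantially heavier combinatorial bookkeeping, which you flag but do not fully carry out. Both proofs are sound; the paper's is shorter because the hard analysis is already packaged in the two auxiliary lemmas.
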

\begin{proof} We may as well assume $F'(0)=0$, otherwise, we consider $G(u)=F(u)-u\cdot F'(0)$.
Choose any two points $u,v\in\mathbb{R}^d, u\neq v$. We have
\begin{align*}
F(u)-F(v)=&\int^1_0\frac{\ud}{\ud t}F(v+t(u-v))\ud t\\
=&(u-v)\cdot\int_{0}^1F'(v+t(u-v))\ud t.
\end{align*}
Consequently, from the above equation, we can derive that
\begin{align*}
\|F(u)-F(v)\|_{L^2}&~\le \|u-v\|_{L^2}\sup_{t\in[0,1]}\|F'(v+t(u-v))\|_{L^\infty}\\
&~ \le C(\|u\|_{L^\infty},\|v\|_{L^\infty})\|u-v\|_{L^2} ,\\
\|\nabla(F(u)-F(v))\|_{L^2}&~\le \|\nabla(u-v)\|_{L^2}\sup_{t\in[0,1]}\|F'(v+t(u-v))\|_{L^\infty}\\
&~\quad+\|u-v\|_{H^1}\sup_{t\in[0,1]}\|\nabla(F'(v+t(u-v)))\|_{H^1} \\
&~\le C(\|u\|_{L^\infty},\|v\|_{L^\infty})(\|u\|_{H^2}+\|v\|_{H^2})\|u-v\|_{H^1}.
\end{align*}
Further, for $k\geq 2$, we have
\begin{align*}
\|F(u)-F(v)\|_{H^k}\le&~ C\Big(\|u-v\|_{L^\infty}\sup_{t\in[0,1]}\|F'(v+t(u-v))\|_{H^k}\\
&\qquad+\|u-v\|_{H^k}\sup_{t\in[0,1]}\|F'(v+t(u-v))\|_{L^\infty}\Big)\\
\le &~C(\|u\|_{L^\infty},\|v\|_{L^\infty})(1+\|u\|_{H^k}+\|v\|_{H^k})\|u-v\|_{H^k}.
\end{align*}
In the above derivation, we have used the following estimate:
\begin{align*}
\|F'(v+t(u-v))\|_{H^k}&~\le C(\|v+t(u-v)\|_{L^\infty})\|v+t(u-v)\|_{H^k} \\
&~\le C(\|u\|_{L^\infty},\|v\|_{L^\infty})(\|u\|_{H^k}+\|v\|_{H^k}),
\end{align*}
which can be induced by Lemma \ref{lem:composition}. Thus, we conclude the proof of the lemma.
\end{proof}

\bigskip
\noindent{\bf Acknowledgments.} Sirui Li is partially supported by the NSFC under grant No. 12061019 and by the Growth Foundation for Youth Science and Technology Talent of Educational Commission of Guizhou Province of China under grant No. [2021]087. Jie Xu is partially supported by the NSFC under grant Nos. 12288201 and 12001524.

\end{document}